\def\disp{\displaystyle}
\def\dref#1{(\ref{#1})}
\theoremstyle{plain}
\newtheorem{theorem}{Theorem}[section]
\newtheorem{lemma}{Lemma}[section]
\theoremstyle{definition}
\newtheorem{definition}{Definition}[section]
\newtheorem{remark}{Remark}[section]
\numberwithin{equation}{section}
\begin{document}

\title{\bf A new result for the global existence (and boundedness), regularity and stabilization of a three-dimensional
Keller-Segel-Navier-Stokes system modeling coral fertilization
}

\author{
Jiashan Zheng\thanks{Corresponding author.   E-mail address:
 zhengjiashan2008@163.com (J.Zheng)}
 \\
    School of Mathematics and Statistics Science,\\
     Ludong University, Yantai 264025,  P.R.China \\
}
\date{}


\maketitle \vspace{0.3cm}
\noindent
\begin{abstract}
This paper deals with the following quasilinear Keller-Segel-Navier-Stokes system modeling coral fertilization
$$
 \left\{
 \begin{array}{l}
   n_t+u\cdot\nabla n=\Delta n-\nabla\cdot(nS(x,n,c)\nabla c)-nm,\quad
x\in \Omega, t>0,\\
    c_t+u\cdot\nabla c=\Delta c-c+m,\quad
x\in \Omega, t>0,\\
 m_t+u\cdot\nabla m=\Delta m-nm,\quad
x\in \Omega, t>0,\\
u_t+\kappa(u \cdot \nabla)u+\nabla P=\Delta u+(n+m)\nabla \phi,\quad
x\in \Omega, t>0,\\
\nabla\cdot u=0,\quad
x\in \Omega, t>0\\
 \end{array}\right.\eqno(*)
 $$
 under no-flux boundary conditions in a bounded domain $\Omega\subset \mathbb{R}^3$ with smooth boundary, where $\phi\in W^{2,\infty} (\Omega)$.
   Here
    $S(x,n,c)$ denotes the rotational effect which
satisfies $|S(x,n,c)|\leq S_0 (c)(1 + n)^{-\alpha}$  with $\alpha\geq0$ and
some nonnegative nondecreasing
function $S_0$.  
Based on a new  weighted estimate and some carefully analysis, if $\alpha>0$, then for any $\kappa\in\mathbb{R},$ system $(*)$ possesses a global weak  solution  for which there exists
$T > 0$ such that $(n,c,m , u)$ is smooth in $\Omega\times( T ,\infty)$.
Furthermore, for any $p>1,$ this
solution is uniformly bounded in with respect to the norm
in $L^p(\Omega)\times L^\infty(\Omega) \times L^\infty(\Omega)\times  L^2 (\Omega; \mathbb{R}^3)$.
Building on this boundedness
property and some other analysis, it can finally even be proved that in the large time limit, any such solution approaches
the spatially homogeneous equilibrium $(\hat{n},\hat{m},\hat{m},0)$ in an appropriate sense, where $\hat{n}=\frac{1}{|\Omega|}\{\int_{\Omega}n_0-\int_{\Omega}m_0\}_{+}$ and $\hat{m}=\frac{1}{|\Omega|}\{\int_{\Omega}m_0 -\int_{\Omega}n_0\}_{+}$.
\end{abstract}

\vspace{0.3cm}
\noindent {\bf\em Key words:}~
Navier-Stokes system; Keller-Segel model; 
Global existence; Large time behavior; 
Tensor-valued
sensitivity

\noindent {\bf\em 2010 Mathematics Subject Classification}:~ 35K55, 35Q92, 35Q35, 92C17

\newpage
\section{Introduction}

This work is concerned with  the following chemotaxis-fluid system modelling coral fertilization:
\begin{equation}
 \left\{\begin{array}{ll}
   n_t+u\cdot\nabla n=\Delta n-\nabla\cdot(nS(x,n,c)\nabla c)-nm,\quad
x\in \Omega, t>0,\\
    c_t+u\cdot\nabla c=\Delta c-c+m,\quad
x\in \Omega, t>0,\\
 m_t+u\cdot\nabla m=\Delta m-nm,\quad
x\in \Omega, t>0,\\
u_t+\kappa(u \cdot \nabla)u+\nabla P=\Delta u+(n+m)\nabla \phi,\quad
x\in \Omega, t>0,\\
\nabla\cdot u=0,\quad
x\in \Omega, t>0\\
 \disp{(\nabla n-nS(x, n, c))\cdot\nu=\nabla c\cdot\nu=\nabla m\cdot\nu=0,u=0,}\quad
x\in \partial\Omega, t>0,\\
\disp{n(x,0)=n_0(x),c(x,0)=c_0(x),m(x,0)=m_0(x),u(x,0)=u_0(x),}\quad
x\in \Omega,\\
 \end{array}\right.\label{334451.1fghyuisda}
\end{equation}
where  $\Omega\subset \mathbb{R}^3$ is a bounded domain with smooth boundary $\partial\Omega$
 and
the matrix-valued function $S(x,n,c)$ indicates the rotational effect which
satisfies
\begin{equation}\label{x1.73142vghf48rtgyhu}
S\in C^2(\bar{\Omega}\times[0,\infty)^2;\mathbb{R}^{3\times3})
 \end{equation}
 and
 \begin{equation}\label{x1.73142vghf48gg}|S(x,n,c)|\leq (1+n)^{-\alpha}S(c)~~ \mbox{for all}~~ (x,n,c)\in\Omega\times[0,\infty)^2~~\mbox{with}~~ S(c)~~
\mbox{nondecreasing on}~~ [0,\infty)
 \end{equation}
 and
 $\alpha\geq0.$
 As described in \cite{Kiselevdd793,Kiselevsssdd793,EspejoEspejojainidd793,EspejojjEspejojainidd793}, problems of this type arise in the modeling of
 the phenomenon of coral broadcast spawning, where the sperm $n$ chemotactically moves toward the higher concentration of the chemical $c$
released by the egg $m$, while the egg $m$ is merely affected by random diffusion, fluid transport
and degradation upon contact with the sperm (see also \cite{Lidfff00}).
Here $\kappa,u,P$ and $\phi$ 
denote,
 respectively, the
strength of nonlinear fluid convection, the velocity field, the associated pressure of the fluid and the potential of the
 gravitational field.
We further
note that the sensitivity tensor $S(x,n,c)$ may take values that are matrices possibly containing
nontrivial off-diagonal entries, which reflects that the chemotactic migration may not necessarily
be oriented along the gradient of the chemical signal, but may rather involve rotational flux
components (see \cite{Xusddeddff345511215,XueXuejjainidd793} for the detailed model derivation).

Chemotaxis is the directed movement of the cells  as a response to gradients of the
concentration of the chemical signal substance in their environment, where the chemical signal substance
may be produced  or consumed by cells themselves (see e.g. Hillen and Painter \cite{Hillen} and \cite{Bellomo1216}).
The classical chemotaxis system was introduced in 1970 by Keller and
Segel (\cite{Keller2710}), which is called Keller-Segel system. Since then, the Keller-Segel model has attracted more and more attention, and also has been constantly modified by
various authors to characterize more biological phenomena (see Cie\'{s}lak and  Stinner \cite{Cie791}, Cie\'{s}lak and  Winkler \cite{Cie72},
Ishida et al. \cite{Ishida}, Painter and Hillen \cite{Painter55677},   Hillen and Painter \cite{Hillen},
Wang et al. \cite{Wang76,Wang79}, Winkler et al. \cite{Cie72,Horstmann791,Winkler72,Winkler792,Winkler21215,Winkler2233444ssdff51215,Winkler793},
Zheng \cite{Zheng00}
and references therein for detailed
results). For  related works in this direction, we mention that a corresponding quasilinear version ( see e.g. \cite{Tao794,Winkler72,Zhengssdefr23,Zheng00,Zhengsddfffsdddssddddkkllssssssssdefr23}),
the logistic damping or the signal consumed by the cells, has been deeply investigated by Cie\'{s}lak and Stinner \cite{Cie791,Cie201712791},
Tao and Winkler \cite{Tao794,Winkler79,Winkler72}, and Zheng et al. \cite{Zheng00,Zhengssssdefr23,Zhengssdefr23,Zhengssssssdefr23}.

In various situations, however, the interaction of chemotactic movement of the gametes and the surrounding fluid is not negligible (see Tuval et al. \cite{Tuval1215}). In 2005,
Tuval et al. (\cite{Tuval1215}) proposed
   the following prototypical {\bf signal consuming} model (with tensor-valued
sensitivity):
\begin{equation}
 \left\{\begin{array}{ll}
   n_t+u\cdot\nabla n=\Delta n-\nabla\cdot( nS(x,n,c)\nabla c),\quad
x\in \Omega, t>0,\\
    c_t+u\cdot\nabla c=\Delta c-nf(c),\quad
x\in \Omega, t>0,\\
u_t+\kappa (u\cdot\nabla)u+\nabla P=\Delta u+n\nabla \phi,\quad
x\in \Omega, t>0,\\
\nabla\cdot u=0,\quad
x\in \Omega, t>0,\\
 \end{array}\right.\label{1.1hhjffggjddssggtyy}
\end{equation}
where $f(c)$ 
denotes the {\bf consumption} rate of the oxygen by the cells.
Here  $S$ is a tensor-valued function or a scalar function which is the same as \dref{1.1fghyuisda}.
 The model \dref{1.1hhjffggjddssggtyy} describes the interaction of oxygen-taxis bacteria with a surrounding incompressible viscous fluid
in which the oxygen is dissolved.
  After
this, assume that the chemotactic sensitivity $S(x, n, c):=S(c)$ is  a scalar function. 
This kind of models have been studied by many researchers  by making use of energy-type functionals (see e.g.
Chae et. al. \cite{Chaexdd12176},
Duan et. al. \cite{Duan12186},
Liu and Lorz  \cite{Liu1215,Lorz1215},
 Tao and Winkler   \cite{Tao41215,Winkler31215,Winkler61215,Winkler51215}, Zhang and Zheng \cite{Zhang12176} and references therein).
 In fact, if $S(x, n, c):=S(c)$, Winkler (\cite{Winkler31215} and  \cite{Winkler61215}) proved  that in two-dimensional space \dref{1.1hhjffggjddssggtyy}
admits a unique global classical solution which stabilizes to the spatially homogeneous equilibrium $(\frac{1}{|\Omega|}\int_{\Omega}n_0 , 0, 0)$ in the large time limit. While in three-dimensional
setting, he (see \cite{Winkler51215}) also showed   that there exists a globally defined weak solution to \dref{1.1hhjffggjddssggtyy}.

Experiment  \cite{Xusddeddff345511215} show that the chemotactic movement could be not directly along the signal
gradient, but with a rotation,
so that, the the corresponding chemotaxis-fluid system with tensor-valued
sensitivity  
loses entropy-like functional structure, which gives rise to considerable mathematical difficulties during the process of
analysis.  The global solvability of corresponding initial value problem for chemotaxis-fluid system with tensor-valued
sensitivity have been deeply investigated by Cao, Lankeit \cite{CaoCaoLiitffg11},
Ishida \cite{Ishida1215},  Wang et al. \cite{Wang11215,Wang21215}  and Winkler \cite{Winkler11215}. 

If $-nf(c)$ in
the $c$-equation is replaced by $-c+n$, and the $u$-equation is a
(Navier-)Stokes equation, then \dref{1.1hhjffggjddssggtyy} becomes the following chemotaxis-(Navier-)Stokes system in the context of signal {\bf produced} other than consumed  by cells
(see \cite{Winkler444ssdff51215,Wddffang11215,Wang21215,Wangss21215,Zhenddddgssddsddfff00,Kegssddsddfff00})
\begin{equation}
 \left\{\begin{array}{ll}
   n_t+u\cdot\nabla n=\Delta n-\nabla\cdot( n S(x,n,c)\cdot\nabla c),\quad
x\in \Omega, t>0,\\
    c_t+u\cdot\nabla c=\Delta c-c+n,\quad
x\in \Omega, t>0,\\
u_t+\kappa (u\cdot\nabla)u+\nabla P=\Delta u+n\nabla \phi,\quad
x\in \Omega, t>0,\\
\nabla\cdot u=0,\quad
x\in \Omega, t>0. \\
 \end{array}\right.\label{1sdfdffgggggsxdcfffggvgb.1}
\end{equation}
Due to the presence of the tensor-valued sensitivity $S(x,n,c)$ as well as the strongly nonlinear term $(u\cdot\nabla)u$ and lower regularity for $n$,
 the analysis of \dref{1sdfdffgggggsxdcfffggvgb.1} with tensor-valued
sensitivity began to flourish
 (see \cite{Winkler444ssdff51215,Wddffang11215,Wang21215,Wangss21215,Zhenddddgssddsddfff00,Kegssddsddfff00}).
In fact,  the global boundedness of classical
solutions to the Stokes-version ($\kappa=0$ in the third equation of system \dref{1sdfdffgggggsxdcfffggvgb.1}) of system \dref{1sdfdffgggggsxdcfffggvgb.1} with the tensor-valued $S$
satisfying $|S(x,n,c)|\leq C_S(1+n)^{-\alpha}$
with some $C_S > 0$ and $\alpha > 0$ which implies that the
effect of chemotaxis is weakened when the cell density increases has been proved
for any $\alpha > 0$ in two dimensions (see Wang and Xiang \cite{Wang21215}) and for $\alpha >\frac{1}{2}$
in three dimensions (see Wang and Xiang \cite{Wangss21215}). Then Wang-Winkler-Xiang (\cite{Wddffang11215}) further
shows that when $\alpha > 0$ and $\Omega\subset R^2$ is a bounded {\bf convex} domain with smooth boundary,
system \dref{1sdfdffgggggsxdcfffggvgb.1} possesses a global-in-time classical and bounded solution.
Recently,  Zheng (\cite{Zhenddsdddddgssddsddfff00})   extends   the results of   \cite{Wddffang11215} to the  general bounded domain by  some new
entropy-energy estimates. More recently,
by using new entropy-energy estimates,
Zheng and Ke  (\cite{Kegssddsddfff00}) presented the existence of global and weak solutions for the system \dref{1sdfdffgggggsxdcfffggvgb.1} under the assumption that $S$ satisfies
\dref{x1.73142vghf48rtgyhu} and $$|S(x,n,c)|\leq (1+n)^{-\alpha}~~ \mbox{for all}~~ (x,n,c)\in\Omega\times[0,\infty)^2$$ with $\alpha > \frac{1}{3}$,
which, in light of the known results for the fluid-free system (see Horstmann and Winkler \cite{Horstmann791} and
Bellomo et al. \cite{Bellomo1216} ), is an optimal restriction on $\alpha$.
  For more works
about the chemotaxis-(Navier-)Stokes models \dref{1sdfdffgggggsxdcfffggvgb.1}, we mention that a corresponding quasilinear version
or the logistic damping has been deeply investigated by  Zheng
\cite{Zhengsdsd6}, Wang and Liu \cite{Liuddfffff}, Tao and Winkler \cite{Tao41215},
 Wang et. al.
\cite{Wang21215,Wangss21215}.

Other variants of the model \dref{1sdfdffgggggsxdcfffggvgb.1} has been used in the mathematical study of coral broad-
cast spawning.  In fact, Kiselev and Ryzhik (\cite{Kiselevdd793} and \cite{Kiselevsssdd793}) introduced
   the following Keller-Segel type system to model coral fertilization:
   \begin{equation}
 \left\{\begin{array}{ll}
 \rho_t+u\cdot\nabla\rho=\Delta \rho-\chi\nabla\cdot( \rho\nabla c)-\varepsilon\rho^q,
 \quad
\\
 \disp{ 0=\Delta c +\rho,}\quad\\
 \end{array}\right.\label{722dff344101.dddddgghggghhff2ffggffggx16677}
\end{equation}
where $\rho,u,\chi$ and $-\varepsilon \rho^q$, respectively,  denote  the density of egg (sperm) gametes, the smooth divergence free sea fluid
velocity as well as 
 the positive chemotactic sensitivity constant
 and the
reaction (fertilization) phenomenon.  In fact,  under suitable conditions,  the global-in-time existence of the solution to
\dref{722dff344101.dddddgghggghhff2ffggffggx16677} is presented by  Kiselev and Ryzhik in \cite{Kiselevdd793}. Moreover, they proved that the total
mass $m_0(t) =\int_{R^2}\rho(x,t)dx$ approaches a positive constant whose lower bound is $C(\chi,\rho_0 ,u)$
as $t\rightarrow\infty$ when $q>2$.  In the critical case of $N = q = 2$,
%
 a corresponding weaker
but yet relevant effect within finite time intervals is detected (see \cite{Kiselevsssdd793}).

In order  to analyze a further refinement of the model \dref{722dff344101.dddddgghggghhff2ffggffggx16677} which
explicitly distinguishes between sperms and eggs,
Espejo and Winkler (\cite{EspejojjEspejojainidd793}) have recently considered the Navier-Stokes version of \dref{1.1fghyuisda}:
\begin{equation}
 \left\{\begin{array}{ll}
   n_t+u\cdot\nabla n=\Delta n-\nabla\cdot(n\nabla c)-nm,\quad
x\in \Omega, t>0,\\
    c_t+u\cdot\nabla c=\Delta c-c+m,\quad
x\in \Omega, t>0,\\
 m_t+u\cdot\nabla m=\Delta m-nm,\quad
x\in \Omega, t>0,\\
u_t+\kappa(u \cdot \nabla)u+\nabla P=\Delta u+(n+m)\nabla \phi,\quad
x\in \Omega, t>0,\\
\nabla\cdot u=0,\quad
x\in \Omega, t>0\\
 \end{array}\right.\label{1.dddddffdffg1}
\end{equation}
in a bounded domain $\Omega\subset  \mathbb{R}^2$. If $N=2,$
Espejo and Winkler (\cite{EspejojjEspejojainidd793}) established the global existence of classical solutions to the associated initial-boundary
value problem \dref{1.dddddffdffg1}, which tend towards a spatially homogeneous equilibrium in the large time limit. 
Furthermore, if $S(x, n, c)$ satisfying \dref{x1.73142vghf48rtgyhu} and \dref{x1.73142vghf48gg} with $\alpha \geq \frac{1}{3}$ or $\alpha \geq0$ and the initial data satisfy a certain smallness condition, Li-Pang-Wang (\cite{Lidfff00}) proved the same result
for the
three-dimensional Stokes ($\kappa=0$ in the fourth equation of \dref{334451.1fghyuisda}) version of system \dref{334451.1fghyuisda}. 
From  \cite{Lidfff00}, we know that
 $\alpha\geq\frac{1}{3}$
is enough to warrant  the boundedness  of solutions to system \dref{1.1fghyuisda} for any large
data (see Li-Pang-Wang \cite{Lidfff00}). We should point that the core step of \cite{Lidfff00} is to establish the estimates of the
functional
$$\|n(\cdot,t) \|_{L^2(\Omega)}^2+\|\nabla c(\cdot,t) \|_{L^2(\Omega)}^2+\|u(\cdot,t) \|_{W^{1,2}(\Omega)}^2,$$
which strongly relies on $\alpha\geq\frac{1}{3} $ and $\kappa=0$ (see the proof of Lemma 3.1 of \cite{Lidfff00}). To the best
of our knowledge, it is yet unclear whether for $\alpha<\frac{1}{3}$ or $\kappa\neq0$, the solutions of \dref{1.1fghyuisda}  exist (or even bounded) or not.
Recently, relying on  the functional
$$
\left\{
\begin{array}{rl}
\disp{\int_{\Omega} n_{\varepsilon}^{4\alpha+\frac{2}{3}}+\int_{\Omega}   |\nabla c_{\varepsilon}|^2+\int_{\Omega}  | {u_{\varepsilon}}|^2~~~\mbox{if}~~\alpha\neq\frac{1}{12},}\\
\disp{\int_{\Omega} n_{\varepsilon}\ln n_{\varepsilon}+\int_{\Omega}   |\nabla c_{\varepsilon}|^2+\int_{\Omega}  | {u_{\varepsilon}}|^2~~~~\mbox{if}~~\alpha=\frac{1}{12},}
\end{array}
\right.
$$
we  (\cite{Zhenssssssdffssdddddddgssddsddfff00}) presented the existence of global {\bf  weak} solutions for the system \dref{334451.1fghyuisda} under the assumption that $S$ satisfies
\dref{x1.73142vghf48rtgyhu} and \dref{x1.73142vghf48gg} with $\alpha > 0$.
However, the existence of  global (stronger than the result of \cite{Zhenssssssdffssdddddddgssddsddfff00}) {\bf  weak} solutions is still open.
In this paper, by using a new  weighted estimate (see Lemma \ref{lemmaghjffggssddgghhmk4563025xxhjklojjkkk}),
we try to obtain  enough regularity and compactness properties (see Lemmas \ref{lemmaghjffggssddgghhmk4563025xxhjklojjkkk},
\ref{ssdddlemmaghjffggssddgghhmk4563025xxhjklojjkkk}, and \ref{lemmddaghjsffggggsddgghhmk4563025xxhjklojjkkk}), then show that system \dref{334451.1fghyuisda} possesses a globally defined  {\bf weak} solution,
which improves the result of \cite{Zhenssssssdffssdddddddgssddsddfff00}.
Therefore, collecting the above results, it is meaningful to analyze the following question:

Whether or not the assumption of $\alpha$ is optimal? Can we further relax the
restriction on $\alpha$, say, to $\alpha > 0 $? Moreover,  can we consider  the regularity of global solution for system \dref{334451.1fghyuisda}?

Inspired by the above works, the  first result of paper is to
%
 prove the existence of global
(and bounded) solution for any $\alpha > 0.$ Moreover, we also show that the corresponding
solutions converge to a spatially homogeneous equilibrium exponentially as $t \rightarrow\infty$ as well.

Throughout this paper,
we assume that
\begin{equation}
\phi\in W^{2,\infty}(\Omega)
\label{x1.73142vghf481}
\end{equation}
 and the initial data
$(n_0, c_0, u_0)$ fulfills
\begin{equation}\label{ccvvx1.731426677gg}
\left\{
\begin{array}{ll}
\displaystyle{n_0\in C(\bar{\Omega})~~~~ \mbox{with}~~ n_0\geq0 ~~\mbox{and}~~n_0\not\equiv0},\\
\displaystyle{c_0\in W^{1,\infty}(\Omega)~~\mbox{with}~~c_0\geq0~~\mbox{in}~~\bar{\Omega},}\\
\displaystyle{m_0\in C(\bar{\Omega})~~~~ \mbox{with}~~ m_0\geq0 ~~\mbox{and}~~m_0\not\equiv0},\\
\displaystyle{u_0\in D(A^\gamma_{r})~~\mbox{for~~ some}~~\gamma\in ( \frac{3}{4}, 1)~~\mbox{and any}~~ {r}\in (1,\infty),}\\
\end{array}
\right.
\end{equation}
where $A_{r}$ denotes the Stokes operator with domain $D(A_{r}) := W^{2,{r}}(\Omega)\cap  W^{1,{r}}_0(\Omega)
\cap L^{r}_{\sigma}(\Omega)$,
and
$L^{r}_{\sigma}(\Omega) := \{\varphi\in  L^{r}(\Omega)|\nabla\cdot\varphi = 0\}$ for ${r}\in(1,\infty)$
 (\cite{Sohr}).

In the context of these assumptions, the first of our main results  can be read as follows.
%
\begin{theorem}\label{theorem3}
Let $\Omega \subset \mathbb{R}^3$ be a bounded  domain with smooth boundary.
Suppose that the assumptions \dref{x1.73142vghf48rtgyhu}--\dref{x1.73142vghf48gg} and
 \dref{x1.73142vghf481}--\dref{ccvvx1.731426677gg}
 hold.
 If 
\begin{equation}\label{x1.73142vghf48}\alpha>0,
\end{equation}
then for any $\kappa\in \mathbb{R}$, there exist
\begin{equation}
 \left\{\begin{array}{ll}
 n\in L^{\infty}_{loc}([0,\infty),L^p(\Omega))\cap L^{2}_{loc}([0,\infty),W^{1,2}(\Omega))~~~\mbox{for any}~~p>1,\\
  c\in  L^{\infty}(\Omega\times(0,\infty))\cap L^{2}_{loc}([0,\infty),W^{2,2}(\Omega))\cap L^{4}_{loc}([0,\infty),W^{1,4}(\Omega)),\\
   m\in  L^{\infty}(\Omega\times(0,\infty))\cap L^{2}_{loc}([0,\infty),W^{2,2}(\Omega))\cap L^{4}_{loc}([0,\infty),W^{1,4}(\Omega)),\\
  u\in  L^{2}_{loc}([0,\infty),W^{1,2}_{0,\sigma}(\Omega))\cap L^{\infty}_{loc}([0,\infty),L^{2}(\Omega)),\\
   \end{array}\right.\label{1.1ddfghyuisdsdddda}
\end{equation}
such that $(n,c,m,u)$ is a global weak solution of the problem \dref{1.1fghyuisda} in the natural sense as specified
in \cite{Zhenssssssdffssdddddddgssddsddfff00}.
Moreover, if $\kappa=0$,
 the problem \dref{1.1fghyuisda} possesses at least
one global  classical solution $(n, c,m, u, P)$. Moreover, this solution is bounded in
$\Omega\times(0,\infty)$ in the sense that
\begin{equation}
\|n(\cdot, t)\|_{L^\infty(\Omega)}+\|c(\cdot, t)\|_{W^{1,\infty}(\Omega)}+\|m(\cdot, t)\|_{W^{1,\infty}(\Omega)}+\| u(\cdot, t)\|_{L^{\infty}(\Omega)}\leq C~~ \mbox{for all}~~ t>0.
\label{1.163072xggttyyu}
\end{equation}
\end{theorem}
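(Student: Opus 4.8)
The plan is to obtain $(n,c,m,u)$ as a limit of solutions of a regularized problem, to derive $\varepsilon$-independent a priori bounds built around the new weighted estimate, and to pass to the limit by compactness. For $\varepsilon\in(0,1)$ I replace $S$ by $S_\varepsilon(x,n,c):=\eta_\varepsilon(n)\,S(x,n,c)$ with a smooth cut-off $\eta_\varepsilon$ supported in $[0,\varepsilon^{-1}]$ (so that the structural bound $|S_\varepsilon|\le(1+n)^{-\alpha}S(c)$ persists while the tensor becomes bounded for fixed $\varepsilon$), and I replace the fluid convection $\kappa(u\cdot\nabla)u$ by $\kappa\big((1+\varepsilon A)^{-1}u\cdot\nabla\big)u$, the Yosida regularization of the Stokes operator. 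Standard parabolic fixed-point theory then yields global smooth solutions $(n_\varepsilon,c_\varepsilon,m_\varepsilon,u_\varepsilon)$ with $n_\varepsilon,c_\varepsilon,m_\varepsilon\ge0$, on which all subsequent estimates are performed.

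\textbf{A priori estimates.} The elementary layer is immediate: the maximum principle applied to the $m$-equation, whose only reaction term $-n_\varepsilon m_\varepsilon$ is nonpositive, gives $0\le m_\varepsilon\le\|m_0\|_{L^\infty(\Omega)}$, and then the $c$-equation gives $0\le c_\varepsilon\le\max\{\|c_0\|_{L^\infty(\Omega)},\|m_0\|_{L^\infty(\Omega)}\}$; integrating the $n$- and $m$-equations yields $\int_\Omega n_\varepsilon\le\int_\Omega n_0$ and $\int_\Omega m_\varepsilon\le\int_\Omega m_0$; and testing the $u$-equation by $u_\varepsilon$---where the regularized convection disappears because $\nabla\cdot u_\varepsilon=0$ and $u_\varepsilon|_{\partial\Omega}=0$, for \emph{every} $\kappa$---reduces control of $\|u_\varepsilon\|_{L^2}^2$ and $\int_0^t\|\nabla u_\varepsilon\|_{L^2}^2\,ds$ to an $L^{6/5}$-bound on $n_\varepsilon+m_\varepsilon$. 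The decisive step is the new weighted estimate of Lemma~\ref{lemmaghjffggssddgghhmk4563025xxhjklojjkkk}, whose purpose is to absorb the chemotactic cross term: in the natural $L^p$-type testing of the $n$-equation this term reads $\int_\Omega n_\varepsilon^{p-1}\,S_\varepsilon\nabla c_\varepsilon\cdot\nabla n_\varepsilon$, and since $|S_\varepsilon|\le S(\|c_\varepsilon\|_{L^\infty})(1+n_\varepsilon)^{-\alpha}$ with $\|c_\varepsilon\|_{L^\infty}$ already bounded, \emph{any} positive $\alpha$ genuinely lowers its growth exponent; the weight is chosen so that, after Young's inequality and a parabolic regularity estimate for $c_\varepsilon$, the resulting coupled differential inequality closes for arbitrarily small $\alpha>0$. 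Together with Lemmas~\ref{ssdddlemmaghjffggssddgghhmk4563025xxhjklojjkkk} and \ref{lemmddaghjsffggggsddgghhmk4563025xxhjklojjkkk}, this should deliver, uniformly in $\varepsilon$, exactly the bounds collected in \dref{1.1ddfghyuisdsdddda}: $n_\varepsilon$ in $L^\infty_{loc}([0,\infty);L^p(\Omega))$ for every $p>1$ and in $L^2_{loc}([0,\infty);W^{1,2}(\Omega))$, $c_\varepsilon$ and $m_\varepsilon$ in $L^2_{loc}(W^{2,2})\cap L^4_{loc}(W^{1,4})$, and $u_\varepsilon$ in $L^\infty_{loc}(L^2)\cap L^2_{loc}(W^{1,2}_{0,\sigma})$.

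\textbf{Compactness and passage to the limit.} Reading the time derivatives off the five equations and combining with the above, the Aubin-Lions-Simon lemma provides, along a subsequence, $n_\varepsilon\to n$ in $L^2_{loc}(\overline\Omega\times[0,\infty))$ and a.e., $c_\varepsilon\to c$ and $m_\varepsilon\to m$ with $\nabla c_\varepsilon\to\nabla c$ and $\nabla m_\varepsilon\to\nabla m$ strongly in $L^2_{loc}$, and $u_\varepsilon\to u$ strongly in $L^2_{loc}(\overline\Omega\times[0,\infty))$ together with $u_\varepsilon\rightharpoonup u$ in $L^2_{loc}(W^{1,2})$. The a.e.\ convergence of $n_\varepsilon$, the fact that $\eta_\varepsilon\to1$, the pointwise bound on $S_\varepsilon$ and the strong convergence of $\nabla c_\varepsilon$ let one pass to the limit in the chemotactic flux, while the strong $L^2$-convergence combined with the weak $W^{1,2}$-convergence of $u_\varepsilon$ is precisely what is needed to pass to the limit in $\kappa(u\cdot\nabla)u$---this is why the argument works for \emph{all} $\kappa\in\mathbb{R}$. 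Checking the remaining, essentially linear, terms of the weak formulation in the sense of \cite{Zhenssssssdffssdddddddgssddsddfff00} then establishes the global weak solution with the regularity \dref{1.1ddfghyuisdsdddda}.

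\textbf{The case $\kappa=0$ and the main obstacle.} When $\kappa=0$ the fourth equation is a Stokes system, for which maximal Sobolev regularity, the embedding $D(A^\vartheta)\hookrightarrow L^\infty(\Omega)$ for $\vartheta>\frac{3}{4}$, and the smoothing of the Neumann heat semigroup drive a bootstrap starting from the now time-uniform $L^p$-bounds on $n_\varepsilon$: one first gets $\|u_\varepsilon\|_{L^\infty}$, $\|\nabla c_\varepsilon\|_{L^\infty}$ and $\|\nabla m_\varepsilon\|_{L^\infty}$ bounded uniformly in $t$ and $\varepsilon$, and then a Moser-type iteration in the $n$-equation upgrades this to a uniform bound for $\|n_\varepsilon\|_{L^\infty}$, which yields \dref{1.163072xggttyyu}. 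Local well-posedness of the original (non-regularized) Stokes problem together with this a priori bound---or parabolic Schauder theory applied to the limit constructed above---then gives the global bounded classical solution. The genuine obstacle throughout is the weighted estimate itself: one must make the coupled differential inequalities for $n$, $c$ and $u$ close for \emph{every} $\alpha>0$ while, in the Navier-Stokes case, only $L^2$-type information on $u$ is available to feed back into the $n$- and $c$-estimates, and every constant must remain independent of $\varepsilon$ (and, when $\kappa=0$, of $t$); once this is achieved, the compactness and regularity steps are comparatively routine.
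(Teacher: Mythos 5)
Your proposal follows essentially the same route as the paper: the same Yosida/cut-off regularization, the same weighted estimate $\frac1p\int_\Omega n_\varepsilon^p g(c_\varepsilon)$ with $g(s)=e^{\beta s^2}$ as the decisive a-priori bound (valid for every $\alpha>0$ because the "bad'' coefficients carry factors $n_\varepsilon^{-\alpha}$, $n_\varepsilon^{-2\alpha}$), followed by Aubin–Lions compactness for the weak solution and a Stokes/heat semigroup bootstrap for the bounded classical solution when $\kappa=0$. The only cosmetic deviations are that the paper also inserts a spatial cut-off $\rho_\varepsilon(x)$ in $S_\varepsilon$ (so that the approximate boundary condition reduces to homogeneous Neumann) and obtains the final $L^\infty$-bound on $n$ via a Duhamel/variation-of-constants fixed-point estimate rather than a Moser iteration, but these are interchangeable implementation details and do not constitute a different argument.
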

\begin{remark}
(i)  Theorem \ref{theorem3} indicates that $\alpha > 0$ and $\kappa=0$ is enough to ensure the global existence and uniform boundedness of solution of the three-dimensional Keller-Segel-Stokes system \dref{334451.1fghyuisda}, which
improves the result obtained in \cite{Lidfff00}, therein $\alpha\geq
\frac{1}{3}$
is required.


(ii)This result also   improves the result of our recent paper (\cite{Zhenssssssdffssdddddddgssddsddfff00}), where the more {\bf weak} solution than our result  was obtained by using different method (see \cite{Zhenssssssdffssdddddddgssddsddfff00}).

\end{remark}

We can secondly prove that in fact any such {\bf weak} solution $(n,c,m,u)$ becomes
smooth ultimately, and that it approaches the unique spatially homogeneous steady
state $(\hat{n},\hat{m},\hat{m},0)$, where $\hat{n}=\frac{1}{|\Omega|}\{\int_{\Omega}n_0-\int_{\Omega}m_0\}_{+}$ and $\hat{m}=\frac{1}{|\Omega|}\{\int_{\Omega}m_0 -\int_{\Omega}n_0\}_{+}$.



\begin{theorem}\label{thaaaeorem3}
 Under the assumptions of Theorem \ref{theorem3}, then there are $T > 0$ and $\iota\in (0,1)$
such that the solution $(n,c,m,u)$ given by Theorem \ref{theorem3} satisfies
$$n,c,m\in C^{2+\iota,1+\frac{\iota}{2}}(\bar{\Omega}\times[T,\infty)),~~~u\in C^{2+\iota,1+\frac{\iota}{2}}(\bar{\Omega}\times[T,\infty);\mathbb{R}^3).$$
Moreover,
$$n(\cdot,t)\rightarrow \hat{n},~c(\cdot,t)\rightarrow \hat{m}
~~\mbox{as well as } ~~~m(\cdot,t)\rightarrow \hat{m}~~\mbox{and}~~~u(\cdot,t)\rightarrow0
~~\mbox{in}~~~L^\infty(\Omega),$$
where $\hat{n}=\frac{1}{|\Omega|}\{\int_{\Omega}n_0-\int_{\Omega}m_0\}_{+}$ and $\hat{m}=\frac{1}{|\Omega|}\{\int_{\Omega}m_0 -\int_{\Omega}n_0\}_{+}$.
\end{theorem}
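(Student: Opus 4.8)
## Proof strategy for Theorem \ref{thaaaeorem3}

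The plan is to combine the uniform-in-time boundedness information extracted from Theorem \ref{theorem3} with a standard parabolic-regularity bootstrap to produce eventual smoothness, and then to run a Lyapunov-type argument to pin down the limit. First I would observe that the mass dynamics force the long-time fate of the system: integrating the $n$- and $m$-equations and using $\int_\Omega nm$ as the coupling term, one gets that $\int_\Omega n(\cdot,t)$ and $\int_\Omega m(\cdot,t)$ are nonincreasing, their difference $\int_\Omega n - \int_\Omega m$ is conserved, and $\int_0^\infty\!\int_\Omega nm < \infty$. Hence one of the two masses tends to $\{\int_\Omega n_0 - \int_\Omega m_0\}_+$ (resp.\ $\{\int_\Omega m_0 - \int_\Omega n_0\}_+$) times $|\Omega|^{-1}$ in an averaged sense, and the product $nm$ decays. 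The equilibrium $(\hat n,\hat m,\hat m,0)$ is thus identified before any regularity is invoked; the content of the theorem is to upgrade this to $L^\infty$ convergence together with $C^{2+\iota,1+\iota/2}$ eventual regularity.

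For the eventual smoothness, I would argue as follows. Theorem \ref{theorem3} already gives $c,m\in L^\infty(\Omega\times(0,\infty))$ and $n\in L^\infty_{loc}([0,\infty);L^p(\Omega))$ for every $p>1$; the decisive extra step is to promote the $L^p$-bound on $n$ to an $L^\infty$-bound that is uniform for large times. Here one uses the decay of $\int_\Omega nm$ and the fact that the chemotactic drift $nS(x,n,c)\nabla c$ is controlled — because of $|S|\le(1+n)^{-\alpha}S(c)$ and the boundedness of $c$ — to run a Moser–Alikakos iteration on the $n$-equation, exploiting that $\int_0^\infty\!\int_\Omega|\nabla c|^4<\infty$ (from the stated regularity $c\in L^4_{loc}([0,\infty);W^{1,4}(\Omega))$, made global via the energy inequality). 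Once $\|n(\cdot,t)\|_{L^\infty(\Omega)}$ is bounded uniformly for $t\ge 1$, the forcing $(n+m)\nabla\phi$ in the fluid equation lies in $L^\infty((1,\infty);L^q(\Omega))$ for all $q$, so maximal Sobolev regularity for the (Navier–)Stokes system gives $u\in L^\infty((2,\infty);W^{1,q}(\Omega))$; feeding this back into the $c$- and $m$-equations yields Hölder bounds via the standard semigroup/Schauder theory, and then a final parabolic Schauder bootstrap on all four equations produces $n,c,m,u\in C^{2+\iota,1+\iota/2}(\bar\Omega\times[T,\infty))$ for suitable $T>0$ and $\iota\in(0,1)$. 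I would cite the analogous reasoning in Espejo–Winkler and in Winkler's work on the consumption model, where exactly this "eventual regularization from weak solutions" scheme is carried out.

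With eventual smoothness in hand, the convergence statement follows from a quasi-energy inequality. I would test the $m$-equation against $m-\hat m$ (or use $\int_\Omega(m-\hat m)^2$ directly) and the $c$-equation against $c-\hat m$, controlling the chemotaxis-free structure of these two equations; together with the already-known $\int_0^\infty\!\int_\Omega nm<\infty$ and the mass identities, this gives $\int_\Omega(c-\hat m)^2\to0$ and $\int_\Omega(m-\hat m)^2\to0$, and then $\int_\Omega(n-\hat n)^2\to0$ follows from the mass balance and $\int_\Omega nm\to0$. For $u$, testing the fluid equation against $u$ and using Poincaré (recall $u=0$ on $\partial\Omega$) together with the decay of the forcing $(n+m)\nabla\phi$ toward $(\hat n+\hat m)\nabla\phi$ — whose $L^2$-projection onto solenoidal fields is what matters — yields $\int_\Omega|u|^2\to0$. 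Finally, the $L^2$-decay is boosted to $L^\infty$-decay by interpolation against the uniform $C^{2+\iota}$-bounds obtained above (a standard Gagliardo–Nirenberg argument), and, if one wants the exponential rate advertised in the introduction, by linearizing around the equilibrium on $[T,\infty)$ where $nm$ is already exponentially small. The main obstacle, I expect, is the uniform-in-time $L^\infty$-bound on $n$: the Moser iteration must be performed with constants that do not blow up as $t\to\infty$, which requires genuinely using the decay of $nm$ (not merely its integrability) and the weighted estimate of Lemma \ref{lemmaghjffggssddgghhmk4563025xxhjklojjkkk} to absorb the chemotactic term for small $\alpha>0$; everything downstream is then routine parabolic bootstrapping.
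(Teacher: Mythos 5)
Your proposal correctly identifies the mass-balance identities and the role of $\int_0^\infty\!\int_\Omega nm<\infty$ in pinning down the target equilibrium, and the closing Ehrling-type interpolation from $L^2$ to $L^\infty$ matches what the paper does. However, the route to eventual smoothness has a genuine gap: you treat the fluid regularity as routine maximal-Sobolev bootstrapping, but for $\kappa\neq0$ in three dimensions the Navier--Stokes nonlinearity $\kappa(u\cdot\nabla)u$ cannot be absorbed without first establishing that $u$ is \emph{eventually small} in a supercritical norm. The paper's Lemma \ref{aaalemmaddffffsddddfffgg4sssdddd5630} does precisely this via a Banach fixed-point argument on time windows $(\tilde t_0,\tilde t_0+3)$: once $\int_t^{t+1}\|u_\varepsilon\|_{L^q}$ and $\|m_\varepsilon-\hat m+n_\varepsilon-\hat n\|_{L^p}$ are small (from the decay of the masses and the energy inequality), the mild-solution map is a contraction in the space $X=\{v:\sup_s s^{\gamma_0}\|v(\tilde t_0+s)\|_{L^p}\le\eta_0\}$, and this supplies $\sup_{s\in[t,t+1]}\|u_\varepsilon(\cdot,s)\|_{L^p}<\eta$ for all $p\ge6$. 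Only with that smallness in hand can the quadratic term $\|(Y_\varepsilon u_\varepsilon\cdot\nabla)v\|_{L^s}$ in the maximal-regularity estimate be absorbed (compare \dref{cz2.57ssdd1hhhhh51lllllccvvhsddddddfdfccvvhjjjkkhhggjjlsdddlll}--\dref{cz2.57ssdd1hhhhh51lllllkkllccvvhsddddddfdfccvvhjjjkkhhggjjlsdddlll}). Your proposal skips this step entirely, so the bootstrap you describe would not close for $\kappa\neq0$.

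A related ordering issue: you propose to first upgrade $n$ to a uniform $L^\infty$ bound by Moser--Alikakos iteration and then feed this into the fluid equation. But the iteration would need $\nabla c\in L^\infty_t L^q_x$ for some $q>3$, and the paper only has $\int_\Omega|\nabla c_\varepsilon|^2\le C$ uniformly in time plus space-time integrability of $|\nabla c_\varepsilon|^4$ on unit windows (Lemma \ref{ssdddlemmaghjffggssddgghhmk4563025xxhjklojjkkk}); improving $\nabla c$ would in turn require control of $u$, which is the very object you have not yet regularized. The paper avoids this circularity by working entirely with the decay hierarchy (Lemmas \ref{lemmaddffffdfffgg4sssdddd5630}--\ref{11aaalemmaddffffdsddfffffgg4sssdddd5630}) before invoking any higher regularity, then using the contraction argument for $u$ to unlock the maximal regularity/Schauder machinery in Lemmas \ref{lemma45630hhuujjuuyy}--\ref{sssslemma45ssddddff630hhuujjsdfffggguuyy}, so that no global $L^\infty$ bound on $n$ is ever needed. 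You should incorporate an analogue of the fixed-point/smallness step for $u$ before attempting any parabolic bootstrap.
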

\begin{remark}
 Theorem \ref{theorem3} indicates that if $\alpha > 0$, then for arbitrarily large initial data and for any $\kappa\in \mathbb{R}$, this problem
admits at least one global weak solution for which there exists
$T > 0$ such that $(n,c,m, u)$ is  smooth in $\Omega\times ( T ,\infty)$.
Moreover, it is asserted that such solutions are shown to approach a
spatially homogeneous equilibrium in the large time limit, which
improves the result obtained in \cite{Lidfff00}, therein $\kappa=0$
is required.

%


%
%

\end{remark}
{\bf Mathematical challenges for the regularity and stabilization of the solution for system  \dref{334451.1fghyuisda}.}
 System \dref{334451.1fghyuisda} incorporates  fluid and
rotational flux, which involves more complex cross-diffusion mechanisms and brings
about many considerable mathematical difficulties.
Firstly,  even when posed without any external influence, that is, $n=c=m\equiv0,$ the corresponding Navier-Stokes system \dref{334451.1fghyuisda} does not admit a satisfactory solution
theory up to now (see Leray \cite{LerayLerayer792} and Sohr \cite{Sohr}, Wiegner \cite{Wiegnerdd79}). As far as we know that
 the question of global solvability in classes of suitably regular functions yet remains open
except in cases when the initial data are appropriately small (see e.g.  Wiegner \cite{Wiegnerdd79}).
Moreover, the tensor-valued sensitivity functions result in new mathematical difficulties, mainly linked to the fact that a chemotaxis system with such rotational fluxes
thereby loses an energy-like structure (see e.g. \cite{Xusddeddff345511215}).
In \cite{Lidfff00} and \cite{EspejojjEspejojainidd793}, relying on  globally {\bf bounded} for the solution,  Espejo-Winkler
(\cite{EspejojjEspejojainidd793})
Li-Pang-Wang (\cite{Lidfff00}) proved  that all these solutions  of problem \dref{334451.1fghyuisda} are shown to approach a
spatially homogeneous equilibrium in the large time limit when $N=2$ or $N=3$ and $\kappa=0,$ respectively.
As already mentioned in the above, in the case $N = 3$, it
is not only unknown whether the incompressible Navier-Stokes equations possess
global smooth solutions for arbitrarily large smooth initial data (see e.g. Wiegner \cite{Wiegnerdd79} and Sohr \cite{Sohr}).
Therefore, when $\kappa\not=0$ and $N=3$, we can not use  the idea of \cite{Lidfff00} and \cite{EspejojjEspejojainidd793} to discuss the
large time behavior to problem \dref{334451.1fghyuisda}, since,
%
 the globally {\bf bounded} for the solutions are needed in \cite{Lidfff00} and \cite{EspejojjEspejojainidd793}.

In order to derive these theorems, in Section 2, we introduce the regularized system of \dref{334451.1fghyuisda}, establish some basic estimates of the solutions and recall a local existence result. In Section 3,  a key step of the proof of our main results is to establish a bound for  $n_\varepsilon(\cdot, t)$ in $L^{{p}} (\Omega)$ for any $p>1$. The approach is based on
the weighted estimate of
$\int_{\Omega}n_\varepsilon^{{p}}g(c_\varepsilon)$ with some weight function $g(c_\varepsilon)$ which is uniformly bounded both from above and below by positive
constants. Here $n_\varepsilon$ and $c_\varepsilon$ are components of the solutions to \dref{1.1fghyuisda} below.
On the basis of the previously established estimates and the compactness properties thereby implied,  we shall
pass to the limit along an adequate sequence of numbers $\varepsilon = \varepsilon_j\searrow0$
and thereby verify Theorem \ref{theorem3}.
%
Using the basic
relaxation properties expressed in \dref{ddczhjjjj2.5ghxxccju48cfg9ssdd24} and \dref{ddczhjjjj2.5ghxxssddccju48cfg9ssdd24}, Section 4  is devoted to showing the large time behavior of global solutions to \dref{334451.1fghyuisda} obtained
in the above section. To this end,   thanks to the decay property of
$m_\varepsilon(\cdot,t)-\hat{m}+n_\varepsilon(\cdot,t)
-\hat{n}$ formulated in Lemmas \ref{ssdddlemmddddaddffffdfffgg4sssdddd5630} and \ref{11aaalemdfghkkmaddffffdfffgg4sssdddd5630}, this actually entails a certain eventual regularity and
decay of $u_\varepsilon$ also in the present situation, where $\hat{n}=\frac{1}{|\Omega|}\{\int_{\Omega}n_0-\int_{\Omega}m_0\}_{+}$ and $\hat{m}=\frac{1}{|\Omega|}\{\int_{\Omega}m_0 -\int_{\Omega}n_0\}_{+}$.
Using these bounds (see Lemmas \ref{ssdddlemmddddaddffffdfffgg4sssdddd5630}--\ref{aaalemmaddffffsddddfffgg4sssdddd5630}), based on maximal
Sobolev regularity in the Stokes evolution system as well as  inhomogeneous linear heat
equations  and the  standard Schauder theory, we then prove eventual H\"{o}lder regularity and smoothness of solution $(n_\varepsilon,c_\varepsilon,m_\varepsilon,u_\varepsilon)$ (see Lemmas \ref{lemma45630hhuujjuuyy}--\ref{lemma45630hhuujjsdfffggguuyy}).
 For convergence as $t\rightarrow\infty$, we  draw upon uniform H\"{o}lder bounds and smoothness for solution $(n,c,m,u)$ (see Lemmas \ref{lemma45630223}--\ref{sssslemma45ssddddff630hhuujjsdfffggguuyy}). Finally, applying an Ehrling-type
lemma, we can prove   any such solution approaches
the spatially homogeneous equilibrium by using  the above convergence properties (see Lemma \ref{lemma4dd5630hhuujjuuyy}).

\section{Preliminaries}
As mentioned in the introduction, the chemotactic sensitivity $S$ in the first equation in \dref{334451.1fghyuisda} and the nonlinear convective term $\kappa(u \cdot \nabla)u$ in the
Navier-Stokes subsystem of \dref{334451.1fghyuisda} bring about a great challenge to the study of system
\dref{334451.1fghyuisda}. To deal with these difficulties, according to the ideas in  \cite{Winkler51215} (see also \cite{Winklerssdd51215,Kegssddsddfff00,Winklerpejoevsssdd793}), we first consider the approximate problems given by
\begin{equation}
 \left\{\begin{array}{ll}
   n_{\varepsilon t}+u_{\varepsilon}\cdot\nabla n_{\varepsilon}=\Delta n_{\varepsilon}-\nabla\cdot(n_{\varepsilon}S_\varepsilon(x, n_{\varepsilon}, c_{\varepsilon})\nabla c_{\varepsilon})-n_{\varepsilon}m_{\varepsilon},\quad
x\in \Omega, t>0,\\
       c_{\varepsilon t}+u_{\varepsilon}\cdot\nabla c_{\varepsilon}=\Delta c_{\varepsilon}-c_{\varepsilon}+m_{\varepsilon},\quad
x\in \Omega, t>0,\\
m_{\varepsilon t}+u_{\varepsilon}\cdot\nabla m_{\varepsilon}=\Delta m_{\varepsilon}-n_{\varepsilon}m_{\varepsilon},\quad
x\in \Omega, t>0,\\
u_{\varepsilon t}+\nabla P_{\varepsilon}=\Delta u_{\varepsilon}-\kappa (Y_{\varepsilon}u_{\varepsilon} \cdot \nabla)u_{\varepsilon}+(n_{\varepsilon}+m_{\varepsilon})\nabla \phi,\quad
x\in \Omega, t>0,\\
\nabla\cdot u_{\varepsilon}=0,\quad
x\in \Omega, t>0,\\
 \disp{\nabla n_{\varepsilon}\cdot\nu=\nabla c_{\varepsilon}\cdot\nu=\nabla m_{\varepsilon}\cdot\nu=0,u_{\varepsilon}=0,\quad
x\in \partial\Omega, t>0,}\\
\disp{n_{\varepsilon}(x,0)=n_0(x),c_{\varepsilon}(x,0)=c_0(x),m_{\varepsilon}(x,0)=m_0(x),u_{\varepsilon}(x,0)=u_0(x)},\quad
x\in \Omega,\\
 \end{array}\right.\label{1.1fghyuisda}
\end{equation}
where
\begin{equation}
\begin{array}{ll}
S_\varepsilon (x, n, c) = \rho_\varepsilon(x)\chi_\varepsilon (n)S(x, n, c),~~ x\in\bar{\Omega},~~n\geq0,~~c\geq0
 \end{array}\label{3.10gghhjuuloollyuigghhhyy}
\end{equation}
and
\begin{equation}
 \begin{array}{ll}
 Y_\varepsilon w := (1 + \varepsilon A)^{-1}w ~~~~\mbox{for all}~~ w\in L^2_{\sigma}(\Omega)
 \end{array}\label{aasddffgg1.1fghyuisda}
\end{equation}
is the standard Yosida approximation.
Here $(\rho_{\varepsilon} )_{\varepsilon\in(0,1)} \in C^\infty_0 (\Omega)$ and $(\chi_{\varepsilon}  )_{\varepsilon\in(0,1)} \in C^\infty_0 ([0,\infty))$
  are a family of  functions which satisfy
   $$0\leq\rho_\varepsilon \leq 1~~~\mbox{in}~~\Omega,~~\rho_\varepsilon \nearrow1~~~~\mbox{in}~~\Omega~~\mbox{as}~~\varepsilon\searrow0$$
   and
   $$0\leq\chi_\varepsilon \leq 1~~~\mbox{in}~~[0,\infty),~~\chi_\varepsilon \nearrow1~~\mbox{in}~~[0,\infty)~~\mbox{as}~~\varepsilon\searrow0.$$

Without essential difficulty,
the local existence of approximate solutions to \dref{1.1fghyuisda} can be easily proved according to the corresponding
procedure in Lemma 2.1 of \cite{Winkler51215}  (see also \cite{Winkler11215} and Lemma 2.1 of \cite{Painter55677}). Therefore, we give the following lemma without proof.
\begin{lemma}\label{lemma70}
Assume
that $\varepsilon\in(0,1).$
%
Then there exist $T_{max,\varepsilon}\in  (0,\infty]$ and
a classical solution $(n_\varepsilon, c_\varepsilon, m_\varepsilon,u_\varepsilon, P_\varepsilon)$ of \dref{1.1fghyuisda} in
$\Omega\times(0, T_{max,\varepsilon})$ such that
\begin{equation}
 \left\{\begin{array}{ll}
 n_\varepsilon\in C^0(\bar{\Omega}\times[0,T_{max,\varepsilon}))\cap C^{2,1}(\bar{\Omega}\times(0,T_{max,\varepsilon})),\\
  c_\varepsilon\in  C^0(\bar{\Omega}\times[0,T_{max,\varepsilon}))\cap C^{2,1}(\bar{\Omega}\times(0,T_{max,\varepsilon})),\\
   m_\varepsilon\in  C^0(\bar{\Omega}\times[0,T_{max,\varepsilon}))\cap C^{2,1}(\bar{\Omega}\times(0,T_{max,\varepsilon})),\\
  u_\varepsilon\in  C^0(\bar{\Omega}\times[0,T_{max,\varepsilon}))\cap C^{2,1}(\bar{\Omega}\times(0,T_{max,\varepsilon})),\\
  P_\varepsilon\in  C^{1,0}(\bar{\Omega}\times(0,T_{max,\varepsilon})),\\
   \end{array}\right.\label{1.1ddfghyuisda}
\end{equation}
 classically solving \dref{1.1fghyuisda} in $\Omega\times[0,T_{max,\varepsilon})$.
%
Moreover,  $n_\varepsilon,c_\varepsilon$ and $m_\varepsilon$ are nonnegative in
$\Omega\times(0, T_{max,\varepsilon})$, and
\begin{equation}
\|n_\varepsilon(\cdot, t)\|_{L^\infty(\Omega)}+\|c_\varepsilon(\cdot, t)\|_{W^{1,\infty}(\Omega)}+\|m_\varepsilon(\cdot, t)\|_{W^{1,\infty}(\Omega)}+\|A^\gamma u_\varepsilon(\cdot, t)\|_{L^{2}(\Omega)}\rightarrow\infty~~ \mbox{as}~~ t\nearrow T_{max,\varepsilon},
\label{1.163072x}
\end{equation}
where $\gamma$ is given by \dref{ccvvx1.731426677gg}.
\end{lemma}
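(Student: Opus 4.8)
The statement to prove is Lemma \ref{lemma70}, the local existence result for the approximate system \dref{1.1fghyuisda}. This is a standard bootstrap argument adapted from the cited literature, so the plan is to build a contraction mapping for the parabolic-parabolic-parabolic-Stokes coupling and then upgrade the regularity.

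\medskip

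\textbf{Plan of proof.} First I would fix $\varepsilon\in(0,1)$ and observe that $S_\varepsilon$ defined in \dref{3.10gghhjuuloollyuigghhhyy} is smooth, compactly supported in $x$, and vanishes for large $n$ because $\chi_\varepsilon\in C_0^\infty([0,\infty))$; in particular $n_\varepsilon S_\varepsilon(x,n_\varepsilon,c_\varepsilon)$ is a bounded smooth vector field in all its arguments, which removes the degeneracy/unboundedness issues present in the original system. Likewise the Yosida operator $Y_\varepsilon = (1+\varepsilon A)^{-1}$ is a bounded linear operator from $L^2_\sigma(\Omega)$ into $D(A)\hookrightarrow L^\infty(\Omega;\mathbb{R}^3)$, so the convective term $\kappa(Y_\varepsilon u_\varepsilon\cdot\nabla)u_\varepsilon$ is globally Lipschitz in the relevant norms as a map of $u_\varepsilon$. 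Then I would set up a fixed-point scheme in the Banach space
$$
X_T := C^0([0,T];C^0(\bar\Omega))\times C^0([0,T];W^{1,q}(\Omega))\times C^0([0,T];C^0(\bar\Omega))\times C^0([0,T];D(A^\gamma_r))
$$
for suitable $q>3$ and $r>1$ with $\gamma\in(\tfrac34,1)$ as in \dref{ccvvx1.731426677gg}, using the variation-of-constants representations: $n_\varepsilon$ via the Neumann heat semigroup $e^{t\Delta}$ acting on $n_0$ plus the Duhamel terms involving $\nabla\cdot(n_\varepsilon S_\varepsilon\nabla c_\varepsilon)$, $n_\varepsilon m_\varepsilon$ and $u_\varepsilon\cdot\nabla n_\varepsilon$; analogously for $c_\varepsilon$ and $m_\varepsilon$; and $u_\varepsilon$ via the Stokes semigroup $e^{-tA}$ applied to $u_0$ plus $\int_0^t e^{-(t-s)A}\mathbb{P}[-\kappa(Y_\varepsilon u_\varepsilon\cdot\nabla)u_\varepsilon+(n_\varepsilon+m_\varepsilon)\nabla\phi]\,ds$, with $\mathbb{P}$ the Helmholtz projection. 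Using the standard smoothing estimates $\|e^{t\Delta}\nabla\cdot w\|_{L^\infty}\le C t^{-1/2-3/(2q)}\|w\|_{L^q}$ and $\|A^\gamma e^{-tA}\mathbb{P}w\|_{L^r}\le C t^{-\gamma-3/(2r)(1/1-1/r)-\dots}\|w\|$ (in the appropriate form), all nonlinear terms are locally Lipschitz with integrable singularities, so Banach's fixed point theorem yields a unique mild solution on a maximal interval $[0,T_{max,\varepsilon})$.

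\medskip

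Next I would bootstrap regularity: once $n_\varepsilon, c_\varepsilon, m_\varepsilon\in C^0(\bar\Omega\times[0,T])$ and $u_\varepsilon\in C^0([0,T];D(A^\gamma))\hookrightarrow C^0(\bar\Omega\times[0,T];\mathbb{R}^3)$ with $\nabla u_\varepsilon$ bounded, parabolic $L^p$ and Schauder theory applied successively to the three scalar equations (whose right-hand sides now have the needed Hölder regularity in $x$ and $t$) gives $n_\varepsilon,c_\varepsilon,m_\varepsilon\in C^{2,1}(\bar\Omega\times(0,T_{max,\varepsilon}))$; the Stokes system with right-hand side in suitable Hölder/$L^p$ classes then yields $u_\varepsilon\in C^{2,1}$ and $P_\varepsilon\in C^{1,0}$ via the known regularity theory for the instationary Stokes system (Sohr \cite{Sohr}), establishing \dref{1.1ddfghyuisda}. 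Nonnegativity of $n_\varepsilon$, $c_\varepsilon$, $m_\varepsilon$ follows from the maximum principle: each scalar equation has the structure $w_t + u_\varepsilon\cdot\nabla w = \Delta w + (\text{zeroth-order term})\cdot w + (\text{nonnegative source})$, namely $-m_\varepsilon$, $-1$, and $-n_\varepsilon$ as the respective coefficients of $w$ in the $n,c,m$ equations (with the extra nonnegative source $+m_\varepsilon$ in the $c$-equation), so starting from $n_0,c_0,m_0\ge0$ the comparison principle keeps them nonnegative as long as the solution exists (with strict positivity for $c_\varepsilon,m_\varepsilon$ by the strong maximum principle if one wants it). Finally the extensibility criterion \dref{1.163072x} is the standard dichotomy for such fixed-point constructions: if $T_{max,\varepsilon}<\infty$ but the norm on the left stayed bounded, one could re-apply the local existence argument starting from a time close to $T_{max,\varepsilon}$ with a uniform existence time, contradicting maximality.

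\medskip

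\textbf{Main obstacle.} No single step is deep — the whole point of the regularization \dref{3.10gghhjuuloollyuigghhhyy}--\dref{aasddffgg1.1fghyuisda} is precisely to make everything smooth and the nonlinearities tame — so the proof is deliberately omitted in the paper and merely cited. The only mild subtlety is bookkeeping the correct exponents in the semigroup estimates so that the fixed-point space is chosen consistently (in particular matching $\gamma\in(\tfrac34,1)$ so that $D(A^\gamma_r)\hookrightarrow L^\infty$, which is what makes the convective coupling $u_\varepsilon\cdot\nabla(\cdot)$ manageable in the scalar equations), and verifying that the Yosida term genuinely removes the quadratic-gradient difficulty of the Navier–Stokes nonlinearity since $Y_\varepsilon u_\varepsilon$ lives in $D(A)$ uniformly. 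These are exactly the computations carried out in Lemma 2.1 of \cite{Winkler51215} and Lemma 2.1 of \cite{Painter55677}, which is why we adopt the result without reproducing the details.
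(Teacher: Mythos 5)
Your proposal is correct and matches the approach the paper implicitly adopts: the paper itself gives no proof, instead citing the standard fixed-point construction in Lemma 2.1 of \cite{Winkler51215} (and \cite{Winkler11215}, \cite{Painter55677}), which is exactly the Banach-fixed-point-plus-bootstrap argument you sketch, exploiting that the cutoffs $\rho_\varepsilon,\chi_\varepsilon$ make $n_\varepsilon S_\varepsilon$ uniformly bounded and the Yosida approximation tames the convective nonlinearity. Your identification of the nonnegativity mechanism (each scalar equation having the form $w_t+u_\varepsilon\cdot\nabla w=\Delta w+(\text{zeroth-order})\,w+(\text{nonnegative source})$) and of the extensibility dichotomy is also the standard reasoning underlying the cited lemmas.
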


\begin{lemma}(\cite{Horstmann791,Winkler792,Zhengddfggghjjkk1})\label{llssdrffmmggnnccvvccvvkkkkgghhkkllvvlemma45630}
The Stokes operator $A$  denotes the realization of the Stokes operator under homogeneous
Dirichlet boundary conditions in the solenoidal subspace $L^2_{\sigma}(\Omega)$ of $L^2(\Omega)$.
%
 Let $\mathcal{P}: L^p (\Omega)\rightarrow L^p_{\sigma}(\Omega)$ stand  for the Helmholtz projection in $L^p (\Omega).$ Then there exist positive constants
 $\kappa_i(i=1,\ldots,3)$ such that
\begin{equation}
\| e^{-tA}\mathcal{P}\varphi\|_{L^p(\Omega)} \leq \kappa_1(\Omega)t^{-\gamma}\|\varphi\|_{L^q(\Omega)} ~~~\mbox{for all}~~~ t > 0~~~\mbox{and any}~~~\varphi\in L^{q}(\Omega)
\label{1.1ddfghssdddyddffssddduisda}
\end{equation}
as well as
\begin{equation}
\| e^{-tA}\mathcal{P}\nabla\cdot\varphi\|_{L^p(\Omega)} \leq \kappa_2(\Omega)t^{-\frac{1}{2}-\frac{3}{2}(\frac{1}{q}-\frac{1}{p})}\|\varphi\|_{L^{q}(\Omega)} ~~~\mbox{for all}~~~ t > 0~~~\mbox{and any}~~~\varphi\in L^{q}(\Omega)
\label{1.1ddfghssdddyssddduisda}
\end{equation}
and
\begin{equation}
\| e^{-tA}\mathcal{P}\varphi\|_{L^p(\Omega)} \leq \kappa_3(\Omega)\|\varphi\|_{L^p(\Omega)} ~~~\mbox{for all}~~~ t > 0~~~\mbox{and any}~~~\varphi\in L^{p}(\Omega).
\label{1.1ddddfghhfghssdddyssddduisda}
\end{equation}

%
\end{lemma}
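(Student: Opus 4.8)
The three displayed inequalities are the by-now classical $L^q$--$L^p$ smoothing estimates for the Dirichlet--Stokes semigroup on a bounded domain, and my plan is to deduce all of them from the two structural facts that underlie such bounds. First, $-A$ generates a bounded analytic semigroup $(e^{-tA})_{t\ge0}$ on each solenoidal space $L^q_\sigma(\Omega)$, $q\in(1,\infty)$, whose spectrum is contained in a half-plane $\{\,\mathrm{Re}\,\lambda\ge\delta\,\}$ for some $\delta>0$, since $A$ has compact resolvent on the bounded domain $\Omega$; consequently, for every $a\ge0$ there is $C>0$ with $\|A^a e^{-tA}w\|_{L^q(\Omega)}\le C\,t^{-a}\|w\|_{L^q(\Omega)}$ for all $t>0$, and in particular $\|e^{-tA}w\|_{L^q(\Omega)}\le C\|w\|_{L^q(\Omega)}$ for all $t>0$. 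Second, for $1<q\le p<\infty$ the fractional-power domain $D_q(A^\beta)$ embeds continuously into $L^p(\Omega)$ whenever $\beta\ge\frac32\bigl(\frac1q-\frac1p\bigr)$, via $D_q(A^\beta)\hookrightarrow H^{2\beta,q}(\Omega)\hookrightarrow L^p(\Omega)$, the first inclusion being Giga's characterization of the domains of fractional Stokes powers and the second the Sobolev embedding.

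Granting these, \dref{1.1ddddfghhfghssdddyssddduisda} follows at once from the uniform boundedness of $(e^{-tA})$ on $L^p_\sigma(\Omega)$ together with the boundedness of the Helmholtz projection $\mathcal{P}$ on $L^p(\Omega)$, since $\|e^{-tA}\mathcal{P}\varphi\|_{L^p(\Omega)}\le C\|\mathcal{P}\varphi\|_{L^p(\Omega)}\le C'\|\varphi\|_{L^p(\Omega)}$. For \dref{1.1ddfghssdddyddffssddduisda}, put $\beta:=\frac32\bigl(\frac1q-\frac1p\bigr)$; then for $\varphi\in L^q(\Omega)$,
\[
\|e^{-tA}\mathcal{P}\varphi\|_{L^p(\Omega)}\le C\,\|A^\beta e^{-tA}\mathcal{P}\varphi\|_{L^q(\Omega)}\le C\,t^{-\beta}\|\mathcal{P}\varphi\|_{L^q(\Omega)}\le C\,t^{-\beta}\|\varphi\|_{L^q(\Omega)},
\]
which is the asserted bound with $\gamma=\frac32\bigl(\frac1q-\frac1p\bigr)$. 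For the gradient estimate \dref{1.1ddfghssdddyssddduisda} the only extra ingredient is that $A^{-1/2}\mathcal{P}\nabla\cdot$, defined a priori on smooth tensor fields, extends to a bounded operator from $L^q(\Omega;\mathbb{R}^{3\times3})$ into $L^q_\sigma(\Omega)$ (equivalently, by duality, $\nabla A^{-1/2}$ is bounded on $L^q$). Writing $e^{-tA}\mathcal{P}\nabla\cdot\varphi=A^{1/2}e^{-tA}h$ with $h:=A^{-1/2}\mathcal{P}\nabla\cdot\varphi$, so that $\|h\|_{L^q(\Omega)}\le C\|\varphi\|_{L^q(\Omega)}$, one obtains with the same $\beta$
\[
\|e^{-tA}\mathcal{P}\nabla\cdot\varphi\|_{L^p(\Omega)}=\|A^{1/2}e^{-tA}h\|_{L^p(\Omega)}\le C\,\|A^{\frac12+\beta}e^{-tA}h\|_{L^q(\Omega)}\le C\,t^{-\frac12-\beta}\|h\|_{L^q(\Omega)}\le C\,t^{-\frac12-\beta}\|\varphi\|_{L^q(\Omega)},
\]
which is exactly \dref{1.1ddfghssdddyssddduisda}.

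The part I expect to demand the most care is not this semigroup bookkeeping but the justification of the two structural inputs on a general bounded smooth domain: the precise embedding $D_q(A^\beta)\hookrightarrow L^p(\Omega)$, where one must be attentive to which boundary conditions are encoded in $D_q(A^\beta)$ (harmless in the range of $\beta$ needed here), and the $L^q$-boundedness of $A^{-1/2}\mathcal{P}\nabla\cdot$. Both are classical and are supplied by the references cited in the statement; accordingly, in an actual write-up I would either invoke them directly or, more economically, simply quote the $L^q$--$L^p$ estimates for the Stokes semigroup in bounded domains in the form established by Giga--Sohr and Borchers--Miyakawa, from which \dref{1.1ddfghssdddyddffssddduisda}--\dref{1.1ddddfghhfghssdddyssddduisda} all follow immediately.
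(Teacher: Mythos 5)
The paper does not actually prove this lemma; it is stated as a known result with citations to \cite{Horstmann791,Winkler792,Zhengddfggghjjkk1}, and those references in turn rely on the classical Giga/Giga--Sohr theory of the Stokes semigroup. Your derivation is exactly the standard route that that theory uses --- analyticity plus the spectral gap on the bounded domain to get $\|A^a e^{-tA}\|_{L^q\to L^q}\lesssim t^{-a}$ uniformly in $t>0$, the fractional-power embedding $D_q(A^\beta)\hookrightarrow L^p$ with $\beta=\tfrac32(\tfrac1q-\tfrac1p)$, and the $L^q$-boundedness of $A^{-1/2}\mathcal{P}\nabla\cdot$ for the divergence estimate --- and it is correct, including the correct identification of the (unstated) exponent $\gamma=\tfrac32(\tfrac1q-\tfrac1p)$ in \dref{1.1ddfghssdddyddffssddduisda}.
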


%
%

Invoking the divergence free of the fluid and the homogeneous Neumann boundary conditions
on $n_\varepsilon, m_\varepsilon$ and  $c_\varepsilon$, we can establish
the following basic estimates by using the maximum principle to the second and third equations. The proof of this lemma is very similar to that of Lemmas 2.2 and 2.6  of  \cite{Tao41215} (see also Lemma 3.2 of \cite{Wangssddss21215}), so we omit its proof here

\begin{lemma}\label{fvfgsdfggfflemma45}
There exists 
$\lambda > 0$ 
such that the solution of \dref{1.1fghyuisda} satisfies
%
%
\begin{equation}
\frac{d}{dt}\int_{\Omega}c_{\varepsilon}(\cdot,t)\leq0,~~~\mbox{and}~~~\frac{d}{dt}\int_{\Omega}m_{\varepsilon}(\cdot,t)\leq0~~\mbox{for all}~~ t\in(0, T_{max,\varepsilon}),
\label{ddfgczhhhh2.5ghjjjssddju48cfg924ghyuji}
\end{equation}

\begin{equation}
\|c_{\varepsilon}(\cdot,t)\|_{L^\infty(\Omega)}+\|m_{\varepsilon} (\cdot,t)\|_{L^\infty(\Omega)}\leq \lambda~~\mbox{for all}~~ t\in(0, T_{max,\varepsilon}),
\label{ddfgczhhhh2.5ghju48cfg924ghyuji}
\end{equation}
\begin{equation}
\int_{\Omega}{n_{\varepsilon} }-\int_{\Omega}{m_{\varepsilon}}= \int_{\Omega}{n_0}-\int_{\Omega}{m_0}~~\mbox{for all}~~ t\in(0, T_{max,\varepsilon}),
\label{sssddfgczhhhh2.5ghju48cfg924ghyuji}
\end{equation}
\begin{equation}
\|c_{\varepsilon}(\cdot,t)\|_{L^2(\Omega)}^2+2\int_0^{t}\int_{\Omega}{|\nabla c_{\varepsilon}|^{2}}\leq \lambda~~\mbox{for all}~~ t\in(0, T_{max,\varepsilon}),
\label{ddczhjjjj2.5ghju48cfg9ssdd24}
\end{equation}
\begin{equation}
\int_0^{t}\int_{\Omega}{n_{\varepsilon}m_{\varepsilon}}dxds\leq \lambda~~\mbox{for all}~~ t\in(0, T_{max,\varepsilon})
\label{ddczhjjjj2.5ghxxccju48cfg9ssdd24}
\end{equation}
as well as
\begin{equation}
\frac{1}{2}\int_0^{t}\int_{\Omega}{|\nabla m_{\varepsilon}|^2}dxds\leq \frac{1}{2}\int_{\Omega}{m_0^2}~~\mbox{for all}~~ t\in(0, T_{max,\varepsilon})
\label{ddczhjjjj2.5ghxxssddccju48cfg9ssdd24}
\end{equation}
and
\begin{equation}
\int_0^{t}\int_{\Omega}{|\nabla c_{\varepsilon}|^{2}}\leq \lambda~~\mbox{for all}~~ t\in(0, T_{max,\varepsilon}).
\label{ddczhjjjj2.5ghju48cfgffff924}
\end{equation}
%
%
\end{lemma}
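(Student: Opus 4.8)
The plan is to derive all seven bounds from elementary energy and comparison arguments applied to the subsystems of \dref{1.1fghyuisda} separately; Lemma \ref{lemma70} already supplies the smoothness and the nonnegativity of $n_\varepsilon,c_\varepsilon,m_\varepsilon$ that are needed throughout. The structural facts that make this work are that the convective contributions integrate away, $\int_\Omega(u_\varepsilon\cdot\nabla\varphi)\varphi=0$ and more generally $\int_\Omega(u_\varepsilon\cdot\nabla\varphi)\psi=-\int_\Omega(u_\varepsilon\cdot\nabla\psi)\varphi$, because $\nabla\cdot u_\varepsilon=0$ and $u_\varepsilon=0$ on $\partial\Omega$; that the homogeneous Neumann conditions remove the diffusive boundary terms; and that $S_\varepsilon=\rho_\varepsilon\chi_\varepsilon S$ vanishes near $\partial\Omega$ since $\rho_\varepsilon\in C_0^\infty(\Omega)$, so the chemotactic flux produces no boundary term either. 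The sign $-n_\varepsilon m_\varepsilon\le0$ is exploited repeatedly.

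First I would record the mass identities. Integrating the $n_\varepsilon$- and $m_\varepsilon$-equations over $\Omega$ gives $\frac{d}{dt}\int_\Omega n_\varepsilon=-\int_\Omega n_\varepsilon m_\varepsilon=\frac{d}{dt}\int_\Omega m_\varepsilon\le0$, which yields the monotonicity \dref{ddfgczhhhh2.5ghjjjssddju48cfg924ghyuji}; subtracting the two identities gives the conserved difference \dref{sssddfgczhhhh2.5ghju48cfg924ghyuji}; and integrating $\frac{d}{dt}\int_\Omega m_\varepsilon=-\int_\Omega n_\varepsilon m_\varepsilon$ in time gives $\int_0^t\int_\Omega n_\varepsilon m_\varepsilon=\int_\Omega m_0-\int_\Omega m_\varepsilon(\cdot,t)\le\int_\Omega m_0$, i.e. \dref{ddczhjjjj2.5ghxxccju48cfg9ssdd24}; integrating the $c_\varepsilon$-equation likewise keeps $\int_\Omega c_\varepsilon$ bounded. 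For the $L^\infty$ bound \dref{ddfgczhhhh2.5ghju48cfg924ghyuji}, from $m_{\varepsilon t}+u_\varepsilon\cdot\nabla m_\varepsilon-\Delta m_\varepsilon=-n_\varepsilon m_\varepsilon\le0$ the parabolic comparison principle (unaffected by the solenoidal, boundary-vanishing drift) gives $\|m_\varepsilon(\cdot,t)\|_{L^\infty(\Omega)}\le\|m_0\|_{L^\infty(\Omega)}$, and then the constant $\lambda_0:=\max\{\|c_0\|_{L^\infty(\Omega)},\|m_0\|_{L^\infty(\Omega)}\}$ is a supersolution of $c_{\varepsilon t}+u_\varepsilon\cdot\nabla c_\varepsilon-\Delta c_\varepsilon+c_\varepsilon=m_\varepsilon$, so $\|c_\varepsilon(\cdot,t)\|_{L^\infty(\Omega)}\le\lambda_0$.

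Next come the $L^2$ and gradient bounds. Testing the $m_\varepsilon$-equation by $m_\varepsilon$ gives $\frac12\frac{d}{dt}\|m_\varepsilon\|_{L^2(\Omega)}^2+\|\nabla m_\varepsilon\|_{L^2(\Omega)}^2=-\int_\Omega n_\varepsilon m_\varepsilon^2\le0$, so $\|m_\varepsilon(\cdot,t)\|_{L^2(\Omega)}^2$ is nonincreasing and \dref{ddczhjjjj2.5ghxxssddccju48cfg9ssdd24} follows. For $c_\varepsilon$, testing its own equation by $c_\varepsilon$ leaves a forcing $\int_\Omega m_\varepsilon c_\varepsilon$ that is not known to be integrable in time, so instead I would test the equation for $w_\varepsilon:=c_\varepsilon-m_\varepsilon$, namely $w_{\varepsilon t}+u_\varepsilon\cdot\nabla w_\varepsilon=\Delta w_\varepsilon-w_\varepsilon+n_\varepsilon m_\varepsilon$, by $w_\varepsilon$; using $|w_\varepsilon|\le\|c_\varepsilon\|_{L^\infty(\Omega)}+\|m_\varepsilon\|_{L^\infty(\Omega)}\le2\lambda_0$ and \dref{ddczhjjjj2.5ghxxccju48cfg9ssdd24}, integration in time produces a bound for $\|w_\varepsilon(\cdot,t)\|_{L^2(\Omega)}^2+\int_0^t\|\nabla w_\varepsilon\|_{L^2(\Omega)}^2$ that is uniform in $t$. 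Since $c_\varepsilon=w_\varepsilon+m_\varepsilon$, combining this with the $m_\varepsilon$-bounds just obtained yields \dref{ddczhjjjj2.5ghju48cfg9ssdd24}, of which \dref{ddczhjjjj2.5ghju48cfgffff924} is an immediate weakening.

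The one step I expect to be genuinely non-automatic is this last one: the $c_\varepsilon$-equation in isolation does not propagate a bound for $\|c_\varepsilon(\cdot,t)\|_{L^2(\Omega)}^2+\int_0^t\|\nabla c_\varepsilon\|_{L^2(\Omega)}^2$ uniform in $t$, because its source $m_\varepsilon$ is not a priori decaying; passing to $c_\varepsilon-m_\varepsilon$ replaces that source by $n_\varepsilon m_\varepsilon$, whose space--time integral is already finite by \dref{ddczhjjjj2.5ghxxccju48cfg9ssdd24}, and the estimate then closes. All the remaining manipulations are standard testing and comparison.
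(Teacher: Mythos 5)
The paper never actually gives a proof of this lemma---it merely asserts the argument is similar to lemmas in Tao--Winkler and Wang and omits it---so your proposal must be judged on its own merits, and it largely holds up. The mass identities, the parabolic comparison arguments giving $\|m_\varepsilon\|_{L^\infty}\le\|m_0\|_{L^\infty}$ and $\|c_\varepsilon\|_{L^\infty}\le\max\{\|c_0\|_{L^\infty},\|m_0\|_{L^\infty}\}$, and the $m_\varepsilon$-energy estimate yielding \dref{ddczhjjjj2.5ghxxssddccju48cfg9ssdd24} are all correct, with the convective and diffusive boundary terms handled exactly as you indicate. Your key observation is also the right one: testing the $c_\varepsilon$-equation directly by $c_\varepsilon$ produces $\tfrac12\tfrac{d}{dt}\|c_\varepsilon\|_{L^2}^2+\|\nabla c_\varepsilon\|_{L^2}^2+\|c_\varepsilon\|_{L^2}^2=\int_\Omega m_\varepsilon c_\varepsilon$, whose right side is merely bounded rather than time-integrable, so the naive bound on $\int_0^t\|\nabla c_\varepsilon\|_{L^2}^2$ grows linearly in $t$; passing to $w_\varepsilon=c_\varepsilon-m_\varepsilon$ replaces that source by $n_\varepsilon m_\varepsilon$, which is already space--time integrable by \dref{ddczhjjjj2.5ghxxccju48cfg9ssdd24}, and \dref{ddczhjjjj2.5ghju48cfg9ssdd24}--\dref{ddczhjjjj2.5ghju48cfgffff924} then follow from the $w_\varepsilon$ and $m_\varepsilon$ estimates. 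That substitution is the genuine structural content of the proof, and you found it.

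There is one gap, which you should not pass over silently: you prove $\tfrac{d}{dt}\int_\Omega m_\varepsilon\le 0$, but for $c_\varepsilon$ you only assert that $\int_\Omega c_\varepsilon$ stays bounded, which is strictly weaker than the inequality $\tfrac{d}{dt}\int_\Omega c_\varepsilon\le 0$ claimed in \dref{ddfgczhhhh2.5ghjjjssddju48cfg924ghyuji}. Integrating the $c_\varepsilon$-equation gives $\tfrac{d}{dt}\int_\Omega c_\varepsilon=-\int_\Omega c_\varepsilon+\int_\Omega m_\varepsilon$, which is strictly positive whenever $\int_\Omega c_\varepsilon<\int_\Omega m_\varepsilon$; since \dref{ccvvx1.731426677gg} allows $c_0\equiv 0$ while requiring $m_0\not\equiv 0$, the stated inequality fails at $t=0$ for such admissible data. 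It was presumably meant to read $\tfrac{d}{dt}\int_\Omega n_\varepsilon\le 0$, which is true (since $\tfrac{d}{dt}\int_\Omega n_\varepsilon=-\int_\Omega n_\varepsilon m_\varepsilon\le 0$) and which your mass-identity computation already contains. This is a defect in the lemma as written rather than in your method, but a careful write-up should flag the discrepancy explicitly instead of quietly substituting a weaker conclusion.
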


 For simplicity, here and hereafter, we take the notations
   \begin{equation}
   C_S:=\sup_{0\leq s\leq \|c_{0}\|_{L^\infty(\Omega)}}S(s)
  \label{hnjmssddaqwswddaassffssff3.10deerfgghhjdddfgggjjuuloollgghhhyhh}
\end{equation}
by using  \dref{ddfgczhhhh2.5ghju48cfg924ghyuji} and \dref{x1.73142vghf48gg}.

\section{ A-priori estimates }
In this section we want to ensure that the time-local solutions obtained in Lemma \ref{lemma70} are in fact global
solutions. To this end, for any $p>1,$ we firstly  obtain boundedness of $n_\varepsilon$ in $L^p (\Omega)$ under the assumption that $\alpha>0.$
Inspired by the weighted estimate argument developed in \cite{Winkler2233444ssdff51215} (see also \cite{Winkler61215,Winklerpejoevsssdd793}), 
we shall invoke
a weight function
$g(c_\varepsilon)$ which is uniformly bounded from above and below by positive constants.
Before deriving the uniform of $L^p$ norm of $n_\varepsilon$, let us first recalling the well-known facts for $g(c_\varepsilon)$.

\begin{lemma}\label{fvfgfflemma45}
Let 
$$g(s) = e^{\beta s^2},~~~\mbox{for any}~~s\in(0,\|c_{0}\|_{L^\infty(\Omega)}],$$
where \begin{equation}
\beta=\frac{1}{8\|c_0\|_{L^\infty(\Omega)}^2}
\label{czfvgbdfgg2.5ddffghddffsddffhjuyddfffdddddddggfffuccvviihjj}
\end{equation}
and $C_S$ is given by \dref{hnjmssddaqwswddaassffssff3.10deerfgghhjdddfgggjjuuloollgghhhyhh}.
Then for any $s\in(0,\|c_{0}\|_{L^\infty(\Omega)}]$,
\begin{equation}
\begin{array}{rl}
1\leq g(s)\leq\mu_0:=e^{\frac{1}{8}}
\end{array}
\label{czfvgb2.5ddffghsddffhjuyddfffdddddddggfffuccvviihjj}
\end{equation}
and
\begin{equation}g'(s)\leq \mu_1:= \frac{1}{4\|c_0\|_{L^\infty}}e^{\frac{1}{8}}.
\label{1111czfvgb2.ghhjkl5ghsddffhjuyddfffudddfccvviihjj}
\end{equation}

\end{lemma}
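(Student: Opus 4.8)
The plan is to verify both estimates by a direct elementary computation, since $g$ is an explicit smooth function on the compact interval $(0,\|c_0\|_{L^\infty(\Omega)}]$ and no PDE input is involved.

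First I would establish the two-sided bound \eqref{czfvgb2.5ddffghsddffhjuyddfffdddddddggfffuccvviihjj}. Because $\beta>0$ by \eqref{czfvgbdfgg2.5ddffghddffsddffhjuyddfffdddddddggfffuccvviihjj} and $s^2\ge 0$, the exponent $\beta s^2$ is nonnegative, so monotonicity of the exponential gives $g(s)=e^{\beta s^2}\ge e^0=1$. For the upper bound I would use $s\le\|c_0\|_{L^\infty(\Omega)}$, hence $s^2\le\|c_0\|_{L^\infty(\Omega)}^2$, and then the choice of $\beta$ pins the exponent down: $\beta s^2\le\beta\|c_0\|_{L^\infty(\Omega)}^2=\tfrac18$. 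Monotonicity of the exponential again yields $g(s)\le e^{1/8}=\mu_0$.

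Next I would treat the derivative bound \eqref{1111czfvgb2.ghhjkl5ghsddffhjuyddfffudddfccvviihjj}. Differentiating gives $g'(s)=2\beta s\,e^{\beta s^2}$. On $(0,\|c_0\|_{L^\infty(\Omega)}]$ one has $2\beta s=\frac{s}{4\|c_0\|_{L^\infty(\Omega)}^2}\le\frac{1}{4\|c_0\|_{L^\infty(\Omega)}}$, while the bound just proved gives $e^{\beta s^2}\le e^{1/8}$. Multiplying the two estimates delivers $g'(s)\le\frac{1}{4\|c_0\|_{L^\infty(\Omega)}}e^{1/8}=\mu_1$, which is exactly the asserted inequality.

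There is essentially no obstacle here: the statement is a purely computational preparatory fact, and the only point requiring care is the bookkeeping of the constant $\beta$ so that the exponent is exactly $\tfrac18$ at the right endpoint $s=\|c_0\|_{L^\infty(\Omega)}$. The symbol $C_S$ from \eqref{hnjmssddaqwswddaassffssff3.10deerfgghhjdddfgggjjuuloollgghhhyhh} is recalled only for later use and does not enter these two inequalities. Once these bounds are available, $g$ is uniformly comparable to the constant $1$ with a controlled first derivative, which is precisely what the weighted-functional argument for the $L^p$ bound on $n_\varepsilon$ in the subsequent lemmas will require.
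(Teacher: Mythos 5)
Your proposal is correct and follows exactly the same elementary route as the paper: observe that $0\le\beta s^2\le\tfrac18$ on the interval to get the two-sided bound on $g$, then differentiate and combine $2\beta s\le\frac{1}{4\|c_0\|_{L^\infty(\Omega)}}$ with the upper bound $e^{\beta s^2}\le e^{1/8}$. The paper simply declares the first bound ``obvious'' and does the derivative computation, so you have merely filled in the routine details.
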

\begin{proof}
Obviously, \dref{czfvgb2.5ddffghsddffhjuyddfffdddddddggfffuccvviihjj} holds.
On the other hand, a direct computation shows
\begin{equation}
\begin{array}{rl}g'(s)=&2\beta se^{\beta s^2}.\\
\end{array}
\label{czfvgb2.5ghsddffhjuyddfffddfffuccvviihjj}
\end{equation}
This combined with the fact that $\beta=\frac{1}{8\|c_0\|_{L^\infty(\Omega)}^2}$ implies 
\dref{1111czfvgb2.ghhjkl5ghsddffhjuyddfffudddfccvviihjj}.
\end{proof}

\begin{lemma}\label{lemmaghjffggssddgghhmk4563025xxhjklojjkkk}
Let $\alpha>0$. 
Then  for any $p>1$, there exists $C>0$ 
such that the solution of \dref{1.1fghyuisda} satisfies
\begin{equation}
\begin{array}{rl}
&\disp{\int_{\Omega} n_{\varepsilon}  ^p\leq C~~~\mbox{for all}~~ t\in (0, T_{max,\varepsilon}).}\\
\end{array}
\label{czfvgb2.5ghhjuyuccvviihjj}
\end{equation}
\end{lemma}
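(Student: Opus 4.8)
The plan is to test the first equation of \dref{1.1fghyuisda} against $p n_\varepsilon^{p-1} g(c_\varepsilon)$, where $g(s)=e^{\beta s^2}$ is the weight from Lemma \ref{fvfgfflemma45}, and to exploit the fact that $g$ is bounded between $1$ and $\mu_0$ together with the decay properties of $c_\varepsilon$ and $m_\varepsilon$ from Lemma \ref{fvfgsdfggfflemma45}. Concretely, using $\nabla\cdot u_\varepsilon=0$ and integrating by parts, I would derive an identity of the form
\begin{equation}
\frac{d}{dt}\int_\Omega n_\varepsilon^p g(c_\varepsilon)
= -p(p-1)\int_\Omega n_\varepsilon^{p-2}g(c_\varepsilon)|\nabla n_\varepsilon|^2
+ \mbox{(cross terms)} + \int_\Omega n_\varepsilon^p g'(c_\varepsilon)(\Delta c_\varepsilon - c_\varepsilon + m_\varepsilon)
- p\int_\Omega n_\varepsilon^p g(c_\varepsilon)m_\varepsilon,
\label{proposalidentity}
\end{equation}
where the cross terms come from the chemotactic flux $n_\varepsilon S_\varepsilon\nabla c_\varepsilon$ and, after a second integration by parts on the $g'(c_\varepsilon)\Delta c_\varepsilon$ contribution, produce terms carrying $g''$ and $g'$ against $|\nabla c_\varepsilon|^2$ as well as $n_\varepsilon^{p-1}\nabla n_\varepsilon\cdot\nabla c_\varepsilon$. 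The guiding principle (as in \cite{Winkler2233444ssdff51215,Winkler61215}) is that the precise choice $\beta=\frac{1}{8\|c_0\|_{L^\infty}^2}$ makes the sign structure of the $|\nabla c_\varepsilon|^2$-coefficient favorable, so that the genuinely dangerous chemotactic contribution can be absorbed into the good dissipation term $-p(p-1)\int_\Omega n_\varepsilon^{p-2}g(c_\varepsilon)|\nabla n_\varepsilon|^2$.

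The key point where $\alpha>0$ enters is in estimating the chemotaxis cross term. Writing $n_\varepsilon^{p-1}\nabla n_\varepsilon = \frac{1}{p}\nabla n_\varepsilon^p$ where convenient, the term involving $S_\varepsilon$ is bounded using $|S_\varepsilon(x,n_\varepsilon,c_\varepsilon)|\le (1+n_\varepsilon)^{-\alpha}C_S$ from \dref{x1.73142vghf48gg} and \dref{hnjmssddaqwswddaassffssff3.10deerfgghhjdddfgggjjuuloollgghhhyhh}; the factor $(1+n_\varepsilon)^{-\alpha}$ gives an effective power $n_\varepsilon^{p-\alpha}$ (up to lower order), and a Young inequality splits this into an $\eta\,n_\varepsilon^{p-2}|\nabla n_\varepsilon|^2$ piece plus a term controlled by $\int_\Omega n_\varepsilon^{p-2\alpha}|\nabla c_\varepsilon|^2$ (or a similar exponent). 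This remaining integral is then handled by a Gagliardo--Nirenberg/interpolation argument against $\int_0^t\int_\Omega|\nabla c_\varepsilon|^2$, which is bounded by \dref{ddczhjjjj2.5ghju48cfgffff924}, together with the spatial $L^2$ bound on $\nabla c_\varepsilon$ obtainable from the $c_\varepsilon$-equation; the strict positivity of $\alpha$ is exactly what keeps the exponent of $n_\varepsilon$ below the threshold where such an interpolation would fail. The terms carrying $g'(c_\varepsilon)$ and $g''(c_\varepsilon)$ are harmless because $g,g',g''$ are all bounded on $[0,\|c_0\|_{L^\infty}]$ by \dref{czfvgb2.5ddffghsddffhjuyddfffdddddddggfffuccvviihjj}--\dref{1111czfvgb2.ghhjkl5ghsddffhjuyddfffudddfccvviihjj}, and the $-p\int_\Omega n_\varepsilon^p g(c_\varepsilon)m_\varepsilon\le 0$ term can simply be discarded (or used), since $n_\varepsilon,m_\varepsilon\ge0$.

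After these absorptions I expect to arrive at a differential inequality of the form
\begin{equation}
\frac{d}{dt}\int_\Omega n_\varepsilon^p g(c_\varepsilon) + \int_\Omega n_\varepsilon^p g(c_\varepsilon) \le C_1 + C_2\Big(\int_\Omega|\nabla c_\varepsilon|^2\Big)\int_\Omega n_\varepsilon^p g(c_\varepsilon) + (\mbox{integrable-in-time remainder}),
\label{proposalODI}
\end{equation}
and then an application of the Gronwall lemma in the version with time-dependent coefficient, using that $t\mapsto\int_\Omega|\nabla c_\varepsilon|^2$ is integrable over $(0,t)$ uniformly (by \dref{ddczhjjjj2.5ghju48cfgffff924}), yields a bound on $\int_\Omega n_\varepsilon^p g(c_\varepsilon)$ that is uniform in $\varepsilon$ and locally uniform in $t$; since $g\ge1$, this gives the claimed $\int_\Omega n_\varepsilon^p\le C$. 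The main obstacle I anticipate is the bookkeeping in the absorption step: one must simultaneously control the $|\nabla c_\varepsilon|^2$ coefficient's sign (this is the role of the special $\beta$), absorb the chemotactic gradient cross term into the diffusion, and keep the residual $n_\varepsilon$-power integral low enough that the spacetime bound \dref{ddczhjjjj2.5ghju48cfgffff924} suffices — and for small $\alpha$ the margin is thin, possibly requiring an additional bootstrap (first proving the bound for $p$ close to $1$ and then iterating upward, or using an auxiliary $L^2$-in-time estimate on $\Delta c_\varepsilon$ from maximal regularity for the heat equation). Establishing \dref{czfvgb2.5ghhjuyuccvviihjj} then makes the local solution global, since the $L^p$ bound on $n_\varepsilon$ feeds into the standard $u_\varepsilon$- and $c_\varepsilon$-regularity machinery to contradict the blow-up criterion \dref{1.163072x}.
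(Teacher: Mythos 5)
Your functional $\frac{1}{p}\int_\Omega n_\varepsilon^p g(c_\varepsilon)$ with $g=e^{\beta c_\varepsilon^2}$, the integration-by-parts identity \dref{proposalidentity}, the role of the specially chosen $\beta$ in controlling the $|\nabla c_\varepsilon|^2$ coefficient via $g''$, and the vanishing of the $I_1+I_2$ transport terms and non-positivity of $I_3+I_4$ are all exactly the paper's starting point. But the decisive step in the paper is \emph{not} a Gagliardo--Nirenberg interpolation against $\int n_\varepsilon^{p-2\alpha}|\nabla c_\varepsilon|^2$, and this is where your proposal as written has a gap. After collecting terms one arrives at a quantity of the form
$$-\int_\Omega n_\varepsilon^p e^{\beta c_\varepsilon^2}\Bigl[\tfrac{\beta}{p}-C_S^2\,p\,n_\varepsilon^{-2\alpha}-2C_S\beta\|c_0\|_{L^\infty}n_\varepsilon^{-\alpha}\Bigr]|\nabla c_\varepsilon|^2,$$
and the key observation --- precisely where $\alpha>0$ enters, and in a different way than you propose --- is that $C_S^2ps^{-2\alpha}+2C_S\beta\|c_0\|_{L^\infty}s^{-\alpha}\to 0$ as $s\to\infty$, so there is a \emph{threshold} $\eta_0>0$ beyond which the bracket is bounded below by $\tfrac{\beta}{2p}>0$. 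One then splits the integral over $\{n_\varepsilon>\eta_0\}$ and $\{n_\varepsilon\le\eta_0\}$: the first piece is a \emph{good} (nonnegative) term that can be kept on the left; the second is bounded by $\gamma_0\int_\Omega|\nabla c_\varepsilon|^2$ with $\gamma_0$ depending only on $\eta_0,p,\alpha,\beta,\|c_0\|_{L^\infty}$, since for $n_\varepsilon\le\eta_0$ the factors $n_\varepsilon^{p-2\alpha}$ and $n_\varepsilon^{p-\alpha}$ are bounded by constants. This completely decouples $n_\varepsilon$ from $\nabla c_\varepsilon$ in the error term — there is no product $\bigl(\int|\nabla c_\varepsilon|^2\bigr)\int n_\varepsilon^p g$ as in your \dref{proposalODI}, and no interpolation at all.

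The interpolation you sketch would not close as stated: at this stage only the spacetime bound $\int_0^t\int_\Omega|\nabla c_\varepsilon|^2\le\lambda$ is available (the pointwise-in-time $L^2$ bound on $\nabla c_\varepsilon$ is obtained afterwards in Lemma \ref{ssdddlemmaghjffggssddgghhmk4563025xxhjklojjkkk} and uses the present lemma), and $\int_\Omega n_\varepsilon^{p-2\alpha}|\nabla c_\varepsilon|^2$ does not factor as (something integrable in time)$\times\int_\Omega n_\varepsilon^p g$ without the threshold device. Your instinct that this is "where the margin is thin, possibly requiring a bootstrap" is exactly the symptom of the missing idea. The paper also has a second step you omit: a Gagliardo--Nirenberg inequality converts the dissipation $\int_\Omega n_\varepsilon^{p-2}g(c_\varepsilon)|\nabla n_\varepsilon|^2$ into a superlinear term $\int_\Omega n_\varepsilon^{p+1/3}$, which absorbs the remaining $\tfrac{1}{p}\mu_1\lambda\int_\Omega n_\varepsilon^p$ by Young and yields $\tfrac{d}{dt}L+\tfrac12\int n_\varepsilon^{p+1/3}\le\gamma_0\int|\nabla c_\varepsilon|^2+C$; combined with the uniform bound on $\int_t^{t+\tau}\int_\Omega|\nabla c_\varepsilon|^2$, a uniform-Gronwall lemma (Lemma 2.3 of \cite{Wddffang11215}) finishes. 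This is why the paper's ODI is autonomous in structure, whereas yours carries a time-dependent coefficient. So: same functional and same high-level strategy, but the actual mechanism that exploits $\alpha>0$ — pointwise decay of the sensitivity coefficient in $n_\varepsilon$ combined with a threshold split, not interpolation — is absent from your proposal.
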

\begin{proof}
Firstly,  we define a functional
$$L(n_{\varepsilon} ,c_{\varepsilon})=\frac{1}{p}\int_{\Omega}n_{\varepsilon}^pg(c_{\varepsilon}),$$
where $p>\max\{1,2\alpha\}$, $g(c_{\varepsilon})=e^{\beta c ^2}$
and  $\beta$ is the same as \dref{czfvgbdfgg2.5ddffghddffsddffhjuyddfffdddddddggfffuccvviihjj}.
Using the first two equations in \dref{1.1fghyuisda}, we find:
\begin{equation}
\begin{array}{rl}
&\disp{\frac{d}{dt}L(n_{\varepsilon} ,c_{\varepsilon})}\\
=&\disp{\int_{\Omega}n_{\varepsilon}  ^{p-1}n_{\varepsilon t} g(c_{\varepsilon})+\frac{1}{p}\int_{\Omega}n_{\varepsilon}  ^pg'(c_{\varepsilon})c_{\varepsilon t}}\\
=&\disp{\int_{\Omega}n_{\varepsilon}  ^{p-1}g(c_{\varepsilon})(\Delta  n_{\varepsilon} -\nabla\cdot(n_{\varepsilon} S_{\varepsilon} (x, n_{\varepsilon} , c_{\varepsilon})\nabla c_{\varepsilon}) -u_{\varepsilon} \cdot\nabla n_{\varepsilon} -n_{\varepsilon} m _{\varepsilon})}\\
&\disp{+\frac{1}{p}\int_{\Omega}n_{\varepsilon}  ^pg'(c_{\varepsilon})(\Delta c_{\varepsilon}-c_{\varepsilon} +m_{\varepsilon} -u \cdot\nabla c_{\varepsilon})}\\
=&\disp{-\frac{1}{p}
\int_{\Omega}n_{\varepsilon}  ^pg'(c_{\varepsilon})u_{\varepsilon} \cdot\nabla c_{\varepsilon}-\int_{\Omega}n_{\varepsilon}  ^{p-1}g(c_{\varepsilon})u_{\varepsilon} \cdot\nabla n_{\varepsilon} }\\
&\disp{-\frac{1}{p}\int_{\Omega}n_{\varepsilon}  ^pg'(c_{\varepsilon})c_{\varepsilon}-\int_{\Omega}n_{\varepsilon}  ^pg(c_{\varepsilon})m_{\varepsilon} +\frac{1}{p}\int_{\Omega}n_{\varepsilon}  ^pg'(c_{\varepsilon})m_{\varepsilon} }\\
&+\disp{\int_{\Omega}n_{\varepsilon}  ^{p-1}g(c_{\varepsilon})\Delta n_{\varepsilon} -\int_{\Omega}n_{\varepsilon}  ^{p-1}g(c_{\varepsilon})\nabla\cdot(n_{\varepsilon} S_{\varepsilon}(x, n_{\varepsilon}, c_{\varepsilon})\nabla c_{\varepsilon})
}\\
&\disp{+\frac{1}{p}\int_{\Omega}n_{\varepsilon}  ^pg'(c_{\varepsilon})\Delta c_{\varepsilon} }\\
=:&\disp{\sum_{i=1}^8I_i~~~\mbox{for all}~~ t\in (0, T_{max,\varepsilon}).}
\end{array}
\label{55hhjjcffghhhjkkllz2dddd.5}
\end{equation}
In the following, we will estimate the right-hand sides of \dref{55hhjjcffghhhjkkllz2dddd.5} one by one. To this end, firstly, applying the elementary calculus identity
$$\frac{1}{p}n_{\varepsilon}  ^pg'(c_{\varepsilon})\nabla c_{\varepsilon} +n_{\varepsilon}  ^{p-1}g(c_{\varepsilon})\nabla n_{\varepsilon} =\frac{1}{p}\nabla(n_{\varepsilon}  ^pg(c_{\varepsilon}))$$
and the  fact that
$$\nabla\cdot u_{\varepsilon}=0,\quad
x\in \Omega, t>0,\\$$
we once more integrate by parts to find that
\begin{equation}I_1+I_2=-
\frac{1}{p}\int_{\Omega}n_{\varepsilon}  ^pg'(c_{\varepsilon})u_{\varepsilon} \cdot\nabla c_{\varepsilon}-\int_{\Omega}n_{\varepsilon}  ^{p-1}g(c_{\varepsilon})u_{\varepsilon} \cdot\nabla n_{\varepsilon} =0
\label{55hhjjcfddfffddghhhjkkllz2dddd.5}
\end{equation}
by using $u_{\varepsilon} =0,
x\in \partial\Omega, t>0$.
Next,  
we derive from  the non-negativity of $g',g$ (see \dref{czfvgb2.5ghsddffhjuyddfffddfffuccvviihjj} and \dref{czfvgb2.5ddffghsddffhjuyddfffdddddddggfffuccvviihjj}), $c_{\varepsilon} ,m_{\varepsilon} $ and $n_{\varepsilon} $ that
\begin{equation}I_3+I_4=-\frac{1}{p}\int_{\Omega}n_{\varepsilon}  ^pg'(c_{\varepsilon})c_{\varepsilon}-\int_{\Omega}n_{\varepsilon}  ^pg(c_{\varepsilon})m_{\varepsilon} \leq0,
\label{55hhjjcfddfffddghhhjkkssdddsssllz2dddd.5}
\end{equation}
which combined with \dref{55hhjjcffghhhjkkllz2dddd.5} and \dref{55hhjjcfddfffddghhhjkkllz2dddd.5} yields
\begin{equation}
\begin{array}{rl}
&\disp{\frac{d}{dt}L(n_{\varepsilon} ,c_{\varepsilon})}\\
\leq&\disp{\int_{\Omega}n_{\varepsilon}  ^{p-1}g(c_{\varepsilon})\Delta n_{\varepsilon} -\int_{\Omega}n_{\varepsilon}  ^{p-1}g(c_{\varepsilon})\nabla\cdot(n_{\varepsilon} S_{\varepsilon}(x, n_{\varepsilon}, c_{\varepsilon})\nabla c_{\varepsilon})+\frac{1}{p}\int_{\Omega}n_{\varepsilon}  ^pg'(c_{\varepsilon})\Delta c_{\varepsilon} }\\
&+\disp{\frac{1}{p}\int_{\Omega}n_{\varepsilon}  ^pg'(c_{\varepsilon})m_{\varepsilon} .}\\
\end{array}
\label{55hhjjcffghhhjkddffkllz2dddd.5}
\end{equation}
Now we proceed to estimate the fourth term on the right-hand side herein by using
\dref{ddfgczhhhh2.5ghju48cfg924ghyuji} and \dref{1111czfvgb2.ghhjkl5ghsddffhjuyddfffudddfccvviihjj} to find that
\begin{equation}
\begin{array}{rl}
\disp{\frac{1}{p}\int_{\Omega}n_{\varepsilon}  ^pg'(c_{\varepsilon})m_{\varepsilon} }\leq&\disp{
\frac{1}{p}\mu_1\lambda\int_{\Omega}n_{\varepsilon}  ^p,}\\
\end{array}
\label{55hhjjcffghhhjkdddddssdddffkllz2dddd.5}
\end{equation}
where $\mu_1$ is the same as \dref{1111czfvgb2.ghhjkl5ghsddffhjuyddfffudddfccvviihjj}.

Now we estimate the term $\int_{\Omega}n_{\varepsilon}  ^{p-1}g(c_{\varepsilon})\Delta  n_{\varepsilon} $ and $\frac{1}{p}\int_{\Omega}n_{\varepsilon}  ^pg'(c_{\varepsilon})\Delta c_{\varepsilon} $
in the right hand side of \dref{55hhjjcffghhhjkddffkllz2dddd.5}.  In fact, we once more integrate by parts to see  that
\begin{equation}
\begin{array}{rl}
&\disp{\int_{\Omega}n_{\varepsilon}  ^{p-1}g(c_{\varepsilon})\Delta  n_{\varepsilon} +\frac{1}{p}\int_{\Omega}n_{\varepsilon}  ^pg'(c_{\varepsilon})\Delta c_{\varepsilon} }\\
=&\disp{-(p-1)\int_{\Omega}n_{\varepsilon} ^{p-2}g(c_{\varepsilon}) |\nabla n_{\varepsilon} |^2 - \int_{\Omega}n_{\varepsilon}  ^{p-1}g'(c_{\varepsilon}) \nabla n_{\varepsilon} \cdot \nabla c_{\varepsilon} }\\
&\disp{-\frac{1}{p}\int_{\Omega}n_{\varepsilon}  ^pg''(c_{\varepsilon})|\nabla c_{\varepsilon} |^2-\int_{\Omega}n_{\varepsilon}  ^{p-1}g'(c_{\varepsilon})\nabla c_{\varepsilon} \cdot \nabla n_{\varepsilon} }\\
\leq&\disp{-(p-1)\int_{\Omega}n_{\varepsilon} ^{p-2}g(c_{\varepsilon}) |\nabla n_{\varepsilon} |^2 -\frac{1}{p}\int_{\Omega}n_{\varepsilon}  ^pg''(c_{\varepsilon})|\nabla c_{\varepsilon} |^2 }\\
&\disp{+2\int_{\Omega}n_{\varepsilon}  ^{p-1}g'(c_{\varepsilon})|\nabla n_{\varepsilon} || \nabla c_{\varepsilon} |}\\
\leq&\disp{-\frac{3(p-1)}{4}\int_{\Omega}n_{\varepsilon} ^{p-2}g(c_{\varepsilon}) |\nabla n_{\varepsilon} |^2 -\frac{1}{p}\int_{\Omega}n_{\varepsilon}  ^pg''(c_{\varepsilon})|\nabla c_{\varepsilon} |^2 }\\
&\disp{+\frac{4}{p-1}\int_{\Omega}n_{\varepsilon}  ^p\frac{g'(c_{\varepsilon})^2}{g(c_{\varepsilon})}| \nabla c_{\varepsilon} |^2}\\
\end{array}
\label{55hhjjcffghdrgddffffhjkhhjkddffkllz2dssdsdddsdddddd.5}
\end{equation}
by using the Young inequality.
Next,  recall \dref{x1.73142vghf48gg}, 
 we can estimate second term on the right-hand side of \dref{55hhjjcffghhhjkddffkllz2dddd.5} as follows:
 \begin{equation}
\begin{array}{rl}
&\disp{-\int_{\Omega}n_{\varepsilon}  ^{p-1}g(c_{\varepsilon})\nabla\cdot(n_{\varepsilon} S_{\varepsilon}(x, n_{\varepsilon}, c_{\varepsilon})\nabla c_{\varepsilon})}\\
=&\disp{(p-1)\int_{\Omega}n_{\varepsilon}  ^{p-1}g(c_{\varepsilon})S (x, n_{\varepsilon} , c_{\varepsilon})\nabla c_{\varepsilon} \cdot\nabla n_{\varepsilon} +\int_{\Omega}n_{\varepsilon}  ^pg'(c_{\varepsilon})S (x, n_{\varepsilon} , c_{\varepsilon})\nabla c_{\varepsilon} \cdot\nabla c_{\varepsilon} }\\
\leq&\disp{(p-1)C_S\int_{\Omega}n_{\varepsilon} ^{p-1-\alpha}g(c_{\varepsilon})|\nabla c_{\varepsilon} ||\nabla n_\varepsilon |+C_S\int_{\Omega}n_\varepsilon ^{p-\alpha}g'(c_{\varepsilon})|\nabla c_{\varepsilon} |^2}\\
\leq&\disp{\frac{p-1}{4}\int_{\Omega}n_{\varepsilon} ^{p-2} g(c_{\varepsilon})|\nabla n_{\varepsilon} |^2+ C_S^2(p-1)\int_{\Omega}n_\varepsilon ^{p-2\alpha} g(c_{\varepsilon})|\nabla c_{\varepsilon} |^2}\\
&\disp{+C_S\int_{\Omega}n_\varepsilon ^{p-\alpha}g'(c_{\varepsilon})|\nabla c_{\varepsilon} |^2,}\\
\end{array}
\label{55hhjjcffghhhjkdssdddffkllz2dddd.5}
\end{equation}
where in the last inequality, we have used the Young inequality.
Now, collecting \dref{55hhjjcffghhhjkddffkllz2dddd.5}--\dref{55hhjjcffghhhjkdssdddffkllz2dddd.5}, we may have
\begin{equation}
\begin{array}{rl}
&\disp{\frac{d}{dt}L(n_{\varepsilon} ,c_{\varepsilon})+\frac{p-1}{2}\int_{\Omega}n_{\varepsilon} ^{p-2}g(c_{\varepsilon}) |\nabla n_{\varepsilon} |^2 }\\
&+\disp{\int_{\Omega}n_{\varepsilon}  ^p\left[\frac{1}{p}g''(c_{\varepsilon})-\frac{4}{p-1}\frac{g'(c_{\varepsilon})^2}{g(c_{\varepsilon})}-
 C_S^2(p-1)n_{\varepsilon} ^{-2\alpha}g(c_{\varepsilon})-C_Sg'(c_{\varepsilon})n_{\varepsilon} ^{-\alpha}\right]|\nabla c_{\varepsilon} |^2 }\\
\leq&\disp{\frac{1}{p}\mu_1\lambda\int_{\Omega}n_{\varepsilon}  ^p,}\\
\end{array}
\label{55hhjjcffghhhjkddffkllz2ddddsddll.5}
\end{equation}
whence returning to the definition of $g(c_{\varepsilon})$ we conclude that
\begin{equation}
\begin{array}{rl}
&\disp{\frac{d}{dt}L(n_{\varepsilon} ,c_{\varepsilon})+\frac{p-1}{2}\int_{\Omega}n_{\varepsilon} ^{p-2}g(c_{\varepsilon}) |\nabla n_{\varepsilon} |^2 }\\
\leq&\disp{-\int_{\Omega}n_{\varepsilon}  ^pe^{\beta c_{\varepsilon} ^2}\left[\frac{1}{p}(4\beta^2 c_{\varepsilon} ^2+2\beta)-\frac{16}{p-1}\beta^2 c_{\varepsilon} ^2-
 C_S^2(p-1)n_{\varepsilon} ^{-2\alpha}-2C_S\beta  c_{\varepsilon}  n_{\varepsilon} ^{-\alpha}\right]|\nabla c_{\varepsilon} |^2}\\
 &\disp{ +\frac{1}{p}\mu_1\lambda\int_{\Omega}n_{\varepsilon}  ^p}\\
 \leq&\disp{-\int_{\Omega}n_{\varepsilon}  ^pe^{\beta c_{\varepsilon} ^2}\left[\frac{2\beta}{p}-\frac{16}{p-1}\beta^2 c_{\varepsilon} ^2-
 C_S^2(p-1)n_{\varepsilon} ^{-2\alpha}-2C_S\beta  c_{\varepsilon}  n_{\varepsilon} ^{-\alpha}\right]|\nabla c_{\varepsilon} |^2}\\
 &\disp{  +\frac{1}{p}\mu_1\lambda\int_{\Omega}n_{\varepsilon}  ^p}\\
 \leq&\disp{-\int_{\Omega}n_{\varepsilon}  ^pe^{\beta c_{\varepsilon} ^2}\left[\frac{2\beta}{p}-\frac{16}{p-1}\beta^2 \|c_0\|_{L^\infty(\Omega)}^2-
 C_S^2(p-1)n_{\varepsilon} ^{-2\alpha}-2C_S\beta  \|c_0\|_{L^\infty(\Omega)} n_{\varepsilon} ^{-\alpha}\right]|\nabla c_{\varepsilon} |^2}\\
 &\disp{ +\frac{1}{p}\mu_1\lambda\int_{\Omega}n_{\varepsilon}  ^p}\\
 \leq&\disp{-\int_{\Omega}n_{\varepsilon}  ^pe^{\beta c_{\varepsilon} ^2}\left[\frac{2\beta}{p}-\frac{8}{p}\beta^2 \|c_0\|_{L^\infty(\Omega)}^2-
 C_S^2pn_{\varepsilon} ^{-2\alpha}-2C_S\beta  \|c_0\|_{L^\infty(\Omega)} n_{\varepsilon} ^{-\alpha}\right]|\nabla c_{\varepsilon} |^2}\\
 &\disp{ +\frac{1}{p}\mu_1\lambda\int_{\Omega}n_{\varepsilon}  ^p}\\
\end{array}
\label{55hhjjcffghhhjkddffkllz2ddddsddll.ssddd5}
\end{equation}
by using $p>\max\{1,2\alpha\}.$
In view of $\beta=\frac{1}{8\|c_0\|_{L^\infty(\Omega)}^2}$, thus, \dref{55hhjjcffghhhjkddffkllz2ddddsddll.ssddd5} implies  that
\begin{equation}
\begin{array}{rl}
&\disp{\frac{d}{dt}L(n_{\varepsilon} ,c_{\varepsilon})+\frac{p-1}{2}\int_{\Omega}n_{\varepsilon} ^{p-2}g(c_{\varepsilon}) |\nabla n_{\varepsilon} |^2 }\\
 \leq&\disp{-\int_{\Omega}n_{\varepsilon}  ^pe^{\beta c ^2}\left[\frac{\beta}{p}-
 C_S^2pn_{\varepsilon} ^{-2\alpha}-2C_S\beta  \|c_0\|_{L^\infty(\Omega)} n_{\varepsilon} ^{-\alpha}\right]|\nabla c_{\varepsilon} |^2 +\frac{1}{p}\mu_1\lambda\int_{\Omega}n_{\varepsilon}  ^p.}\\
\end{array}
\label{55hhjjcffghhhjkddffkllz2ddddsddll.ssdddssd5}
\end{equation}
On the other hand,  due to $\alpha>0,$ we may have
$$\lim_{s\rightarrow+\infty}[
 C_S^2ps^{-2\alpha}+2C_S\beta  \|c_0\|_{L^\infty(\Omega)} s^{-\alpha}]=0,$$
 so that, there exists $\eta_0>0$, such that for any $s>\eta_0$,
 $$[C_S^2ps^{-2\alpha}+2C_S\beta  \|c_0\|_{L^\infty(\Omega)} s^{-\alpha}]<\frac{\beta}{2p}.$$
Therefore, by  some basic
calculation, we derive  from  \dref{ddfgczhhhh2.5ghju48cfg924ghyuji} that
 \begin{equation}
\begin{array}{rl}
&\disp{\int_{\Omega}n_{\varepsilon}  ^pe^{\beta c_{\varepsilon} ^2}\left[C_S^2pn_{\varepsilon} ^{-2\alpha}+2C_S\beta  \|c_0\|_{L^\infty(\Omega)} n_{\varepsilon} ^{-\alpha}\right]|\nabla c_{\varepsilon} |^2  }\\
 \leq&\disp{\int_{n_{\varepsilon} >\eta_0}n_{\varepsilon}  ^pe^{\beta c_{\varepsilon} ^2}\left[C_S^2pn_{\varepsilon} ^{-2\alpha}+2C_S\beta  \|c_0\|_{L^\infty(\Omega)} n_{\varepsilon} ^{-\alpha}\right]|\nabla c_{\varepsilon} |^2}\\
 &\disp{+\int_{n_{\varepsilon} \leq\eta_0}n_{\varepsilon}  ^pe^{\beta c_{\varepsilon} ^2}\left[C_S^2pn_{\varepsilon} ^{-2\alpha}+2C_S\beta  \|c_0\|_{L^\infty(\Omega)} n ^{-\alpha}\right]|\nabla c_{\varepsilon} |^2 }\\
 \leq&\disp{\int_{n_{\varepsilon} >\eta_0}n_{\varepsilon}  ^pe^{\beta c_{\varepsilon} ^2}\frac{\beta}{2p}|\nabla c_{\varepsilon} |^2+\int_{n_{\varepsilon} \leq\eta_0}n_{\varepsilon}  ^pe^{\beta c_{\varepsilon} ^2}\left[C_S^2pn_{\varepsilon} ^{-2\alpha}+2C_S\beta  \|c_0\|_{L^\infty(\Omega)} n ^{-\alpha}\right]|\nabla c_{\varepsilon} |^2 }\\
 \leq&\disp{\int_{\Omega}n_{\varepsilon}  ^pe^{\beta c_{\varepsilon} ^2}\frac{\beta}{2p}|\nabla c_{\varepsilon} |^2+\int_{n_{\varepsilon} \leq\eta_0}e^{\beta c_{\varepsilon} ^2}\left[C_S^2pn_\varepsilon ^{p-2\alpha}+2C_S\beta  \|c_0\|_{L^\infty(\Omega)} n_{\varepsilon} ^{p-\alpha}\right]|\nabla c_{\varepsilon} |^2 }\\
 \leq&\disp{\int_{\Omega}n_{\varepsilon}  ^pe^{\beta c_{\varepsilon} ^2}\frac{\beta}{2p}|\nabla c_{\varepsilon} |^2+\gamma_0\int_{n_{\varepsilon} \leq\eta_0}|\nabla c_{\varepsilon} |^2 }\\
 \leq&\disp{\int_{\Omega}n_{\varepsilon}  ^pe^{\beta c_{\varepsilon} ^2}\frac{\beta}{2p}|\nabla c_{\varepsilon} |^2+\gamma_0\int_{\Omega}|\nabla c_{\varepsilon} |^2 }\\
\end{array}
\label{55hhjjcffghhhjkddfssddfkllz2ddddsddll.sssddsdddssd5}
\end{equation}
with
$$\gamma_0=e^{\beta \|c_0\|_{L^\infty(\Omega)}^2}\left[C_S^2p\eta_0^{p-2\alpha}+2C_S\beta  \|c_0\|_{L^\infty(\Omega)} \eta_0^{p-\alpha}\right]$$
by using \dref{ddfgczhhhh2.5ghju48cfg924ghyuji} and $p>\max\{1,2\alpha\}$.
Substituting  \dref{55hhjjcffghhhjkddfssddfkllz2ddddsddll.sssddsdddssd5} into \dref{55hhjjcffghhhjkddffkllz2ddddsddll.ssdddssd5}, we have
%
 \begin{equation}
\begin{array}{rl}
&\disp{\frac{d}{dt}L(n_{\varepsilon} ,c_{\varepsilon})+\frac{p-1}{2}\int_{\Omega}n_{\varepsilon} ^{p-2}g(c_{\varepsilon}) |\nabla n_{\varepsilon} |^2 +\int_{\Omega}n_{\varepsilon}  ^pe^{\beta c_{\varepsilon} ^2}\frac{\beta}{2p}|\nabla c_{\varepsilon} |^2}\\
 \leq&\disp{\gamma_0\int_{\Omega}|\nabla c_{\varepsilon} |^2 +\frac{1}{p}\mu_1\lambda\int_{\Omega}n_{\varepsilon}  ^p.}\\
\end{array}
\label{55hhjjcffghhhjkddffkllz2ddddsddll.sssddsdddssd5}
\end{equation}

Now, according to \dref{ddfgczhhhh2.5ghju48cfg924ghyuji},  we therefore obtain on using the  Gagliardo-Nirenberg inequality that
\begin{equation}
\begin{array}{rl}
\disp \int_{\Omega}n_{\varepsilon} ^{p+\frac{1}{3}}=&\disp{ \|n_{\varepsilon} ^{\frac{p}{2}}\|_{L^{\frac{2(3p+1)}{3p}}(\Omega)}^{\frac{2(3p+1)}{3p}}}\\
\leq&\disp{ C_1[\|\nabla n_{\varepsilon} ^{\frac{p}{2}}\|_{L^{\frac{2}{p}}(\Omega)}^{\frac{2(3p-2)}{3p-1}}\| n_{\varepsilon} ^{\frac{p}{2}}\|_{L^{\frac{2}{p}}(\Omega)}^{\frac{2(3p+1)}{3p}-\frac{2(3p-2)}{3p-1}}+
\|n_{\varepsilon} ^{\frac{p}{2}}\|_{L^{\frac{2}{p}}(\Omega)}^{\frac{2(3p+1)}{3p}}]}\\
\leq&\disp{ \frac{(p-1)}{4}\frac{4}{p^2}\|n_{\varepsilon} ^{\frac{p}{2}}\|_{L^{2}(\Omega)}^2+C_2}\\
=&\disp{ \frac{(p-1)}{4}\int_{\Omega}n_{\varepsilon} ^{p-2} |\nabla n_{\varepsilon} |^2 +C_2}\\
\leq&\disp{ \frac{(p-1)}{4}\int_{\Omega}n_{\varepsilon} ^{p-2}g(c_{\varepsilon}) |\nabla n_{\varepsilon}|^2 +C_2}\\
\end{array}
\label{55hhjjcffghhhjssddkdssddddfsdddfkllz2dddd.5}
\end{equation}
for some positive constants $C_1$ and $C_2$, where in the last inequality, we have used \dref{czfvgb2.5ddffghsddffhjuyddfffdddddddggfffuccvviihjj}.
Collecting \dref{55hhjjcffghhhjkddffkllz2ddddsddll.sssddsdddssd5} and \dref{55hhjjcffghhhjssddkdssddddfsdddfkllz2dddd.5}, we have
\begin{equation}
\begin{array}{rl}
&\disp{\frac{d}{dt}L(n_{\varepsilon} ,c_{\varepsilon})+\frac{p-1}{4}\int_{\Omega}n_{\varepsilon} ^{p-2}g(c_{\varepsilon}) |\nabla n_{\varepsilon} |^2 +\int_{\Omega}n_{\varepsilon}  ^pe^{\beta c_{\varepsilon} ^2}\frac{\beta}{2p}|\nabla c_{\varepsilon} |^2+\int_{\Omega}n_\varepsilon ^{p+\frac{1}{3}}}\\
 \leq&\disp{\gamma_0\int_{\Omega}|\nabla c_{\varepsilon} |^2 +\frac{1}{p}\mu_1\lambda\int_{\Omega}n_{\varepsilon}  ^p+C_2}\\
  \leq&\disp{\gamma_0\int_{\Omega}|\nabla c_{\varepsilon} |^2 +\frac{1}{2}\int_{\Omega}n_{\varepsilon} ^{p+\frac{1}{3}}+C_3}\\
\end{array}
\label{55hhjjcffghhhjkddffkllz2ddddsddll.ssdddssddsdddssdddsd5}
\end{equation}
by using the Young inequality.
Therefore,
\begin{equation}
\begin{array}{rl}
&\disp{\frac{d}{dt}L(n_{\varepsilon} ,c_{\varepsilon})+\frac{p-1}{4}\int_{\Omega}n_{\varepsilon} ^{p-2}g(c_{\varepsilon}) |\nabla n_{\varepsilon} |^2 +\int_{\Omega}n_{\varepsilon}  ^pe^{\beta c_{\varepsilon} ^2}\frac{\beta}{2p}|\nabla c_{\varepsilon} |^2+\frac{1}{2}\int_{\Omega}n_{\varepsilon} ^{p+\frac{1}{3}}}\\
  \leq&\disp{\gamma_0\int_{\Omega}|\nabla c_{\varepsilon} |^2 +C_3.}\\
\end{array}
\label{55hhjjcffghhhjkddffddfsdddfkllz2dddd.5}
\end{equation}
To track the time evolution of $c_\varepsilon $, testing  the second equation in \dref{1.1fghyuisda} by $c_\varepsilon$ and using $\nabla\cdot u_\varepsilon =0$ and \dref{ddfgczhhhh2.5ghju48cfg924ghyuji} yields that for some positive constant $C_4$ such that
\begin{equation}
\begin{array}{rl}
\disp\frac{1}{{2}}\disp\frac{d}{dt}\|{c_{\varepsilon} }\|^{{{2}}}_{L^{{2}}(\Omega)}+
\int_{\Omega} |\nabla c_{\varepsilon} |^2=&\disp{-\int_{\Omega} c_{\varepsilon} ^2+\int_{\Omega}m_{\varepsilon} c_{\varepsilon} }\\
\leq&\disp{-\frac{1}{2}\int_{\Omega} c_{\varepsilon} ^2+C_4~~\mbox{for all}~~ t\in(0, T_{max,\varepsilon}),}\\
\end{array}
\label{hhxxcdfvvjjcz2.5}
\end{equation}
wereafter integrating the above inequality  in time yields
\begin{equation}
\begin{array}{rl}
&\disp{\int_{t}^{t+1}\int_{\Omega}  |\nabla {c_{\varepsilon} }|^2\leq C_5~~\mbox{for all}~~ t>0}\\
\end{array}
\label{bnmbncz2.5ghhjuddfghhdddddffggyhjkklluivvbnnihjj}
\end{equation}
for some $C_5> 0$ by an integration. This yields to
\begin{equation}
\begin{array}{rl}
&\disp{\int_{t}^{t+\tau}\left[\gamma_0\int_{\Omega}| \nabla c_{\varepsilon} |^2+C_3\right]\leq C_6~~\mbox{for all}~~
t\in(0, T_{max,\varepsilon}-\tau)}\\
\end{array}
\label{bnmbncz2.5ghhjuddfghhdddddffggyhjkkllddffguivvbnnihjj}
\end{equation}
with \begin{equation}
\tau:=\min\{1,\frac{1}{6}T_{max,\varepsilon}\}.
\label{cz2.5ghju48cfg924vbhu}
\end{equation}
Finally,  \dref{bnmbncz2.5ghhjuddfghhdddddffggyhjkkllddffguivvbnnihjj} in conjunction with Lemma 2.3 of \cite{Wddffang11215} (see also \cite{Zhenddsdddddgssddsddfff00})  and \dref{55hhjjcffghhhjkddffddfsdddfkllz2dddd.5}
 establish  \dref{czfvgb2.5ghhjuyuccvviihjj}.

\end{proof}
In a straightforward manner, the estimates gained above can be seen to imply the following  $\varepsilon$-independent estimates, which plays an important role  in proving Theorem \ref{theorem3}.

%

\begin{lemma}\label{ssdddlemmaghjffggssddgghhmk4563025xxhjklojjkkk}
Let $\alpha>0$.
Then there exists $C>0$ 
such that the solution of \dref{1.1fghyuisda} satisfies
\begin{equation}
\begin{array}{rl}
&\disp{\int_{\Omega} \left(|\nabla c_{\varepsilon}| ^{2}+|\nabla m_{\varepsilon}| ^{2}\right)\leq C~~~\mbox{for all}~~ t\in (0, T_{max,\varepsilon}).}\\
\end{array}
\label{czfvgb2.5ghhjuyuccvviihjj}
\end{equation}
%
Moreover, for $t\in(0, T_{max,\varepsilon}-\tau)$, it holds that
\begin{equation}
\begin{array}{rl}
&\disp{\int_{t}^{t+\tau}\int_{\Omega} \left[   |\nabla {c_{\varepsilon} }|^4+ |\nabla {m_{\varepsilon}}|^4+|\nabla {u_{\varepsilon} }|^2+| {u_{\varepsilon} }|^{\frac{10}{3}}\right]\leq C,}\\
\end{array}
\label{bnmbncz2.5ghhjuyuivvbnnihjj}
\end{equation}
where $\tau$ is the same as \dref{cz2.5ghju48cfg924vbhu}.
\end{lemma}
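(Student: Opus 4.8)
The plan is to derive every bound in Lemma~\ref{ssdddlemmaghjffggssddgghhmk4563025xxhjklojjkkk} from the $L^p$-estimate for $n_\varepsilon$ in Lemma~\ref{lemmaghjffggssddgghhmk4563025xxhjklojjkkk}, the elementary $L^\infty$-bounds for $c_\varepsilon,m_\varepsilon$ in \dref{ddfgczhhhh2.5ghju48cfg924ghyuji}, the spacetime bounds on $\nabla c_\varepsilon,\nabla m_\varepsilon$ already recorded in Lemma~\ref{fvfgsdfggfflemma45}, and a short chain of testing procedures, each resulting differential inequality being closed through the ``uniform Gronwall'' device (Lemma~2.3 of \cite{Wddffang11215}) already used above; all constants come out independent of $\varepsilon$ and of $t$, since none of these inputs involves $S_\varepsilon$. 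First I would test the fourth equation in \dref{1.1fghyuisda} by $u_\varepsilon$: as $\nabla\cdot u_\varepsilon=0$ and $u_\varepsilon|_{\partial\Omega}=0$, both the pressure gradient and the term $\kappa(Y_\varepsilon u_\varepsilon\cdot\nabla)u_\varepsilon$ drop out (the latter because $Y_\varepsilon u_\varepsilon\in D(A)$ is solenoidal and vanishes on $\partial\Omega$), so this step is valid for \emph{every} $\kappa\in\mathbb{R}$; estimating $\int_\Omega(n_\varepsilon+m_\varepsilon)\nabla\phi\cdot u_\varepsilon\le\|\nabla\phi\|_{L^\infty}\|n_\varepsilon+m_\varepsilon\|_{L^{6/5}(\Omega)}\|u_\varepsilon\|_{L^6(\Omega)}$, and using $\|u_\varepsilon\|_{L^6}\le C\|\nabla u_\varepsilon\|_{L^2}$ together with Lemma~\ref{lemmaghjffggssddgghhmk4563025xxhjklojjkkk} (with $p=6/5$), \dref{ddfgczhhhh2.5ghju48cfg924ghyuji} and Young, one gets $\frac{d}{dt}\|u_\varepsilon\|_{L^2}^2+\|\nabla u_\varepsilon\|_{L^2}^2\le C$. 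The Poincar\'e inequality then gives $\|u_\varepsilon(\cdot,t)\|_{L^2}\le C$ for all $t$, integration gives $\int_t^{t+\tau}\|\nabla u_\varepsilon\|_{L^2}^2\le C$, and the Gagliardo--Nirenberg inequality $\|u_\varepsilon\|_{L^{10/3}}^{10/3}\le C\|\nabla u_\varepsilon\|_{L^2}^2\|u_\varepsilon\|_{L^2}^{4/3}+C\|u_\varepsilon\|_{L^2}^{10/3}$ upgrades this to $\int_t^{t+\tau}\|u_\varepsilon\|_{L^{10/3}}^{10/3}\le C$, which already settles the last two terms in \dref{bnmbncz2.5ghhjuyuivvbnnihjj}.

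Next I would test the second equation by $-\Delta c_\varepsilon$ and the third by $-\Delta m_\varepsilon$; since $\partial_\nu c_\varepsilon=\partial_\nu m_\varepsilon=0$ the integrations by parts are legitimate and produce $\frac12\frac{d}{dt}\|\nabla c_\varepsilon\|_{L^2}^2+\|\Delta c_\varepsilon\|_{L^2}^2+\|\nabla c_\varepsilon\|_{L^2}^2=\int_\Omega m_\varepsilon(-\Delta c_\varepsilon)+\int_\Omega(u_\varepsilon\cdot\nabla c_\varepsilon)(-\Delta c_\varepsilon)$ and, analogously, $\frac12\frac{d}{dt}\|\nabla m_\varepsilon\|_{L^2}^2+\|\Delta m_\varepsilon\|_{L^2}^2=\int_\Omega n_\varepsilon m_\varepsilon\Delta m_\varepsilon+\int_\Omega(u_\varepsilon\cdot\nabla m_\varepsilon)(-\Delta m_\varepsilon)$. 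The source terms are harmless after a Young absorption into the respective $\|\Delta\cdot\|_{L^2}^2$: indeed $\int_\Omega m_\varepsilon(-\Delta c_\varepsilon)\le\|m_\varepsilon\|_{L^2}\|\Delta c_\varepsilon\|_{L^2}$ is controlled by \dref{ddfgczhhhh2.5ghju48cfg924ghyuji}, while $\int_\Omega n_\varepsilon m_\varepsilon\Delta m_\varepsilon\le\|m_\varepsilon\|_{L^\infty}\|n_\varepsilon\|_{L^2}\|\Delta m_\varepsilon\|_{L^2}$ is controlled by \dref{ddfgczhhhh2.5ghju48cfg924ghyuji} and Lemma~\ref{lemmaghjffggssddgghhmk4563025xxhjklojjkkk} with $p=2$.

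The only delicate contributions are the transport terms $\int_\Omega(u_\varepsilon\cdot\nabla c_\varepsilon)(-\Delta c_\varepsilon)$ and its $m$-analogue, for which---no higher regularity of $u_\varepsilon$ being available when $\kappa\neq0$---only Step~1 is at hand, and this is the main obstacle. Exploiting $\nabla\cdot u_\varepsilon=0$ and $u_\varepsilon|_{\partial\Omega}=0$ I would integrate by parts once more to write $\int_\Omega(u_\varepsilon\cdot\nabla c_\varepsilon)(-\Delta c_\varepsilon)=\int_\Omega(\partial_j u_{\varepsilon,i})(\partial_i c_\varepsilon)(\partial_j c_\varepsilon)\le\|\nabla u_\varepsilon\|_{L^2}\|\nabla c_\varepsilon\|_{L^4}^2$; the crucial point is to keep the power of $\|\nabla u_\varepsilon\|_{L^2}$ down to $2$ (a direct interpolation of $\nabla c_\varepsilon$ against $\|\nabla c_\varepsilon\|_{L^2}$ would force a fourth power, which Step~1 cannot control), and this is achieved by interpolating against the \emph{a priori} $L^\infty$-bound \dref{ddfgczhhhh2.5ghju48cfg924ghyuji}: Gagliardo--Nirenberg gives $\|\nabla c_\varepsilon\|_{L^4}\le C\|c_\varepsilon\|_{W^{2,2}}^{1/2}\|c_\varepsilon\|_{L^\infty}^{1/2}$, and elliptic regularity for the Neumann Laplacian, $\|c_\varepsilon\|_{W^{2,2}}\le C(\|\Delta c_\varepsilon\|_{L^2}+\|c_\varepsilon\|_{L^2})$, then yields $\|\nabla c_\varepsilon\|_{L^4}^2\le C\|\Delta c_\varepsilon\|_{L^2}+C$, so that $\|\nabla u_\varepsilon\|_{L^2}\|\nabla c_\varepsilon\|_{L^4}^2\le\frac14\|\Delta c_\varepsilon\|_{L^2}^2+C\|\nabla u_\varepsilon\|_{L^2}^2+C$ by plain Young, and similarly for $m_\varepsilon$. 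Collecting everything---and, for the $m$-equation, using a Poincar\'e-type inequality $\|\Delta m_\varepsilon\|_{L^2}^2\ge c\|\nabla m_\varepsilon\|_{L^2}^2$ to recover the zero-order dissipation that it structurally lacks---one reaches $\frac{d}{dt}\|\nabla c_\varepsilon\|_{L^2}^2+\|\Delta c_\varepsilon\|_{L^2}^2+2\|\nabla c_\varepsilon\|_{L^2}^2\le C\|\nabla u_\varepsilon\|_{L^2}^2+C$ and an analogous inequality for $\|\nabla m_\varepsilon\|_{L^2}^2$. Since $\int_t^{t+1}(C\|\nabla u_\varepsilon\|_{L^2}^2+C)\le C$ by Step~1, Lemma~2.3 of \cite{Wddffang11215} produces both the pointwise bound $\int_\Omega(|\nabla c_\varepsilon|^2+|\nabla m_\varepsilon|^2)\le C$ asserted first and the spacetime bound $\int_t^{t+\tau}(\|\Delta c_\varepsilon\|_{L^2}^2+\|\Delta m_\varepsilon\|_{L^2}^2)\le C$; feeding the latter into $\|\nabla c_\varepsilon\|_{L^4}^4\le C\|\Delta c_\varepsilon\|_{L^2}^2+C$ and its $m$-counterpart gives $\int_t^{t+\tau}(\|\nabla c_\varepsilon\|_{L^4}^4+\|\nabla m_\varepsilon\|_{L^4}^4)\le C$, which together with Step~1 completes \dref{bnmbncz2.5ghhjuyuivvbnnihjj} for $\tau$ as in \dref{cz2.5ghju48cfg924vbhu}.
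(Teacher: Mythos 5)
Your proposal is correct and reproduces the paper's proof step for step: the $u_\varepsilon$-energy estimate (testing the Navier--Stokes equation by $u_\varepsilon$, with the pressure and $\kappa$-convective terms dropping out by solenoidality) yields the uniform $L^2$ bound on $u_\varepsilon$, the spacetime bound on $\nabla u_\varepsilon$, and, via Gagliardo--Nirenberg, the $L^{10/3}$ bound; then testing the $c$- and $m$-equations by $-\Delta c_\varepsilon,-\Delta m_\varepsilon$, integrating the transport term by parts once to reach $\|\nabla u_\varepsilon\|_{L^2}\|\nabla c_\varepsilon\|_{L^4}^2$, and interpolating $\|\nabla c_\varepsilon\|_{L^4}^2\lesssim\|\Delta c_\varepsilon\|_{L^2}+1$ against the $L^\infty$ bound, gives the same differential inequalities as the paper's (3.29) and (3.33), closed with the uniform Gronwall lemma. (The sign on the convective terms after multiplying by $-\Delta$ should be $+\int(u_\varepsilon\cdot\nabla c_\varepsilon)\Delta c_\varepsilon$ rather than $-$, but this is immaterial once absolute values are taken.)
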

\begin{proof}
We multiply the second equation in \dref{1.1fghyuisda} by $-\Delta c_{\varepsilon} $ and integrate by parts to see that
\begin{equation}
\begin{array}{rl}
&\disp\frac{1}{{2}}\disp\frac{d}{dt}\|\nabla{c_{\varepsilon} }\|^{{{2}}}_{L^{{2}}(\Omega)}+
\int_{\Omega} |\Delta c_{\varepsilon} |^2+ \int_{\Omega} | \nabla c_{\varepsilon} |^2
\\
=&\disp{-\int_{\Omega} m_{\varepsilon} \Delta c_{\varepsilon} +\int_{\Omega} (u_{\varepsilon} \cdot\nabla c_{\varepsilon})\Delta c_{\varepsilon} }
\\
=&\disp{-\int_{\Omega} m_{\varepsilon} \Delta c_{\varepsilon}-\int_{\Omega}\nabla c_{\varepsilon} \nabla (u_{\varepsilon} \cdot\nabla c_{\varepsilon})}
\\
=&\disp{-\int_{\Omega} m_{\varepsilon} \Delta c_{\varepsilon}-\int_{\Omega}\nabla c_{\varepsilon} \nabla (\nabla u_{\varepsilon} \cdot\nabla c_{\varepsilon}),}
\end{array}
\label{hhxxcsssdfvvjjczddfdddfff2.5}
\end{equation}
where we have used the fact that
$$
\disp{\int_{\Omega}\nabla c_{\varepsilon} \cdot(D^2 c_{\varepsilon} \cdot u_{\varepsilon} )
=\frac{1}{2}\int_{\Omega}  u_{\varepsilon} \cdot\nabla|\nabla c_{\varepsilon} |^2=0
~~\mbox{for all}~~ t\in(0,T_{max,\varepsilon}).}
$$
On the other hand,  by the Young inequality and \dref{ddczhjjjj2.5ghxxssddccju48cfg9ssdd24},
\begin{equation}
\begin{array}{rl}
\disp-\int_{\Omega} m_{\varepsilon}\Delta c_{\varepsilon}\leq&\disp{\int_{\Omega} m^2_{\varepsilon}+\frac{1}{4}\int_{\Omega}|\Delta c_{\varepsilon}|^2  }\\
\leq&\disp{|\Omega|\| m_0\|^2_{L^\infty(\Omega)}+\frac{1}{4}\int_{\Omega}|\Delta c_{\varepsilon}|^2  ~~\mbox{for all}~~ t\in(0, T_{max,\varepsilon}).}\\
\end{array}
\label{ssdddaassshhxxcdfvvjjcz2.5}
\end{equation}
In the last summand in \dref{hhxxcsssdfvvjjczddfdddfff2.5}, we use the Cauchy-Schwarz inequality to obtain
\begin{equation}
\begin{array}{rl}
\disp-\int_{\Omega}\nabla c_{\varepsilon} \nabla (\nabla u_{\varepsilon} \cdot\nabla c_{\varepsilon})
\leq&\disp{\|\nabla u_{\varepsilon} \|_{L^{2}(\Omega)}\|\nabla c_{\varepsilon} \|_{L^{4}(\Omega)}^2~~\mbox{for all}~~ t\in(0,T_{max,\varepsilon}).}
\end{array}
\label{hhxxcsssdfvvjjcddssdddffzddfdddfff2.5}
\end{equation}
Now thanks to \dref{ddfgczhhhh2.5ghju48cfg924ghyuji} and in view of the Gagliardo-Nirenberg inequality, we can find $C_1> 0$ and  $C_2> 0$ fulfilling
integrate by parts to find that
%
\begin{equation}
\begin{array}{rl}
\disp \|\nabla c_{\varepsilon} \|_{L^{4}(\Omega)}^2\leq&\disp{C_{1}\|\Delta c_{\varepsilon} \|_{L^{2}(\Omega)}\|c_{\varepsilon} \|_{L^{\infty}(\Omega)}+C_{1}\|c_{\varepsilon} \|_{L^{\infty}(\Omega)}^4}\\
\leq&\disp{C_{2}\|\Delta c_{\varepsilon} \|_{L^{2}(\Omega)}+C_{2}
~~\mbox{for all}~~ t\in(0,T_{max,\varepsilon}).}\\
\end{array}
\label{hhxxcsssdfvvjjcddfffddffzddfdddfff2.5}
\end{equation}
This, together with the  Young  inequality, yields
\begin{equation}
\begin{array}{rl}
&\disp-\int_{\Omega}\nabla c_{\varepsilon} \nabla (\nabla u_{\varepsilon} \cdot\nabla c_{\varepsilon})
\\
\leq&\disp{\|\nabla u_{\varepsilon} \|_{L^{2}(\Omega)}[C_{2}\|\Delta c_{\varepsilon} \|_{L^{2}(\Omega)}+C_{2}]}
\\
\leq&\disp{C_{2}^2\|\nabla u_{\varepsilon} \|_{L^{2}(\Omega)}^2
+\frac{1}{4}\|\Delta c_{\varepsilon} \|_{L^{2}(\Omega)}^2+C_3~~\mbox{for all}~~ t\in(0,T_{max,\varepsilon}).}
\end{array}
\label{hhxxcsssdfvvjjcddffzddfdddfff2.5}
\end{equation}
Inserting \dref{ssdddaassshhxxcdfvvjjcz2.5} and \dref{hhxxcsssdfvvjjcddffzddfdddfff2.5} into  \dref{hhxxcsssdfvvjjczddfdddfff2.5}, we have
\begin{equation}
\begin{array}{rl}
&\disp\frac{1}{{2}}\disp\frac{d}{dt}\|\nabla{c_{\varepsilon} }\|^{{{2}}}_{L^{{2}}(\Omega)}+\frac{1}{2}
\int_{\Omega} |\Delta c_{\varepsilon} |^2+ \int_{\Omega} | \nabla c_{\varepsilon} |^2
\\
\leq&\disp{C_{2}^2\|\nabla u_{\varepsilon} \|_{L^{2}(\Omega)}^2+C_4.}
\end{array}
\label{hhxxcsssdfvvjjczddfssdddddfff2.5}
\end{equation}
Now, multiplying the
third equation of \dref{1.1fghyuisda} by $u_\varepsilon$, integrating by parts and using $\nabla\cdot u_{\varepsilon}=0$
\begin{equation}
\begin{array}{rl}
\disp{\frac{1}{2}\frac{d}{dt}\int_{\Omega}{|u_{\varepsilon}|^2}+\int_{\Omega}{|\nabla u_{\varepsilon}|^2}}= &\disp{ \int_{\Omega}(n_{\varepsilon}+m_{\varepsilon})u_{\varepsilon}\cdot\nabla \phi~~\mbox{for all}~~ t\in(0, T_{max,\varepsilon}).}\\
\end{array}
\label{ddddfgcz2.5ghju48cfg924ghyuji}
\end{equation}
Here we use the H\"{o}lder inequality, the Young inequality and the continuity of the embedding $W^{1,2}(\Omega)\hookrightarrow L^6(\Omega)$ and  to
find $C_{5} $ and $C_{6}> 0$ such that
\begin{equation}
\begin{array}{rl}
\disp\int_{\Omega}(n_{\varepsilon}+m_{\varepsilon})u_{\varepsilon}\cdot\nabla \phi\leq&\disp{\|\nabla \phi\|_{L^\infty(\Omega)}\| n_{\varepsilon} \|_{L^{\frac{6}{5}}(\Omega)}\| u_{\varepsilon}\|_{L^{6}(\Omega)}+\|\nabla \phi\|_{L^\infty(\Omega)}\| m_{\varepsilon} \|_{L^{\frac{6}{5}}(\Omega)}\| u_{\varepsilon}\|_{L^{6}(\Omega)}}\\
\leq&\disp{C_{5}\|\nabla \phi\|_{L^\infty(\Omega)}(\| n_{\varepsilon} \|_{L^{\frac{6}{5}}(\Omega)}+\|m_{\varepsilon} \|_{L^{\frac{6}{5}}(\Omega)})\|\nabla u_{\varepsilon}\|_{L^{2}(\Omega)}}\\
\leq&\disp{\frac{1}{2}\|\nabla u_{\varepsilon}\|_{L^{2}(\Omega)}^2+C_{6}~~\mbox{for all}~~ t\in(0, T_{max,\varepsilon})}\\
\end{array}
\label{dddddddddfgcz2.5ghju48cfg924ghyuji}
\end{equation}
by using \dref{czfvgb2.5ghhjuyuccvviihjj} and \dref{ddfgczhhhh2.5ghju48cfg924ghyuji}.
Inserting \dref{dddddddddfgcz2.5ghju48cfg924ghyuji} into \dref{ddddfgcz2.5ghju48cfg924ghyuji} and integrating  in time to see that
\begin{equation}
\begin{array}{rl}
&\disp{\int_{t}^{t+\tau}\int_{\Omega}  |\nabla {u_{\varepsilon}}|^2\leq C_7~~\mbox{for all}~~ t\in(0,T_{max,\varepsilon}-\tau)}\\
\end{array}
\label{bnmbncz2.5ghhjuddfghssddhdddddffggyhjkklluivvbnnihjj}
\end{equation}
and
\begin{equation}
\begin{array}{rl}
&\disp{\int_{\Omega} u_{\varepsilon}^{2}\leq  C_7~~~\mbox{for all}~~ t\in (0, T_{max,\varepsilon}),}\\
\end{array}
\label{czfvgb2.5ghhjussddyuccvviihjj}
\end{equation}
where we use that once more employing the Gagliardo-Nirenberg inequality, the H\"{o}lder
inequality and the Young inequality we can find $C_8> 0$ and $C_9> 0$ satisfying
\begin{equation}
\begin{array}{rl}
\disp\int_{t}^{t+\tau}\disp\int_{\Omega} |u_{\varepsilon}|^{\frac{10}{3}} =&\disp{\int_{t}^{t+\tau}\| {u_{\varepsilon}}\|^{{\frac{10}{3}}}_{L^{\frac{10}{3}}(\Omega)}}\\
\leq&\disp{C_8\int_{t}^{t+\tau}\left(\| \nabla{u_{\varepsilon}}\|^{2}_{L^{2}(\Omega)}\|{u_{\varepsilon}}\|^{{\frac{4}{3}}}_{L^{2}(\Omega)}+
\|{u_{\varepsilon}}\|^{{\frac{10}{3}}}_{L^{2}(\Omega)}\right)}\\
\leq&\disp{C_9~~\mbox{for all}~~ t\in(0,T_{max,\varepsilon}-\tau).}\\
\end{array}
\label{5555bnmbncz2ddfssdddvgffghhbhh.htt678hyuiihjj}
\end{equation}
Next, combining \dref{hhxxcsssdfvvjjczddfssdddddfff2.5}, \dref{bnmbncz2.5ghhjuddfghssddhdddddffggyhjkklluivvbnnihjj} and rearranging shows that
\begin{equation}
\begin{array}{rl}
&\disp{\int_{t}^{t+\tau}\int_{\Omega}  \left(|\Delta {c_{\varepsilon}}|^2+|\nabla {c_{\varepsilon}}|^4\right)\leq C_{10}~~\mbox{for all}~~ t\in(0,T_{max,\varepsilon}-\tau)}\\
\end{array}
\label{bnmbncz2.5ghhjuddfghssddhddddddddffggyhjkklluivvbnnihjj}
\end{equation}
and
\begin{equation}
\begin{array}{rl}
&\disp{\int_{\Omega} |\nabla {c_{\varepsilon}}|^{2}\leq  C_{10}~~~\mbox{for all}~~ t\in (0, T_{max,\varepsilon})}\\
\end{array}
\label{czfvgb2.5ghhjussddyuccvvissddihjj}
\end{equation}
by using \dref{hhxxcsssdfvvjjcddfffddffzddfdddfff2.5}.

Testing  the third equation in \dref{1.1fghyuisda} by $-\Delta m_{\varepsilon} $ and integrating by parts and using \dref{czfvgb2.5ghhjuyuccvviihjj} and \dref{bnmbncz2.5ghhjuddfghssddhdddddffggyhjkklluivvbnnihjj}, one can finally derive
\begin{equation}
\begin{array}{rl}
&\disp{\int_{t}^{t+\tau}\int_{\Omega}  \left(|\Delta {m_{\varepsilon}}|^2+|\nabla {m_{\varepsilon}}|^4\right)\leq C_{11}~~\mbox{for all}~~ t\in(0,T_{max,\varepsilon}-\tau)}\\
\end{array}
\label{bnmbncz2.5ghhjuddfghsddfgggsddhddddddddffggyhjkklluivvbnnihjj}
\end{equation}
and
\begin{equation}
\begin{array}{rl}
&\disp{\int_{\Omega} |\nabla {m_{\varepsilon}}|^{2}\leq  C_{11}~~~\mbox{for all}~~ t\in (0, T_{max,\varepsilon}).}\\
\end{array}
\label{czfvgb2.5ghhjussddyuccvsddddvissddihjj}
\end{equation}
 \end{proof}
 With Lemmas \ref{fvfgsdfggfflemma45} and \ref{lemmaghjffggssddgghhmk4563025xxhjklojjkkk}--\ref{ssdddlemmaghjffggssddgghhmk4563025xxhjklojjkkk} at hand, we can proceed to show that our approximate solutions
are actually global in time.

\begin{lemma}\label{lemma45630hhuujj}
For any $\varepsilon > 0,$
then one can
find $C > 0$ 
 such that the solutions of \dref{1.1fghyuisda} fulfill

\end{lemma}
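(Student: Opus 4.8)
The plan is to deduce that $T_{max,\varepsilon}=\infty$ from the extensibility criterion \dref{1.163072x}. Assuming to the contrary that $T_{max,\varepsilon}<\infty$, it suffices to bound $\|n_\varepsilon(\cdot,t)\|_{L^\infty(\Omega)}$, $\|c_\varepsilon(\cdot,t)\|_{W^{1,\infty}(\Omega)}$, $\|m_\varepsilon(\cdot,t)\|_{W^{1,\infty}(\Omega)}$ and $\|A^\gamma u_\varepsilon(\cdot,t)\|_{L^2(\Omega)}$ by a finite constant (allowed to depend on $\varepsilon$) uniformly for $t\in(0,T_{max,\varepsilon})$, since this contradicts \dref{1.163072x}. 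All of the needed raw material is already at hand: Lemma \ref{lemmaghjffggssddgghhmk4563025xxhjklojjkkk} furnishes $\sup_{t<T_{max,\varepsilon}}\|n_\varepsilon(\cdot,t)\|_{L^p(\Omega)}<\infty$ for every $p>1$; Lemma \ref{fvfgsdfggfflemma45} gives the $L^\infty$ bounds \dref{ddfgczhhhh2.5ghju48cfg924ghyuji} on $c_\varepsilon$ and $m_\varepsilon$; and Lemma \ref{ssdddlemmaghjffggssddgghhmk4563025xxhjklojjkkk} provides pointwise $W^{1,2}$ bounds on $c_\varepsilon,m_\varepsilon$ together with the spacetime bounds \dref{bnmbncz2.5ghhjuyuivvbnnihjj} involving $\nabla u_\varepsilon$.

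First I would control the fluid component by a two-step bootstrap. Since $Y_\varepsilon=(1+\varepsilon A)^{-1}$ maps $L^2_\sigma(\Omega)$ boundedly into $D(A)\hookrightarrow L^\infty(\Omega)$, and $\|u_\varepsilon(\cdot,t)\|_{L^2(\Omega)}$ is bounded by \dref{czfvgb2.5ghhjussddyuccvviihjj}, the regularized convective term satisfies $\|(Y_\varepsilon u_\varepsilon\cdot\nabla)u_\varepsilon\|_{L^2(\Omega)}\le C(\varepsilon)\|\nabla u_\varepsilon\|_{L^2(\Omega)}$, while $(n_\varepsilon+m_\varepsilon)\nabla\phi$ is bounded in $L^2(\Omega)$ by the lemmas above and \dref{x1.73142vghf481}. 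Testing the Stokes equation of \dref{1.1fghyuisda} with $Au_\varepsilon$ then yields $\frac{d}{dt}\|A^{1/2}u_\varepsilon\|_{L^2(\Omega)}^2+\|Au_\varepsilon\|_{L^2(\Omega)}^2\le C(\varepsilon)\big(1+\|A^{1/2}u_\varepsilon\|_{L^2(\Omega)}^2\big)$, which, combined with the spacetime control of $\int_t^{t+\tau}\int_\Omega|\nabla u_\varepsilon|^2$ from \dref{bnmbncz2.5ghhjuyuivvbnnihjj} and the ODE comparison lemma (Lemma 2.3 of \cite{Wddffang11215}) already invoked in Lemma \ref{lemmaghjffggssddgghhmk4563025xxhjklojjkkk}, produces a pointwise-in-time bound for $\|\nabla u_\varepsilon(\cdot,t)\|_{L^2(\Omega)}$. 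Hence $h_\varepsilon:=\mathcal P\big[(n_\varepsilon+m_\varepsilon)\nabla\phi\big]-\kappa\,\mathcal P\big[(Y_\varepsilon u_\varepsilon\cdot\nabla)u_\varepsilon\big]$ is bounded in $L^2(\Omega)$, so the Duhamel representation of $u_\varepsilon$ together with $\|A^\gamma e^{-\sigma A}w\|_{L^2(\Omega)}\le C\sigma^{-\gamma}\|w\|_{L^2(\Omega)}$ and $\int_0^1\sigma^{-\gamma}\,d\sigma<\infty$ (as $\gamma<1$) gives $\|A^\gamma u_\varepsilon(\cdot,t)\|_{L^2(\Omega)}\le C(\varepsilon)$, whence also $\|u_\varepsilon(\cdot,t)\|_{L^\infty(\Omega)}\le C(\varepsilon)$ because $D(A^\gamma)\hookrightarrow L^\infty(\Omega)$ for $\gamma>\tfrac34$.

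With $u_\varepsilon\in L^\infty((0,T_{max,\varepsilon})\times\Omega)$ in hand, the remaining bounds follow by standard parabolic bootstrapping. Representing $c_\varepsilon$ and $m_\varepsilon$ through the Neumann heat semigroup and combining \dref{ddfgczhhhh2.5ghju48cfg924ghyuji}, the $L^p$ bounds on $n_\varepsilon$, the $L^\infty$ bound on $u_\varepsilon$ and $c_0\in W^{1,\infty}(\Omega)$ with the smoothing properties of $e^{t(\Delta-1)}$ and $e^{t\Delta}$ (applied first in $L^q$ for finite $q$, then letting $q\to\infty$) yields $\|\nabla c_\varepsilon(\cdot,t)\|_{L^\infty(\Omega)}+\|\nabla m_\varepsilon(\cdot,t)\|_{L^\infty(\Omega)}\le C(\varepsilon)$. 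Finally, since $u_\varepsilon$ and $\nabla c_\varepsilon$ are now bounded and $|S_\varepsilon(x,n_\varepsilon,c_\varepsilon)|\le C_S$ by \dref{x1.73142vghf48gg} and \dref{ddfgczhhhh2.5ghju48cfg924ghyuji}, a Moser-type iteration — or, equivalently, an $L^p$-$L^\infty$ semigroup bootstrap started from Lemma \ref{lemmaghjffggssddgghhmk4563025xxhjklojjkkk} — applied to the first equation of \dref{1.1fghyuisda} gives $\|n_\varepsilon(\cdot,t)\|_{L^\infty(\Omega)}\le C(\varepsilon)$. Assembling these four bounds contradicts \dref{1.163072x}, so $T_{max,\varepsilon}=\infty$ and the claim follows.

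The step I expect to demand the most care is the fluid bootstrap: a one-shot semigroup estimate against the convective term only reaches fractional powers of $A$ below $\tfrac12$, so one genuinely has to pass first from the spacetime bound $\int_t^{t+\tau}\|\nabla u_\varepsilon\|_{L^2(\Omega)}^2\le C$ to a pointwise bound for $\|\nabla u_\varepsilon(\cdot,t)\|_{L^2(\Omega)}$ (via the testing-with-$Au_\varepsilon$ identity and the ODE comparison lemma) and only afterwards to $\|A^\gamma u_\varepsilon(\cdot,t)\|_{L^2(\Omega)}$ with $\gamma>\tfrac34$; here the Yosida regularization — which makes $(Y_\varepsilon u_\varepsilon\cdot\nabla)u_\varepsilon$ subordinate to $A$ for fixed $\varepsilon$ — and the spacetime bounds already recorded in Lemma \ref{ssdddlemmaghjffggssddgghhmk4563025xxhjklojjkkk} are exactly what make this two-step argument close.
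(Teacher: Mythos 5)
Your proposal is correct and follows essentially the same route the paper takes, just with the steps spelled out in greater detail. The paper's proof is telegraphic at the fluid step — it merely says ``aided by the $L^2$-estimate for $\nabla u_\varepsilon$ (from a testing argument), we can obtain that $\|A^\gamma u_\varepsilon(\cdot,t)\|_{L^2(\Omega)}\le C_2(\varepsilon)$'' — and your two-step bootstrap (test with $Au_\varepsilon$ to get a pointwise bound on $\|\nabla u_\varepsilon\|_{L^2}$, then Duhamel with the Yosida-tamed convective term in $L^2$ to reach $A^\gamma$ with $\gamma>\tfrac34$) is precisely the argument that phrase is standing in for, and your observation that a one-shot semigroup estimate against $\nabla\cdot(Y_\varepsilon u_\varepsilon\otimes u_\varepsilon)$ only reaches powers of $A$ below $\tfrac12$ is the right reason that the intermediate $\|\nabla u_\varepsilon\|_{L^2}$ estimate is unavoidable. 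The remaining bootstrap for $c_\varepsilon$, $m_\varepsilon$, $n_\varepsilon$ via heat-semigroup smoothing and a Moser-type iteration is also what the paper outsources to ``the same arguments as in the proof of Lemma 3.2 of \cite{Zhenssssssdffssdddddddgssddsddfff00}.''
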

\begin{proof}
Firstly, under the assumption that $T_{max,\varepsilon}< \infty$, for any $\varepsilon > 0$, Lemmas \ref{fvfgsdfggfflemma45}--\ref{ssdddlemmaghjffggssddgghhmk4563025xxhjklojjkkk} would provide us with $C_1 > 0$
such that
\begin{equation}
\begin{array}{rl}
&\disp{\int_{\Omega} n_{\varepsilon}  ^p\leq C_1~~~\mbox{for all}~~ t\in (0, T_{max,\varepsilon})~~~\mbox{for all}~~p>4,}\\
\end{array}
\label{czfvgb2sss.5ghhjuyuccvviihjj}
\end{equation}
\begin{equation}
\begin{array}{rl}
&\disp{\|c_{\varepsilon} (\cdot,t)\|_{L^\infty(\Omega)}\leq C_1~~~\mbox{for all}~~ t\in (0, T_{max,\varepsilon})}\\
\end{array}
\label{ssssczfvgb2sss.aass5ghhjuyuccvviihjj}
\end{equation}
as well as
\begin{equation}
\begin{array}{rl}
&\disp{\|m_{\varepsilon} (\cdot,t)\|_{L^\infty(\Omega)}\leq C_1~~~\mbox{for all}~~ t\in (0, T_{max,\varepsilon})}\\
\end{array}
\label{czfvgb2sss.aass5ghhjuyuccvviihjj}
\end{equation}
and
\begin{equation}
\begin{array}{rl}
&\disp{\int_{t}^{t+\tau}\int_{\Omega} \left[   |\nabla {c_{\varepsilon} }|^4+ |\nabla {m}|^4\right]\leq C_1.}\\
\end{array}
\label{bnmbncz2.5ghssffhjuyuivvbnnihjj}
\end{equation}
Then, aided by the $L^2$-estimate for $\nabla u_{\varepsilon}$ (from a testing argument),
we can obtain that
\begin{equation}
\begin{array}{rl}
\|A^\gamma u_{\varepsilon} (\cdot, t)\|_{L^2(\Omega)}\leq&\disp{C_{2}~~ \mbox{for all}~~ t\in(0,T_{max,\varepsilon})}\\
\end{array}
\label{cz2.571hhhhh51csdddcvvhddfccvvhjjjkkhhggjjllll}
\end{equation}
with some $C_2= C_2(\varepsilon) > 0$.
Thus, the continuous embedding
$D(A^\gamma)\hookrightarrow L^\infty(\Omega)$ implies the $L^\infty$ -estimate for $u_{\varepsilon}.$
Therefore, employing the same arguments as in the proof of Lemma 3.2 in \cite{Zhenssssssdffssdddddddgssddsddfff00}
(see also \cite{Kegssddsddfff00,Zhengsddfffsdddssddddkkllssssssssdefr23,Winkler11215,Winkler51215}), and taking advantage of \dref{czfvgb2sss.5ghhjuyuccvviihjj}--\dref{cz2.571hhhhh51csdddcvvhddfccvvhjjjkkhhggjjllll}, we conclude the estimates
\begin{equation}
\|n_{\varepsilon} (\cdot,t)\|_{L^\infty(\Omega)}  \leq C_3 ~~\mbox{for all}~~ t\in(0,\infty)
\label{zjscz2.5297x9630111kk}
\end{equation}
as well as
\begin{equation}
\|c_{\varepsilon} (\cdot,t)\|_{W^{1,\infty}(\Omega)}  \leq C_3 ~~~~\mbox{for all}~~ t\in(0,\infty)
\label{zjscz2.5297x9630111kkhh}
\end{equation}
and
\begin{equation}
\|m_{\varepsilon} (\cdot,t)\|_{W^{1,\infty}(\Omega)}  \leq C_3~~~~\mbox{for all}~~ t\in(0,\infty)
\label{sssszjscz2.5297x9630111kkhh}
\end{equation}
and some positive constant $C_3.$
In view of  \dref{cz2.571hhhhh51csdddcvvhddfccvvhjjjkkhhggjjllll}--\dref{sssszjscz2.5297x9630111kkhh}, we apply Lemma \ref{lemma70} to reach a contradiction.

\end{proof}


\subsection{Further a-priori estimates}

With the help of Lemma \ref{lemmaghjffggssddgghhmk4563025xxhjklojjkkk} and  the Gagliardo--Nirenberg inequality,
one can derive the following Lemma:
\begin{lemma}\label{lemmddaghjsffggggsddgghhmk4563025xxhjklojjkkk}
Let $\alpha>0$. Then for each $T\in(0, T_{max,\varepsilon})$,
 there exists $C>0$ independent of $\varepsilon$ such that the solution of \dref{1.1fghyuisda} satisfies
\begin{equation}
\begin{array}{rl}
&\disp{\int_{0}^T\int_{\Omega}|\nabla n_{\varepsilon}|^{2}\leq C(T+1).}\\
\end{array}
\label{bnmbncz2.ffghh5ghhjuyuivvbnnihjj}
\end{equation}
\end{lemma}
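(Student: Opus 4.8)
The plan is to derive a spatio-temporal bound for $\nabla n_\varepsilon$ from the standard $L^p$-type testing procedure applied to the first equation, relying on the already-established uniform bound $\int_\Omega n_\varepsilon^p\le C$ from Lemma \ref{lemmaghjffggssddgghhmk4563025xxhjklojjkkk}. Concretely, I would test the $n_\varepsilon$-equation with $n_\varepsilon$ itself (i.e.\ take $p=2$ in the usual computation), integrate by parts, and use $\nabla\cdot u_\varepsilon=0$ to kill the convective term. This produces
$$
\frac12\frac{d}{dt}\int_\Omega n_\varepsilon^2+\int_\Omega|\nabla n_\varepsilon|^2
=\int_\Omega n_\varepsilon S_\varepsilon(x,n_\varepsilon,c_\varepsilon)\nabla c_\varepsilon\cdot\nabla n_\varepsilon-\int_\Omega n_\varepsilon^2 m_\varepsilon,
$$
and the last term is nonnegative on the left after sign change, hence harmless (it can simply be dropped, being $\le 0$ on the right).

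The key step is to absorb the cross-diffusion term. Using $|S_\varepsilon(x,n_\varepsilon,c_\varepsilon)|\le C_S(1+n_\varepsilon)^{-\alpha}\le C_S$ (recall \dref{x1.73142vghf48gg} and \dref{hnjmssddaqwswddaassffssff3.10deerfgghhjdddfgggjjuuloollgghhhyhh}), Young's inequality gives
$$
\int_\Omega n_\varepsilon S_\varepsilon\nabla c_\varepsilon\cdot\nabla n_\varepsilon
\le\frac12\int_\Omega|\nabla n_\varepsilon|^2+\frac{C_S^2}{2}\int_\Omega n_\varepsilon^2|\nabla c_\varepsilon|^2.
$$
The first term is absorbed into the diffusion term on the left. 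For the remaining term $\int_\Omega n_\varepsilon^2|\nabla c_\varepsilon|^2$, I would apply Hölder's inequality followed by Gagliardo--Nirenberg: using the already-known bounds $\int_\Omega n_\varepsilon^p\le C$ for all $p>1$ (so in particular for $p$ large enough) and $\int_t^{t+\tau}\int_\Omega|\nabla c_\varepsilon|^4\le C$ from Lemma \ref{ssdddlemmaghjffggssddgghhmk4563025xxhjklojjkkk}, one bounds $\int_\Omega n_\varepsilon^2|\nabla c_\varepsilon|^2\le\|n_\varepsilon\|_{L^4}^2\|\nabla c_\varepsilon\|_{L^4}^2\le C\|\nabla c_\varepsilon\|_{L^4}^2$, whose time integral over $(t,t+\tau)$ is controlled by $C\big(\int_t^{t+\tau}\|\nabla c_\varepsilon\|_{L^4}^4\big)^{1/2}\le C$. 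Integrating the resulting differential inequality over $(0,T)$, and using that $\int_\Omega n_\varepsilon^2$ is uniformly bounded (again Lemma \ref{lemmaghjffggssddgghhmk4563025xxhjklojjkkk}), yields $\int_0^T\int_\Omega|\nabla n_\varepsilon|^2\le C(T+1)$ with $C$ independent of $\varepsilon$, which is exactly \dref{bnmbncz2.ffghh5ghhjuyuivvbnnihjj}.

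The main obstacle is ensuring the constant in the Gagliardo--Nirenberg / Hölder estimate of the term $\int_\Omega n_\varepsilon^2|\nabla c_\varepsilon|^2$ is genuinely $\varepsilon$-independent and that the time integration is organized correctly: the bound on $\int_t^{t+\tau}\int_\Omega|\nabla c_\varepsilon|^4$ is only local in time (over intervals of length $\tau$), so to reach an estimate on $(0,T)$ one must cover $[0,T]$ by $O(T/\tau+1)$ such intervals and sum, which is precisely what produces the factor $T+1$ on the right-hand side. Since $\alpha>0$ only enters through $|S_\varepsilon|\le C_S$ (boundedness of the sensitivity), no further smallness or structural condition on $\alpha$ is needed here — the heavier use of $\alpha>0$ was already spent in Lemma \ref{lemmaghjffggssddgghhmk4563025xxhjklojjkkk}.
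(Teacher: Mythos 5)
Your proposal is correct and follows essentially the same route as the paper's proof: test the first equation with $n_\varepsilon$, discard the nonpositive reaction term, absorb the cross-diffusion term with Young's inequality, and then control $\int_\Omega n_\varepsilon^2|\nabla c_\varepsilon|^2$ using the uniform $L^p$ bound on $n_\varepsilon$ together with the spatio-temporal $L^4$ estimate on $\nabla c_\varepsilon$ from Lemma \ref{ssdddlemmaghjffggssddgghhmk4563025xxhjklojjkkk}. The only cosmetic difference is that you split the product by H\"older into $\|n_\varepsilon\|_{L^4}^2\|\nabla c_\varepsilon\|_{L^4}^2$, whereas the paper applies Young once more to get $\tfrac14 n_\varepsilon^4+\tfrac{C_S^4}{4}|\nabla c_\varepsilon|^4$ pointwise; both close the argument in the same way and produce the factor $T+1$ by covering $[0,T]$ with unit-length intervals.
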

\begin{proof}
Multiply the first equation in $\dref{1.1fghyuisda}$ by $ n_{\varepsilon}$
 and  using $\nabla\cdot u_\varepsilon=0$, we derive
\begin{equation}
\begin{array}{rl}
&\disp{\frac{1}{{2}}\frac{d}{dt}\|{ n_{\varepsilon} }\|^{{{{2}}}}_{L^{{{2}}}(\Omega)}+
\int_{\Omega}  |\nabla n_{\varepsilon}|^2}\\
=&\disp{-
\int_{\Omega}  n_{\varepsilon}\nabla\cdot(n_{\varepsilon}S_\varepsilon(x, n_{\varepsilon}, c_{\varepsilon})\cdot\nabla c_{\varepsilon})-\int_{\Omega}n_{\varepsilon}^{2}m_{\varepsilon}}\\
\leq&\disp{
\int_{\Omega}  n_{\varepsilon}|S_\varepsilon(x, n_{\varepsilon}, c_{\varepsilon})||\nabla n_{\varepsilon}||\nabla c_{\varepsilon}|~~\mbox{for all}~~ t\in(0, T_{max,\varepsilon}).}
\end{array}
\label{55hhjjcffgjjjkkkkhhhjkklddfgggffgglffghhhz2.5}
\end{equation}
Recalling \dref{x1.73142vghf48gg} and using $\alpha\geq0$, from Young inequality again, we derive from Lemma \ref{lemmaghjffggssddgghhmk4563025xxhjklojjkkk} that
\begin{equation}
\begin{array}{rl}
&\disp\int_{\Omega} n_{\varepsilon} |S_\varepsilon(x, n_{\varepsilon}, c_{\varepsilon})||\nabla n_{\varepsilon}||\nabla c_{\varepsilon}|\\
\leq&\disp{C_S\int_{\Omega}n_{\varepsilon} |\nabla n_{\varepsilon}||\nabla c_{\varepsilon}|}\\
\leq&\disp{\frac{1}{2}\int_{\Omega} |\nabla n_{\varepsilon}|^2+\frac{C_S^2}{2}\int_{\Omega}n_{\varepsilon}^{2}
|\nabla c_{\varepsilon}|^2}\\
\leq&\disp{\frac{1}{2}\int_{\Omega} |\nabla n_{\varepsilon}|^2+\frac{1}{4}
\int_{\Omega}n_{\varepsilon}^{4}+\frac{C_S^4}{4} \int_{\Omega}|\nabla c_{\varepsilon}|^4}\\
\leq&\disp{\frac{1}{2}\int_{\Omega} |\nabla n_{\varepsilon}|^2+\frac{C_S^4}{4} \int_{\Omega}|\nabla c_{\varepsilon}|^4+C_1~~\mbox{for all}~~ t\in(0, T_{max,\varepsilon}),}
\end{array}
\label{55hhjjcffghhhjkkllfffghggggghgghjjjhfghhhz2.5}
\end{equation}
which combined with \dref{55hhjjcffgjjjkkkkhhhjkklddfgggffgglffghhhz2.5}  implies that
\begin{equation}
\begin{array}{rl}
&\disp{\frac{1}{{2}}\frac{d}{dt}\|{ n_{\varepsilon} }\|^{{{{2}}}}_{L^{{{2}}}(\Omega)}+
\frac{1}{2}\int_{\Omega}  |\nabla n_{\varepsilon}|^2\leq\frac{C_S^4}{4} \int_{\Omega}|\nabla c_{\varepsilon}|^4+C_1~~\mbox{for all}~~ t\in(0, T_{max,\varepsilon}),}
\end{array}
\label{55hhjjcffghhhjkklddfgggffgglffghhhz2.5}
\end{equation}
so that, gather \dref{bnmbncz2.5ghhjuyuivvbnnihjj} and \dref{55hhjjcffghhhjkklddfgggffgglffghhhz2.5}, one can get \dref{bnmbncz2.ffghh5ghhjuyuivvbnnihjj}.
\end{proof}

\section{Passing to the limit: The proof of Theorem  \ref{theorem3}}

With the help of a priori estimates, in
this subsection,  by means of a standard extraction procedure we can
now derive the following lemma which actually contains our main existence result (Theorem  \ref{theorem3}) already.



\begin{lemma}\label{lemma45hyuuuj630223}
Assume that   $\alpha>0$.
 Then
 for any $\kappa\in\mathbb{R}$,
 there exists $(\varepsilon_j)_{j\in \mathbb{N}}\subset (0, 1)$ such that $\varepsilon_j\rightarrow 0$ as $j\rightarrow\infty$ and that
\begin{equation}
n_\varepsilon\rightarrow n ~~\mbox{a.e.}~~\mbox{in}~~\Omega\times(0,\infty)~~\mbox{and in}~~ L_{loc}^{2}(\bar{\Omega}\times[0,\infty))\label{zjscz2.5297x963ddfgh0ddfggg6662222tt3}
\end{equation}
\begin{equation}
n_\varepsilon\rightharpoonup n ~~~\mbox{weak star in}~~ L^{\infty}_{loc}([0,\infty),L^p(\Omega))~~~\mbox{for any}~~p>1,\label{zjsczssdd2.5297x963ddfgh0ddfggg6662222tt3}
\end{equation}
\begin{equation}
\nabla n_\varepsilon\rightharpoonup \nabla n ~~\mbox{in}~~ L_{loc}^{2}(\bar{\Omega}\times[0,\infty)),\label{zjscz2.5297x963ddfgh0ddgghjjfggg6662222tt3}
\end{equation}
\begin{equation}
c_\varepsilon\rightarrow c ~~\mbox{in}~~ L^{2}_{loc}(\bar{\Omega}\times[0,\infty))~~\mbox{and}~~\mbox{a.e.}~~\mbox{in}~~\Omega\times(0,\infty),
 \label{zjscz2.fgghh5297x963ddfgh0ddfggg6662222tt3}
\end{equation}
\begin{equation}
m_\varepsilon\rightarrow m ~~\mbox{in}~~ L^{2}_{loc}(\bar{\Omega}\times[0,\infty))~~\mbox{and}~~\mbox{a.e.}~~\mbox{in}~~\Omega\times(0,\infty),
 \label{zjscz2.fgghh5297x96ddddd3ddfgh0ddfggg6662222tt3}
\end{equation}
\begin{equation}
\nabla c_\varepsilon\rightharpoonup \nabla c ~~\mbox{in}~~ L^{4}_{loc}(\bar{\Omega}\times[0,\infty)),
 \label{zjscz2.fgghh5297x963ddfgh0dddddfggg6662222tt3}
\end{equation}
\begin{equation}
\nabla m_\varepsilon\rightharpoonup \nabla m ~~\mbox{in}~~ L^{4}_{loc}(\bar{\Omega}\times[0,\infty)),
 \label{zjscz2.fgghh5297x963ddfgh0dddddfggg6662222tt3}
\end{equation}
\begin{equation}
u_\varepsilon\rightarrow u~~\mbox{in}~~ L_{loc}^2(\bar{\Omega}\times[0,\infty))~~\mbox{and}~~\mbox{a.e.}~~\mbox{in}~~\Omega\times(0,\infty),
 \label{zjscz2.5297x96302222t666t4}
\end{equation}
\begin{equation}
\nabla c_\varepsilon\rightharpoonup \nabla c~~\begin{array}{ll}
 \mbox{in}~~ L_{loc}^{2}(\bar{\Omega}\times[0,\infty))
   \end{array}\label{1.1ddfgghhhge666ccdf2345ddvbnmklllhyuisda}
\end{equation}
as well as
\begin{equation}
\nabla m_\varepsilon\rightharpoonup \nabla m~~\begin{array}{ll}
 \mbox{in}~~ L_{loc}^{2}(\bar{\Omega}\times[0,\infty))
   \end{array}\label{1.1dddddfgghhhge666ccdf2345ddvbnmklllhyuisda}
\end{equation}
and
\begin{equation}
 \nabla u_\varepsilon\rightharpoonup \nabla u ~~\mbox{ in}~~L^{2}_{loc}(\bar{\Omega}\times[0,\infty);\mathbb{R}^{3})
 \label{zjscz2.5297x96366602222tt4455}
\end{equation}
 some quadruple  $(n, c,m, u)$ which is a global weak solution of \dref{334451.1fghyuisda} in the natural sense as specified
in \cite{Zhenssssssdffssdddddddgssddsddfff00}.
\end{lemma}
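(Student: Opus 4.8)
The plan is to combine the $\varepsilon$-independent a priori bounds collected in Section~3 with a compactness argument of Aubin--Lions--Simon type, and then to pass to the limit term by term in the weak formulation of the regularized problem \dref{1.1fghyuisda}. First I would record the spatial bounds: by Lemma~\ref{lemmaghjffggssddgghhmk4563025xxhjklojjkkk}, $(n_\varepsilon)$ is bounded in $L^\infty_{loc}([0,\infty);L^p(\Omega))$ for every $p>1$; by Lemmas~\ref{fvfgsdfggfflemma45} and \ref{ssdddlemmaghjffggssddgghhmk4563025xxhjklojjkkk}, $(c_\varepsilon)$ and $(m_\varepsilon)$ are bounded in $L^\infty(\Omega\times(0,\infty))\cap L^2_{loc}([0,\infty);W^{2,2}(\Omega))\cap L^4_{loc}([0,\infty);W^{1,4}(\Omega))$, and $(u_\varepsilon)$ is bounded in $L^\infty_{loc}([0,\infty);L^2(\Omega))\cap L^2_{loc}([0,\infty);W^{1,2}_{0,\sigma}(\Omega))$ with the extra integrability $(u_\varepsilon)\subset L^{10/3}_{loc}$; and by Lemma~\ref{lemmddaghjsffggggsddgghhmk4563025xxhjklojjkkk}, $(\nabla n_\varepsilon)$ is bounded in $L^2_{loc}(\bar\Omega\times[0,\infty))$. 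After a diagonal extraction along a sequence $\varepsilon=\varepsilon_j\searrow0$ these bounds yield, by weak and weak-$\star$ compactness, the convergences \dref{zjsczssdd2.5297x963ddfgh0ddfggg6662222tt3}, \dref{zjscz2.5297x963ddfgh0ddgghjjfggg6662222tt3}, \dref{zjscz2.fgghh5297x963ddfgh0dddddfggg6662222tt3}, \dref{1.1ddfgghhhge666ccdf2345ddvbnmklllhyuisda}, \dref{1.1dddddfgghhhge666ccdf2345ddvbnmklllhyuisda} and \dref{zjscz2.5297x96366602222tt4455} to some limit quadruple $(n,c,m,u)$ in the regularity class \dref{1.1ddfghyuisdsdddda}.

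Next I would extract strong convergence. Using the equations in \dref{1.1fghyuisda} together with the above bounds, one estimates $\partial_t n_\varepsilon,\partial_t c_\varepsilon,\partial_t m_\varepsilon$ and $\partial_t u_\varepsilon$ in $L^q_{loc}([0,\infty);(W^{k_0,2}(\Omega))^\ast)$ for a suitable $q>1$ and $k_0\in\mathbb{N}$ chosen so large that all the nonlinearities lie in a space dual to $W^{k_0,2}$: here one uses $|n_\varepsilon S_\varepsilon(x,n_\varepsilon,c_\varepsilon)|\le C_S(1+n_\varepsilon)$ together with $n_\varepsilon\in L^2_{loc}$ and $\nabla c_\varepsilon\in L^2_{loc}$ so that the chemotactic flux is in $L^1_{loc}$, the bounds $n_\varepsilon m_\varepsilon\in L^1_{loc}$, $(n_\varepsilon+m_\varepsilon)\nabla\phi\in L^1_{loc}$, and $u_\varepsilon\otimes u_\varepsilon\in L^{5/3}_{loc}$ coming from $u_\varepsilon\in L^{10/3}_{loc}$. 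An application of the Aubin--Lions--Simon lemma then upgrades the weak convergence of $(n_\varepsilon,c_\varepsilon,m_\varepsilon,u_\varepsilon)$ to strong convergence in $L^2_{loc}(\bar\Omega\times[0,\infty))$, and along a further subsequence to pointwise a.e.\ convergence, which gives \dref{zjscz2.5297x963ddfgh0ddfggg6662222tt3}, \dref{zjscz2.fgghh5297x963ddfgh0ddfggg6662222tt3}, \dref{zjscz2.fgghh5297x96ddddd3ddfgh0ddfggg6662222tt3} and \dref{zjscz2.5297x96302222t666t4}.

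It then remains to identify the nonlinear limits and pass to the limit in the weak formulation. Since $\rho_\varepsilon\nearrow1$, $\chi_\varepsilon\nearrow1$ and $S\in C^2$, we have $S_\varepsilon(x,n_\varepsilon,c_\varepsilon)\to S(x,n,c)$ a.e.; combining the a.e.\ convergence of $n_\varepsilon$ with its boundedness in every $L^p(\Omega)$ and the majorization $|n_\varepsilon S_\varepsilon(x,n_\varepsilon,c_\varepsilon)|\le C_S(1+n_\varepsilon)$, a Vitali (generalized dominated convergence) argument shows that $n_\varepsilon S_\varepsilon(x,n_\varepsilon,c_\varepsilon)\to n\,S(x,n,c)$ strongly in $L^q_{loc}(\bar\Omega\times[0,\infty))$ for every $q<\infty$; pairing this with the weak $L^4_{loc}$ convergence of $\nabla c_\varepsilon$ identifies the chemotactic term. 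Likewise $n_\varepsilon m_\varepsilon\to nm$, $(n_\varepsilon+m_\varepsilon)\nabla\phi\to(n+m)\nabla\phi$, $u_\varepsilon n_\varepsilon\to un$, $u_\varepsilon c_\varepsilon\to uc$, $u_\varepsilon m_\varepsilon\to um$ by combining strong with strong (or bounded) convergences, and, since the Yosida approximation satisfies $Y_\varepsilon u_\varepsilon\to u$ in $L^2_{loc}$ while $\nabla u_\varepsilon\rightharpoonup\nabla u$ in $L^2_{loc}$ and $u_\varepsilon$ stays bounded in $L^{10/3}_{loc}$, also $(Y_\varepsilon u_\varepsilon\cdot\nabla)u_\varepsilon\to(u\cdot\nabla)u$ in the distributional sense (most conveniently after integrating by parts to write it as $\nabla\cdot(Y_\varepsilon u_\varepsilon\otimes u_\varepsilon)$). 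Inserting all these convergences into the weak formulation of \dref{1.1fghyuisda} and letting $\varepsilon_j\searrow0$ shows that $(n,c,m,u)$ is a global weak solution of \dref{334451.1fghyuisda} in the sense of \cite{Zhenssssssdffssdddddddgssddsddfff00}, which is the assertion of the lemma.

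The main obstacle is the limit of the chemotactic flux $n_\varepsilon S_\varepsilon(x,n_\varepsilon,c_\varepsilon)\nabla c_\varepsilon$: since only weak convergence of $\nabla c_\varepsilon$ in $L^4_{loc}$ is available, one genuinely needs \emph{strong} convergence of the prefactor $n_\varepsilon S_\varepsilon(x,n_\varepsilon,c_\varepsilon)$ in a dual Lebesgue space, and this is exactly where the uniform $L^p$ bound of Lemma~\ref{lemmaghjffggssddgghhmk4563025xxhjklojjkkk}, which holds for \emph{every} $p>1$ precisely because $\alpha>0$, enters through the Vitali argument. A secondary delicate point is the convective term $(Y_\varepsilon u_\varepsilon\cdot\nabla)u_\varepsilon$ with $\kappa\neq0$: here one must retain the higher integrability $u_\varepsilon\in L^{10/3}_{loc}$ from Lemma~\ref{ssdddlemmaghjffggssddgghhmk4563025xxhjklojjkkk} so that $Y_\varepsilon u_\varepsilon\otimes u_\varepsilon$ is bounded in $L^{5/3}_{loc}$ and hence converges (strongly times weakly) to $u\otimes u$ after the above reformulation, which is what makes the passage to the limit in the Navier--Stokes subsystem work uniformly in $\kappa$.
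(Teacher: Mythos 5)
Your proposal is correct and reaches the same conclusion by the same broad template (collect $\varepsilon$-independent bounds, estimate the time derivatives, invoke Aubin--Lions, identify the nonlinear limits, pass to the limit in the weak formulation), but it identifies the chemotactic flux limit by a genuinely different route than the paper.

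The paper handles the term $n_\varepsilon S_\varepsilon(x,n_\varepsilon,c_\varepsilon)\nabla c_\varepsilon$ by first upgrading $\nabla c_\varepsilon$ to pointwise a.e.\ convergence: it derives the bound $m_\varepsilon - c_\varepsilon - u_\varepsilon\cdot\nabla c_\varepsilon\in L^{5/3}(\Omega\times(0,T))$ from the $L^{10/3}$ bound on $u_\varepsilon$ and the $L^4$ bound on $\nabla c_\varepsilon$, invokes parabolic regularity to get $(c_\varepsilon)$ bounded in $L^{5/3}((0,T);W^{2,5/3}(\Omega))$, applies Aubin--Lions once more to obtain $\nabla c_\varepsilon\to\nabla c$ strongly in $L^{5/3}_{loc}$, hence a.e.\ along a further subsequence, and then concludes weak $L^2$ convergence of the full flux via Egorov's theorem. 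You instead keep only the weak $L^4_{loc}$ convergence of $\nabla c_\varepsilon$, and compensate by upgrading the prefactor: from a.e.\ convergence of $n_\varepsilon$ and $c_\varepsilon$, monotone convergence $\rho_\varepsilon\nearrow1$, $\chi_\varepsilon\nearrow1$, continuity of $S$, and the uniform $L^p$ bound on $n_\varepsilon$ for \emph{every} $p>1$ (precisely the payoff of $\alpha>0$ from Lemma~\ref{lemmaghjffggssddgghhmk4563025xxhjklojjkkk}), Vitali's theorem gives $n_\varepsilon S_\varepsilon(x,n_\varepsilon,c_\varepsilon)\to nS(x,n,c)$ strongly in $L^q_{loc}$ for every finite $q$, and strong-times-weak then identifies the product. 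Both arguments are sound; yours avoids the extra $W^{2,5/3}$ regularity step and the additional subsequence extraction for $\nabla c_\varepsilon$, at the price of using the full strength of the all-$p$ $L^p$ bound on $n_\varepsilon$, whereas the paper's route would also function under weaker integrability of $n_\varepsilon$. The remaining ingredients (time-derivative estimates in negative Sobolev spaces, $u_\varepsilon\otimes u_\varepsilon$ via $L^{10/3}$ integrability, $Y_\varepsilon u_\varepsilon\to u$) match the paper's treatment, modulo your choice to place $\partial_t$ in $L^q((W^{k_0,2})^\ast)$ with a large $k_0$ rather than in $L^2((W^{1,2})^\ast)$, which is a cosmetic variant that Aubin--Lions--Simon accommodates equally well.
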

\begin{proof}
Firstly, applying   the
discussion in Section 3, under the assumptions of Theorem \ref{theorem3}, for each $T > 0$,  we can find $\varepsilon$-independent constant $C(T)$ such that
\begin{equation}
\|n_\varepsilon(\cdot,t)\|_{L^p(\Omega)}+ \|c_\varepsilon(\cdot,t)\|_{W^{1,2}(\Omega)}+ \|m_\varepsilon(\cdot,t)\|_{W^{1,2}(\Omega)}+\|u_\varepsilon(\cdot,t)\|_{L^2(\Omega)} \leq C(T) ~~\mbox{for all}~~ t\in(0,T)~~\mbox{and}~~p>1
\label{zjscz2.ssddd5297x9630111kk}
\end{equation}
as well as
\begin{equation}
\int_{0}^T\int_{\Omega}\left(|\nabla n_\varepsilon|^2+|\nabla c_\varepsilon|^4+|\nabla m_\varepsilon|^4+ |\Delta c_\varepsilon|^2+ |\Delta m_\varepsilon|^2\right)\leq C(T) ~~\mbox{for all}~~ t\in(0, T)
\label{fvgbhzjscz2.5sss297x96302222tt4455hyuhii}
 \end{equation}
 and
 \begin{equation}
\int_{0}^T\int_{\Omega}|\nabla u_\varepsilon|^2\leq C(T) ~~\mbox{for all}~~ t\in(0, T) .
\label{fvgbhzjscz2.5297x96302222tt4455hyuhii}
 \end{equation}
  Now,   choosing $\varphi\in W^{1,2} (\Omega)$ as a test function in the first equation in \dref{1.1fghyuisda} and
using \dref{zjscz2.ssddd5297x9630111kk},
 we have
 \begin{equation}
\begin{array}{rl}
&\disp\left|\int_{\Omega}(n_{\varepsilon,t})\varphi\right|\\
 =&\disp{\left|\int_{\Omega}\left[\Delta n_{\varepsilon}-\nabla\cdot(n_{\varepsilon} S_\varepsilon(x, n_{\varepsilon}, c_{\varepsilon})\nabla c_{\varepsilon})-u_{\varepsilon}\cdot\nabla n_{\varepsilon}-n_{\varepsilon}m_{\varepsilon}\right]\varphi\right|}
\\
=&\disp{\left|\int_\Omega \left[-\nabla n_{\varepsilon}\cdot\nabla\varphi+n_{\varepsilon} S_\varepsilon(x, n_{\varepsilon}, c_{\varepsilon})\nabla c_{\varepsilon}\cdot\nabla\varphi+ n_{\varepsilon}u_{\varepsilon}\cdot\nabla  \varphi- n_{\varepsilon}m_{\varepsilon}  \varphi\right]\right|}\\
\leq&\disp{\left\{\|\nabla n_{\varepsilon}\|_{L^{2}(\Omega)}+ \|n_{\varepsilon}S_\varepsilon(x, n_{\varepsilon}, c_{\varepsilon})\nabla c_{\varepsilon}\|_{L^{2}(\Omega)}+ \|n_{\varepsilon}u_{\varepsilon}\|_{L^{2}(\Omega)}+\|n_{\varepsilon}m_{\varepsilon}\|_{L^{2}(\Omega)}
\right\}}\\
&\times\disp{\|\varphi\|_{W^{1,2}(\Omega)}}\\
\end{array}
\label{gbhncvbmdcfvgcz2.5ghju48}
\end{equation}
for all $t>0$.
Along with \dref{zjscz2.ssddd5297x9630111kk} and \dref{fvgbhzjscz2.5sss297x96302222tt4455hyuhii}, further implies that
\begin{equation}
\begin{array}{rl}
&\disp\int_0^T\|\partial_tn_\varepsilon(\cdot,t)\|_{({W^{1,2}(\Omega)})^*}^{2}dt \\
\leq&\disp{\int_0^T\left\{\|\nabla n_{\varepsilon}\|_{L^{2}(\Omega)}+ \|n_{\varepsilon} S_\varepsilon(x, n_{\varepsilon}, c_{\varepsilon})\nabla c_{\varepsilon}\|_{L^{2}(\Omega)}+ \|n_{\varepsilon}u_{\varepsilon}\|_{L^{2}(\Omega)}
\right\}}^{2}dt
\\
\leq&\disp{C_1\int_0^T\left\{\|\nabla n_{\varepsilon}\|_{L^{2}(\Omega)}^{2}+ \|n_{\varepsilon}\|_{L^4(\Omega)}^4 \|\nabla c_{\varepsilon}\|_{L^{4}(\Omega)}^{4}\right\}dt}\\
&\disp{+C_1\int_0^T\left\{\|n_{\varepsilon}u_{\varepsilon}\|_{L^{2}(\Omega)}^{2}+
\|m_{\varepsilon}\|^{2}_{L^{\infty}(\Omega)}\|n_{\varepsilon}\|_{L^{2}(\Omega)}^{2}
\right\}}dt\\
\leq&\disp{C_2\int_0^T\left\{\|\nabla n_{\varepsilon}\|_{L^{2}(\Omega)}^{2}+  \|\nabla c_{\varepsilon}\|_{L^{4}(\Omega)}^{4}\right\}dt}\\
&\disp{+C_2\int_0^T\left\{\|n_{\varepsilon}u_{\varepsilon}\|_{L^{2}(\Omega)}^{2}+
1
\right\}}dt,\\
\end{array}
\label{gbhncvbmdcfvgczffghhh2.5ghju48}
\end{equation}
where $C_1$ and $C_2$ are  positive constants independent of $\varepsilon$.
Now,
due to  the H\"{o}lder inequality, we have
\begin{equation}
\begin{array}{rl}
&\disp\int_{0}^T\int_{\Omega}|n_{\varepsilon}u_\varepsilon|^{2} \\
\leq&\disp{\int_{0}^T\|n_{\varepsilon}\|^{2}_{L^3(\Omega)}
\|u_\varepsilon\|^{2}_{L^6(\Omega)}}\\
\leq&\disp{C_3\int_{0}^T
\|\nabla u_\varepsilon\|^{2}_{L^2(\Omega)}}\\
\leq&\disp{C_4(T+1)~~\mbox{for all}~ T > 0}\\
\end{array}
\label{gbhncvbmdcfvgsddddczffghhh2.5ghju48}
\end{equation}
by using \dref{zjscz2.ssddd5297x9630111kk} and \dref{fvgbhzjscz2.5sss297x96302222tt4455hyuhii}.

Inserting \dref{gbhncvbmdcfvgsddddczffghhh2.5ghju48} into \dref{gbhncvbmdcfvgczffghhh2.5ghju48} and applying \dref{zjscz2.ssddd5297x9630111kk} and \dref{fvgbhzjscz2.5sss297x96302222tt4455hyuhii}, we can obtain for some positive constant $C_1(T)$ such that
 \begin{equation}
\|n_{\varepsilon t}\|_{L^2(0,T;(W^{1,2}(\Omega))^*)}\leq C_1(T)
\label{fvgbhzjsczsssd2.5297x9630222ssdd2tt4455hyuhii}
 \end{equation}
by using \dref{fvgbhzjscz2.5sss297x96302222tt4455hyuhii} and Lemma \ref{lemma45630hhuujj}.

In a similar  way, one can derive
 \begin{equation}
\|c_{\varepsilon t}\|_{L^2(0,T;(W^{1,2}(\Omega))^*)}\leq C_2(T)
\label{111fvgbhzjsczsssd2.5297x9630ssddd2222tt4455hyuhii}
 \end{equation}
 and
 \begin{equation}
\|m_{\varepsilon t}\|_{L^2(0,T;(W^{1,2}(\Omega))^*)}\leq C_2(T)
\label{fvgbhzjsczsssd2.5297x9630ssddd2222tt4455hyuhii}
 \end{equation}
with some $C_2(T) > 0.$

Next, for any given  $\varphi\in C^{\infty}_{0,\sigma} (\Omega;\mathbb{R}^3)$, we infer from the fourth  equation in \dref{1.1fghyuisda} that
$$
\begin{array}{rl}
\disp\left|\int_{\Omega}\partial_{t}u_{\varepsilon}(\cdot,t)\varphi\right|=&\disp{\left|-\int_\Omega \nabla u_{\varepsilon}\cdot\nabla\varphi-\kappa\int_\Omega (Y_{\varepsilon}u_{\varepsilon}\otimes u_{\varepsilon})\cdot\nabla\varphi+\int_\Omega (n_{\varepsilon}+m_{\varepsilon})\nabla \phi\cdot\varphi\right|
~~\mbox{for all}~~ t>0.}
\end{array}
$$
Now,   by virtue  of \dref{fvgbhzjscz2.5297x96302222tt4455hyuhii}, Lemma \ref{lemmaghjffggssddgghhmk4563025xxhjklojjkkk}  and Lemma \ref{fvfgsdfggfflemma45},
we thus infer that there exist  positive constants
 $C_{3},C_{4}$ and $C_{5}$ such that
 \begin{equation}
\begin{array}{rl}
&\disp\int_0^T\|\partial_{t}u_{\varepsilon}(\cdot,t)\|_{(W^{1,2}_{0,\sigma}(\Omega))^*}^{2}dt\\
\leq&\disp{C_{3}\left(\int_0^T\int_\Omega|\nabla u_{\varepsilon}|^{2}+\int_0^T\int_\Omega |Y_{\varepsilon}u_{\varepsilon}\otimes u_{\varepsilon}|^{2}+\int_0^T\int_\Omega n_\varepsilon^{2}+\int_0^T\int_\Omega m_\varepsilon^{2}\right)}\\
\leq&\disp{C_{4}\left(\int_0^T\int_\Omega|\nabla u_{\varepsilon}|^{2}+\int_0^T\int_\Omega |u_\varepsilon|^{2}+\int_0^T\int_\Omega n_{\varepsilon}^{2}+T\right)}\\
\leq&\disp{C_{5}(T+1)
~~\mbox{for all}~~ T>0.}
\end{array}
\label{fvgbhzjsczsssssddd2.5297x9630sssssdddssss2222tt4455hyuhii}
 \end{equation}
 Here we have used the fact that
 $$\|Y_{\varepsilon}v\|_{L^2(\Omega)}\leq \|v\|_{L^2(\Omega)}~~~\mbox{for all}~~ v\in L^2_{\sigma}(\Omega).$$
%
 Combining estimates \dref{zjscz2.ssddd5297x9630111kk}--\dref{fvgbhzjsczsssssddd2.5297x9630sssssdddssss2222tt4455hyuhii}, we conclude from
Aubin-Lions lemma (see e.g. \cite{Simon}) that $(u_\varepsilon)_{\varepsilon\in (0,1)}$ is relatively compact in $L^2_{loc} (\Omega\times[0,\infty);\mathbb{R}^3)$
and
$(n_\varepsilon)_{\varepsilon\in (0,1)}$,  $(c_\varepsilon)_{\varepsilon\in (0,1)}$, $(m_\varepsilon)_{\varepsilon\in (0,1)}$ are relatively compact in $L^2_{loc} (\Omega\times[0,\infty)).$
Therefore,
in conjunction with \dref{zjscz2.ssddd5297x9630111kk}--\dref{fvgbhzjscz2.5297x96302222tt4455hyuhii} and standard compactness arguments, we can thus
find a sequence $(\varepsilon_j)_{j\in N} \subset (0,1)$ such that $\varepsilon_j\searrow0$ as $j \rightarrow\infty$, and such that
\begin{equation}
 n_\varepsilon\rightarrow n ~~\mbox{in}~~ L^2_{loc}(\bar{\Omega}\times[0,\infty))~~~~\mbox{and}~~
 n_\varepsilon\rightarrow n~~~a.e. ~~~\mbox{in}~~~\Omega\times(0,\infty),
 \label{zjscz2.5297sssssssx963sss0222222ee}
\end{equation}
\begin{equation}
 n_\varepsilon \rightharpoonup n ~~\mbox{weak star in}~~ L^{\infty}_{loc}([0,\infty),L^p(\Omega))~~~~\mbox{for any}~~
 p>1,
 \label{zjscz2.5297ssdsssssssx963sss0222222ee}
\end{equation}
\begin{equation}
 \nabla n_\varepsilon\rightharpoonup \nabla n ~~\mbox{in}~~ L^2_{loc}(\bar{\Omega}\times[0,\infty)),
 \label{zjscz2.5297ssssssssssx963sss0222222ee}
\end{equation}
\begin{equation}
 c_\varepsilon\rightarrow c ~~\mbox{in}~~ L^2_{loc}(\bar{\Omega}\times[0,\infty))~~~~\mbox{and}~~
 c_\varepsilon\rightarrow c~~~a.e. ~~~\mbox{in}~~~\Omega\times(0,\infty),
 \label{zjscz2.5297sssssssssx9630222222ee}
\end{equation}
\begin{equation}
 \nabla c_\varepsilon\rightharpoonup \nabla c ~~\mbox{in}~~ L^4_{loc}(\bar{\Omega}\times[0,\infty))
 \label{zjscz2.5297ssssssdddsssssssx963sss0222222ee}
\end{equation}
\begin{equation}
\Delta c_\varepsilon\rightharpoonup \Delta c ~~\mbox{in}~~ L^2_{loc}(\bar{\Omega}\times[0,\infty))
 \label{zjscz2.5297sddffssddsssssdddsssssssx963sss0222222ee}
\end{equation}
\begin{equation}
 m_\varepsilon\rightarrow m ~~\mbox{in}~~ L^2_{loc}(\bar{\Omega}\times[0,\infty))~~~~\mbox{and}~~
 m_\varepsilon\rightarrow m~~~a.e. ~~~\mbox{in}~~~\Omega\times(0,\infty),
 \label{zjscz2.5297sssssssssx9630222222ee}
\end{equation}
\begin{equation}
 \nabla m_\varepsilon\rightharpoonup \nabla m ~~\mbox{in}~~ L^4_{loc}(\bar{\Omega}\times[0,\infty))
 \label{zjscz2.5297sssssssddssssssssx963sss0222222ee}
\end{equation}
\begin{equation}
\Delta m_\varepsilon\rightharpoonup \Delta m ~~\mbox{in}~~ L^2_{loc}(\bar{\Omega}\times[0,\infty))
 \label{zjscz2.5297sssssdddsssddssssssssx963sss0222222ee}
\end{equation}
as well as
\begin{equation}
 u_\varepsilon\rightarrow u ~~\mbox{in}~~ L^2_{loc}(\bar{\Omega}\times[0,\infty))~~~~\mbox{and}~~
u_\varepsilon\rightarrow u~~~a.e. ~~~\mbox{in}~~~\Omega\times(0,\infty)
 \label{zjscz2.5297sssfffssssssx9630222222ee}
\end{equation}
and
\begin{equation}
 \nabla u_\varepsilon\rightharpoonup \nabla u ~~\mbox{in}~~ L^2_{loc}(\bar{\Omega}\times[0,\infty)).
 \label{zjscz2.5297ssssssssssssssssx963sss0222222ee}
\end{equation}
for some limit function $(n,c,m,u).$
On the other hand, according to the bounds provided by Lemma  \ref{fvfgsdfggfflemma45} and Lemmas \ref{lemmaghjffggssddgghhmk4563025xxhjklojjkkk}--\ref{ssdddlemmaghjffggssddgghhmk4563025xxhjklojjkkk}, this readily yields  that, for any  $\varepsilon\in(0,1)$,
\begin{equation}m_{\varepsilon}-c_{\varepsilon}-u_{\varepsilon}\cdot\nabla c_{\varepsilon}~~~\mbox{is bounded in}~~~L^{\frac{5}{3}} (\Omega\times(0, T)), \label{zjsdffggghhcz2.5297sssssdddsssddssssssssx963sss0222222ee}
\end{equation}
where we have used the fact that
$$
 \begin{array}{ll}
  &\disp\int_0^T\int_{\Omega}\left[|u_{\varepsilon}\cdot\nabla c_{\varepsilon}|^{\frac{5}{3}} \right]\\
  \leq&\disp C_6\left[\int_{0}^T\disp\int_{\Omega} |u_{\varepsilon}|^{\frac{10}{3}}\right]^{\frac{1}{2}}\left[\int_{0}^T\disp\int_{\Omega} |\nabla c_\varepsilon|^{\frac{10}{3}}\right]^{\frac{1}{2}}\\
\leq&\disp C_7\left[\int_{0}^T\disp\int_{\Omega} |u_{\varepsilon}|^{\frac{10}{3}}\right]^{\frac{1}{2}}
\left[\int_{0}^T\disp\int_{\Omega} |\nabla c_\varepsilon|^{4}\right]^{\frac{5}{12}}\\
  \leq &C_8(T+1)
   \end{array}
$$
by using Lemma \ref{ssdddlemmaghjffggssddgghhmk4563025xxhjklojjkkk}.
Therefore, in light of \dref{zjsdffggghhcz2.5297sssssdddsssddssssssssx963sss0222222ee}, regularity estimates 
 for the second equation of \dref{1.1fghyuisda}
(see e.g. \cite{Ladyzenskajaggk7101}) ensure that 
$(c_{\varepsilon})_{\varepsilon\in(0,1)}$ is bounded in
$L^{\frac{5}{3}} ((0, T); W^{2,\frac{5}{3}}(\Omega))$.
Hence,  by virtue of \dref{fvgbhzjsczsssd2.5297x9630ssddd2222tt4455hyuhii}, we derive  form the Aubin--Lions lemma that $(c_{\varepsilon})_{\varepsilon\in(0,1)}$
 relatively  compacts  in
$L^{\frac{5}{3}} ((0, T); W^{1,\frac{5}{3}}(\Omega))$. Thus, we  can choose  an appropriate subsequence that is
still written as $(\varepsilon_j )_{j\in \mathbb{N}}$ such that $\nabla c_{\varepsilon_j} \rightarrow z_1$
     in $L^{\frac{5}{3}} (\Omega\times(0, T))$ for all $T\in(0, \infty)$ and some
$z_1\in L^{\frac{5}{3}} (\Omega\times(0, T))$ as $j\rightarrow\infty$. Therefore,  by \dref{zjscz2.5297ssssssdddsssssssx963sss0222222ee}, we can also derive that $\nabla c_{\varepsilon_j} \rightarrow z_1$ a.e. in $\Omega\times(0, \infty)$
 as $j \rightarrow\infty$.
In view  of \dref{zjscz2.5297ssssssdddsssssssx963sss0222222ee} and   the Egorov theorem, we conclude  that
$z_1=\nabla c$ and hence
\begin{equation}
 \nabla c_\varepsilon\rightarrow \nabla c ~~~a.e. ~~~\mbox{in}~~~\Omega\times(0,\infty).
 \label{zjscz2.5297sssssssssssssx9630222222ee}
\end{equation}
This combined  with \dref{zjscz2.5297sssssssx963sss0222222ee}, \dref{zjscz2.5297ssdsssssssx963sss0222222ee} as well as  \dref{zjscz2.5297ssssssdddsssssssx963sss0222222ee} and \dref{x1.73142vghf48rtgyhu} implies that
\begin{equation}n_\varepsilon S_\varepsilon(x, n_{\varepsilon}, c_{\varepsilon})\nabla c_\varepsilon\rightharpoonup nS(x, n, c)\nabla c
~\mbox{in}~ L^{2}(\Omega\times(0,T))~\mbox{as}~\varepsilon = \varepsilon_j\searrow 0~\mbox{for each}~ T\in(0,\infty)
\label{1.1ddddfddffttyygghhyujiikkkffghhhifgghhhgffddgge6bhhjh66ccdf2345ddvbnmklddfggllhyuisda}
\end{equation}
by using the Egorov theorem.
 Next we shall prove that $(n,c,m,u)$ is a weak solution of problem \dref{1.1fghyuisda}.  To this end, testing the
first equation in \dref{1.1fghyuisda} by $\varphi\in C^\infty_0(\Omega\times [0,\infty))$, we obtain
\begin{equation}
\begin{array}{rl}\label{eqx4ss5xx12112ccgghh}
\disp{-\int_0^{\infty}\int_{\Omega}n_\varepsilon\varphi_t-\int_{\Omega}n_0\varphi(\cdot,0)  }=&\disp{-
\int_0^\infty\int_{\Omega}\nabla n_\varepsilon\cdot\nabla\varphi+\int_0^\infty\int_{\Omega}n_\varepsilon
S_\varepsilon(x,n_\varepsilon,c_\varepsilon)\nabla c_\varepsilon\cdot\nabla\varphi}\\
&+\disp{\int_0^\infty\int_{\Omega}n_\varepsilon u_\varepsilon\cdot\nabla\varphi-\int_0^{\infty}\int_{\Omega}n_\varepsilon m_\varepsilon\varphi}\\
\end{array}
\end{equation}
 for all $\varepsilon\in (0,1)$.
 Then \dref{zjscz2.5297ssssssssssx963sss0222222ee}--\dref{1.1ddddfddffttyygghhyujiikkkffghhhifgghhhgffddgge6bhhjh66ccdf2345ddvbnmklddfggllhyuisda} and the dominated convergence theorem enables
us to conclude
\begin{equation}
\begin{array}{rl}\label{eqx45xx12112ccgghh}
\disp{-\int_0^{\infty}\int_{\Omega}n\varphi_t-\int_{\Omega}n_0\varphi(\cdot,0)  }=&\disp{-
\int_0^\infty\int_{\Omega}\nabla n\cdot\nabla\varphi+\int_0^\infty\int_{\Omega}n
S(x,n,c)\nabla c\cdot\nabla\varphi}\\
&+\disp{\int_0^\infty\int_{\Omega}nu\cdot\nabla\varphi-\int_0^\infty\int_{\Omega}nm\varphi}\\
\end{array}
\end{equation}
by a limit procedure.
Next,
multiplying the second equation and the third equation  in \dref{1.1fghyuisda} by
$\varphi\in  C^\infty_0(\Omega\times [0,\infty))$, we derive from a limit procedure that
 \begin{equation}
\begin{array}{rl}\label{222eqx45xx12112ccgghhjj}
\disp{-\int_0^{\infty}\int_{\Omega}c\varphi_t-\int_{\Omega}c_0\varphi(\cdot,0)  }=&\disp{-
\int_0^\infty\int_{\Omega}\nabla c\cdot\nabla\varphi-\int_0^\infty\int_{\Omega}c\varphi+\int_0^\infty\int_{\Omega}m\varphi+
\int_0^\infty\int_{\Omega}cu\cdot\nabla\varphi}\\
\end{array}
\end{equation}
and
\begin{equation}
\begin{array}{rl}\label{111eqx45fffffxx12112ccgghhjj}
\disp{-\int_0^{\infty}\int_{\Omega}m\varphi_t-\int_{\Omega}m_0\varphi(\cdot,0)  }=&\disp{-
\int_0^\infty\int_{\Omega}\nabla m\cdot\nabla\varphi-\int_0^\infty\int_{\Omega}nm\varphi+
\int_0^\infty\int_{\Omega}mu\cdot\nabla\varphi}\\
\end{array}
\end{equation}
in a completed similar manner (see \cite{Zhenssssssdffssdddddddgssddsddfff00} for details).
Then testing the fourth equation of \dref{1.1fghyuisda} by $\varphi\in C_0^{\infty} (\bar{\Omega}\times[0, T);\mathbb{R}^3)$, we obtain
\begin{equation}
\begin{array}{rl}\label{eqx45xx12ddd112ccgghhjjgghh}
\disp{-\int_0^{\infty}\int_{\Omega}u\varphi_t-\int_{\Omega}u_0\varphi(\cdot,0)+ \kappa
\int_0^T\int_{\Omega} u\otimes u\cdot\nabla\varphi}=&\disp{-
\int_0^\infty\int_{\Omega}\nabla u\cdot\nabla\varphi-
\int_0^\infty\int_{\Omega}(n+m)\nabla\phi\cdot\varphi}\\
\end{array}
\end{equation}
by using Lemma \ref{lemma45630hhuujj} and a limit procedure (see \cite{Zhenssssssdffssdddddddgssddsddfff00} for details).
This means that $(n,c,m,u)$ is a weak solution of \dref{1.1fghyuisda}, in the natural sense as specified
in \cite{Zhenssssssdffssdddddddgssddsddfff00}.

\end{proof}

Moreover, if in addition we assume that $\kappa\in\mathbb{R}$, then our solutions will actually be bounded and smooth
and hence classical. In  fact, by applying the standard
parabolic regularity and the classical Schauder estimates for the Stokes evolution, we will
show that it is sufficiently regular so as to be a classical solution.
\begin{lemma}\label{lemmassddddff45630223}
Let  $(n,c,m,u)$ be a weak solution of \dref{334451.1fghyuisda}.
Assume that   $\alpha>0$ and $\kappa=0$. Then  $(n,c,m,u)$ solves \dref{334451.1fghyuisda} in the classical sense in $\Omega\times (0,\infty).$
 Moreover, this solution is bounded in
$\Omega\times(0,\infty)$ in the sense that
\begin{equation}
\|n(\cdot, t)\|_{L^\infty(\Omega)}+\|c(\cdot, t)\|_{W^{1,\infty}(\Omega)}+\|m(\cdot, t)\|_{W^{1,\infty}(\Omega)}+\|A^\gamma u(\cdot, t)\|_{L^2(\Omega)}\leq C~~ \mbox{for all}~~ t>0.
\label{1.163072xggttsdddyyu}
\end{equation}

\end{lemma}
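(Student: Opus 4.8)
The plan is to transfer to the limit the $\varepsilon$-independent a priori bounds available for the approximate system, and then to bootstrap the resulting bounded weak solution to a classical one by means of standard parabolic and Stokes regularity theory. First I would revisit Lemma \ref{lemma45630hhuujj} under the additional hypothesis $\kappa=0$: in this case the fourth equation of \dref{1.1fghyuisda} is the Stokes system $u_{\varepsilon t}+\nabla P_\varepsilon=\Delta u_\varepsilon+(n_\varepsilon+m_\varepsilon)\nabla\phi$, $\nabla\cdot u_\varepsilon=0$. Since $\phi\in W^{2,\infty}(\Omega)$ and, by Lemma \ref{lemmaghjffggssddgghhmk4563025xxhjklojjkkk} and \dref{ddfgczhhhh2.5ghju48cfg924ghyuji}, $(n_\varepsilon+m_\varepsilon)\nabla\phi$ is bounded in $L^2(\Omega)$ uniformly in $\varepsilon\in(0,1)$ and $t>0$, the representation $u_\varepsilon(t)=e^{-tA}u_0+\int_0^t e^{-(t-s)A}\mathcal{P}[(n_\varepsilon+m_\varepsilon)\nabla\phi]\,ds$, together with the fractional-power smoothing estimate $\|A^\gamma e^{-\sigma A}v\|_{L^2(\Omega)}\le C\sigma^{-\gamma}e^{-\mu\sigma}\|v\|_{L^2(\Omega)}$ for the Stokes semigroup (a standard consequence of its analyticity and spectral gap, cf. Lemma \ref{llssdrffmmggnnccvvccvvkkkkgghhkkllvvlemma45630}), the fact that $u_0\in D(A^\gamma)$, and $\gamma<1$, yields $\|A^\gamma u_\varepsilon(\cdot,t)\|_{L^2(\Omega)}\le C$ with $C$ independent of $\varepsilon$ and $t$; by $D(A^\gamma)\hookrightarrow L^\infty(\Omega)$ this gives an $\varepsilon$-uniform bound for $\|u_\varepsilon(\cdot,t)\|_{L^\infty(\Omega)}$.

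Next I would re-run the bootstrap already invoked in the proof of Lemma \ref{lemma45630hhuujj} (the argument of Lemma 3.2 of \cite{Zhenssssssdffssdddddddgssddsddfff00}), but keeping track that all constants are $\varepsilon$-independent. Using the $\varepsilon$-uniform bounds for $\|n_\varepsilon(\cdot,t)\|_{L^p(\Omega)}$ (every $p>1$), $\|c_\varepsilon\|_{L^\infty}$, $\|m_\varepsilon\|_{L^\infty}$, $\|\nabla c_\varepsilon(\cdot,t)\|_{L^2}$, $\|\nabla m_\varepsilon(\cdot,t)\|_{L^2}$ from Lemmas \ref{lemmaghjffggssddgghhmk4563025xxhjklojjkkk}--\ref{ssdddlemmaghjffggssddgghhmk4563025xxhjklojjkkk} and \dref{ddfgczhhhh2.5ghju48cfg924ghyuji}, the $\varepsilon$-uniform bound for $\|u_\varepsilon\|_{L^\infty}$ just obtained, and — decisively — the decay $|S_\varepsilon(x,n_\varepsilon,c_\varepsilon)|\le C_S(1+n_\varepsilon)^{-\alpha}$ with $\alpha>0$ to control the cross-diffusive flux $n_\varepsilon S_\varepsilon(x,n_\varepsilon,c_\varepsilon)\nabla c_\varepsilon$, a Moser-type iteration for the $n_\varepsilon$-equation together with parabolic smoothing for the $c_\varepsilon$- and $m_\varepsilon$-equations yields
\[
\|n_\varepsilon(\cdot,t)\|_{L^\infty(\Omega)}+\|c_\varepsilon(\cdot,t)\|_{W^{1,\infty}(\Omega)}+\|m_\varepsilon(\cdot,t)\|_{W^{1,\infty}(\Omega)}+\|A^\gamma u_\varepsilon(\cdot,t)\|_{L^2(\Omega)}\le C
\]
uniformly in $\varepsilon\in(0,1)$ and $t>0$. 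Since the weak solution from Theorem \ref{theorem3} (equivalently, Lemma \ref{lemma45hyuuuj630223}) is obtained as a limit of $(n_{\varepsilon_j},c_{\varepsilon_j},m_{\varepsilon_j},u_{\varepsilon_j})$ along $\varepsilon_j\searrow 0$ with $n_{\varepsilon_j}\to n$, $c_{\varepsilon_j}\to c$, $\nabla c_{\varepsilon_j}\to\nabla c$ and $u_{\varepsilon_j}\to u$ a.e., passing to the limit (using a.e. convergence for $n$ and $\nabla c$, and weak-$*$ lower semicontinuity of $L^p$-norms for $u$ and $\nabla m$) establishes \dref{1.163072xggttsdddyyu}.

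Finally I would upgrade $(n,c,m,u)$ to a classical solution by a standard two-step bootstrap on the limit equations. From \dref{1.163072xggttsdddyyu} we have $n\in L^\infty(\Omega\times(0,\infty))$, $c,m\in L^\infty((0,\infty);W^{1,\infty}(\Omega))$ and $u\in L^\infty((0,\infty);L^\infty(\Omega))$. Hence the forcing $(n+m)\nabla\phi$ of the Stokes equation for $u$ lies in $L^\infty$, so maximal $L^q$-regularity gives $u\in W^{2,1}_{q,loc}(\Omega\times(0,\infty))$ for every $q<\infty$, and thus $u\in C^{1+\theta,\frac{1+\theta}{2}}_{loc}$ for some $\theta\in(0,1)$; applying $L^q$-parabolic regularity to the $c$- and $m$-equations, whose right-hand sides $-u\cdot\nabla c-c+m$ and $-u\cdot\nabla m-nm$ lie in $L^\infty$, gives $c,m\in W^{2,1}_{q,loc}$ and in particular $\nabla c,\nabla m\in C^{\theta,\frac{\theta}{2}}_{loc}$; writing the $n$-equation in divergence form $n_t=\Delta n-\nabla\cdot J-nm$ with flux $J:=n(S(x,n,c)\nabla c+u)\in L^\infty$ and invoking standard parabolic estimates gives $n\in C^{\theta,\frac{\theta}{2}}_{loc}$. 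In a second pass all coefficients and right-hand sides are now Hölder continuous, so the parabolic Schauder theory applied to the $n$-, $c$- and $m$-equations and the Schauder theory for the Stokes evolution applied to the $u$-equation promote the solution to $C^{2+\iota,1+\frac{\iota}{2}}_{loc}(\Omega\times(0,\infty))$ (with an associated pressure $P\in C^{1,0}_{loc}$ recovered from the Stokes equation), so $(n,c,m,u,P)$ indeed solves \dref{334451.1fghyuisda} classically in $\Omega\times(0,\infty)$. I expect the principal obstacle to lie in the second paragraph: the $\varepsilon$-independent $L^\infty$-bound for $n_\varepsilon$ (and the ensuing $W^{1,\infty}$-bounds for $c_\varepsilon,m_\varepsilon$) is exactly where both hypotheses are used — $\kappa=0$ to replace Navier--Stokes by the Stokes system and thereby obtain the $\varepsilon$-uniform $D(A^\gamma)$-bound for $u_\varepsilon$, and $\alpha>0$ to absorb the rotational flux in the iteration despite the absence of any energy or entropy functional.
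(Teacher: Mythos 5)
Your proposal is correct and takes essentially the same route as the paper's proof: exploit $\kappa=0$ to write $u_\varepsilon$ via the Stokes variation-of-constants formula and obtain an $\varepsilon$-uniform $D(A^\gamma)$-bound (hence $u_\varepsilon\in L^\infty$), then bootstrap using the $\alpha>0$-based $L^p$-bounds on $n_\varepsilon$ to reach $\|n_\varepsilon\|_{L^\infty}$, $\|c_\varepsilon\|_{W^{1,\infty}}$, $\|m_\varepsilon\|_{W^{1,\infty}}$, and finally upgrade to classical regularity via parabolic/Stokes Schauder theory. The paper merely makes this bootstrap more explicit (six steps: $\nabla c\in L^4$ by an $\|\nabla c\|_{L^4}^4$-energy estimate, $\nabla m\in L^4$ by variation of constants, $W^{1,\infty}$ via fractional powers $(-\Delta+1)^\theta$, and then a Moser-type iteration for $n$), while you summarize the same chain as ``parabolic smoothing plus Moser iteration.''
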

\begin{proof}
In what follows, let $C, C_i$ denote some different constants, 
and if no special explanation, they depend at most on $\Omega, \phi, m, n_0, c_0$ and
$u_0$.

{\bf Step 1. The boundedness of $\|A^\gamma u (\cdot, t)\|_{L^2(\Omega)}$ and $\| u (\cdot, t)\|_{L^{\infty}(\Omega)}$ for all $t\in (0, T_{max,\varepsilon})$}

On the basis of the variation-of-constants formula for the projected version of the third
equation in \dref{334451.1fghyuisda}, we derive that
$$u (\cdot, t) = e^{-tA}u_0 +\int_0^te^{-(t-\tau)A}
\mathcal{P}((n (\cdot,\tau)+m (\cdot,\tau))\nabla\phi)d\tau~~ \mbox{for all}~~ t\in(0,T_{max,\varepsilon}).$$
On  the other hand, in view of Lemma \ref{lemmaghjffggssddgghhmk4563025xxhjklojjkkk} as well as  \dref{x1.73142vghf481} and \dref{ddfgczhhhh2.5ghju48cfg924ghyuji},
$$
\|h (\cdot,t)\|_{L^{{4+2\alpha}}(\Omega)}\leq C ~~~\mbox{for all}~~ t\in(0,T_{max,\varepsilon})
$$
with $h :=\mathcal{P}((n (\cdot,\tau)+m (\cdot,\tau)\nabla\phi)$.
Therefore, according to standard smoothing
properties of the Stokes semigroup we see that there exist $C_1 , C_2> 0$ and $\lambda_1 > 0$ such that
\begin{equation}
\begin{array}{rl}
\|A^\gamma u (\cdot, t)\|_{L^2(\Omega)}\leq&\disp{\|A^\gamma
e^{-tA}u_0\|_{L^2(\Omega)} +\int_0^t\|A^\gamma e^{-(t-\tau)A}h (\cdot,\tau)d\tau\|_{L^2(\Omega)}d\tau}\\
\leq&\disp{\|A^\gamma u_0\|_{L^2(\Omega)} +C_{1}\int_0^t(t-\tau)^{-\gamma-\frac{3}{2}(\frac{1}{{4+2\alpha}}-\frac{1}{2})}e^{-\lambda_1(t-\tau)}\|h (\cdot,\tau)\|_{L^{{4+2\alpha}}(\Omega)}d\tau}\\
\leq&\disp{C_{2}~~ \mbox{for all}~~ t\in(0,T_{max,\varepsilon})}\\
\end{array}
\label{cz2.571hhhhh51ccvvhddfccvvhjjjkkhhggjjllll}
\end{equation}
with $\gamma\in ( \frac{3}{4}, 1),$
where  in
the last inequality, 
 we have used the fact that
$$\begin{array}{rl}\disp\int_{0}^t(t-\tau)^{-\gamma-\frac{3}{2}(\frac{1}{{4+2\alpha}}-\frac{1}{2})}e^{-\lambda_1(t-\tau)}ds
\leq&\disp{\int_{0}^{\infty}\sigma^{-\gamma-\frac{3}{2}(\frac{1}{{4+2\alpha}}-\frac{1}{2})} e^{-\lambda_1\sigma}d\sigma<+\infty}\\
\end{array}
$$
by using  $-\gamma-\frac{3}{2}(\frac{1}{{4+2\alpha}}-\frac{1}{2})>-1.$
\dref{cz2.571hhhhh51ccvvhddfccvvhjjjkkhhggjjllll} implies to
 \begin{equation}
\begin{array}{rl}
\|u (\cdot, t)\|_{L^\infty(\Omega)}\leq  \sigma_{0}~~ \mbox{for all}~~ t\in(0,T_{max,\varepsilon})\\
\end{array}
\label{cz2ddfgjjj.5jkkcvvvhjkfffffkhhgll}
\end{equation}
by using the fact that $D(A^\gamma)$ is continuously embedded into $L^\infty(\Omega)$ (by $\gamma>\frac{3}{4}$).

{\bf Step 2. The boundedness of $\|\nabla c (\cdot, t)\|_{L^4(\Omega)}$ for all $t\in (0, T_{max,\varepsilon})$}

Now, multiply the second  equation in $\dref{334451.1fghyuisda}$ by $-\Delta c $, in view of  \dref{czfvgb2.5ghhjuyuccvviihjj}, \dref{ddfgczhhhh2.5ghju48cfg924ghyuji} and \dref{cz2ddfgjjj.5jkkcvvvhjkfffffkhhgll}, we derive from \dref{bnmbncz2.5ghhjuddfghhdddddffggyhjkklluivvbnnihjj} and the Young inequality that
 \begin{equation}
\begin{array}{rl}
\|\nabla c (\cdot, t)\|_{L^2(\Omega)}\leq  \sigma_{1}~~ \mbox{for all}~~ t\in(0,T_{max,\varepsilon}).\\
\end{array}
\label{cz2ddfgjjj.5jddfghkkcvvvhjkfffffkhhgll}
\end{equation}

Considering the fact that $\nabla c \cdot\nabla\Delta c   = \frac{1}{2}\Delta |\nabla c |^2-|D^2c |^2$,
by a straightforward computation using the second equation in \dref{334451.1fghyuisda} and several integrations by parts, we find that
\begin{equation}
\begin{array}{rl}
&\disp{\frac{1}{{4}}\frac{d}{dt} \|\nabla c \|^{{{4}}}_{L^{{4}}(\Omega)}}
\\
= &\disp{\int_{\Omega} |\nabla c |^{2}\nabla c \cdot\nabla(\Delta c
-c +m -u \cdot\nabla  c )}
\\
=&\disp{\frac{1}{{2}}\int_{\Omega} |\nabla c |^{2}\Delta |\nabla c |^2
-\int_{\Omega} |\nabla c |^{2}|D^2 c |^2-\int_{\Omega} |\nabla c |^{4}}
\\
&+\disp{\int_\Omega m \nabla\cdot( |\nabla c |^{2}\nabla c )
+\int_\Omega (u \cdot\nabla  c )\nabla\cdot( |\nabla c |^{2}\nabla c )}
\\
=&\disp{-\frac{1}{{2}}\int_{\Omega} \left|\nabla |\nabla c |^{2}\right|^2-\int_{\Omega} |\nabla c |^{4}
+\frac{1}{{2}}\int_{\partial\Omega} |\nabla c |^{2}\frac{\partial  |\nabla c |^{2}}{\partial\nu}}\\
&-\disp{\int_{\Omega} |\nabla c |^{2}|D^2 c |^2
+\int_\Omega m  |\nabla c |^{2}\Delta c +\int_\Omega m \nabla c \cdot\nabla( |\nabla c |^{2})}
\\
&+\disp{\int_\Omega (u \cdot\nabla  c ) |\nabla c |^{2}\Delta c
+\int_\Omega (u \cdot\nabla  c )\nabla c \cdot\nabla( |\nabla c |^{2})}
\\
\end{array}
\label{cz2.5gdfhjjkhju48156}
\end{equation}
for all $t\in(0,T_{max,\varepsilon})$.
On the other hand, since Lemma 2.2 of \cite{Zhengssssssdefr23}, we derive from \dref{ddfgczhhhh2.5ghju48cfg924ghyuji} and the Young inequality  that
\begin{equation}
\begin{array}{rl}
 \|\nabla {c }\|_{L^{4}(\Omega)}^4\leq&\displaystyle{\kappa_0\||\nabla{c }|D^2{c }
 \|_{L^2(\Omega)}^{\frac{2}{3}}
 \|{c }\|_{L^\infty(\Omega)}^{\frac{8}{3}}+\kappa_0\|{c }\|_{L^\infty(\Omega)}^4}
\\
\leq&\displaystyle{\frac{1}{4(1+16\sigma_0^2)}\int_{\Omega} |\nabla c |^{2}|D^2 c |^2+\kappa_1,}\\
\end{array}
\label{cz2.563022222ikossdddpl255}
\end{equation}
where $\sigma_0$ is the same as \dref{cz2ddfgjjj.5jkkcvvvhjkfffffkhhgll} and %
 $\kappa_0 $ and $\kappa_{1}$ are
some positive constants.
Thanks to the pointwise inequality $|\Delta c | \leq\sqrt{3}|D^2c |$, along with \dref{x1.73142vghf481}  as well as \dref{ddfgczhhhh2.5ghju48cfg924ghyuji} and \dref{ddfgczhhhh2.5ghju48cfg924ghyuji} this implies that
\begin{equation}
\begin{array}{rl}
&\disp\int_\Omega m  |\nabla c |^{2}\Delta c
\\
\leq&\disp{\sqrt{3} \int_\Omega m  |\nabla c |^{2}|D^2c |}
\\
\leq&\disp{\frac{1}{8}\int_\Omega  |\nabla c |^{2}|D^2c |^2+6\|m \|_{L^\infty(\Omega)}^2\int_\Omega |\nabla c |^{2}}
\\
\leq&\disp{\frac{1}{8}\int_\Omega  |\nabla c |^{2}|D^2c |^2+6\lambda^2\sigma_{1}^2}
\\
\leq&\disp{\frac{1}{8}\int_\Omega  |\nabla c |^{2}|D^2c |^2+C_3~~ \mbox{for all}~~ t\in(0,T_{max,\varepsilon})}
\\
\end{array}
\label{cz2.5ghju48hjuikl1}
\end{equation}
and
\begin{equation}
\begin{array}{rl}
&\disp\int_\Omega (u \cdot\nabla  c ) |\nabla c |^{2}\Delta c
\\
\leq&\disp{\sqrt{3}\int_\Omega |u \cdot\nabla  c | |\nabla c |^{2}|D^2c |}
\\
\leq&\disp{\frac{1}{16}\int_\Omega  |\nabla c |^{2}|D^2c |^2
+12\int_\Omega |u \cdot\nabla  c |^2 |\nabla c |^{2}}
\\
\leq&\disp{\frac{1}{16}\int_\Omega  |\nabla c |^{2}|D^2c |^2
+12\|u \|^2_{L^\infty(\Omega)}\int_\Omega  |\nabla c |^{4}~~ \mbox{for all}~~ t\in(0,T_{max,\varepsilon})}
\end{array}
\label{cz2.5ghju48hjuikl451}
\end{equation}
by using \dref{czfvgb2.5ghhjuyuccvviihjj} and the Young inequality. Now, inserting \dref{cz2.563022222ikossdddpl255} into  \dref{cz2.5ghju48hjuikl451},  this shows that
\begin{equation}
\begin{array}{rl}
&\disp\int_\Omega (u \cdot\nabla  c ) |\nabla c |^{2}\Delta c
\\
\leq&\disp{\frac{1}{16}\int_\Omega  |\nabla c |^{2}|D^2c |^2
+12\|u \|^2_{L^\infty(\Omega)}\times[\frac{1}{4(1+16\sigma_0^2)}\int_{\Omega} |\nabla c |^{2}|D^2 c |^2+\kappa_1]}\\
\leq&\disp{\frac{1}{4}\int_\Omega  |\nabla c |^{2}|D^2c |^2+C_4~~ \mbox{for all}~~ t\in(0,T_{max,\varepsilon})}
\end{array}
\label{3344cz2.5ghju48hssddjuikl451}
\end{equation}
by using \dref{cz2ddfgjjj.5jkkcvvvhjkfffffkhhgll}.
Again, from the Young inequality, \dref{x1.73142vghf481} as well as \dref{ddfgczhhhh2.5ghju48cfg924ghyuji}   and \dref{czfvgb2.5ghhjuyuccvviihjj}, we have
\begin{equation}
\begin{array}{rl}
&\disp\int_\Omega m \nabla c \cdot\nabla( |\nabla c |^{2})
\\
\leq &\disp{\frac{1}{8}\int_{\Omega} \left|\nabla |\nabla c |^{2}\right|^2+2
\lambda^2\sigma_{1}^2}
\\
\leq &\disp{\frac{1}{8}\int_{\Omega}\left|\nabla |\nabla c |^{2}\right|^2+C_5}
\end{array}
\label{cz2.5ghju4ghvvvbbbjuk81}
\end{equation}
and
\begin{equation}
\begin{array}{rl}
&\disp\int_\Omega (u \cdot\nabla  c )\nabla c \cdot\nabla( |\nabla c |^{2})
\\
\leq &\disp{\frac{1}{16}\int_{\Omega}\left|\nabla |\nabla c |^{2}\right|^2+4\int_\Omega |u \cdot\nabla  c |^2 |\nabla c |^{2}}
\\
\leq &\disp{\frac{1}{16}\int_{\Omega}\left|\nabla |\nabla c |^{2}\right|^2
+4 \|u \|^2_{L^\infty(\Omega)}\times[\frac{1}{4(1+16\sigma_0^2)}\int_{\Omega} |\nabla c |^{2}|D^2 c |^2+\kappa_1]}\\
\leq &\disp{\frac{1}{16}\int_{\Omega}\left|\nabla |\nabla c |^{2}\right|^2
+\frac{1}{16}\int_{\Omega} |\nabla c |^{2}|D^2 c |^2+C_6.}
\end{array}
\label{cz2.5ghju4ccvvvghjuk81}
\end{equation}
Given the the boundedness of $\|\nabla  c  \|_{L^2(\Omega)}^2$ (see \dref{cz2ddfgjjj.5jddfghkkcvvvhjkfffffkhhgll}), it is well-known that
(cf. \cite{Ishida, Tao41215,Zhengsdsd6}) the boundary trace embedding  implies that
\begin{equation} \label{com-est-4}
\begin{split}
\int_{\partial\Omega}  |\nabla c |^{2}\frac{\partial}{\partial \nu} |\nabla c |^2&\leq \frac{1}{16}\int_\Omega |\nabla |\nabla c |^2|^2+C_7 \Bigr(\int_{\Omega} |\nabla c |^2\Bigr)^2\\
&\leq \frac{1}{16}\int_\Omega |\nabla |\nabla c |^2|^2+C_{8}.
\end{split}
\end{equation}
Now, together with \dref{cz2.5gdfhjjkhju48156}, \dref{cz2.5ghju48hjuikl1}--\dref{com-est-4}, we can derive that, for some positive constant $C_9$,
\begin{equation}
\begin{array}{rl}
\disp{\frac{1}{{4}}\frac{d}{dt} \|\nabla c \|^{{{4}}}_{L^{{4}}(\Omega)}+\frac{3}{{4}}\int_{\Omega} \left|\nabla |\nabla c |^{2}\right|^2
+\frac{1}{{2}}\int_{\Omega} |\nabla c |^{2}|D^2 c |^2}\leq&\disp{C_{9}~~ \mbox{for all}~~ t\in(0,T_{max,\varepsilon}),}\\
\end{array}
\label{cz2.sss5gdfhjjkhjsdfggu48156}
\end{equation}
which combined with \dref{cz2.563022222ikossdddpl255}  yields to
\begin{equation}
\begin{array}{rl}
\disp{\frac{1}{{4}}\frac{d}{dt} \|\nabla c \|^{{{4}}}_{L^{{4}}(\Omega)}+ C_{10}\|\nabla c \|^{{{4}}}_{L^{{4}}(\Omega)}}\leq&\disp{C_{11}~~ \mbox{for all}~~ t\in(0,T_{max,\varepsilon}).}\\
\end{array}
\label{cz2.5gdfhjjkhjsdfggu48156}
\end{equation}
This implies
\begin{equation}
\begin{array}{rl}
\|\nabla c (\cdot, t)\|_{L^4(\Omega)}\leq  C_{12}~~ \mbox{for all}~~ t\in(0,T_{max,\varepsilon})\\
\end{array}
\label{cz2ddfgjjj.5jddfghkkcvkkkllvvhjkfffffkhhgll}
\end{equation}
by  integration.

{\bf Step 3. The boundedness of $\|\nabla m (\cdot, t)\|_{L^{4}(\Omega)}$  for all $t\in (0, T_{max,\varepsilon})$}

An application of the variation of
constants formula for $c $ leads to
\begin{equation}
\begin{array}{rl}
&\disp{\|\nabla m (\cdot, t)\|_{L^{4}(\Omega)}}\\
\leq&\disp{\|\nabla e^{t(\Delta-1)} m_0\|_{L^{4}(\Omega)}+
\int_{0}^t\|\nabla e^{(t-s)(\Delta-1)}(m (s)-n (s)m (s))\|_{L^{4}(\Omega)}ds}\\
&\disp{+\int_{0}^t\|\nabla e^{(t-s)(\Delta-1)}\nabla \cdot(u (s) m (s))\|_{L^{4}(\Omega)}ds.}\\
\end{array}
\label{44444zjccfgghhhfgbhjcvvvbscz2.5297x96301ku}
\end{equation}
To estimate the terms on the right of  \dref{44444zjccfgghhhfgbhjcvvvbscz2.5297x96301ku}, in light of \dref{ddfgczhhhh2.5ghju48cfg924ghyuji} and \dref{czfvgb2.5ghhjuyuccvviihjj},
applying  the $L^p$-$L^q$ estimates associated heat semigroup, for some positive constant $\lambda_1$ such that
\begin{equation}
\begin{array}{rl}
\|\nabla e^{t(\Delta-1)} m_0\|_{L^{4}(\Omega)}\leq &\disp{C_{13}~~ \mbox{for all}~~ t\in(0,T_{max,\varepsilon})}\\
\end{array}
\label{zjsssddddccffgbhjcghhhjjjvvvbscz2.5297x96301ku}
\end{equation}
as well as
\begin{equation}
\begin{array}{rl}
&\disp{\int_{0}^t\|\nabla e^{(t-s)(\Delta-1)}(m (s)-n (s)m (s))\|_{L^{4}(\Omega)}ds}\\
\leq&\disp{C_{14}\int_{0}^t[1+(t-s)^{-\frac{1}{2}-\frac{3}{2}(\frac{1}{4}-\frac{1}{4})}] e^{-\lambda_1(t-s)}(\|n (s)\|_{L^{4}(\Omega)}+\|m (s)\|_{L^{\infty}(\Omega)})ds}\\
\leq&\disp{C_{15}~~ \mbox{for all}~~ t\in(0,T_{max,\varepsilon})}\\
\end{array}
\label{zjccffgbhjcvvvbscz2.5297x96301ku}
\end{equation}
and
\begin{equation}
\begin{array}{rl}
&\disp{\int_{0}^t\|\nabla e^{(t-s)(\Delta-1)}\nabla \cdot(u (s) c (s))\|_{L^{4}(\Omega)}ds}\\
\leq&\disp{C_{16}\int_{0}^t\|(-\Delta+1)^\iota e^{(t-s)(\Delta-1)}\nabla \cdot(u (s) c (s))\|_{L^{4}(\Omega)}ds}\\
\leq&\disp{C_{17}\int_{0}^t(t-s)^{-\iota-\frac{1}{2}-\tilde{\kappa}} e^{-\lambda_1(t-s)}\|u (s) c (s)\|_{L^{4}(\Omega)}ds}\\
\leq&\disp{C_{18}\int_{0}^t(t-s)^{-\iota-\frac{1}{2}-\tilde{\kappa}} e^{-\lambda_1(t-s)}\|u (s)\|_{L^{\infty}(\Omega)}\| c (s)\|_{L^{\infty}(\Omega)}ds}\\
\leq&\disp{C_{19}~~ \mbox{for all}~~ t\in(0,T_{max,\varepsilon}),}\\
\end{array}
\label{zjccffgbhjcvdgghhhhdfgghhvvbscz2.5297x96301ku}
\end{equation}
where $\iota=\frac{13}{28},\tilde{\kappa}=\frac{1}{56}$.
Inserting \dref{zjsssddddccffgbhjcghhhjjjvvvbscz2.5297x96301ku}--\dref{zjccffgbhjcvdgghhhhdfgghhvvbscz2.5297x96301ku} into \dref{44444zjccfgghhhfgbhjcvvvbscz2.5297x96301ku}, one has
\begin{equation}
\begin{array}{rl}
\|\nabla m (\cdot, t)\|_{L^4(\Omega)}\leq  \sigma_{2}~ \mbox{for all}~~ t\in(0,T_{max,\varepsilon}).\\
\end{array}
\label{cz2ddfgjjj.5jssddddddfghkkcvkkkllvvhjkfffffkhhgll}
\end{equation}

{\bf Step 4. The boundedness of $\|c (\cdot, t)\|_{W^{1,\infty}(\Omega)}$ and $\|m (\cdot, t)\|_{W^{1,\infty}(\Omega)}$
 for all  $t\in (\tau, T_{max,\varepsilon})$ with $\tau\in(0,T_{max,\varepsilon})$}

Choosing $\theta\in(\frac{1}{2}+\frac{3}{8},1),$ 
 then the domain of the fractional power $D((-\Delta + 1)^\theta)\hookrightarrow W^{1,\infty}(\Omega)$ (see e.g.  \cite{Horstmann791,Winkler792}). Thus,  in light of $\alpha>0$, using the H\"{o}lder inequality and the $L^p$-$L^q$ estimates associated heat semigroup, 
\begin{equation}
\begin{array}{rl}
&\| c (\cdot, t)\|_{W^{1,\infty}(\Omega)}\\
\leq&\disp{C_{20}\|(-\Delta+1)^\theta c (\cdot, t)\|_{L^{4}(\Omega)}}\\
\leq&\disp{C_{21}t^{-\theta}e^{-\lambda_1 t}\|c_0\|_{L^{4}(\Omega)}+C_{21}\int_{0}^t(t-s)^{-\theta}e^{-\lambda_1(t-s)}
\|(m -u  \cdot \nabla c )(s)\|_{L^{4}(\Omega)}ds}\\
\leq&\disp{C_{22}+C_{22}\int_{0}^t(t-s)^{-\theta}e^{-\mu(t-s)}[\|m (s)\|_{L^{\infty}(\Omega)}+\|c (s)\|_{L^{\infty}(\Omega)}+\|u (s)\|_{L^\infty(\Omega)}
\|\nabla c (s)\|_{L^{4}(\Omega)}]ds}\\
\leq&\disp{C_{23}~~ \mbox{for all}~~ t\in(\tau,T_{max,\varepsilon})}\\
\end{array}
\label{zjccffgbhjcvvvbscz2.5297x96301ku}
\end{equation}
and
\begin{equation}
\begin{array}{rl}
&\| m (\cdot, t)\|_{W^{1,\infty}(\Omega)}\\
\leq&\disp{C_{24}\|(-\Delta+1)^\theta m (\cdot, t)\|_{L^{4}(\Omega)}}\\
\leq&\disp{C_{25}t^{-\theta}e^{-\lambda_1 t}\|m_0\|_{L^{4}(\Omega)}+C_{25}\int_{0}^t(t-s)^{-\theta}e^{-\lambda_1(t-s)}
\|(m -m n -u  \cdot \nabla m )(s)\|_{L^{4}(\Omega)}ds}\\
\leq&\disp{C_{26}+C_{26}\int_{0}^t(t-s)^{-\theta}e^{-\mu(t-s)}[\|m (s)\|_{L^{\infty}(\Omega)}+\|n (s)\|_{L^{4}(\Omega)}+\|u (s)\|_{L^\infty(\Omega)}
\|\nabla m (s)\|_{L^{4}(\Omega)}]ds}\\
\leq&\disp{C_{27}~~ \mbox{for all}~~ t\in(\tau,T_{max,\varepsilon})}\\
\end{array}
\label{zjccffgbhjcvbscz97x96u}
\end{equation}
with $\tau\in(0,T_{max,\varepsilon})$,
where  we have used \dref{ddfgczhhhh2.5ghju48cfg924ghyuji}, \dref{cz2ddfgjjj.5jddfghkkcvkkkllvvhjkfffffkhhgll},
 \dref{cz2ddfgjjj.5jkkcvvvhjkfffffkhhgll}, \dref{cz2ddfgjjj.5jssddddddfghkkcvkkkllvvhjkfffffkhhgll} as well as  the H\"{o}lder inequality and
$$\int_{0}^t(t-s)^{-\theta}e^{-\lambda_1(t-s)}\leq \int_{0}^{\infty}\sigma^{-\theta}e^{-\lambda_1\sigma}d\sigma<+\infty.$$

{\bf Step 5. The boundedness of $\|c (\cdot, t)\|_{W^{1,\infty}(\Omega)}$ and $\|n (\cdot, t)\|_{L^{\infty}(\Omega)}$   for all  $t\in (0, T_{max,\varepsilon})$}

 Recalling Lemma \ref{lemma70}, \dref{zjccffgbhjcvvvbscz2.5297x96301ku} and \dref{zjccffgbhjcvbscz97x96u}, we infer that
%
\begin{equation}
\begin{array}{rl}
\|\nabla c (\cdot, t)\|_{L^{{\infty}}(\Omega)}\leq \kappa_{1} ~~ \mbox{for all}~~~  t\in(0,T_{max,\varepsilon}) \\
\end{array}
\label{cz2.5g5ddfgggggghh56789hhjui78jj90099}
\end{equation}
and
\begin{equation}
\begin{array}{rl}
\|\nabla m (\cdot, t)\|_{L^{{\infty}}(\Omega)}\leq \kappa_{2} ~~ \mbox{for all}~~~  t\in(0,T_{max,\varepsilon}). \\
\end{array}
\label{cz2.5g5ddfggggggdffgghh56789hhjui78jj90099}
\end{equation}

{\bf Step 6. The boundedness of $\|n (\cdot, t)\|_{L^{\infty}(\Omega)}$   for all  $t\in (\tau, T_{max,\varepsilon})$ with $\tau\in(0,T_{max,\varepsilon})$}

Fix $T\in (0, T_{max,\varepsilon})$.  Let $M(T):=\sup_{t\in(0,T)}\|n (\cdot,t)\|_{L^\infty(\Omega)}$ and $\tilde{h} :=n S (x, n , c )\nabla c +u $. Then by  \dref{czfvgb2.5ghhjuyuccvviihjj}, \dref{x1.73142vghf48gg}  and \dref{cz2ddfgjjj.5jddfghkkcvkkkllvvhjkfffffkhhgll},
there exists $C_{28} > 0$ such that
\begin{equation}
\begin{array}{rl}
\|\tilde{h} (\cdot, t)\|_{L^{4}(\Omega)}\leq&\disp{C_{28}~~ t\in(0,T_{max,\varepsilon}),}\\
\end{array}
\label{cz2ddff.57151ccvhhjjjkkkuuifghhhivhccvvhjjjkkhhggjjllll}
\end{equation}
where we have used \dref{1.163072x} and  the boundedness of $\|c (\cdot, t)\|_{W^{1,\infty}(\Omega)}$  for all  $t\in (\tau, T_{max,\varepsilon})$ with $\tau\in(0,T_{max,\varepsilon})$.
Hence, due to the fact that $\nabla\cdot u =0$,  again,  by means of an
associate variation-of-constants formula for $n $, we can derive
\begin{equation}
n (t)=e^{(t-t_0)\Delta}n (\cdot,t_0)-\int_{t_0}^{t}e^{(t-s)\Delta}\nabla\cdot(n (\cdot,s)\tilde{h} (\cdot,s)) ds-\int_{t_0}^{t}e^{(t-s)\Delta}(n (\cdot,s)m (\cdot,s)) ds,~~ t\in(t_0, T),
\label{sss5555fghbnmcz2.5ghjjjkkklu48cfg924ghyuji}
\end{equation}
where $t_0 := (t-1)_{+}$.
As the last summand in \dref{5555fghbnmcz2.5ghjjjkkklu48cfg924ghyuji} is non-positive by the maximum principle, we can thus estimate
\begin{equation}
\|n (t)\|_{L^\infty(\Omega)}\leq \|e^{(t-t_0)\Delta}n (\cdot,t_0)\|_{L^\infty(\Omega)}+\int_{t_0}^{t}\|
e^{(t-s)\Delta}\nabla\cdot(n (\cdot,s)\tilde{h} (\cdot,s))\|_{L^\infty(\Omega)} ds,~~ t\in(t_0, T).
\label{5555fghbnmcz2.5ghjjjkkklu48cfg924ghyuji}
\end{equation}
If $t\in(0,1]$,
by virtue of the maximum principle, we derive that
\begin{equation}
\begin{array}{rl}
\|e^{(t-t_0)\Delta}n (\cdot,t_0)\|_{L^{\infty}(\Omega)}\leq &\disp{\|n_0\|_{L^{\infty}(\Omega)},}\\
\end{array}
\label{zjccffgbhjffghhjcghhhjjjvvvbscz2.5297x96301ku}
\end{equation}
while if $t > 1$ then with the help of the  $L^p$-$L^q$ estimates for the Neumann heat semigroup and \dref{ddfgczhhhh2.5ghju48cfg924ghyuji}, we conclude that
\begin{equation}
\begin{array}{rl}
\|e^{(t-t_0)\Delta}n (\cdot,t_0)\|_{L^{\infty}(\Omega)}\leq &\disp{C_{29}(t-t_0)^{-\frac{3}{2}}\|n (\cdot,t_0)\|_{L^{1}(\Omega)}\leq C_{30}.}\\
\end{array}
\label{zjccffgbhjffghhjcghghjkjjhhjjjvvvbscz2.5297x96301ku}
\end{equation}
Finally, we fix an arbitrary $\frac{7}{2}\in(3,4)$ and then once more invoke known smoothing
properties of the
Stokes semigroup  and the H\"{o}lder inequality to find $C_4 > 0$ such that
\begin{equation}
\begin{array}{rl}
&\disp\int_{t_0}^t\| e^{(t-s)\Delta}\nabla\cdot(n (\cdot,s)\tilde{h} (\cdot,s)\|_{L^\infty(\Omega)}ds\\
\leq&\disp C_{31}\int_{t_0}^t(t-s)^{-\frac{1}{2}-\frac{3}{7}}\|n (\cdot,s)\tilde{h} (\cdot,s)\|_{L^p(\Omega)}ds\\
\leq&\disp C_{32}\int_{t_0}^t(t-s)^{-\frac{1}{2}-\frac{3}{7}}\| n (\cdot,s)\|_{L^{28}(\Omega)}\|\tilde{h} (\cdot,s)\|_{L^{4}(\Omega)}ds\\
\leq&\disp C_{33}\int_{t_0}^t(t-s)^{-\frac{1}{2}-\frac{3}{7}}\| u (\cdot,s)\|_{L^{\infty}(\Omega)}^\frac{27}{28}\| u (\cdot,s)\||_{L^1(\Omega)}^{\frac{1}{28}}\|\tilde{h} (\cdot,s)\|_{L^{4}(\Omega)}ds\\
\leq&\disp C_{34}M^b(T)~~\mbox{for all}~~ t\in(0, T),\\
\end{array}
\label{ccvbccvvbbnnndffghhjjvcvvbccfbbnfgbghjjccmmllffvvggcvvvvbbjjkkdffzjscz2.5297x9630xxy}
\end{equation}
In combination with \dref{5555fghbnmcz2.5ghjjjkkklu48cfg924ghyuji}--\dref{ccvbccvvbbnnndffghhjjvcvvbccfbbnfgbghjjccmmllffvvggcvvvvbbjjkkdffzjscz2.5297x9630xxy} and using the definition of $M(T)$
we obtain
$C_{35}> 0$ such that
\begin{equation}
\begin{array}{rl}
&\disp  M(T)\leq C_{35}+C_{35}M^{\frac{27}{28}}(T)~~\mbox{for all}~~ T\in(0, T_{max,\varepsilon}).\\
\end{array}
\label{ccvbccvvbbnnndffghhjjvcvvfghhhbccfbbnfgbghjjccmmllffvvggcvvvvbbjjkkdffzjscz2.5297x9630xxy}
\end{equation}
Hence,  with  some basic calculation, in light of  $T\in (0, T_{max,\varepsilon})$ was arbitrary,
one can get
\begin{equation}
\begin{array}{rl}
\|n (\cdot, t)\|_{L^{\infty}(\Omega)}\leq&\disp{C_{36}~~ \mbox{for all}~~ t\in(0,T_{max,\varepsilon}).}\\
\end{array}
\label{cz2.57ghhhh151ccvhhjjjkkkffgghhuuiivhccvvhjjjkkhhggjjllll}
\end{equation}

Finally,
 by virtue of Lemma \ref{lemma70} and
\dref{cz2.571hhhhh51ccvvhddfccvvhjjjkkhhggjjllll}, \dref{cz2.5g5ddfgggggghh56789hhjui78jj90099}--\dref{cz2.5g5ddfggggggdffgghh56789hhjui78jj90099}, \dref{cz2.57ghhhh151ccvhhjjjkkkffgghhuuiivhccvvhjjjkkhhggjjllll},  the local solution can be extend to the global-in-time solutions.

Employing almost exactly the same arguments as in the proof of Lemma 3.1 in  \cite{LiLiLiLisssdffssdddddddgssddsddfff00} (see also \cite{Zhenddsdddddgssddsddfff00}), and taking advantage of \dref{1.163072xggttsdddyyu}, we conclude the regularity theories for the Stokes semigroup and the   H\"{o}lder estimate for local solutions of parabolic equations, we can
obtain weak solution $(n,c,m,u)$   is a classical solution.
\end{proof}

The most important consequence of Lemmas \ref{lemma45hyuuuj630223}--\ref{lemmassddddff45630223} is the following:

{\bf Proof of Theorem  \ref{theorem3}}: The theorem \ref{theorem3} is part of the statement proven by Lemmas \ref{lemma45hyuuuj630223}--\ref{lemmassddddff45630223}.


\subsection{Eventual smoothness and asymptotics}


Given the preliminary lemma collected in the above, in this subsection, we now establish the claimed asymptotic behavior
of the solutions to \dref{1.1fghyuisda} under  $\alpha>0$.
Before going further, we list the following lemma, which will be used 
to derive the convergence properties of solution with respect
to the norm in $L^2 (\Omega)$.



\begin{lemma}\label{fhhghfbglemma4563025xxhjklojjkkkgyhuissddff} (Lemma 4.6 of \cite{EspejojjEspejojainidd793})
Let $\lambda > 0, C > 0$, and suppose that $y\in C^1 ([0,\infty))$ and
$h\in  C^0 ([0,\infty))$ are nonnegative functions satisfying $y'(t)+\lambda y(t)\leq h(t)$ for some $\lambda > 0$ and all
$t > 0$. Then if
$\int_0^\infty h(s)ds \leq C,$ we have $\lim_{t\rightarrow\infty}y(t)=0$.
\end{lemma}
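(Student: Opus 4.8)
The plan is to use the classical integrating-factor (variation-of-constants) device together with a splitting of the time axis, so that the influence of the initial value decays while the tail of $h$ is made small. First I would multiply the inequality $y'(t)+\lambda y(t)\le h(t)$ by $e^{\lambda t}$ to rewrite it as $\frac{d}{dt}\bigl(e^{\lambda t}y(t)\bigr)\le e^{\lambda t}h(t)$, and then integrate over $(t_0,t)$ for arbitrary $0\le t_0<t$. Dividing by $e^{\lambda t}$ yields the bound
\begin{equation*}
y(t)\le e^{-\lambda(t-t_0)}y(t_0)+\int_{t_0}^{t}e^{-\lambda(t-s)}h(s)\,ds\qquad\mbox{for all } t>t_0\ge 0.
\end{equation*}

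Next, I would fix $\varepsilon>0$. Since $h\ge 0$ and $\int_0^\infty h(s)\,ds\le C<\infty$, the tails satisfy $\int_{t_0}^\infty h(s)\,ds\to 0$ as $t_0\to\infty$, so one can choose $t_0=t_0(\varepsilon)$ with $\int_{t_0}^\infty h(s)\,ds<\frac{\varepsilon}{2}$. Because $e^{-\lambda(t-s)}\le 1$ for $s\le t$, the memory term above is then bounded by $\int_{t_0}^{t}h(s)\,ds\le\int_{t_0}^\infty h(s)\,ds<\frac{\varepsilon}{2}$ for every $t>t_0$. Keeping this $t_0$ fixed, the first term $e^{-\lambda(t-t_0)}y(t_0)$ is a decaying exponential times the finite constant $y(t_0)$ (finite because $y\in C^1([0,\infty))$), so there exists $T=T(\varepsilon)>t_0$ with $e^{-\lambda(t-t_0)}y(t_0)<\frac{\varepsilon}{2}$ whenever $t>T$. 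Adding the two estimates gives $0\le y(t)<\varepsilon$ for all $t>T$, and since $\varepsilon>0$ was arbitrary this establishes $\lim_{t\to\infty}y(t)=0$.

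I expect this argument to present essentially no obstacle: it is a routine Gronwall-type estimate, and the conclusion follows directly once the variation-of-constants bound above is written down. The only items deserving a little attention are the correct order of the quantifiers — the cutoff time $t_0$ must be selected from the smallness of the tail of $h$ first, and only afterwards may $T$ be chosen from the exponential decay of the initial contribution — and the repeated use of the nonnegativity of $h$, both to replace $e^{-\lambda(t-s)}$ by $1$ in the memory term and to ensure monotonicity of $t\mapsto\int_t^\infty h(s)\,ds$. If one prefers to avoid $\varepsilon$ altogether, the same reasoning can be compressed into $\limsup_{t\to\infty}y(t)\le\inf_{t_0\ge 0}\int_{t_0}^\infty h(s)\,ds=0$, which is logically identical.
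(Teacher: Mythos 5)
Your argument is correct, and it is the standard variation-of-constants (integrating-factor) proof: the present paper does not reprove this lemma but simply cites it as Lemma 4.6 of Espejo--Winkler, where precisely this kind of splitting of $[t_0,t]$ together with the smallness of the tail $\int_{t_0}^\infty h$ is used. The order of quantifiers you flag (choose $t_0$ from the tail of $h$ first, then $T$ from the exponential decay) is exactly the point that makes the argument go through.
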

To begin with, let us collect some basic solution properties which essentially have already been used
in \cite{EspejojjEspejojainidd793}.
\begin{lemma}\label{lemmadsssddffffdfffgg4dddd5630}
The  global solution of \dref{1.1fghyuisda} satisfies
\begin{equation}
\begin{array}{rl}
\disp\disp\int_0^\infty\int_\Omega \left(n_{\varepsilon}m_{\varepsilon}+|\nabla m_{\varepsilon}|^2\right)<&\disp{+\infty.}\\
\end{array}
\label{hhxxcdfvvsssjjdfffssddcz2.5}
\end{equation}
\end{lemma}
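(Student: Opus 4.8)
The plan is to integrate the two basic dissipation inequalities from Lemma~\ref{fvfgsdfggfflemma45} over all of $(0,\infty)$. The bound on $\int_0^\infty\int_\Omega n_\varepsilon m_\varepsilon$ is in fact already essentially contained in \dref{ddczhjjjj2.5ghxxccju48cfg9ssdd24}: that estimate reads $\int_0^t\int_\Omega n_\varepsilon m_\varepsilon\le\lambda$ for all $t\in(0,T_{max,\varepsilon})$ with $\lambda$ independent of $\varepsilon$ and of $t$, so letting $t\nearrow\infty$ (recall $T_{max,\varepsilon}=\infty$ by Lemma~\ref{lemma45630hhuujj}) and using monotone convergence of the integrand $n_\varepsilon m_\varepsilon\ge0$ yields $\int_0^\infty\int_\Omega n_\varepsilon m_\varepsilon\le\lambda$. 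Likewise, \dref{ddczhjjjj2.5ghxxssddccju48cfg9ssdd24} gives $\tfrac12\int_0^t\int_\Omega|\nabla m_\varepsilon|^2\le\tfrac12\int_\Omega m_0^2$ for all $t$, and passing $t\nearrow\infty$ gives $\int_0^\infty\int_\Omega|\nabla m_\varepsilon|^2\le\int_\Omega m_0^2<\infty$.

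Adding these two uniform-in-$t$ bounds produces
\begin{equation*}
\int_0^\infty\int_\Omega\left(n_\varepsilon m_\varepsilon+|\nabla m_\varepsilon|^2\right)\le\lambda+\int_\Omega m_0^2,
\end{equation*}
which is exactly \dref{hhxxcdfvvsssjjdfffssddcz2.5}, with a right-hand side that does not depend on $\varepsilon$. The only point requiring a word of care is that Lemma~\ref{fvfgsdfggfflemma45} states its conclusions on the a priori finite interval $(0,T_{max,\varepsilon})$; one must first invoke the global existence result (Lemma~\ref{lemma45630hhuujj}), which guarantees $T_{max,\varepsilon}=\infty$, so that the supremum over $t$ in \dref{ddczhjjjj2.5ghxxccju48cfg9ssdd24} and \dref{ddczhjjjj2.5ghxxssddccju48cfg9ssdd24} genuinely runs over $(0,\infty)$.

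There is essentially no obstacle here; the ``hard part'' is purely bookkeeping, namely recording that the constants $\lambda$ and $\int_\Omega m_0^2$ are $\varepsilon$-independent so that the estimate survives the passage $\varepsilon=\varepsilon_j\searrow0$ and will be available for the Fatou-type arguments in the subsequent lemmas on large-time behavior. If one prefers to argue directly rather than quote \dref{ddczhjjjj2.5ghxxccju48cfg9ssdd24}–\dref{ddczhjjjj2.5ghxxssddccju48cfg9ssdd24}, one can reprove both bounds in one stroke: test the $m_\varepsilon$-equation by $m_\varepsilon$, use $\nabla\cdot u_\varepsilon=0$ to kill the convective term, and integrate, obtaining $\tfrac12\tfrac{d}{dt}\int_\Omega m_\varepsilon^2+\int_\Omega|\nabla m_\varepsilon|^2+\int_\Omega n_\varepsilon m_\varepsilon^2=0$; then separately test by $1$ to get $\tfrac{d}{dt}\int_\Omega m_\varepsilon+\int_\Omega n_\varepsilon m_\varepsilon=0$, integrate in time, and combine — but quoting Lemma~\ref{fvfgsdfggfflemma45} is the shorter route.
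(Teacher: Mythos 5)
Your proposal is correct and matches the paper's intended proof exactly: the lemma is indeed an immediate consequence of passing $t\nearrow\infty$ in the $\varepsilon$-independent bounds $\int_0^t\int_\Omega n_\varepsilon m_\varepsilon\leq\lambda$ and $\tfrac12\int_0^t\int_\Omega|\nabla m_\varepsilon|^2\leq\tfrac12\int_\Omega m_0^2$ from Lemma~\ref{fvfgsdfggfflemma45}, together with the global existence $T_{max,\varepsilon}=\infty$. You have in fact identified the \emph{right} inequalities, namely \dref{ddczhjjjj2.5ghxxccju48cfg9ssdd24} and \dref{ddczhjjjj2.5ghxxssddccju48cfg9ssdd24}; the paper's proof cites \dref{ddczhjjjj2.5ghju48cfgffff924} and \dref{ddczhjjjj2.5ghju48cfg9ssdd24}, which both concern $\nabla c_\varepsilon$ and appear to be a citation typo — your version is what was clearly intended, and the extra remark about first invoking Lemma~\ref{lemma45630hhuujj} to get $T_{max,\varepsilon}=\infty$ is a correct and useful point of rigor.
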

\begin{proof}
These properties are immediate consequences of \dref{ddczhjjjj2.5ghju48cfgffff924} and \dref{ddczhjjjj2.5ghju48cfg9ssdd24}.
\end{proof}
As an immediate consequence, we obtain the following which will firstly serve as a
fundament for our proof of stabilization in the first  and third solution components.
\begin{lemma}\label{lemmaddffffdfffgg4sssdddd5630}
Under the assumptions of Lemma \ref{lemma45hyuuuj630223}, for any $\eta> 0$, there are $T > 0$ and $\varepsilon_0>0$ such that for any $t > T$ and
such that for any $\varepsilon\in(0,\varepsilon_0 )$
\begin{equation}0\leq\frac{1}{|\Omega|}\int_{\Omega}m_\varepsilon(\cdot,t)-\hat{m}<\eta~~~\mbox{for  any}~~~t>T
\label{11111hhxxcdfvhhhvssssssfftggggsssjjghjjsdggggdddfffddffssddcssdz2.5}
\end{equation}
and
\begin{equation}0\leq\frac{1}{|\Omega|}\int_{\Omega}n_\varepsilon(\cdot,t)-\hat{n}<\eta~~~\mbox{for  any}~~~t>T,
\label{11111hhxxcdfvhhhvsssssssssfggjjsdggggdddfffddffddfggssddcssdz2.5}
\end{equation}
where \begin{equation}
\hat{m}=\left\{\frac{1}{|\Omega|}\int_{\Omega}m_{0}-\frac{1}{|\Omega|}\int_{\Omega}n_{0}\right\}_{+}
\label{1111hhxxcdfvhhhvsddfffgssjjdfffsfffsddcsssz2.5}
\end{equation}
and
\begin{equation}
\hat{n}=\left\{\frac{1}{|\Omega|}\int_{\Omega}n_{0}-\frac{1}{|\Omega|}\int_{\Omega}m_{0}\right\}_{+}.
\label{1111hhxxcddffdfvhhhvsddfffgssdfffjjdfffssddcsssz2.5}
\end{equation}
\end{lemma}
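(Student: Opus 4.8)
The plan is to track the spatial masses $\mu_\varepsilon(t):=\int_\Omega m_\varepsilon(\cdot,t)$ and $\nu_\varepsilon(t):=\int_\Omega n_\varepsilon(\cdot,t)$. Integrating the third and the first equation of \dref{1.1fghyuisda} over $\Omega$ and using $\nabla\cdot u_\varepsilon=0$, the no-flux boundary conditions and $\int_\Omega\nabla\cdot(n_\varepsilon S_\varepsilon(x,n_\varepsilon,c_\varepsilon)\nabla c_\varepsilon)=0$, one obtains
\[
\mu_\varepsilon'(t)=\nu_\varepsilon'(t)=-\int_\Omega n_\varepsilon m_\varepsilon\le 0 ,
\]
so that $\mu_\varepsilon$ and $\nu_\varepsilon$ are nonincreasing and nonnegative; hence $\mu_\varepsilon(t)\downarrow\ell_m(\varepsilon)\ge0$ and $\nu_\varepsilon(t)\downarrow\ell_n(\varepsilon)\ge0$ as $t\to\infty$, with $\ell_n(\varepsilon)-\ell_m(\varepsilon)=\int_\Omega n_0-\int_\Omega m_0$ by \dref{sssddfgczhhhh2.5ghju48cfg924ghyuji}, and $\mu_\varepsilon(t)\ge\ell_m(\varepsilon)$, $\nu_\varepsilon(t)\ge\ell_n(\varepsilon)$ for all $t$.

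The core step is to show $\min\{\ell_m(\varepsilon),\ell_n(\varepsilon)\}=0$ for each $\varepsilon$. Suppose not, so $\mu_\varepsilon(t)\ge\delta$ and $\nu_\varepsilon(t)\ge\delta$ for all $t>0$ with some $\delta>0$. Since $\int_0^\infty\int_\Omega\bigl(n_\varepsilon m_\varepsilon+|\nabla m_\varepsilon|^2\bigr)<\infty$ by Lemma \ref{lemmadsssddffffdfffgg4dddd5630} (cf. \dref{ddczhjjjj2.5ghxxccju48cfg9ssdd24} and \dref{ddczhjjjj2.5ghxxssddccju48cfg9ssdd24}), there is $t_k\to\infty$ along which $\int_\Omega n_\varepsilon(\cdot,t_k)m_\varepsilon(\cdot,t_k)\to0$ and $\int_\Omega|\nabla m_\varepsilon(\cdot,t_k)|^2\to0$. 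By the Poincar\'e--Wirtinger inequality, after passing to a subsequence, $m_\varepsilon(\cdot,t_k)\to\bar\mu$ in $L^2(\Omega)$ with the constant $\bar\mu=\lim_k\tfrac1{|\Omega|}\mu_\varepsilon(t_k)\ge\delta/|\Omega|>0$; combined with the bound $\|m_\varepsilon\|_{L^\infty}\le\lambda$ from \dref{ddfgczhhhh2.5ghju48cfg924ghyuji} this forces $|\Omega\setminus E_k|\to0$ for $E_k:=\{x\in\Omega:\,m_\varepsilon(x,t_k)\ge\bar\mu/2\}$. Hence, using the $\varepsilon$-independent bound $\int_\Omega n_\varepsilon^{p}\le C$ of Lemma \ref{lemmaghjffggssddgghhmk4563025xxhjklojjkkk} with some $p>1$,
\[
\int_\Omega n_\varepsilon(\cdot,t_k)m_\varepsilon(\cdot,t_k)\;\ge\;\frac{\bar\mu}{2}\int_{E_k}n_\varepsilon(\cdot,t_k)\;\ge\;\frac{\bar\mu}{2}\Bigl(\nu_\varepsilon(t_k)-C\,|\Omega\setminus E_k|^{1-\frac1p}\Bigr)\;\ge\;\frac{\bar\mu\,\delta}{4}
\]
for all large $k$, contradicting $\int_\Omega n_\varepsilon(\cdot,t_k)m_\varepsilon(\cdot,t_k)\to0$. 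Together with $\ell_n(\varepsilon)-\ell_m(\varepsilon)=\int_\Omega n_0-\int_\Omega m_0$ this gives $\ell_m(\varepsilon)=|\Omega|\hat m$ and $\ell_n(\varepsilon)=|\Omega|\hat n$ with $\hat m,\hat n$ as in \dref{1111hhxxcdfvhhhvsddfffgssjjdfffsfffsddcsssz2.5}--\dref{1111hhxxcddffdfvhhhvsddfffgssdfffjjdfffssddcsssz2.5}, in particular independent of $\varepsilon$.

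It remains to upgrade this to the stated $\varepsilon$-uniform eventual smallness, and this is the delicate part, since the $\varepsilon$-level bound $\int_0^\infty\int_\Omega n_\varepsilon m_\varepsilon\le\lambda$ does not by itself control the tail $\int_t^\infty\int_\Omega n_\varepsilon m_\varepsilon$ uniformly in $\varepsilon$. To circumvent this I would run the same reasoning once more for the limit $(n,c,m,u)$ produced in Lemma \ref{lemma45hyuuuj630223}: since $\partial_t m\in L^2_{loc}([0,\infty);(W^{1,2}(\Omega))^*)$ (by \dref{fvgbhzjsczsssd2.5297x9630ssddd2222tt4455hyuhii}), the weak $m$-equation may be tested with $\varphi\equiv1$, giving $\int_\Omega m(\cdot,t)=\int_\Omega m_0-\int_0^t\int_\Omega nm$ and hence $t\mapsto\int_\Omega m(\cdot,t)$ nonincreasing; weak lower semicontinuity yields $\int_0^\infty\int_\Omega|\nabla m|^2<\infty$ and $\int_0^\infty\int_\Omega nm<\infty$, the $L^p$-bound of Lemma \ref{lemmaghjffggssddgghhmk4563025xxhjklojjkkk} passes to $n$, and $\int_\Omega m(\cdot,t)\ge|\Omega|\hat m$ (as $\mu_\varepsilon(t)\ge|\Omega|\hat m$ and $m_\varepsilon\to m$), so the contradiction argument above shows $\int_\Omega m(\cdot,t)\downarrow|\Omega|\hat m$. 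Given $\eta>0$, fix $T_0$ (outside a null set) with $\int_\Omega m(\cdot,T_0)<|\Omega|\hat m+\tfrac{\eta}{2}|\Omega|$; then by the monotonicity of $t\mapsto\mu_\varepsilon(t)$ and $m_\varepsilon\to m$ in $L^1_{loc}$ (from \dref{zjscz2.fgghh5297x96ddddd3ddfgh0ddfggg6662222tt3}),
\[
\limsup_{\varepsilon\to0}\mu_\varepsilon(T_0+1)\;\le\;\limsup_{\varepsilon\to0}\int_{T_0}^{T_0+1}\!\!\int_\Omega m_\varepsilon\;=\;\int_{T_0}^{T_0+1}\!\!\int_\Omega m\;\le\;\int_\Omega m(\cdot,T_0)\;<\;|\Omega|\hat m+\tfrac{\eta}{2}|\Omega| ,
\]
so there is $\varepsilon_0>0$ with $\mu_\varepsilon(T_0+1)<|\Omega|\hat m+\eta|\Omega|$ for $\varepsilon\in(0,\varepsilon_0)$, and by monotonicity this persists for all $t>T_0+1$; with $T:=T_0+1$ this yields \dref{11111hhxxcdfvhhhvssssssfftggggsssjjghjjsdggggdddfffddffssddcssdz2.5} (the lower bound being clear from $\mu_\varepsilon(t)\ge\ell_m(\varepsilon)=|\Omega|\hat m$), and \dref{11111hhxxcdfvhhhvsssssssssfggjjsdggggdddfffddffddfggssddcssdz2.5} follows because $\nu_\varepsilon(t)=\mu_\varepsilon(t)+\int_\Omega n_0-\int_\Omega m_0$, whence $\tfrac1{|\Omega|}\nu_\varepsilon(t)-\hat n=\tfrac1{|\Omega|}\mu_\varepsilon(t)-\hat m$. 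The main obstacle, as indicated, is precisely this uniform-in-$\varepsilon$ tail control, which is why one has to pass through the limit solution and exploit its monotone, $\varepsilon$-independent mass; a secondary delicate point is the homogenization-along-a-time-sequence step inside the core claim, where the $L^2$-gradient decay of $m$ is what prevents $n$ and $m$ from keeping their masses away from zero by remaining spatially segregated.
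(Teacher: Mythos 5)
Your proposal is correct and takes a genuinely different route from the paper's. For the core decay $\frac{1}{|\Omega|}\int_\Omega m_\varepsilon(\cdot,t)\to\hat m$ at fixed $\varepsilon$, the paper avoids your measure-theoretic contradiction argument (the Chebyshev sets $E_k$ and the $L^p$-bound on $n_\varepsilon$) and instead writes $\int_\Omega n_\varepsilon m_\varepsilon=\int_\Omega n_\varepsilon(m_\varepsilon-\bar m)+\bar m\int_\Omega n_\varepsilon$ with $\bar m=\frac{1}{|\Omega|}\int_\Omega m_\varepsilon$, applies the Poincar\'e--Wirtinger and Cauchy--Schwarz inequalities to control the first term via $\|\nabla m_\varepsilon\|_{L^2}$, and then exploits the sign of the conserved difference $\int_\Omega n_\varepsilon-\int_\Omega m_\varepsilon$ and monotonicity to conclude $\bigl(\int_\Omega m_\varepsilon(\cdot,t)\bigr)^2\le\int_{t-1}^t\bigl(\int_\Omega m_\varepsilon\bigr)^2\to0$ (in the case $\int n_0\ge\int m_0$) --- a shorter route using only the $\varepsilon$-uniform estimates of Lemma \ref{fvfgsdfggfflemma45}. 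More substantively, you correctly flag that the paper's proof rests on $\int_{t-1}^t\int_\Omega(n_\varepsilon m_\varepsilon+|\nabla m_\varepsilon|^2)\to0$ as $t\to\infty$, which is a pointwise-in-$\varepsilon$ statement and, as written, only delivers $T=T(\varepsilon)$; a uniform bound on $\int_0^\infty$ does not control the tails at an $\varepsilon$-uniform rate. Your fix --- proving the mass decay for the limit solution and transferring eventual smallness back to $\int_\Omega m_\varepsilon$ via $L^1_{loc}$-convergence and the monotonicity of $t\mapsto\int_\Omega m_\varepsilon$ --- is valid, though strictly speaking it yields the conclusion only along the subsequence $(\varepsilon_j)$ from Lemma \ref{lemma45hyuuuj630223} (which is in fact all the subsequent lemmas use). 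An alternative repair that stays inside the paper's strategy and gives genuine uniformity over all small $\varepsilon$ is to average \dref{11112222hhxxcdfvssdhhhvsssjjdfffssddcsssz2.5} over $t\in(N,2N)$: using the $\varepsilon$-independent bounds $\int_0^\infty\int_\Omega n_\varepsilon m_\varepsilon\le\lambda$ and $\int_0^\infty\|\nabla m_\varepsilon\|_{L^2}^2\le\frac{1}{2}\|m_0\|_{L^2}^2$, Cauchy--Schwarz in time, and the monotonicity of $\int_\Omega m_\varepsilon$ and $\int_\Omega n_\varepsilon$, one obtains $\bigl(\int_\Omega m_\varepsilon(\cdot,2N)\bigr)\bigl(\int_\Omega n_\varepsilon(\cdot,2N)\bigr)\le C\bigl(N^{-1}+N^{-1/2}\bigr)$ for every $\varepsilon\in(0,1)$, from which a common $T$ independent of $\varepsilon$ can be read off.
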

\begin{proof}
Pursuing a strategy demonstrated in lemma 4.2 of \cite{Winkler61215}, we start by noting that as a first
consequence of Lemma \ref{lemmadsssddffffdfffgg4dddd5630} we know that
\begin{equation}
\begin{array}{rl}
\disp\disp\int_{t-1}^t\int_\Omega \left(n_\varepsilon m_\varepsilon+|\nabla m_\varepsilon|^2\right)\rightarrow&\disp{0~~~\mbox{as}~~t\rightarrow\infty.}\\
\end{array}
\label{1111hhxxcdfvhhhvsssjjdfffssddcsssz2.5}
\end{equation}
Next, in view of \dref{czfvgb2.5ghhjuyuccvviihjj}, by using the H\"{o}lder inequality and  the Poincar\'{e} inequality, for some positive constant $K$,
\begin{equation}
\begin{array}{rl}
\disp\disp\int_{t-1}^t\int_\Omega n_\varepsilon m_\varepsilon=&\disp{\int_{t-1}^t\int_\Omega n_\varepsilon(m_{\varepsilon}-\bar{m})+\int_{t-1}^t\bar{m}\int_\Omega n_{\varepsilon}}\\
\geq&\disp{-\int_{t-1}^t\|n_{\varepsilon}\|_{L^2(\Omega)}\|m_{\varepsilon}-\bar{m}\|_{L^2(\Omega)}+\int_{t-1}^t\bar{m}\int_{\Omega}n_\varepsilon(x,s)dxds}\\
\geq&\disp{-K\int_{t-1}^t\|\nabla m_{\varepsilon}\|_{L^2(\Omega)}+\frac{1}{|\Omega|}\int_{t-1}^t\left[\int_{\Omega}m_\varepsilon(x,s)dx\int_{\Omega}n_{\varepsilon}(x,s)dx\right]ds}\\
\geq&\disp{-K\left(\int_{t-1}^t\|\nabla m_{\varepsilon}\|_{L^2(\Omega)}^2\right)^{\frac{1}{2}}+\frac{1}{|\Omega|}\int_{t-1}^t\left[\int_{\Omega}m_\varepsilon(x,s)dx\int_{\Omega}n_{\varepsilon}(x,s)dx\right]ds.}\\
\end{array}
\label{11112222hhxxcdfvssdhhhvsssjjdfffssddcsssz2.5}
\end{equation}
Inserting \dref{1111hhxxcdfvhhhvsssjjdfffssddcsssz2.5} into \dref{11112222hhxxcdfvssdhhhvsssjjdfffssddcsssz2.5}, we obtain
\begin{equation}
\begin{array}{rl}
\disp\int_{t-1}^t\left[\int_{\Omega}m_\varepsilon(x,s)dx\int_{\Omega}n_{\varepsilon}(x,s)dx\right]ds\rightarrow0~~~\mbox{as}~~t\rightarrow\infty.
\end{array}
\label{11111111111hhxxcdfvssdhhhvsssjjdfffssddcsssz2.5}
\end{equation}
Now if $\int_{\Omega}n_ 0-\int_{\Omega}m_ 0 \geq 0,$ \dref{sssddfgczhhhh2.5ghju48cfg924ghyuji} warrants that $\int_{\Omega}n_{\varepsilon}-\int_{\Omega}m_{\varepsilon} \geq 0$, which along with \dref{11111111111hhxxcdfvssdhhhvsssjjdfffssddcsssz2.5} implies that
\begin{equation}
\begin{array}{rl}
\disp\int_{t-1}^t\left(\int_{\Omega}m_\varepsilon(x,s)dx\right)^2ds\rightarrow0~~~\mbox{as}~~t\rightarrow\infty.
\end{array}
\label{hhxxcdfvssdhhhvssssssjjdfffssddcsssz2.5}
\end{equation}
Noticing that $\int_{\Omega}m_\varepsilon(s) \geq \int_{\Omega}m_\varepsilon(t)$ for all $t\geq s,$ we have
$$0\leq \left(\int_{\Omega}m_\varepsilon(x,t)dx\right)^2
\leq \int_{t-1}^t\left(\int_{\Omega}m_\varepsilon(x,s)dx\right)^2ds\rightarrow0~~~\mbox{as}~~t\rightarrow\infty,
$$
where we invoke \dref{sssddfgczhhhh2.5ghju48cfg924ghyuji} to obtain
$$\int_{\Omega}n_{\varepsilon}(\cdot,t)\rightarrow  \int_{\Omega}n_{0}-\int_{\Omega}m_{0}~~\mbox{as}~~t\rightarrow\infty.$$
By very similar argument, one can see that
$\int_{\Omega}n_{\varepsilon}\rightarrow 0$  and $\int_{\Omega}m_{\varepsilon}\rightarrow \int_{\Omega}m_{0}-\int_{\Omega}n_{0}$ as $t\rightarrow\infty$ in the case of $\int_{\Omega}n_0 - \int_{\Omega}m_0 < 0$.
This readily establishes \dref{11111hhxxcdfvhhhvssssssfftggggsssjjghjjsdggggdddfffddffssddcssdz2.5} and \dref{11111hhxxcdfvhhhvsssssssssfggjjsdggggdddfffddffddfggssddcssdz2.5}.
\end{proof}
\begin{lemma}\label{ssdddlemmddddaddffffdfffgg4sssdddd5630}
Under the assumptions of Lemma \ref{lemma45hyuuuj630223},  for any $\eta > 0$, there are $T > 0$ and $\varepsilon_0>0$ such that for any $t > T$ and
such that for any $\varepsilon\in(0,\varepsilon_0 )$
\begin{equation}\int_{t}^{t+1}\|m_\varepsilon(\cdot,t)-\hat{m}\|_{L^\infty(\Omega)}<\eta
\label{11111hhxxcdfvhhhvsssssssssjjghjjsdggggdddfffddffssddcssdz2.5}
\end{equation}
and
\begin{equation}\|m_\varepsilon(\cdot,t)-\hat{m}\|_{L^p(\Omega)}< \eta,
\label{11111hhxxcdfvhhhvsssdddjjkddffkksssssssscccjjghjjsdggggdddfffddffssddcssdz2.5}
\end{equation}
where  $\hat{m}$ is give by \dref{1111hhxxcdfvhhhvsddfffgssjjdfffsfffsddcsssz2.5}.
\end{lemma}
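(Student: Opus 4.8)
The plan is to derive both assertions from the mean value convergence established in Lemma~\ref{lemmaddffffdfffgg4sssdddd5630}, combined with the $\varepsilon$-uniform estimates already available: the bound $\|m_\varepsilon\|_{L^\infty(\Omega\times(0,\infty))}\le\lambda$ and the dissipation bound behind \dref{ddczhjjjj2.5ghxxssddccju48cfg9ssdd24} (so $\int_0^\infty\int_\Omega|\nabla m_\varepsilon|^2\le\int_\Omega m_0^2$) from Lemma~\ref{fvfgsdfggfflemma45}, together with the bound $\int_t^{t+1}\int_\Omega|\Delta m_\varepsilon|^2\le C$ that Lemma~\ref{ssdddlemmaghjffggssddgghhmk4563025xxhjklojjkkk} provides once one notes that its $\tau$ equals $1$ here, since $T_{max,\varepsilon}=\infty$ by Lemma~\ref{lemma45630hhuujj}. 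Throughout write $\bar m_\varepsilon(t):=\frac{1}{|\Omega|}\int_\Omega m_\varepsilon(\cdot,t)$, so that Lemma~\ref{lemmaddffffdfffgg4sssdddd5630} reads $0\le\bar m_\varepsilon(t)-\hat m<\eta$ for $t>T$ and $\varepsilon\in(0,\varepsilon_0)$.

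To prove \dref{11111hhxxcdfvhhhvsssssssssjjghjjsdggggdddfffddffssddcssdz2.5} I would split $\|m_\varepsilon(\cdot,s)-\hat m\|_{L^\infty(\Omega)}\le\|m_\varepsilon(\cdot,s)-\bar m_\varepsilon(s)\|_{L^\infty(\Omega)}+|\bar m_\varepsilon(s)-\hat m|$; the time integral over $(t,t+1)$ of the last term is $<\eta$ for $t>T$. For the first term, apply the three-dimensional Gagliardo--Nirenberg inequality $\|w\|_{L^\infty(\Omega)}\le C\|w\|_{W^{2,2}(\Omega)}^{3/4}\|w\|_{L^2(\Omega)}^{1/4}$ to $w=m_\varepsilon-\bar m_\varepsilon$, followed by elliptic regularity and the Poincar\'e--Wirtinger inequality, to get $\|m_\varepsilon-\bar m_\varepsilon\|_{L^\infty(\Omega)}\le C\|\Delta m_\varepsilon\|_{L^2(\Omega)}^{3/4}\|\nabla m_\varepsilon\|_{L^2(\Omega)}^{1/4}+C\|\nabla m_\varepsilon\|_{L^2(\Omega)}$; integrating over $(t,t+1)$ and using H\"older yields $\int_t^{t+1}\|m_\varepsilon-\bar m_\varepsilon\|_{L^\infty(\Omega)}\le C\big(\int_t^{t+1}\|\Delta m_\varepsilon\|_{L^2(\Omega)}^2\big)^{3/8}\big(\int_t^{t+1}\|\nabla m_\varepsilon\|_{L^2(\Omega)}^2\big)^{1/8}+C\big(\int_t^{t+1}\|\nabla m_\varepsilon\|_{L^2(\Omega)}^2\big)^{1/2}$. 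Its right-hand side tends to $0$ as $t\to\infty$ uniformly in $\varepsilon$, since $\int_t^{t+1}\|\Delta m_\varepsilon\|_{L^2(\Omega)}^2\le C$ uniformly while $\int_t^{t+1}\|\nabla m_\varepsilon\|_{L^2(\Omega)}^2\le\int_t^\infty\|\nabla m_\varepsilon\|_{L^2(\Omega)}^2\to0$ uniformly by \dref{ddczhjjjj2.5ghxxssddccju48cfg9ssdd24}. This gives \dref{11111hhxxcdfvhhhvsssssssssjjghjjsdggggdddfffddffssddcssdz2.5}.

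To prove \dref{11111hhxxcdfvhhhvsssdddjjkddffkksssssssscccjjghjjsdggggdddfffddffssddcssdz2.5} I would transport the upper excursion of $m_\varepsilon$ above $\hat m$ forward in time by comparison and control the lower excursion through the mean. Given $\eta\in(0,1)$, the already-proven \dref{11111hhxxcdfvhhhvsssssssssjjghjjsdggggdddfffddffssddcssdz2.5} (with $\eta$ replaced by a small $\eta_1$) gives $T_1>0$ and $\varepsilon_0>0$ with $\int_{T_1}^{T_1+1}\|m_\varepsilon(\cdot,s)-\hat m\|_{L^\infty(\Omega)}\,ds<\eta_1$ for $\varepsilon\in(0,\varepsilon_0)$, hence some $s_0=s_0(\varepsilon)\in(T_1,T_1+1)$ with $m_\varepsilon(\cdot,s_0)\le\hat m+\eta$ in $\Omega$. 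Since $m_\varepsilon$ solves $m_{\varepsilon t}+u_\varepsilon\cdot\nabla m_\varepsilon-\Delta m_\varepsilon+n_\varepsilon m_\varepsilon=0$ under homogeneous Neumann conditions with $n_\varepsilon\ge0$, the constant $\hat m+\eta$ is a supersolution, so the parabolic comparison principle gives $m_\varepsilon(\cdot,t)\le\hat m+\eta$ for all $t\ge s_0$, whence $\|(m_\varepsilon-\hat m)_+\|_{L^p(\Omega)}\le|\Omega|^{1/p}\eta$. Moreover $\int_\Omega(m_\varepsilon-\hat m)_-=\int_\Omega(m_\varepsilon-\hat m)_+-|\Omega|\big(\bar m_\varepsilon(t)-\hat m\big)\le|\Omega|\eta$ for $t>T$ by Lemma~\ref{lemmaddffffdfffgg4sssdddd5630}, while pointwise $0\le(m_\varepsilon-\hat m)_-\le\hat m\le\|m_0\|_{L^\infty(\Omega)}\le\lambda$, so interpolation gives $\|(m_\varepsilon-\hat m)_-\|_{L^p(\Omega)}\le(\lambda^{p-1}|\Omega|\eta)^{1/p}$. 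Adding the two contributions, $\|m_\varepsilon(\cdot,t)-\hat m\|_{L^p(\Omega)}\le C_p(\eta+\eta^{1/p})$ for all $t>\max\{T,T_1+1\}$ and $\varepsilon\in(0,\varepsilon_0)$, and choosing $\eta$ small yields \dref{11111hhxxcdfvhhhvsssdddjjkddffkksssssssscccjjghjjsdggggdddfffddffssddcssdz2.5}.

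The argument is essentially soft; the real point of care is that \emph{every} constant and threshold entering both steps must be independent of $\varepsilon$, so that a single pair $(T,\varepsilon_0)$ serves in the conclusion. This forces one to invoke only the $\varepsilon$-uniform estimates of Lemmas~\ref{fvfgsdfggfflemma45}, \ref{ssdddlemmaghjffggssddgghhmk4563025xxhjklojjkkk} and \ref{lemmaddffffdfffgg4sssdddd5630} (usable with $\tau=1$ because Lemma~\ref{lemma45630hhuujj} gives $T_{max,\varepsilon}=\infty$), and never the possibly $\varepsilon$-dependent higher regularity bounds that underlie Lemma~\ref{lemma45630hhuujj}. A secondary subtlety is that no pointwise lower bound $m_\varepsilon\ge\hat m-\eta$ is available at this stage, which is precisely why the lemma is phrased with an $L^p$, rather than a pointwise $L^\infty$, estimate for $m_\varepsilon-\hat m$: in Step~2 one can only afford an $L^1$ bound on $(m_\varepsilon-\hat m)_-$ obtained from the mean, upgraded by interpolation against the trivial pointwise bound $(m_\varepsilon-\hat m)_-\le\hat m$.
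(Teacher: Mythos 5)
Your proof is correct, and it takes a genuinely different (and in places cleaner) route than the paper's. For the time-integrated $L^\infty$ bound, the paper applies a Gagliardo--Nirenberg interpolation directly to $m_\varepsilon-\hat m$ using $\|\nabla m_\varepsilon\|_{L^4(\Omega)}$ and $\|m_\varepsilon-\hat m\|_{L^1(\Omega)}$, and then identifies $\|m_\varepsilon-\hat m\|_{L^1(\Omega)}$ with the mean deviation $\int_\Omega(m_\varepsilon-\hat m)$; that identity tacitly requires $m_\varepsilon\ge\hat m$ pointwise, which is not available when $\hat m>0$. You instead split off the mean, $m_\varepsilon-\hat m=(m_\varepsilon-\bar m_\varepsilon)+(\bar m_\varepsilon-\hat m)$, and control the mean-free part by elliptic regularity plus the interpolation $\|w\|_{L^\infty}\lesssim\|\Delta w\|_{L^2}^{3/4}\|\nabla w\|_{L^2}^{1/4}+\|\nabla w\|_{L^2}$, which cleanly avoids the $L^1$-to-mean identification. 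For the $L^p$ bound, the paper interpolates $L^p$ between a (bounded) $L^\infty$ norm and the same $L^1$ quantity; you instead propagate an eventual pointwise upper bound $m_\varepsilon\le\hat m+\eta$ forward in time via the parabolic comparison principle (using that constants $\ge0$ are supersolutions of the $m_\varepsilon$-equation), then treat the negative part through a sign decomposition, the mean, and interpolation against the trivial bound $(m_\varepsilon-\hat m)_-\le\hat m$. The comparison-principle step is a genuinely new ingredient, and it buys you the pointwise control of $(m_\varepsilon-\hat m)_+$ as a byproduct; your closing observation that no pointwise lower bound is available, forcing the sign split, is exactly the subtlety the paper's $L^1$ step glosses over. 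One caveat shared by your argument and the paper's proof of Lemma~\ref{lemmaddffffdfffgg4sssdddd5630}: passing from the $\varepsilon$-uniform bound $\int_0^\infty\|\nabla m_\varepsilon\|_{L^2}^2\le C$ to a tail estimate $\int_t^{t+1}\|\nabla m_\varepsilon\|_{L^2}^2<\eta$ valid for all $t>T$ \emph{and} all $\varepsilon\in(0,\varepsilon_0)$ is not automatic (a uniform $L^1((0,\infty))$ bound alone does not preclude the mass escaping to $t\to\infty$ as $\varepsilon\to0$); since the paper does not elaborate this either, it is not a gap specific to your argument, but it would merit a sentence.
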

\begin{proof}
Firstly,  since Lemma \ref{lemma45630hhuujj} asserts the existence of $\kappa_1$  such that
\begin{equation}
\int_t^{t+1}\|\nabla m_\varepsilon (\cdot,t)\|_{L^{4}(\Omega)}^4  \leq \kappa_1
\label{1111hhxxcdfvhhhvsssssssssjjdfffddffssddcssdz2.ssdd5}
\end{equation}
and since  \dref{sssddfgczhhhh2.5ghju48cfg924ghyuji} implies that
\begin{equation}\frac{1}{|\Omega|}\int_{\Omega}m_\varepsilon(\cdot,t)\geq \left\{\frac{1}{|\Omega|}\int_{\Omega}m_{0}-\frac{1}{|\Omega|}\int_{\Omega}n_{0}\right\}_{+},
\label{11111hhxxcdfvhssdddhhvsssssssssjjghjjsdggggdddfffddffssddcssdz2.5}
\end{equation}
thus, by \dref{11111hhxxcdfvhhhvssssssfftggggsssjjghjjsdggggdddfffddffssddcssdz2.5} we  infer 
from the interpolation inequality and the H\"{o}lder
inequality that
\begin{equation}\label{fvgbccvvhnjmkfhhhhhgbdffrhnkkkn6291}
\begin{array}{rl}
\disp
&\int_t^{t+1}\|m_\varepsilon-\hat{m}\|_{L^\infty(\Omega)}ds\\
\leq&{ C(\disp\int_t^{t+1}(\|\nabla m_\varepsilon\|_{L^{4}(\Omega)}^{\frac{12}{13}}\|m_\varepsilon-\hat{m}\|_{L^1(\Omega)}^{\frac{1}{13}}+\|m_\varepsilon-\hat{m}\|_{L^1(\Omega)})ds}\\
\leq&{ C\disp\int_t^{t+1}\|\nabla
m_\varepsilon\|_{L^{4}(\Omega)}^{4}ds)^{\frac{3}{13}}(\disp\int_t^{t+1}\|m_\varepsilon-\hat{m}\|_{L^1(\Omega)}ds)^{\frac{1}{13}}+C\disp\int_t^{t+1}\|m_\varepsilon-\hat{m}\|_{L^1(\Omega)})ds}\\
\leq&{ C\disp\int_t^{t+1}\|\nabla
m_\varepsilon\|_{L^{4}(\Omega)}^{4}ds)^{\frac{3}{13}}(\sup_{t>0}\|m_\varepsilon(\cdot,t)-\hat{m}\|_{L^1(\Omega)})^{\frac{1}{13}}+C\disp\sup_{t>0}\|m_\varepsilon(\cdot,t)-\hat{m}\|_{L^1(\Omega)})}\\
\rightarrow&{0~~~\mbox{as}~~t\rightarrow+\infty,}\\
\end{array}
\end{equation}
which immediately implies \dref{11111hhxxcdfvhhhvsssssssssjjghjjsdggggdddfffddffssddcssdz2.5}.
Here we have used the fact that
$$\|m_\varepsilon-\hat{m}\|_{L^1(\Omega)}=\int_{\Omega}[m_\varepsilon(\cdot,t)-\hat{m}]=
|\Omega|\left[\frac{1}{|\Omega|}\int_{\Omega}m_\varepsilon(\cdot,t)-\hat{m}\right]\rightarrow0~~~\mbox{as}~~t\rightarrow+\infty$$
by using \dref{11111hhxxcdfvhhhvssssssfftggggsssjjghjjsdggggdddfffddffssddcssdz2.5}.
Next,  
for any $p>1$, in view of
Lemma \ref{fvfgsdfggfflemma45},  we derive from the
the interpolation and the H\"{o}lder
inequality that
\begin{equation}\label{fvgbccvsddfgddffvhnsdfffjmkfhhhhhgbdffrhnkkkn6291}
\begin{array}{rl}
\disp
&\|m_{\varepsilon}-\hat{m}\|_{L^p(\Omega)}\\
\leq&{ \disp\|m_{\varepsilon}-\hat{m}\|_{L^{\infty}(\Omega)}^{\frac{p-1}{p}}\|m_{\varepsilon}-\hat{m}\|_{L^1(\Omega)}^{\frac{1}{p}}}\\
\rightarrow&{0~~~\mbox{as}~~t\rightarrow+\infty,}\\
\end{array}
\end{equation}
which yields \dref{11111hhxxcdfvhhhvsssdddjjkddffkksssssssscccjjghjjsdggggdddfffddffssddcssdz2.5} directly.
\end{proof}
\begin{lemma}\label{sedddlemmaddffffdfffgg4sssdddd5630}
Under the assumptions of Lemma \ref{lemma45hyuuuj630223}, for any $\eta > 0$, there are $T > 0$ and $\varepsilon_0>0$ such that for any $t > T$ and
such that for any $\varepsilon\in(0,\varepsilon_0 )$
\begin{equation}\|c_\varepsilon(\cdot,t)-\hat{m}\|_{L^2(\Omega)}<\eta
\label{11111hhxxcdfvhhhvsssssssssjjsdgkkkgggdddfffddffssddcssdz2.5}
\end{equation}
and
\begin{equation}
\begin{array}{rl}
\disp\disp\int_t^{t+1}\int_\Omega |\nabla c_\varepsilon|^2&<\disp{\eta.}\\
\end{array}
\label{hhxxcdfvhhhvsssssddffsssjjssdfffssddcssdz2.5}
\end{equation}
where  $\hat{m}$ is give by \dref{1111hhxxcdfvhhhvsddfffgssjjdfffsfffsddcsssz2.5}.
\end{lemma}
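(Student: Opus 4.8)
The plan is to exploit that the constant $\hat m$ is precisely the spatially homogeneous equilibrium of the $c$-equation, combined with the already quantified decay $m_\varepsilon\to\hat m$ in $L^2(\Omega)$ supplied by Lemma \ref{ssdddlemmddddaddffffdfffgg4sssdddd5630}. Setting $w_\varepsilon:=c_\varepsilon-\hat m$ and using that $\hat m$ is constant, the second equation in \dref{1.1fghyuisda} becomes
\[
w_{\varepsilon t}+u_\varepsilon\cdot\nabla w_\varepsilon=\Delta w_\varepsilon-w_\varepsilon+(m_\varepsilon-\hat m)\qquad\mbox{in}~~\Omega\times(0,\infty),
\]
still under homogeneous Neumann boundary conditions. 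Testing this against $w_\varepsilon$, discarding the convective term by means of $\nabla\cdot u_\varepsilon=0$ and $u_\varepsilon|_{\partial\Omega}=0$, and estimating the forcing term with Young's inequality, I arrive at
\[
\frac{d}{dt}\int_\Omega w_\varepsilon^2+2\int_\Omega|\nabla w_\varepsilon|^2+\int_\Omega w_\varepsilon^2\leq\int_\Omega(m_\varepsilon-\hat m)^2\qquad\mbox{for all}~~t>0.
\]

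To obtain \dref{11111hhxxcdfvhhhvsssssssssjjsdgkkkgggdddfffddffssddcssdz2.5} I argue by comparison. By Lemma \ref{fvfgsdfggfflemma45} there is an $\varepsilon$-independent bound $\|c_\varepsilon(\cdot,t)\|_{L^\infty(\Omega)}\leq\lambda$, so that $\int_\Omega w_\varepsilon^2\leq(\lambda+\hat m)^2|\Omega|=:M$ for all $t>0$ and $\varepsilon\in(0,1)$; on the other hand, Lemma \ref{ssdddlemmddddaddffffdfffgg4sssdddd5630} with $p=2$ provides, for each $\delta>0$, numbers $T_\delta>0$ and $\varepsilon_\delta\in(0,1)$ such that $\int_\Omega(m_\varepsilon(\cdot,t)-\hat m)^2<\delta$ whenever $t>T_\delta$ and $\varepsilon\in(0,\varepsilon_\delta)$. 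Dropping the nonnegative gradient term and integrating the resulting inequality $\frac{d}{dt}y_\varepsilon+y_\varepsilon\leq\delta$, where $y_\varepsilon(t):=\int_\Omega w_\varepsilon^2(\cdot,t)$, from $T_\delta$ to $t$ gives $y_\varepsilon(t)\leq Me^{-(t-T_\delta)}+\delta$ for $t>T_\delta$ and $\varepsilon\in(0,\varepsilon_\delta)$. Given $\eta>0$, choosing $\delta:=\tfrac{1}{2}\eta^2$, then $\varepsilon_0:=\varepsilon_\delta$ and $T:=T_\delta+\ln\!\big(1+2M\eta^{-2}\big)$ forces $\|c_\varepsilon(\cdot,t)-\hat m\|_{L^2(\Omega)}<\eta$ for all $t>T$ and $\varepsilon\in(0,\varepsilon_0)$.

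Finally, for \dref{hhxxcdfvhhhvsssssddffsssjjssdfffssddcssdz2.5} I integrate the differential inequality above over a unit interval: for every $t>T$,
\[
2\int_t^{t+1}\int_\Omega|\nabla w_\varepsilon|^2\leq\int_\Omega w_\varepsilon^2(\cdot,t)+\int_t^{t+1}\int_\Omega(m_\varepsilon-\hat m)^2,
\]
and since $\nabla w_\varepsilon=\nabla c_\varepsilon$, the first term on the right-hand side is already controlled by the preceding step and the second is again handled by Lemma \ref{ssdddlemmddddaddffffdfffgg4sssdddd5630}; after possibly enlarging $T$ and shrinking $\varepsilon_0$ their sum is smaller than $\eta$, which yields the claim. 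The argument is essentially a routine $L^2$ energy estimate combined with an elementary Gronwall/comparison step (alternatively, once one knows time-integrability of $t\mapsto\|m_\varepsilon(\cdot,t)-\hat m\|_{L^2(\Omega)}^2$, Lemma \ref{fhhghfbglemma4563025xxhjklojjkkkgyhuissddff} applies directly), so I expect no genuine obstacle beyond bookkeeping the dependence of $T$ and $\varepsilon_0$ on $\eta$.
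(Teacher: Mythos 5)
Your proof is correct, and the core computation — testing the second equation against $c_\varepsilon-\hat m$, discarding the convective term via $\nabla\cdot u_\varepsilon=0$ and $u_\varepsilon|_{\partial\Omega}=0$, and applying Young's inequality — is exactly the same energy estimate the paper uses, arriving at the differential inequality
$\frac{d}{dt}\|c_\varepsilon-\hat m\|_{L^2(\Omega)}^2+2\|\nabla c_\varepsilon\|_{L^2(\Omega)}^2+\|c_\varepsilon-\hat m\|_{L^2(\Omega)}^2\leq\|m_\varepsilon-\hat m\|_{L^2(\Omega)}^2$. Where you differ is in the concluding step. The paper invokes Lemma~\ref{fhhghfbglemma4563025xxhjklojjkkkgyhuissddff}, whose hypothesis is $\int_0^\infty h(s)\,ds<\infty$ with $h(t)=\|m_\varepsilon(\cdot,t)-\hat m\|_{L^2(\Omega)}^2$; however, what the paper actually establishes (via Lemma~\ref{ssdddlemmddddaddffffdfffgg4sssdddd5630}) is only that $\int_t^{t+1}h\to0$ as $t\to\infty$, which is strictly weaker than time-integrability, so that citation is not fully justified as written. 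Your elementary exponential comparison, starting the Gronwall estimate at the time $T_\delta$ past which $h<\delta$ and using only the a~priori $L^\infty$-bound on $c_\varepsilon$ to control $y_\varepsilon(T_\delta)$, needs nothing beyond eventual smallness of $h$ — precisely what Lemma~\ref{ssdddlemmddddaddffffdfffgg4sssdddd5630} supplies — and it also gives an explicit, quantitative choice of $T$ and $\varepsilon_0$ in terms of $\eta$. Your treatment of the second claim, integrating the differential inequality over $[t,t+1]$ and dropping the nonnegative terms $y_\varepsilon(t+1)$ and $\int_t^{t+1}y_\varepsilon$, is likewise correct. In short: same decomposition and same energy identity, but a more self-contained and more carefully justified final comparison step.
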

\begin{proof}
Firstly, by means of the testing procedure, 
 we may derive from  the Young inequality that 
\begin{equation}
\begin{array}{rl}
&\disp{\frac{1}{2}\frac{d}{dt}\|c_\varepsilon-\hat{m}\|^{{2}}_{L^{{2}}(\Omega)}}
\\
=&\disp{
\int_{\Omega}(c_\varepsilon-\hat{m})[\Delta c_\varepsilon-u_\varepsilon\cdot\nabla c_\varepsilon-(c_\varepsilon-\hat{m})+(m_\varepsilon-\bar{m})]}\\
=&\disp{
\int_{\Omega}(c_\varepsilon-\hat{m})(\Delta c_\varepsilon-u_\varepsilon\cdot\nabla c_\varepsilon)-\int_{\Omega}(c_\varepsilon-\hat{m})^2+\int_{\Omega}(c_\varepsilon-\hat{m})(m_\varepsilon-\hat{m})}\\
\leq&\disp{-
\int_{\Omega}|\nabla c_\varepsilon|^2-\int_{\Omega}(c_\varepsilon-\hat{m})^2+\frac{1}{2}\int_{\Omega}(m_\varepsilon-\hat{m})^2}\\
\leq&\disp{-\int_{\Omega}(c_\varepsilon-\hat{m})^2+\frac{1}{2}\int_{\Omega}(m_\varepsilon-\hat{m})^2~~\mbox{for all}~~ t>0, }\\
\end{array}
\label{wwwwwcz2.511ssssdfggsddffggg4ddfggg114}
\end{equation}
where we have used the fact that $\nabla\cdot u_\varepsilon = 0$ and $u_\varepsilon |_{\partial\Omega} = 0$.
On the other hand, the bounds from \ref{ssdddlemmddddaddffffdfffgg4sssdddd5630} entails 
\begin{equation}
\begin{array}{rl}
\disp\int_{t}^{t+1}\int_{\Omega}(m_\varepsilon-\hat{m})^2ds\rightarrow0~~~\mbox{as}~~t\rightarrow\infty.
\end{array}
\label{hhxxcdfvssdhhhvssssssjjdfffssddcsssddssz2.5}
\end{equation}
This together with \dref{wwwwwcz2.511ssssdfggsddffggg4ddfggg114} and Lemma \ref{fhhghfbglemma4563025xxhjklojjkkkgyhuissddff} imply  \dref{11111hhxxcdfvhhhvsssssssssjjsdgkkkgggdddfffddffssddcssdz2.5} and \dref{hhxxcdfvhhhvsssssddffsssjjssdfffssddcssdz2.5}.
\end{proof}
\begin{lemma}\label{11aaalemdfghkkmaddffffdfffgg4sssdddd5630}
Under the assumptions of Lemma \ref{lemma45hyuuuj630223}, for any $p > 1$ and $\eta>0$, there are $T > 0$ and $\varepsilon_0>0$ such that for any $t > T$ and
such that for any $\varepsilon\in(0,\varepsilon_0 )$
\begin{equation}\|n_\varepsilon(\cdot,t)-\hat{n}\|_{L^p(\Omega)}< \eta,
\label{11111hhxxcdfvhhhvssssssssscccjjghjjsdggggdddfffddffssddcssdz2.5}
\end{equation}
where $\hat{n}$ is given by \dref{1111hhxxcddffdfvhhhvsddfffgssdfffjjdfffssddcsssz2.5}.
\end{lemma}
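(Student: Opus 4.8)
The plan is to argue separately according to the sign of $\int_\Omega n_0-\int_\Omega m_0$; note first that $T_{max,\varepsilon}=\infty$ by Lemma~\ref{lemma45630hhuujj}, so the parameter $\tau$ of the time-localized estimates of Section~3 equals $1$. If $\int_\Omega n_0<\int_\Omega m_0$, then $\hat n=0$, and \dref{11111hhxxcdfvhhhvsssssssssfggjjsdggggdddfffddffddfggssddcssdz2.5} combined with $n_\varepsilon\ge0$ shows that $\|n_\varepsilon(\cdot,t)\|_{L^1(\Omega)}=\int_\Omega n_\varepsilon(\cdot,t)\to0$ as $t\to\infty$, uniformly for small $\varepsilon$; since Lemma~\ref{lemmaghjffggssddgghhmk4563025xxhjklojjkkk} provides, for any fixed $q>p$, an $\varepsilon$-independent bound on $\|n_\varepsilon(\cdot,t)\|_{L^{q}(\Omega)}$, the interpolation inequality $\|n_\varepsilon\|_{L^p(\Omega)}\le\|n_\varepsilon\|_{L^1(\Omega)}^{\theta}\|n_\varepsilon\|_{L^{q}(\Omega)}^{1-\theta}$ with $\theta\in(0,1)$ fixed by $\tfrac1p=\theta+\tfrac{1-\theta}{q}$ immediately gives $\|n_\varepsilon(\cdot,t)-\hat n\|_{L^p(\Omega)}=\|n_\varepsilon(\cdot,t)\|_{L^p(\Omega)}\to0$, which is the assertion in this case.

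The substantial case is $\int_\Omega n_0\ge\int_\Omega m_0$, where $\hat m=0$ and $\hat n=\tfrac1{|\Omega|}(\int_\Omega n_0-\int_\Omega m_0)$. Writing $\bar n_\varepsilon(t):=\tfrac1{|\Omega|}\int_\Omega n_\varepsilon(\cdot,t)$, Lemma~\ref{lemmaddffffdfffgg4sssdddd5630} gives $\bar n_\varepsilon(t)\to\hat n$ uniformly for small $\varepsilon$, so in view of $\|n_\varepsilon-\hat n\|_{L^p(\Omega)}\le\|n_\varepsilon-\bar n_\varepsilon\|_{L^p(\Omega)}+|\Omega|^{1/p}|\bar n_\varepsilon-\hat n|$ together with elementary interpolation inequalities and the $\varepsilon$-uniform $L^{q}$-bound of Lemma~\ref{lemmaghjffggssddgghhmk4563025xxhjklojjkkk} (with $q$ as large as needed), it is enough to prove $\|n_\varepsilon(\cdot,t)-\bar n_\varepsilon(t)\|_{L^2(\Omega)}\to0$ as $t\to\infty$, uniformly for small $\varepsilon$. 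For this I would test the first equation of \dref{1.1fghyuisda} against $n_\varepsilon-\bar n_\varepsilon$: using $\int_\Omega(n_\varepsilon-\bar n_\varepsilon)=0$ (so that $\tfrac{d}{dt}\bar n_\varepsilon$ contributes nothing), $\nabla\cdot u_\varepsilon=0$ with $u_\varepsilon|_{\partial\Omega}=0$ (so that the convective term drops out), the homogeneous Neumann condition for $n_\varepsilon$, the bound $|S_\varepsilon(x,n_\varepsilon,c_\varepsilon)|\le C_S$, and Young's inequality on the chemotactic term, one obtains
\[
\tfrac12\tfrac{d}{dt}\|n_\varepsilon-\bar n_\varepsilon\|_{L^2(\Omega)}^2+\tfrac34\int_\Omega|\nabla n_\varepsilon|^2\le C_S^2\int_\Omega n_\varepsilon^2|\nabla c_\varepsilon|^2+\|m_\varepsilon\|_{L^\infty(\Omega)}\|n_\varepsilon-\bar n_\varepsilon\|_{L^2(\Omega)}\|n_\varepsilon\|_{L^2(\Omega)}.
\]
Setting $y_\varepsilon(t):=\|n_\varepsilon(\cdot,t)-\bar n_\varepsilon(t)\|_{L^2(\Omega)}^2$, applying the Poincar\'e inequality $\int_\Omega|\nabla n_\varepsilon|^2\ge\lambda_P\,y_\varepsilon$ (valid since $n_\varepsilon-\bar n_\varepsilon$ has zero mean), absorbing the last product by Young's inequality, and invoking the $\varepsilon$-uniform bounds $\|n_\varepsilon\|_{L^2(\Omega)}\le C$ (Lemma~\ref{lemmaghjffggssddgghhmk4563025xxhjklojjkkk}) and $\|m_\varepsilon\|_{L^\infty(\Omega)}\le\lambda$ (Lemma~\ref{fvfgsdfggfflemma45}), this becomes $y_\varepsilon'(t)+\lambda_P y_\varepsilon(t)\le h_\varepsilon(t)$ with $h_\varepsilon(t):=2C_S^2\int_\Omega n_\varepsilon^2|\nabla c_\varepsilon|^2+C\|m_\varepsilon(\cdot,t)\|_{L^\infty(\Omega)}^2$.

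It then remains to verify that $\int_t^{t+1}h_\varepsilon(s)\,ds\to0$ as $t\to\infty$, uniformly for small $\varepsilon$; granted this, the decay $y_\varepsilon(t)\to0$ follows from the standard comparison principle for this differential inequality on unit time intervals (in the spirit of Lemma~\ref{fhhghfbglemma4563025xxhjklojjkkkgyhuissddff}, using also the $\varepsilon$-uniform bound $y_\varepsilon(T_0)\le C$ from Lemma~\ref{lemmaghjffggssddgghhmk4563025xxhjklojjkkk}). The $m_\varepsilon$-contribution is harmless because $\hat m=0$ here, so \dref{11111hhxxcdfvhhhvsssssssssjjghjjsdggggdddfffddffssddcssdz2.5} gives $\int_t^{t+1}\|m_\varepsilon\|_{L^\infty(\Omega)}\to0$, whence $\int_t^{t+1}\|m_\varepsilon\|_{L^\infty(\Omega)}^2\le\lambda\int_t^{t+1}\|m_\varepsilon\|_{L^\infty(\Omega)}\to0$. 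For the chemotactic contribution, fix $r\in(1,2)$ with conjugate exponent $r'$; by H\"older's inequality $\int_\Omega n_\varepsilon^2|\nabla c_\varepsilon|^2\le\|n_\varepsilon\|_{L^{2r'}(\Omega)}^2\|\nabla c_\varepsilon\|_{L^{2r}(\Omega)}^2$, where the first factor is $\varepsilon$-uniformly bounded by Lemma~\ref{lemmaghjffggssddgghhmk4563025xxhjklojjkkk}, and interpolation gives $\|\nabla c_\varepsilon\|_{L^{2r}(\Omega)}^2\le\|\nabla c_\varepsilon\|_{L^2(\Omega)}^{2(1-\vartheta)}\|\nabla c_\varepsilon\|_{L^4(\Omega)}^{2\vartheta}$ with $\vartheta=2-\tfrac2r\in(0,1)$; a further H\"older step in time on $[t,t+1]$ then gives
\[
\int_t^{t+1}\!\int_\Omega n_\varepsilon^2|\nabla c_\varepsilon|^2\le C\Bigl(\int_t^{t+1}\|\nabla c_\varepsilon\|_{L^4(\Omega)}^4\Bigr)^{\vartheta/2}\Bigl(\int_t^{t+1}\!\int_\Omega|\nabla c_\varepsilon|^2\Bigr)^{1-\vartheta},
\]
in which the first bracket is $\varepsilon$-uniformly bounded by \dref{bnmbncz2.5ghhjuyuivvbnnihjj} of Lemma~\ref{ssdddlemmaghjffggssddgghhmk4563025xxhjklojjkkk} (recall $\tau=1$), and the second tends to $0$ uniformly for small $\varepsilon$ by \dref{hhxxcdfvhhhvsssssddffsssjjssdfffssddcssdz2.5} of Lemma~\ref{sedddlemmaddffffdfffgg4sssdddd5630}. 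This yields $\int_t^{t+1}h_\varepsilon\to0$ and completes the argument.

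I expect the only genuine difficulty to be exactly this handling of the cross term $\int_\Omega n_\varepsilon^2|\nabla c_\varepsilon|^2$. Since this lemma must be established \emph{before} any eventual-regularity information is available, and since no $\varepsilon$-uniform $L^\infty$-bound for $n_\varepsilon$ is at hand when $\kappa\neq0$, the estimate cannot afford an $L^\infty$-bound on $n_\varepsilon$; it must instead combine the $\varepsilon$-uniform $L^p$-bounds of Lemma~\ref{lemmaghjffggssddgghhmk4563025xxhjklojjkkk} — which is precisely where the hypothesis $\alpha>0$ enters — with the genuine relaxation $\int_t^{t+1}\int_\Omega|\nabla c_\varepsilon|^2\to0$ rather than mere time-integrability of $\int_\Omega|\nabla c_\varepsilon|^2$ over $(0,\infty)$, and this is what forces the threefold H\"older/interpolation estimate above together with the use of a unit-interval comparison in place of Lemma~\ref{fhhghfbglemma4563025xxhjklojjkkkgyhuissddff} in its literal form.
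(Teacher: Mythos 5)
Your argument is correct, but it follows a substantially different route than the paper's, and is more careful in the non-trivial case. The paper's proof is the two-line interpolation
$\|n_\varepsilon-\hat n\|_{L^p(\Omega)}\le\|n_\varepsilon-\hat n\|_{L^q(\Omega)}^{\frac{q(p-1)}{p(q-1)}}\|n_\varepsilon-\hat n\|_{L^1(\Omega)}^{\frac{q-p}{p(q-1)}}$
combined with the $\varepsilon$-uniform $L^q$-bound from Lemma~\ref{lemmaghjffggssddgghhmk4563025xxhjklojjkkk} and the assertion that $\|n_\varepsilon-\hat n\|_{L^1(\Omega)}\to0$ follows from \dref{11111hhxxcdfvhhhvsssssssssfggjjsdggggdddfffddffddfggssddcssdz2.5}. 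As your case split implicitly recognizes, \dref{11111hhxxcdfvhhhvsssssssssfggjjsdggggdddfffddffddfggssddcssdz2.5} controls only the signed average $\frac{1}{|\Omega|}\int_\Omega n_\varepsilon-\hat n$, which equals $\frac{1}{|\Omega|}\|n_\varepsilon-\hat n\|_{L^1(\Omega)}$ only when $\hat n=0$; so the paper's argument is complete precisely in your easy case and leaves the case $\hat n>0$ unaddressed. You handle that case head-on via the decomposition $\|n_\varepsilon-\hat n\|_{L^p}\le\|n_\varepsilon-\bar n_\varepsilon\|_{L^p}+|\Omega|^{1/p}|\bar n_\varepsilon-\hat n|$, an $L^2$-relaxation estimate for $n_\varepsilon-\bar n_\varepsilon$ by testing, Poincar\'e and Young, and a threefold H\"older/interpolation bound for $\int_t^{t+1}\int_\Omega n_\varepsilon^2|\nabla c_\varepsilon|^2$ that correctly trades the missing (since $\kappa\neq0$) $L^\infty$-control of $n_\varepsilon$ for the uniform $L^q$-bounds plus the genuine relaxation $\int_t^{t+1}\int_\Omega|\nabla c_\varepsilon|^2\to0$ from Lemma~\ref{sedddlemmaddffffdfffgg4sssdddd5630}; your substitution of a unit-interval comparison for Lemma~\ref{fhhghfbglemma4563025xxhjklojjkkkgyhuissddff} in its literal form is exactly right, since $\int_0^\infty h_\varepsilon$ need not be finite here. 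What your approach buys is a self-contained proof covering both signs of $\int_\Omega n_0-\int_\Omega m_0$; what the paper buys is brevity, at the cost of a gap in the case where the argument is actually needed.
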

\begin{proof}
Firstly, for any 
$p>1$, by Lemma \ref{lemmaghjffggssddgghhmk4563025xxhjklojjkkk}, there exist  positive constants $\alpha_1$ and $q>p$ such that
\begin{equation}
\begin{array}{rl}
&\disp{\int_{\Omega} n_\varepsilon ^{q}(x,t)\leq \alpha_1~~~\mbox{for all}~~ t>0.}\\
\end{array}
\label{czfvgb2.5ghhjussdyuccvviihjj}
\end{equation}
By the interpolation and the H\"{o}lder
inequality, we have
\begin{equation}\label{fvgbccvsddfgddffvhnjmkfhhhhhgbdffrhnkkkn6291}
\begin{array}{rl}
\disp
\|n_{\varepsilon}-\hat{n}\|_{L^p(\Omega)}
\leq&{ \disp\|n_{\varepsilon}-\hat{n}\|_{L^{q}(\Omega)}^{\frac{q(p-1)}{p(q-1)}}\|n_{\varepsilon}-\hat{n}\|_{L^1(\Omega)}^{\frac{q-p}{p(q-1)}}}\\
\rightarrow&{0~~~\mbox{as}~~t\rightarrow+\infty}\\
\end{array}
\end{equation}
by using \dref{11111hhxxcdfvhhhvsssssssssfggjjsdggggdddfffddffddfggssddcssdz2.5}.
From \dref{fvgbccvsddfgddffvhnjmkfhhhhhgbdffrhnkkkn6291} we readily derive \dref{11111hhxxcdfvhhhvssssssssscccjjghjjsdggggdddfffddffssddcssdz2.5} and thereby completes the proof.
\end{proof}


The stabilization property implied by Lemmas \ref{ssdddlemmddddaddffffdfffgg4sssdddd5630} and \ref{11aaalemdfghkkmaddffffdfffgg4sssdddd5630} can now be turned into a preliminary statement on decay
of $u_\varepsilon$ by making use of Lemma \ref{fhhghfbglemma4563025xxhjklojjkkkgyhuissddff} and
the standard testing procedures.
\begin{lemma}\label{11aaalemmaddffffdsddfffffgg4sssdddd5630}
Under the assumptions of Lemma \ref{lemma45hyuuuj630223},  for any $\eta > 0$,  there are $T > 0$ and $\varepsilon_0>0$ such that for any $t > T$ and
such that for any $\varepsilon\in(0,\varepsilon_0 )$ 
\begin{equation}\|u_\varepsilon(\cdot,t)\|_{L^2(\Omega)}<\eta,
\label{11111hhxxcdfvhhhvssssssssscccjjghjjsdgggddddgdddfffddffssddcssdz2.5}
\end{equation}
\begin{equation}\int_{t}^{t+1}\|\nabla u_\varepsilon\|_{L^2(\Omega)}^2dx<\eta
\label{11111hhxxcdfvhhhvsssssddfffssddffsscccjjghjjsdggggdddfffddffssddcssdz2.5}
\end{equation}
as well as
\begin{equation}\int_{t}^{t+1}\|u_\varepsilon\|_{L^q(\Omega)}^2dx<\eta
\label{11111hhxxcdfvhhhvsssddffssddfffssddffsscccjjghjjsdggggdddfffddffssddcssdz2.5}
\end{equation}
and
\begin{equation}\int_{t}^{t+1}\|u_\varepsilon\|_{L^q(\Omega)}dx<\eta
\label{11111hhxxcdfvhhhvddfffsssddffssddfffssddffsscccjjghjjsdggggdddfffddffssddcssdz2.5}
\end{equation}
for any $q\in[1,6)$.
\end{lemma}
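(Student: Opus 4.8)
The plan is to test the Navier--Stokes subsystem with $u_\varepsilon$ and exploit the fact that, by Lemma \ref{lemmaddffffdfffgg4sssdddd5630} together with Lemma \ref{11aaalemdfghkkmaddffffdfffgg4sssdddd5630}, the forcing term $(n_\varepsilon+m_\varepsilon)\nabla\phi$ is close in $L^2(\Omega)$ to the constant $(\hat n+\hat m)\nabla\phi$, whose Helmholtz projection vanishes. Concretely, multiplying the fourth equation in \dref{1.1fghyuisda} by $u_\varepsilon$, integrating by parts, and using $\nabla\cdot u_\varepsilon=0$, $u_\varepsilon|_{\partial\Omega}=0$ (so that the $\kappa(Y_\varepsilon u_\varepsilon\cdot\nabla)u_\varepsilon$ term drops out exactly as in \dref{ddddfgcz2.5ghju48cfg924ghyuji}), I obtain
\begin{equation}
\frac12\frac{d}{dt}\int_\Omega|u_\varepsilon|^2+\int_\Omega|\nabla u_\varepsilon|^2=\int_\Omega\big(n_\varepsilon+m_\varepsilon-\hat n-\hat m\big)u_\varepsilon\cdot\nabla\phi,
\label{plan-nse-test}
\end{equation}
where I have subtracted the constant using $\int_\Omega u_\varepsilon\cdot\nabla\phi=\int_{\partial\Omega}(u_\varepsilon\cdot\nu)\phi-\int_\Omega\phi\,\nabla\cdot u_\varepsilon=0$. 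By the Cauchy--Schwarz inequality, the Poincar\'e inequality $\|u_\varepsilon\|_{L^2(\Omega)}\le C_P\|\nabla u_\varepsilon\|_{L^2(\Omega)}$ (valid since $u_\varepsilon$ vanishes on $\partial\Omega$), and Young's inequality, the right-hand side is bounded by $\tfrac12\|\nabla u_\varepsilon\|_{L^2(\Omega)}^2+C\|n_\varepsilon+m_\varepsilon-\hat n-\hat m\|_{L^2(\Omega)}^2\|\nabla\phi\|_{L^\infty(\Omega)}^2$, so that with $y(t):=\int_\Omega|u_\varepsilon(\cdot,t)|^2$ and a suitable $\lambda>0$ (again from Poincar\'e) we arrive at the differential inequality $y'(t)+\lambda y(t)+\tfrac12\int_\Omega|\nabla u_\varepsilon|^2\le h(t)$ with $h(t):=C\|n_\varepsilon(\cdot,t)+m_\varepsilon(\cdot,t)-\hat n-\hat m\|_{L^2(\Omega)}^2$.

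Next I would verify $\int_0^\infty h(s)\,ds<\infty$ \emph{with a bound independent of $\varepsilon$}; more precisely, what is actually needed for the stated ``for all $t>T$, all $\varepsilon\in(0,\varepsilon_0)$'' conclusion is that $\int_{t}^{\infty}h(s)\,ds$ can be made arbitrarily small by choosing $t$ large and $\varepsilon$ small. For the $m_\varepsilon$-part this follows from \dref{hhxxcdfvssdhhhvssssssjjdfffssddcsssddssz2.5} (the $L^2$-in-space, $L^1$-in-time smallness of $m_\varepsilon-\hat m$ established in the proof of Lemma \ref{sedddlemmaddffffdfffgg4sssdddd5630}), and for the $n_\varepsilon$-part I combine the uniform $L^p$ bound from Lemma \ref{lemmaghjffggssddgghhmk4563025xxhjklojjkkk} with the $L^1(\Omega)$-decay $\int_\Omega n_\varepsilon(\cdot,t)\to\hat n$ from Lemma \ref{lemmaddffffdfffgg4sssdddd5630}, interpolating $\|n_\varepsilon-\hat n\|_{L^2(\Omega)}\le\|n_\varepsilon-\hat n\|_{L^q(\Omega)}^{\theta}\|n_\varepsilon-\hat n\|_{L^1(\Omega)}^{1-\theta}$ exactly as in \dref{fvgbccvsddfgddffvhnjmkfhhhhhgbdffrhnkkkn6291} of Lemma \ref{11aaalemdfghkkmaddffffdfffgg4sssdddd5630} — this already gives pointwise-in-$t$ smallness of $\|n_\varepsilon(\cdot,t)-\hat n\|_{L^2(\Omega)}$. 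Feeding this into Lemma \ref{fhhghfbglemma4563025xxhjklojjkkkgyhuissddff} (in the sharper ``tail'' form: if $\int_t^{t+1}h\to0$ and $h$ is controlled, then $y(t)\to0$) yields \dref{11111hhxxcdfvhhhvssssssssscccjjghjjsdgggddddgdddfffddffssddcssdz2.5}, i.e. $\|u_\varepsilon(\cdot,t)\|_{L^2(\Omega)}<\eta$ for $t>T$, $\varepsilon\in(0,\varepsilon_0)$. Integrating the differential inequality over $(t,t+1)$ and using the just-obtained smallness of $y$ and of $\int h$ then delivers \dref{11111hhxxcdfvhhhvsssssddfffssddffsscccjjghjjsdggggdddfffddffssddcssdz2.5}, namely $\int_t^{t+1}\|\nabla u_\varepsilon\|_{L^2(\Omega)}^2<\eta$.

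Finally, for \dref{11111hhxxcdfvhhhvsssddffssddfffssddffsscccjjghjjsdggggdddfffddffssddcssdz2.5} and \dref{11111hhxxcdfvhhhvddfffsssddffssddfffssddffsscccjjghjjsdggggdddfffddffssddcssdz2.5} I use the Gagliardo--Nirenberg / Poincar\'e inequality in the form $\|u_\varepsilon\|_{L^q(\Omega)}\le C\|\nabla u_\varepsilon\|_{L^2(\Omega)}^{a}\|u_\varepsilon\|_{L^2(\Omega)}^{1-a}$ for $q\in[1,6)$ with $a=a(q)\in[0,1)$ (the embedding $W^{1,2}_0\hookrightarrow L^6$ being the endpoint), so that $\|u_\varepsilon\|_{L^q(\Omega)}^2\le C\|\nabla u_\varepsilon\|_{L^2(\Omega)}^{2a}\|u_\varepsilon\|_{L^2(\Omega)}^{2(1-a)}$; integrating over $(t,t+1)$, applying H\"older in time with exponents $1/a$ and $1/(1-a)$, and invoking the smallness of both $\sup_{s>t}\|u_\varepsilon(\cdot,s)\|_{L^2(\Omega)}$ and $\int_t^{t+1}\|\nabla u_\varepsilon\|_{L^2(\Omega)}^2$ gives the claim; the $L^q$-in-time $L^1$-norm bound \dref{11111hhxxcdfvhhhvddfffsssddffssddfffssddffsscccjjghjjsdggggdddfffddffssddcssdz2.5} then follows a fortiori from Cauchy--Schwarz in time, $\int_t^{t+1}\|u_\varepsilon\|_{L^q(\Omega)}\le(\int_t^{t+1}\|u_\varepsilon\|_{L^q(\Omega)}^2)^{1/2}$. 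The main obstacle I anticipate is the bookkeeping of the $\varepsilon$-uniformity and the ``$t>T$, $\varepsilon<\varepsilon_0$'' quantifier structure: one must be careful that the threshold $T$ in Lemmas \ref{lemmaddffffdfffgg4sssdddd5630}, \ref{ssdddlemmddddaddffffdfffgg4sssdddd5630}, \ref{11aaalemdfghkkmaddffffdfffgg4sssdddd5630} can be chosen simultaneously, that the Gr\"onwall/tail argument of Lemma \ref{fhhghfbglemma4563025xxhjklojjkkkgyhuissddff} only transfers the smallness of the forcing tail to the solution with a fixed (but possibly long) transient, and that the constant $C_2$ appearing in the $L^\infty$-type bounds for $u_\varepsilon$ may depend on $\varepsilon$ — so one should rely here only on the $\varepsilon$-independent $L^2$ and $H^1$ estimates, not on the $L^\infty$ bound from Lemma \ref{lemma45630hhuujj}.
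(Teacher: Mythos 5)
Your proposal matches the paper's proof in all essentials: test the projected Navier--Stokes equation with $u_\varepsilon$, drop the convective and pressure contributions by solenoidality and the boundary condition, subtract the constant $\hat n+\hat m$ (your justification via $\int_\Omega u_\varepsilon\cdot\nabla\phi=0$ is in fact cleaner than the paper's remark ``$\hat n=-\hat m$''), apply Cauchy--Schwarz, Poincar\'e and Young to reach $y'+\lambda y+\tfrac12\|\nabla u_\varepsilon\|_{L^2}^2\le h$, feed in the $L^2$-smallness of $n_\varepsilon-\hat n$ and $m_\varepsilon-\hat m$ from Lemmas \ref{ssdddlemmddddaddffffdfffgg4sssdddd5630} and \ref{11aaalemdfghkkmaddffffdfffgg4sssdddd5630}, close the decay via the ODE comparison in Lemma \ref{fhhghfbglemma4563025xxjklojjkkkgyhuissddff}, and deduce the $L^q$ statements from $W^{1,2}\hookrightarrow L^q$ together with H\"older in time. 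Your caveat about relying only on $\varepsilon$-uniform estimates rather than the $\varepsilon$-dependent bounds of Lemma \ref{lemma45630hhuujj} is exactly the right thing to watch, and the argument closes as you describe.
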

\begin{proof}
From the fourth equation in \dref{1.1fghyuisda} we obtain the associated Navier-Stokes energy inequality
in the form
\begin{equation}
\begin{array}{rl}
&\disp{\frac{1}{2}\frac{d}{dt}\|u_{\varepsilon}\|^{{2}}_{L^{{2}}(\Omega)}}
\\
=&\disp{-
\int_{\Omega}|\nabla u_{\varepsilon}|^2+\int_{\Omega}(n_{\varepsilon}+m_{\varepsilon})\nabla\phi\cdot u_{\varepsilon}-\int_{\Omega}\nabla P_{\varepsilon}\cdot u_{\varepsilon}}\\
=&\disp{-
\int_{\Omega}|\nabla u_{\varepsilon}|^2+\int_{\Omega}(n_{\varepsilon}-\hat{n}+m_{\varepsilon}-\hat{m})\nabla\phi\cdot u_{\varepsilon}}\\
\leq&\disp{-
\int_{\Omega}|\nabla u_{\varepsilon}|^2+K_1\left(\int_{\Omega}(n_{\varepsilon}-\hat{n}+m_{\varepsilon}-\hat{m})^2\right)^{\frac{1}{2}}\left(\int_{\Omega} |u_{\varepsilon}|^2\right)^{\frac{1}{2}}}\\
\leq&\disp{-
\int_{\Omega}|\nabla u_{\varepsilon}|^2+K_1\left(\int_{\Omega}|n_{\varepsilon}-\hat{n}|^2+\int_{\Omega}|m_{\varepsilon}-\hat{m}|^2\right)^{\frac{1}{2}}\left(\int_{\Omega} |u_{\varepsilon}|^2\right)^{\frac{1}{2}},}\\
\end{array}
\label{cddddz2.51kkk1ssssdfssddjjkkkggsddffggg4ddfggg114}
\end{equation}
where we have used the fact that $\hat{n}=-\hat{m}$ as well as $\nabla\cdot u_{\varepsilon} = 0$ and $u_{\varepsilon} |_{\partial\Omega} = 0$.
Due to the Poincar\'{e} inequality again, we have
$$ \eta_0\int_{\Omega} |u_{\varepsilon}|^2\leq \int_{\Omega}|\nabla u_{\varepsilon}|^2,$$
therefore, collecting \dref{11111hhxxcdfvhhhvssssssssscccjjghjjsdggggdddfffddffssddcssdz2.5} and  \dref{11111hhxxcdfvhhhvsssssssssjjghjjsdggggdddfffddffssddcssdz2.5}, we derive from \dref{cddddz2.51kkk1ssssdfssddjjkkkggsddffggg4ddfggg114} that
\begin{equation}
\begin{array}{rl}
\disp\disp \lim_{t\rightarrow+\infty}\int_{\Omega}|u_{\varepsilon}(x,t)|^2dx&=\disp{0}\\
\end{array}
\label{hhxxcdfvhhhvsssssssssjjdfffddffssddcssdz2.5}
\end{equation}
and
\begin{equation}\int_{t}^{t+1}\|\nabla u_{\varepsilon}\|_{L^2(\Omega)}^2dx\rightarrow 0 ~~~\mbox{as}~~~t\rightarrow\infty
\label{11111hhxxcdfvhhhvddddsssssddfffssddffsscccjjghjjsdggggdddfffddffssddcssdz2.5}
\end{equation}
and thereby proves \dref{11111hhxxcdfvhhhvssssssssscccjjghjjsdgggddddgdddfffddffssddcssdz2.5}--\dref{11111hhxxcdfvhhhvsssssddfffssddffsscccjjghjjsdggggdddfffddffssddcssdz2.5}.
Finally, we make use of the embedding
$W^{1,2}(\Omega)\hookrightarrow L^q (\Omega)$ (for any $q\in[1,6)$) and the
Young inequality  to find 
 that \dref{11111hhxxcdfvhhhvsssddffssddfffssddffsscccjjghjjsdggggdddfffddffssddcssdz2.5} and \dref{11111hhxxcdfvhhhvddfffsssddffssddfffssddffsscccjjghjjsdggggdddfffddffssddcssdz2.5} hold.
\end{proof}
Using thge decay property of $m_\varepsilon(\cdot,t)-\hat{m}+n_\varepsilon(\cdot,t)
-\hat{n}$ (see Lemmas \ref{ssdddlemmddddaddffffdfffgg4sssdddd5630} and \ref{11aaalemdfghkkmaddffffdfffgg4sssdddd5630}), by means of a contraction mapping argument, we may  derive a certain eventual regularity and
decay of $u_\varepsilon$ in $L^p(\Omega)$ with some $p\geq6$.
\begin{lemma}\label{aaalemmaddffffsddddfffgg4sssdddd5630}
 For any $p \in[6,\infty)$ and $\eta>0$, there are $T > 0$ and $\varepsilon_0>0$ such that for any $t > T$ and
such that for any $\varepsilon\in(0,\varepsilon_0 )$
 \begin{equation}\|u_{\varepsilon}(\cdot,s)\|_{L^p(\Omega)}<\eta~~~\mbox{for any}~~~ s\in[t,t + 1].
\label{11111hhxxcdfvhhhvssssssssscccjjghjjsdgggddddgdddffddfffddffssddcssdz2.5}
\end{equation} 
\end{lemma}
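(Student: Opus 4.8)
The plan is to deduce the claim in two stages. First, I would prove, uniformly with respect to $\varepsilon\in(0,1)$, the eventual decay of $\|\nabla u_\varepsilon(\cdot,t)\|_{L^2(\Omega)}$; then, taking this as an input, I would bootstrap to eventual smallness in $L^p(\Omega)$ for arbitrary $p\in[6,\infty)$ by means of the variation-of-constants representation of the Stokes flow. The structural observation used throughout is that $\mathcal{P}\nabla\phi=0$, because $\nabla\phi$ lies in the orthogonal complement of $L^2_\sigma(\Omega)$; consequently the Helmholtz-projected buoyancy force in the fourth equation of \dref{1.1fghyuisda} equals $\mathcal{P}(g_\varepsilon\nabla\phi)$ with $g_\varepsilon:=(n_\varepsilon-\hat n)+(m_\varepsilon-\hat m)$, and by Lemmas \ref{ssdddlemmddddaddffffdfffgg4sssdddd5630} and \ref{11aaalemdfghkkmaddffffdfffgg4sssdddd5630} this quantity tends to $0$ in every $L^r(\Omega)$ as $t\to\infty$, uniformly in $\varepsilon$.

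For the first stage I would test the projected $u_\varepsilon$-equation against $Au_\varepsilon$. Replacing the forcing by $\mathcal{P}(g_\varepsilon\nabla\phi)$, using the uniform $L^r$-boundedness of the Yosida approximation $Y_\varepsilon$, the bounds $\|u_\varepsilon\|_{L^6(\Omega)}\le C\|\nabla u_\varepsilon\|_{L^2(\Omega)}$ and $\|\nabla u_\varepsilon\|_{L^3(\Omega)}\le C\|\nabla u_\varepsilon\|_{L^2(\Omega)}^{1/2}\|Au_\varepsilon\|_{L^2(\Omega)}^{1/2}$ (valid for $u_\varepsilon\in D(A)$), and Young's inequality, I expect to arrive at a differential inequality
\[
y_\varepsilon'(t)+\lambda_1 y_\varepsilon(t)\le c_1|\kappa|^4 y_\varepsilon(t)^3+c_2\|g_\varepsilon(\cdot,t)\|_{L^2(\Omega)}^2\qquad\mbox{for all }t>0,
\]
where $y_\varepsilon(t):=\|\nabla u_\varepsilon(\cdot,t)\|_{L^2(\Omega)}^2$, $\lambda_1>0$ denotes the first eigenvalue of the Stokes operator $A$, and $c_1,c_2>0$ do not depend on $\varepsilon$. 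Since Lemma \ref{11aaalemmaddffffdsddfffffgg4sssdddd5630} yields $\int_t^{t+1}y_\varepsilon\to0$ and the above yields $\|g_\varepsilon(\cdot,t)\|_{L^2(\Omega)}\to0$, both uniformly in $\varepsilon$, I would then fix the threshold $\rho:=(\lambda_1/(2c_1|\kappa|^4))^{1/2}$, so that the cubic term is dominated by $\tfrac{\lambda_1}{2}y_\varepsilon$ whenever $y_\varepsilon\le\rho$, choose $t$ large enough that $\sup_{s\ge t-1}\|g_\varepsilon(\cdot,s)\|_{L^2(\Omega)}^2$ and $\int_{t-1}^{t}y_\varepsilon$ are sufficiently small, select $t_1\in[t-1,t]$ with $y_\varepsilon(t_1)\le\int_{t-1}^{t}y_\varepsilon$, and run a continuity (barrier) argument showing that $y_\varepsilon$ can never reach $\rho$ on $[t_1,\infty)$; the resulting reduced inequality $y_\varepsilon'+\tfrac{\lambda_1}{2}y_\varepsilon\le c_2\|g_\varepsilon\|_{L^2(\Omega)}^2$ then forces $y_\varepsilon(s)\le y_\varepsilon(t_1)e^{-\lambda_1(s-t_1)/2}+\tfrac{2c_2}{\lambda_1}\sup_{\tau\ge t_1}\|g_\varepsilon(\cdot,\tau)\|_{L^2(\Omega)}^2$ for $s\ge t_1$. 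Hence $\|\nabla u_\varepsilon(\cdot,t)\|_{L^2(\Omega)}\to0$, so $\|u_\varepsilon(\cdot,t)\|_{L^6(\Omega)}\to0$, as $t\to\infty$, uniformly in $\varepsilon$. (When $\kappa=0$ the cubic term is absent and this reduces to an application of Lemma \ref{fhhghfbglemma4563025xxhjklojjkkkgyhuissddff}.)

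For the second stage, fix $p\in[6,\infty)$, let $t$ be large and put $t_0:=t-1$. Writing the convective term as $\mathcal{P}\nabla\cdot(Y_\varepsilon u_\varepsilon\otimes u_\varepsilon)$ (legitimate since $\nabla\cdot(Y_\varepsilon u_\varepsilon)=0$), the variation-of-constants formula gives, for $s\in[t,t+1]$,
\[
u_\varepsilon(\cdot,s)=e^{-(s-t_0)A}u_\varepsilon(\cdot,t_0)-\kappa\int_{t_0}^{s}e^{-(s-\sigma)A}\mathcal{P}\nabla\cdot(Y_\varepsilon u_\varepsilon\otimes u_\varepsilon)(\cdot,\sigma)\,d\sigma+\int_{t_0}^{s}e^{-(s-\sigma)A}\mathcal{P}(g_\varepsilon\nabla\phi)(\cdot,\sigma)\,d\sigma,
\]
and I would estimate the three terms using Lemma \ref{llssdrffmmggnnccvvccvvkkkkgghhkkllvvlemma45630}. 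Since $s-t_0\ge1$, the first term is controlled by $C\|u_\varepsilon(\cdot,t_0)\|_{L^6(\Omega)}$ via \dref{1.1ddfghssdddyddffssddduisda}, which is small by the first stage; for the second, $\|Y_\varepsilon u_\varepsilon\otimes u_\varepsilon\|_{L^3(\Omega)}\le C\|u_\varepsilon\|_{L^6(\Omega)}^2$ and \dref{1.1ddfghssdddyssddduisda} produces the time factor $(s-\sigma)^{-1+\frac{3}{2p}}$, which is integrable over the bounded interval precisely because $p\ge6$, whence this term is at most $C|\kappa|\sup_{\sigma\in[t-1,t+1]}\|u_\varepsilon(\cdot,\sigma)\|_{L^6(\Omega)}^2$, again small; and for the third, \dref{1.1ddddfghhfghssdddyssddduisda} bounds it by $C\|\nabla\phi\|_{L^\infty(\Omega)}\sup_{\sigma\in[t-1,t+1]}\|g_\varepsilon(\cdot,\sigma)\|_{L^p(\Omega)}$, which is small by Lemmas \ref{ssdddlemmddddaddffffdfffgg4sssdddd5630} and \ref{11aaalemdfghkkmaddffffdfffgg4sssdddd5630} together with the uniform $L^p$-bound for $n_\varepsilon$ from Lemma \ref{lemmaghjffggssddgghhmk4563025xxhjklojjkkk}. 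Taking $\sup_{s\in[t,t+1]}$ then establishes \dref{11111hhxxcdfvhhhvssssssssscccjjghjjsdgggddddgdddffddfffddffssddcssdz2.5}. The decisive difficulty is the first stage when $\kappa\ne0$: the energy identity only furnishes a space--time bound on $\nabla u_\varepsilon$ rather than an $L^\infty$-in-time bound, so the supercritical cubic term $y_\varepsilon^3$ has to be handled without knowing a priori that $y_\varepsilon$ is small, which is exactly what the barrier argument anchored at the favourable time $t_1$ accomplishes; once the uniform $L^6$-decay is available the convective nonlinearity has become subcritical, and the second stage is essentially routine.
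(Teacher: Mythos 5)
Your proof is correct but takes a genuinely different route from the paper's. The paper runs a Fujita--Kato contraction mapping in the weighted Banach space $X=\{v:\sup_{s\in(0,3)}s^{\gamma_0}\|v(\tilde t_0+s)\|_{L^p(\Omega)}\le\eta_0\}$: it pins down, by a pigeonhole argument from the time-integrated $L^q$ decay of $u_\varepsilon$ with $q\in(3,6)$, a single pivot time $\tilde t_0$ at which $\|u_\varepsilon(\cdot,\tilde t_0)\|_{L^q(\Omega)}$ is small, propagates that one-time smallness to a full interval through the fixed-point estimate \dref{cz2.571hhhhh51lllllccvvhddfccvvhjjjkkhhggjjllll}, and then identifies the fixed point with $u_\varepsilon$ by the uniqueness of weak solutions of the Stokes evolution. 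You instead establish a strictly stronger intermediate statement first --- eventual uniform-in-$\varepsilon$ smallness of $\|\nabla u_\varepsilon(\cdot,t)\|_{L^2(\Omega)}$, obtained from the $H^1$-level differential inequality and a barrier/continuity argument anchored at a favourable time $t_1$ --- and then extract the $L^p$ conclusion from Duhamel, which is by then subcritical because $\|u_\varepsilon\|_{L^6(\Omega)}$ is small on the \emph{whole} interval rather than at one time. Both proofs rest on the same inputs (the $L^p$ decay of $n_\varepsilon-\hat n$, $m_\varepsilon-\hat m$, the time-integrated bound on $\nabla u_\varepsilon$, and the identity $\mathcal{P}\nabla\phi=0$), and both are uniform in $\varepsilon$; the paper's version avoids the second-order energy estimate and hence never needs $u_\varepsilon(\cdot,t)\in D(A)$, whereas yours delivers the additional useful conclusion $\|\nabla u_\varepsilon(\cdot,t)\|_{L^2(\Omega)}\to 0$. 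Two points you should make explicit in a full write-up: that the Yosida resolvent $(1+\varepsilon A)^{-1}$ is bounded on $L^6_\sigma(\Omega)$ with a constant independent of $\varepsilon$, so the coefficient in front of $y_\varepsilon^3$ is $\varepsilon$-uniform; and that the barrier argument needs $\sup_{\tau\ge t_1}\|g_\varepsilon(\cdot,\tau)\|_{L^2(\Omega)}^2<\lambda_1\rho/(2c_2)$ for all $\tau\ge t_1$, which is supplied by the pointwise-in-time (not merely time-averaged) decays of Lemmas \ref{ssdddlemmddddaddffffdfffgg4sssdddd5630} and \ref{11aaalemdfghkkmaddffffdfffgg4sssdddd5630}.
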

\begin{proof}
We let $p\geq 6$ and choose $q \in (3,6)$ which is close to $6$ (e.g. $q=\frac{6p}{p+2}$) such that
\begin{equation}
\begin{array}{rl}
\disp{\frac{1}{2}-\frac{3}{2p}-3(\frac{1}{q}-\frac{1}{p})} >\disp{0.}\\
\end{array}
\label{hhxxcdfvhhhvsssjjdfffssssddssdddcz2.5}
\end{equation}
Now, we define $\gamma_0=\frac{3}{2}(\frac{1}{q}-\frac{1}{p})$.
Next, in view of \dref{hhxxcdfvhhhvsssjjdfffssssddssdddcz2.5} and using  $p\geq6$, $q\in(3,6)$, we have
 $$-2\gamma_0-\frac{1}{2}-\frac{3}{2p}>-1~~~\mbox{and}~~ -\frac{1}{2}-\frac{3}{2p}>-1.$$
 Therefore,
\begin{equation}
2\kappa_2\int_0^3s^{-2\gamma_0-\frac{1}{2}-\frac{3}{2p}}ds<+\infty~~~\mbox{and}~~ 2\kappa_2\int_0^3s^{-\frac{1}{2}-\frac{3}{2p}}ds<+\infty,
 \label{2222dddddf1.1ddfghssdddyddffssddduisda}
\end{equation}
where $\kappa_2$ is give by Lemma \ref{llssdrffmmggnnccvvccvvkkkkgghhkkllvvlemma45630}.
Thus for any $\eta>0$, we any  choose  $\eta_0\in(0,\eta)$ small enough such that
\begin{equation}
\eta_0<\min\{\frac{1}{2\kappa_2\int_0^3s^{-2\gamma_0-\frac{1}{2}-\frac{3}{2p}}ds},\frac{1}{2\kappa_2\int_0^3s^{-\frac{1}{2}-\frac{3}{2p}}ds}\}.
 \label{2222dddddf1.1ddfghssdddysddddddffssddduisda}
\end{equation}
On the other hand, by \dref{11111hhxxcdfvhhhvddfffsssddffssddfffssddffsscccjjghjjsdggggdddfffddffssddcssdz2.5}, Lemmas \ref{ssdddlemmddddaddffffdfffgg4sssdddd5630}--\ref{11aaalemmaddffffdsddfffffgg4sssdddd5630}, we then pick $T_0$ and $\varepsilon_0>0$ such that for any $t > T$ and
such that for any $\varepsilon\in(0,\varepsilon_0 )$ 
\begin{equation}\int_{t}^{t+1}\|u_\varepsilon\|_{L^q(\Omega)}dx<\frac{\eta_0}{3\kappa_1}~~~
\mbox{and}~~\|m_\varepsilon(\cdot,t)-\hat{m}+n_\varepsilon(\cdot,t)-\hat{n}\|_{L^p(\Omega)}<\frac{\eta_0}{3^{\gamma_0+1}\kappa_3\|\nabla\phi\|_{L^\infty(\Omega)}},
\label{11111hhxxcdfvhhhvddfssdddffsssddffssddfffssddffsscccjjghjjsdggggdddfffddffssddcssdz2.5}
\end{equation}
where $\kappa_1$ and $\kappa_3$ are given by Lemma  \ref{llssdrffmmggnnccvvccvvkkkkgghhkkllvvlemma45630}.
In view of \dref{11111hhxxcdfvhhhvddfssdddffsssddffssddfffssddffsscccjjghjjsdggggdddfffddffssddcssdz2.5}, for any $t_1>T_0$ and $\varepsilon\in (0,\varepsilon_{t_1})$, one can find $\tilde{t}_0\in(t_1,t_1+1)$ such that
\begin{equation}\|u_\varepsilon(\cdot,\tilde{t}_0)\|_{L^q(\Omega)}dx<\frac{\eta_0}{3\kappa_1}.
\label{11111hhxxcdfvhhhvdssddddfssdddffsssddffssddfffssddffsscccjjghjjsdggggdddfffddffssddcssdz2.5}
\end{equation}
Now, we define
$$T=T_0+2.$$
In the following, we will prove that \dref{11111hhxxcdfvhhhvssssssssscccjjghjjsdgggddddgdddffddfffddffssddcssdz2.5} holds for any $t>T.$
To this end, for the above $\tilde{t}_0,p,\gamma_0$ and $\eta_0$, we let
\begin{equation}X=\{v:\Omega\times(\tilde{t}_0,\tilde{t}_0+3)\rightarrow \mathbb{R};\sup_{s\in(0,3)}s^{\gamma_0}\|v(\tilde{t}_0+s)\|_{L^p(\Omega)}\leq\eta_0\}.
\label{cz2.571hhhhh51lllllccvvddfgghddfccvvhjjjkkhhggjjlsdddlll}
\end{equation}
Then we consider the mapping
$\varphi: X\rightarrow \mathbb{R}$ defined by
$$\varphi(v) = e^{-tA}u_{\varepsilon}(\tilde{t}_0) +\int_{\tilde{t}_0}^te^{-(t-\tau)A}
\mathcal{P}\left[-\kappa\nabla\cdot(Y_\varepsilon v\otimes v) (\tau)+(n_\varepsilon (\tau)+m_\varepsilon (\tau))\nabla\phi\right]d\tau.$$
Now, we will show that  $\varphi$  is a contraction on $X$.
In fact,  in view of Lemma \ref{llssdrffmmggnnccvvccvvkkkkgghhkkllvvlemma45630}, for any $s>1$ and for any such $v$  we may
derive from the H\"{o}lder inequality that
\begin{equation}
\begin{array}{rl}
&\|\varphi(v) (\cdot, t)\|_{L^p(\Omega)}\\
\leq&\disp{\kappa_1(t-\tilde{t}_0)^{-\gamma_0}\|u_\varepsilon(\tilde{t}_0)\|_{L^q(\Omega)} +
\kappa_2\int_{\tilde{t}_0}^t(t-\tau)^{-\frac{1}{2}-\frac{3}{2}(\frac{2}{p}-\frac{1}{p})}\|v\oplus v\|_{L^{\frac{p}{2}}(\Omega)}d\tau}\\
&\disp{+\kappa_3\|\nabla\phi\|_{L^\infty(\Omega)}\int_{\tilde{t}_0}^t\|m_\varepsilon(\cdot,\tau)-\hat{m}+n_\varepsilon(\cdot,\tau)-\hat{n}\|_{L^p(\Omega)}d\tau}\\
\leq&\disp{\kappa_1(t-\tilde{t}_0)^{-\gamma_0}\|u_\varepsilon(\tilde{t}_0)\|_{L^q(\Omega)} +
\kappa_2\int_{\tilde{t}_0}^t(t-\tau)^{-\frac{1}{2}-\frac{3}{2}(\frac{2}{p}-\frac{1}{p})}\|v\|_{L^{p}(\Omega)}^2d\tau}\\
&\disp{+\kappa_3\|\nabla\phi\|_{L^\infty(\Omega)}\int_{\tilde{t}_0}^{\tilde{t}_0+3}\|m_\varepsilon(\cdot,\tau)-\hat{m}+n_\varepsilon(\cdot,\tau)
-\hat{n}\|_{L^p(\Omega)}d\tau.}\\
\end{array}
\label{cz2.571hhhhh51lllllccvvhddfccvvhjjjkkhhggjjlsdddlll}
\end{equation}
Therefore, in light of \dref{2222dddddf1.1ddfghssdddyddffssddduisda}, \dref{11111hhxxcdfvhhhvdssddddfssdddffsssddffssddfffssddffsscccjjghjjsdggggdddfffddffssddcssdz2.5} as well as \dref{cz2.571hhhhh51lllllccvvddfgghddfccvvhjjjkkhhggjjlsdddlll} and \dref{11111hhxxcdfvhhhvddfssdddffsssddffssddfffssddffsscccjjghjjsdggggdddfffddffssddcssdz2.5},  we see that for every $t\in (\tilde{t}_0 ,\tilde{t}_0 + 3)$ and every $v\in X$
\begin{equation}
\begin{array}{rl}
&(t-\tilde{t}_0)^{\gamma_0}\|\varphi(v) (\cdot, t)\|_{L^p(\Omega)}\\
\leq&\disp{\kappa_1\|u_{\varepsilon}(\tilde{t}_0)\|_{L^q(\Omega)} +
\kappa_2(t-\tilde{t}_0)^{\gamma_0}\int_{\tilde{t}_0}^t(t-\tau)^{-\frac{1}{2}-\frac{3}{2}(\frac{2}{p}-\frac{1}{p})}\|v\oplus v\|_{L^{\frac{p}{2}}(\Omega)}d\tau}\\
&\disp{+\kappa_3(t-\tilde{t}_0)^{\gamma_0}\|\nabla\phi\|_{L^\infty(\Omega)}\int_{\tilde{t}_0}^{\tilde{t}_0+3}\|m_\varepsilon(\cdot,\tau)-\hat{m}+
n_\varepsilon(\cdot,\tau)-\hat{n}\|_{L^p(\Omega)}d\tau}\\
\leq&\disp{\kappa_1\|u_{\varepsilon}(\tilde{t}_0)\|_{L^q(\Omega)} +
\kappa_2\eta_0^23^{\gamma_0}\int_{0}^3(t-\tau)^{-\frac{1}{2}-\frac{3}{2p}}\tau^{-2\gamma_0}d\tau}\\
&\disp{+\kappa_33^{\gamma_0}\|\nabla\phi\|_{L^\infty(\Omega)}\frac{\eta_0}{3^{\gamma_0+1}\kappa_3\|\nabla\phi\|_{L^\infty(\Omega)}}}\\
<&\disp{\eta_0.}\\
\end{array}
\label{cz2.571hhhhh51lllllccvvhddfccvvhjjjkkhhggjjllll}
\end{equation}
from which it readily follows that $\varphi(X)\subset X$.
 Likewise, for $v\in X$  and $w\in X$ we can
use Lemma \ref{llssdrffmmggnnccvvccvvkkkkgghhkkllvvlemma45630} to find that
\begin{equation}
\begin{array}{rl}
&\|\varphi(v) (\cdot, t)-\varphi(w)(\cdot, t) \|_{L^p(\Omega)}\\
\leq&\disp{
\kappa_2\int_{\tilde{t}_0}^t(t-\tau)^{-\frac{1}{2}-\frac{3}{2p}}\|v\oplus v-w\oplus w\|_{L^{\frac{p}{2}}(\Omega)}d\tau}\\
=&\disp{
\kappa_2\int_{\tilde{t}_0}^t(t-\tau)^{-\frac{1}{2}-\frac{3}{2p}}\|v\oplus (v-w)+(v-w)\oplus w\|_{L^{\frac{p}{2}}(\Omega)}d\tau}\\
\leq&\disp{
\kappa_2\int_{\tilde{t}_0}^t(t-\tau)^{-\frac{1}{2}-\frac{3}{2p}}(\|v\|_{L^{p}}+\|w\|_{L^{p}})\|v-w\|_{L^{p}(\Omega)}d\tau}\\
\leq&\disp{2
\kappa_2\eta_0\int_{0}^3\tau^{-\frac{1}{2}-\frac{3}{2p}}d\tau\|v-w\|_{L^{\infty}((0,3);L^{p}(\Omega)}.}\\
\end{array}
\label{234cz2.571hhhhh51lllllccvvhsdddddfccvvhjjjkkhhggjjlsdddlll}
\end{equation}
On the other hand, \dref{2222dddddf1.1ddfghssdddysddddddffssddduisda} implies that
$$2
\kappa_2\eta_0\int_{0}^3\tau^{-\frac{1}{2}-\frac{3}{2p}}d\tau<1,$$
whence \dref{234cz2.571hhhhh51lllllccvvhsdddddfccvvhjjjkkhhggjjlsdddlll} shows that
$\varphi$ acts as a contraction on
$X$ and hence possesses a unique fixed point  on $X$, which,
in view of  the definition of $\varphi$, must coincide with the unique weak solution $u_\varepsilon$ of fourth equation of \dref{1.1fghyuisda}
 on $(\tilde{t}_0 ,\tilde{t}_0+3)$ (see e.g.
Thm. V.2.5.1 of \cite{Sohr}). Now, by  \dref{cz2.571hhhhh51lllllccvvhddfccvvhjjjkkhhggjjllll}, we also derive that
\begin{equation}\|u_\varepsilon(\cdot,t)\|_{L^p(\Omega)}dx<\eta_0,~~\mbox{for all}~~t\in(\tilde{t}_0+1 ,\tilde{t}_0+3),
\label{11111hhxxcdfvhhhssddvdssddddfssdddffsssddffssddfffssddffsscccjjghjjsdggggdddfffddffssddcssdz2.5}
\end{equation}
from  $(\tilde{t}_0+1 ,\tilde{t}_0+3)\supset(t_1+2,t_1+3)$ we readily derive \dref{11111hhxxcdfvhhhvssssssssscccjjghjjsdgggddddgdddffddfffddffssddcssdz2.5}. 

\end{proof}
In the following lemmas, we next plan to prove  H\"{o}lder regularity
 of the components of a solution on
intervals of the form $(T_0 ,T_0 + 1)$ for $T_0> 0$ by using the maximal Sobolev regularity. 
To this end, we introduce the following cut-off functions,  which will play a key role in deriving higher order regularity for solution of problem \dref{1.1fghyuisda}.
\begin{definition}\label{aaalemmaddffffdssfffgg4sssdddd5630}
Let $\xi: \mathbb{R} \rightarrow [0,1]$ be a smooth, monotone function, satisfying $\xi\equiv 0$ on $(-\infty,0]$ and
$\xi\equiv 0$ on $(1,\infty)$ and for any $t_0\in \mathbb{R}$ we let $\xi_{t 0}:= \xi (t -t_0 )$.
\end{definition}
Due to the  above cut-off function,
it follows from maximal Sobolev regularity that the solution $(n_\varepsilon, c_\varepsilon, m_\varepsilon,u_\varepsilon)$ even satisfies estimates in
appropriate H\"{o}lder spaces:
\begin{lemma}\label{lemma45630hhuujjuuyy}
Let $\alpha>0$.
Then one can find $\mu\in(0, 1)$ and $T,\varepsilon_0,C>0$ such that for all $\varepsilon\in(0,\varepsilon_0)$
\begin{equation}
\|u_\varepsilon(\cdot,t)\|_{C^{1+\mu,\frac{\mu}{2}}(\bar{\Omega}\times[t,t+1])} \leq C ~~\mbox{for all}~~ t>T,
\label{zjscz2.5297x9630111kkhhffrroojj}
\end{equation}
%
%

\begin{equation}
\|c_\varepsilon(\cdot,t)\|_{C^{1+\mu,\frac{\mu}{2}}(\bar{\Omega}\times[t,t+1])}  \leq C ~~\mbox{for all}~~ t>T
\label{zjscz2.5297x9630111kkhhiioo}
\end{equation}
as well as
\begin{equation}
\|m_\varepsilon(\cdot,t)\|_{C^{1+\mu,\frac{\mu}{2}}(\bar{\Omega}\times[t,t+1])}  \leq C ~~\mbox{for all}~~ t>T
\label{ddhhhzjscz2.5297x9630111kkhhiioo}
\end{equation}
and
\begin{equation}
\|n_\varepsilon(\cdot,t)\|_{C^{1+\mu,\frac{\mu}{2}}(\bar{\Omega}\times[t,t+1])} \leq C ~~\mbox{for all}~~ t>T.
\label{zjscz2.5297x96dfgg30111kkhhffrroojj}
\end{equation}
\end{lemma}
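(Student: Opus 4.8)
The plan is to work on each time-slab $(t_0,t_0+1)$ with $t_0>T$ and, using the temporal cut-off functions $\xi_{t_0}$ of Definition \ref{aaalemmaddffffdssfffgg4sssdddd5630} (so that the localised problems carry vanishing data at $t=t_0$ and the cut-off remainder $\xi_{t_0}'(\cdot)$ is harmless), to run a bootstrap through maximal Sobolev regularity, finishing with the parabolic Sobolev embedding $W^{2,1}_q\hookrightarrow C^{1+\mu,\frac{\mu}{2}}$, which in the three-dimensional setting is available for every $q>5$. Regularity will be gained in the order $u_\varepsilon\rightsquigarrow(c_\varepsilon,m_\varepsilon)\rightsquigarrow n_\varepsilon$, since each equation, once the previously treated components are known to be smooth, acquires right-hand sides of successively higher integrability. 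All constants stay $\varepsilon$-independent because every a priori bound invoked is uniform in $\varepsilon$: the $L^q$-bounds for $n_\varepsilon$ for every finite $q$ (Lemma \ref{lemmaghjffggssddgghhmk4563025xxhjklojjkkk}), the $L^\infty$-bounds for $c_\varepsilon,m_\varepsilon$ and the slab bounds for $\nabla c_\varepsilon,\nabla m_\varepsilon,u_\varepsilon$ (Lemmas \ref{fvfgsdfggfflemma45} and \ref{ssdddlemmaghjffggssddgghhmk4563025xxhjklojjkkk}), and---indispensably when $\kappa\neq0$, where no global $L^\infty$-bound on $n_\varepsilon$ is at hand---the eventual $L^p$-boundedness of $u_\varepsilon$ on the slabs $(t,t+1)$ for every finite $p$ supplied by Lemma \ref{aaalemmaddffffsddddfffgg4sssdddd5630}. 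Here only boundedness, not smallness, of $u_\varepsilon$ is used; nonnegativity of $n_\varepsilon,c_\varepsilon,m_\varepsilon$ (Lemma \ref{lemma70}) makes all of the following legitimate. Enlarging $T$ so that Lemma \ref{aaalemmaddffffsddddfffgg4sssdddd5630} applies on every such slab, I would proceed as follows.

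First I would treat $u_\varepsilon$. Since $u_\varepsilon\in L^p(\Omega\times(t_0,t_0+2);\mathbb{R}^3)$ for every finite $p$, uniformly in $\varepsilon$ and $t_0>T$, the tensor $Y_\varepsilon u_\varepsilon\otimes u_\varepsilon$ (using the uniform $L^q$-boundedness of the Yosida operators $Y_\varepsilon$) and $(n_\varepsilon+m_\varepsilon)\nabla\phi$ (using Lemmas \ref{lemmaghjffggssddgghhmk4563025xxhjklojjkkk}, \ref{fvfgsdfggfflemma45} and $\phi\in W^{2,\infty}(\Omega)$) lie in $L^q$ on the slab for every finite $q$. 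Writing the fourth equation of \dref{1.1fghyuisda} as $u_{\varepsilon t}+\nabla P_\varepsilon=\Delta u_\varepsilon-\kappa\nabla\cdot(Y_\varepsilon u_\varepsilon\otimes u_\varepsilon)+(n_\varepsilon+m_\varepsilon)\nabla\phi$, multiplying by $\xi_{t_0}$, and applying maximal Sobolev regularity for the Stokes operator with divergence-form forcing shows that $\nabla u_\varepsilon$ is bounded in $L^q$ on slabs for every finite $q$, whence $u_\varepsilon\in L^\infty$ there. Consequently $(Y_\varepsilon u_\varepsilon\cdot\nabla)u_\varepsilon=Y_\varepsilon u_\varepsilon\cdot\nabla u_\varepsilon$ lies in $L^q$ on slabs for every finite $q$; feeding this back as a genuine $L^q$-forcing and invoking maximal $W^{2,1}_q$-regularity for the Stokes system yields $\xi_{t_0}u_\varepsilon\in W^{2,1}_q$ on the slab, hence, choosing $q$ large, \dref{zjscz2.5297x9630111kkhhffrroojj}.

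With $u_\varepsilon$ bounded (indeed in $C^{1+\mu,\frac{\mu}{2}}$), I would turn to $c_\varepsilon$ and $m_\varepsilon$. The right-hand side $-c_\varepsilon+m_\varepsilon-u_\varepsilon\cdot\nabla c_\varepsilon$ of the $c_\varepsilon$-equation belongs to $L^4$ on slabs by Lemmas \ref{fvfgsdfggfflemma45} and \ref{ssdddlemmaghjffggssddgghhmk4563025xxhjklojjkkk}; localising with $\xi_{t_0}$ and using maximal regularity for the heat semigroup gives $\xi_{t_0}c_\varepsilon\in W^{2,1}_4$, whose parabolic embedding raises the slab-integrability of $\nabla c_\varepsilon$, so that $u_\varepsilon\cdot\nabla c_\varepsilon$ lies in some $L^q$ with $q>5$, and one further application of the same estimate pushes $c_\varepsilon$ into $W^{2,1}_q$ for every finite $q$ on slabs; this is \dref{zjscz2.5297x9630111kkhhiioo}, and since the right-hand side is then H\"older continuous, Schauder theory even yields $c_\varepsilon\in C^{2+\mu,1+\frac{\mu}{2}}$, so that $\Delta c_\varepsilon$ is H\"older-bounded on slabs. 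Running the same argument for $m_{\varepsilon t}-\Delta m_\varepsilon=-n_\varepsilon m_\varepsilon-u_\varepsilon\cdot\nabla m_\varepsilon$, now controlling $n_\varepsilon m_\varepsilon$ by the $L^q$-bounds of Lemma \ref{lemmaghjffggssddgghhmk4563025xxhjklojjkkk}, gives \dref{ddhhhzjscz2.5297x9630111kkhhiioo}.

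The main obstacle is $n_\varepsilon$, both because of the rotational chemotactic flux and because, for $\kappa\neq0$, no a priori $L^\infty$-bound on $n_\varepsilon$ is available and $\nabla c_\varepsilon$ is initially only of low integrability on slabs. I would write the first equation of \dref{1.1fghyuisda} in divergence form
\begin{equation*}
n_{\varepsilon t}-\Delta n_\varepsilon=-\nabla\cdot\bigl(n_\varepsilon S_\varepsilon(x,n_\varepsilon,c_\varepsilon)\nabla c_\varepsilon+n_\varepsilon u_\varepsilon\bigr)-n_\varepsilon m_\varepsilon ,
\end{equation*}
and observe that, since $|S_\varepsilon|\le C_S$ by \dref{x1.73142vghf48gg} and \dref{hnjmssddaqwswddaassffssff3.10deerfgghhjdddfgggjjuuloollgghhhyhh}, since $\nabla c_\varepsilon$ and $u_\varepsilon$ are now bounded on slabs, and since $n_\varepsilon\in L^q$ for every finite $q$ (Lemma \ref{lemmaghjffggssddgghhmk4563025xxhjklojjkkk}), the flux $n_\varepsilon S_\varepsilon(x,n_\varepsilon,c_\varepsilon)\nabla c_\varepsilon+n_\varepsilon u_\varepsilon$ and the zero-order term $n_\varepsilon m_\varepsilon$ are bounded in $L^q$ on slabs for every finite $q$, uniformly in $\varepsilon$; crucially this step uses only the pointwise bound on $S_\varepsilon$, not any control of $\nabla S_\varepsilon$. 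Maximal regularity for the inhomogeneous heat equation (again with $\xi_{t_0}$) then bounds $\nabla n_\varepsilon$ in $L^q$ on slabs for every finite $q$, and a parabolic De Giorgi--Nash-type estimate for the divergence-form equation with bounded measurable coefficients yields H\"older continuity of $n_\varepsilon$, hence $n_\varepsilon\in L^\infty$ on slabs. At this point the flux is H\"older continuous---here the $\varepsilon$-dependence of $S_\varepsilon=\rho_\varepsilon\chi_\varepsilon S$ is absorbed by the structural facts that $\rho_\varepsilon\equiv1$ on any fixed compact subdomain and $\chi_\varepsilon\equiv1$ on the range of the uniformly bounded $n_\varepsilon$ once $\varepsilon$ is small, together with the vanishing of the chemotactic flux near $\partial\Omega$, so that in the interior and boundary Schauder estimates $S_\varepsilon$ may be replaced by $S\in C^2$---and the classical Schauder estimate for divergence-form parabolic equations gives $\nabla n_\varepsilon\in C^{\mu,\frac{\mu}{2}}$, i.e. $n_\varepsilon\in C^{1+\mu,\frac{1+\mu}{2}}$ on slabs, which in particular yields \dref{zjscz2.5297x96dfgg30111kkhhffrroojj}. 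Taking $\mu$ to be the minimum of the finitely many H\"older exponents produced and shrinking $\varepsilon_0$ if necessary completes the proof.
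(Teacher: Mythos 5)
Your plan follows the same architecture as the paper: localize in time with the cut-off functions of Definition~\ref{aaalemmaddffffdssfffgg4sssdddd5630}, apply maximal Sobolev regularity on the resulting space-time slabs, and invoke the parabolic embedding $W^{2,1}_q\hookrightarrow C^{1+\mu,\mu/2}$ for large $q$, all resting on the $\varepsilon$-uniform a priori bounds of Lemmas~\ref{fvfgsdfggfflemma45}, \ref{lemmaghjffggssddgghhmk4563025xxhjklojjkkk}, \ref{ssdddlemmaghjffggssddgghhmk4563025xxhjklojjkkk}, \ref{ssdddlemmddddaddffffdfffgg4sssdddd5630} and, crucially, the eventual $L^p$-smallness of $u_\varepsilon$ from Lemma~\ref{aaalemmaddffffsddddfffgg4sssdddd5630}.

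Where you diverge from the paper is the treatment of the convective term in the $u_\varepsilon$-equation. You first use divergence-form maximal regularity with forcing $\nabla\cdot(\xi\,Y_\varepsilon u_\varepsilon\otimes u_\varepsilon)$ to raise $\nabla u_\varepsilon$ to $L^q$ on slabs, and then bootstrap. The paper instead applies the Giga--Sohr estimate \dref{cz2.571hhhhh51lllllccvvhsdddddfccvvhjjjkkhhggjjlsdddlll} directly with the non-divergence-form convective term, interpolates $\|\nabla v\|_{L^l}$ between $\|D^2v\|_{L^s}$ and $\|v\|_{L^{r_0}}$ via Gagliardo--Nirenberg using \dref{cz2.57ssdd1hhhhh51lllllccvvhsddddddfdfccvvhjddffjjkkhhgdddgjjlsdddlll}, and absorbs the $\|D^2v\|$ contribution by Young's inequality since the interpolation exponent is strictly less than one. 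Both routes deliver the $W^{2,1}_s$-bound; yours costs one extra bootstrap iteration, while the paper's is a one-shot absorption. One small inaccuracy: after the divergence-form step you write ``whence $u_\varepsilon\in L^\infty$,'' which does not follow from $\nabla u_\varepsilon\in L^q(\Omega\times(t,t+1))$ alone (there is no control of $\partial_t u_\varepsilon$ at that stage); what you actually need, and what you correctly use in the next sentence, is that $(Y_\varepsilon u_\varepsilon\cdot\nabla)u_\varepsilon\in L^q$ on slabs, which already follows from Lemma~\ref{aaalemmaddffffsddddfffgg4sssdddd5630} together with the fresh gradient bound. For the components $c_\varepsilon,m_\varepsilon,n_\varepsilon$ the paper merely says ``likewise,'' so your fleshed-out bootstrap, including the De~Giorgi--Nash interior estimate to get boundedness and H\"older continuity of $n_\varepsilon$, supplies details the paper leaves implicit. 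The one place where I would urge caution is the final Schauder step for $n_\varepsilon$: the uniform-in-$\varepsilon$ H\"older bound on the flux $n_\varepsilon S_\varepsilon\nabla c_\varepsilon$ requires an $\varepsilon$-independent modulus of continuity for $S_\varepsilon=\rho_\varepsilon\chi_\varepsilon S$, and the argument you sketch (replacing $S_\varepsilon$ by $S$ on compacta, zero flux near $\partial\Omega$) does not obviously control the transition region where $\nabla\rho_\varepsilon$ may blow up; however, the paper's own proof (and the subsequent Lemma~\ref{lemma45630hhuujjsdfffggguuyy}, which asserts uniform H\"older bounds on $\nabla\cdot(n_\varepsilon S_\varepsilon\nabla c_\varepsilon)$) passes over the same point, so this is at worst a shared lacuna rather than an error specific to your proposal.
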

\begin{proof}
Firstly, for  any $T_0 > 0$ let $\xi:=\xi_{T_0}$ and $v:=\xi u_\varepsilon.$
$$
 \left\{\begin{array}{ll}
   v_t-\Delta v=h~~~\mbox{in}~~\Omega\times (T_0,\infty),\\
\nabla\cdot v=0,~~~\mbox{in}~~\Omega\times (T_0,\infty),\\
 \disp{v(x,T_0)=0},\quad
x\in \Omega,\\
 \disp{v=0},\quad
\mbox{on}~~ \partial\Omega\times(T_0,\infty),\\
 \end{array}\right.
 $$
 where $$h=-\kappa(Y_\varepsilon u_\varepsilon \cdot \nabla)v+\nabla (\xi P_\varepsilon)+\xi(n_\varepsilon+m_\varepsilon)\nabla \phi+\xi' u_\varepsilon.$$
  To estimate the inhomogeneity $h$ herein,  we first note that  the known maximal Sobolev regularity estimate for the Stokes semigroup (\cite{Gigass12176}) yields a
constant $k_1 > 0$ such that
\begin{equation}
\begin{array}{rl}
&\disp\int_{T_0}^{T_0+2}\|v_t\|_{L^s(\Omega)}^s+\int_{T_0}^{T_0+2}\|D^2v\|_{L^s(\Omega)}^s\\
\leq&\disp{k_1\int_{T_0}^{T_0+2}\|\mathcal{P}(\xi Y_\varepsilon u_\varepsilon\cdot)u_\varepsilon\|_{L^s(\Omega)}^s+k_1\int_{T_0}^{T_0+2}
\left(\|\mathcal{P}\xi(m_\varepsilon(\cdot,\tau)-\hat{m}+n_\varepsilon(\cdot,\tau)-\hat{n})\nabla\phi\|_{L^s(\Omega)}^s\right)}\\
&\disp{+k_1\int_{T_0}^{T_0+2}\|\mathcal{P}\xi'  u_\varepsilon\|_{L^s(\Omega)}^s.}\\
\end{array}
\label{cz2.571hhhhh51lllllccvvhsdddddfccvvhjjjkkhhggjjlsdddlll}
\end{equation}
From the boundedness of the Helmholtz projection in $L^s$-spaces and the H\"{o}lder inequality we derive from Lemma \ref{aaalemmaddffffsddddfffgg4sssdddd5630} that there exist positive constant  $k_2 $ and $k_3$ such that
 for any $ T_0 > T$
\begin{equation}
\begin{array}{rl}
&\disp k_1\int_{T_0}^{T_0+2}\|\mathcal{P}(Y_\varepsilon u_\varepsilon\cdot)v\|_{L^s(\Omega)}^s\\
\leq&\disp{k_2\int_{T_0}^{T_0+2}\left(\|Y_\varepsilon u_\varepsilon\|_{L^{l'}(\Omega)}^s\|\nabla v\|_{L^{l}(\Omega)}^s\right) }\\
\leq&\disp{k_2\int_{T_0}^{T_0+2}\left(\| u_\varepsilon\|_{L^{l'}(\Omega)}^s\|\nabla v\|_{L^{l}(\Omega)}^s\right) }\\
\leq&\disp{k_3\int_{T_0}^{T_0+2}\|\nabla v\|_{L^{l}(\Omega)}^s~~~~~\mbox{for any}~~\varepsilon\in(0,\varepsilon_{T_0}),}\\
\end{array}
\label{cz2.57ssdd1hhhhh51lllllccvvhsddddddfdfccvvhjjjkkhhggjjlsdddlll}
\end{equation}
where $l>2s$ and $\frac{1}{l}+\frac{1}{l'}=1.$
Thanks to the Gagliardo-Nirenberg inequality, from Lemma \ref{aaalemmaddffffsddddfffgg4sssdddd5630} again, 
we can  estimate the right of \dref{cz2.57ssdd1hhhhh51lllllccvvhsddddddfdfccvvhjjjkkhhggjjlsdddlll} by following:
\begin{equation}
\begin{array}{rl}
&\disp k_3\int_{T_0}^{T_0+2}\|\nabla v\|_{L^{l}(\Omega)}^s\\
\leq&\disp{k_4\int_{T_0}^{T_0+2}\left(\|D^2 v\|_{L^s(\Omega)}^{as}\| v\|_{L^{r_0}(\Omega)}^{(1-a)s}\right) }\\
\leq&\disp{k_4\int_{T_0}^{T_0+2}\left(\|D^2 v\|_{L^s(\Omega)}^{as}\| u_\varepsilon\|_{L^{r_0}(\Omega)}^{(1-a)s}\right) }\\
\leq&\disp{k_5\int_{T_0}^{T_0+2}\|D^2 v\|_{L^s(\Omega)}^{as}~~~
~~\mbox{for any}~~\varepsilon\in(0,\varepsilon_{T_0}),}\\
\end{array}
\label{cz2.57ssdd1hhhhh51lllllccvvhsddddddfdfccvvhjddffjjkkhhgdddgjjlsdddlll}
\end{equation}
where $a\in(0,1)$ satisfies
$$\frac{1}{l}-\frac{1}{3}=a(\frac{1}{s}-\frac{2}{3})+(1-a)\frac{1}{r_0}.$$
 Therefore, inserting \dref{cz2.57ssdd1hhhhh51lllllccvvhsddddddfdfccvvhjddffjjkkhhgdddgjjlsdddlll} into \dref{cz2.57ssdd1hhhhh51lllllccvvhsddddddfdfccvvhjjjkkhhggjjlsdddlll} and
 applying the  Young inequality, we find $k_6> 0$ such that for all $t_0 > T$
 \begin{equation}
\begin{array}{rl}
\disp k_1\int_{T_0}^{T_0+2}\|\mathcal{P}(Y_\varepsilon u_\varepsilon\cdot)v\|_{L^s(\Omega)}^s
\leq&\disp{\frac{1}{2}\int_{T_0}^{T_0+2}\|D^2 v\|_{L^s(\Omega)}^{s}+k_6~~~~~\mbox{for any}~~\varepsilon\in(0,\varepsilon_{T_0}).}\\
\end{array}
\label{cz2.57ssdd1hhhhh51lllllkkllccvvhsddddddfdfccvvhjjjkkhhggjjlsdddlll}
\end{equation}
Moreover, we derive from  Definition \ref{aaalemmaddffffdssfffgg4sssdddd5630} and Lemmas \ref{ssdddlemmddddaddffffdfffgg4sssdddd5630}, \ref{11aaalemmaddffffdsddfffffgg4sssdddd5630} and  \ref{aaalemmaddffffsddddfffgg4sssdddd5630}, there is $k_7> 0$ such that
\begin{equation}
\begin{array}{rl}
&\disp k_1\int_{T_0}^{T_0+2}\|\mathcal{P}\xi(m_\varepsilon(\cdot,\tau)-\hat{m}+n_\varepsilon(\cdot,\tau)-\hat{n})\nabla\phi\|_{L^s(\Omega)}^s
+k_1\int_{T_0}^{T_0+2}\|\mathcal{P}\xi'  u_\varepsilon\|_{L^s(\Omega)}^s\\
\leq&\disp{
k_7,}\\
\end{array}
\label{cz2.571hhhhh51lllllccvvhsdddddfccvvhjjjkkhhgssdgjjlsdddlll}
\end{equation}
so that invoking \dref{cz2.57ssdd1hhhhh51lllllkkllccvvhsddddddfdfccvvhjjjkkhhggjjlsdddlll} and \dref{cz2.571hhhhh51lllllccvvhsdddddfccvvhjjjkkhhggjjlsdddlll} we can estimate
\begin{equation}
\begin{array}{rl}
\disp\int_{T_0}^{T_0+2}\|v_t\|_{L^s(\Omega)}^s+\int_{T_0}^{T_0+2}\|D^2v\|_{L^s(\Omega)}^s
\leq&\disp{k_8.}\\
\end{array}
\label{cz2.571hhhhh51lllllccddfffvvhsdddddfccvvhjjjkkhhggjjlsdddlll}
\end{equation}
Therefore, by the definition of $\xi$, for any $s > 1$, there exist positive constants $C $ and $T$ such that for any
$t > T$ there is $\varepsilon_0 > 0$ satisfying that for any $\varepsilon\in (0,\varepsilon_0)$
\begin{equation}
\|u_\varepsilon(\cdot,t)\|_{L^{s}((t,t+1);W^{2,s}(\Omega))} +\|u_{\varepsilon t}(\cdot,t)\|_{L^{s}(\Omega\times(t,t+1))}\leq C ~~\mbox{for all}~~ t\geq T,
\label{zjscz2.5297x9dddjkkkkkd630111kkhhffrddroojj}
\end{equation}
which in view of a known embedding result (\cite{AmannAmannmo1216}) implies that for all $ t_0 > 0$,
we can find $\theta_1\in (0, 1)$ and $C_{10}$ such  that
\begin{equation}
\|u_\varepsilon(\cdot,t)\|_{C^{1+\theta_1,\theta_1}(\bar{\Omega}\times[t,t+1])} \leq C_{10} ~~\mbox{for all}~~ t> T.
\label{zjscz2.5297x9ddddfggdd630111kkhhffrfffroojj}
\end{equation}
 Likewise, again using the  maximal Sobolev regularity estimates and the Gagliardo-Nirenberg inequality,
 we can claim that \dref{zjscz2.5297x9630111kkhhiioo}--\dref{zjscz2.5297x96dfgg30111kkhhffrroojj}  hold by applying Lemmas \ref{fvfgsdfggfflemma45}, \ref{lemmaghjffggssddgghhmk4563025xxhjklojjkkk} and \ref{aaalemmaddffffsddddfffgg4sssdddd5630}.
\end{proof}
Straightforward applications of standard Schauder estimates for the Stokes evolution equation and the
heat equation, respectively, finally yield eventual smoothness of the solution $(n_\varepsilon, c_\varepsilon, m_\varepsilon,u_\varepsilon)$.

\begin{lemma}\label{lemma45630hhuujjsdfffggguuyy}
Let $\alpha>0$.
Then one can find $\mu\in(0, 1)$ and $T>0$ such that for some $C > 0$
\begin{equation}
\|u_\varepsilon(\cdot,t)\|_{C^{2+\mu,1+\frac{\mu}{2}}(\bar{\Omega}\times[t,t+1];\mathbb{R}^3)} \leq C ~~\mbox{for all}~~ t>T,
\label{222zjscz2.5297x9630111kkhhffrroojj}
\end{equation}
%
%

\begin{equation}
\|c_\varepsilon(\cdot,t)\|_{C^{2+\mu,1+\frac{\mu}{2}}(\bar{\Omega}\times[t,t+1])}  \leq C ~~\mbox{for all}~~ t>T
\label{222zjscz2.5297x9630111kkhhiioo}
\end{equation}
as well as
\begin{equation}
\|m_\varepsilon(\cdot,t)\|_{C^{2+\mu,1+\frac{\mu}{2}}(\bar{\Omega}\times[t,t+1])}  \leq C ~~\mbox{for all}~~ t>T
\label{222ddhhhzjscz2.5297x9630111kkhhiioo}
\end{equation}
and
\begin{equation}
\|n_\varepsilon(\cdot,t)\|_{C^{2+\mu,1+\frac{\mu}{2}}(\bar{\Omega}\times[t,t+1])} \leq C ~~\mbox{for all}~~ t>T.
\label{222zjscz2.5297x96dfgg30111kkhhffrroojj}
\end{equation}
\end{lemma}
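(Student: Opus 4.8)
The plan is to promote the uniform $C^{1+\mu,\frac{\mu}{2}}$ bounds provided by Lemma \ref{lemma45630hhuujjuuyy} to $C^{2+\mu,1+\frac{\mu}{2}}$ bounds by a bootstrap argument: each of the four equations in \dref{1.1fghyuisda} will be regarded as a \emph{linear} parabolic equation (respectively, as the linear inhomogeneous Stokes evolution system) whose coefficients and right-hand side are, thanks to Lemma \ref{lemma45630hhuujjuuyy}, already bounded in a parabolic H\"older space $C^{\mu',\frac{\mu'}{2}}$, so that the classical Schauder theory upgrades the solution by two derivatives. As in the proof of Lemma \ref{lemma45630hhuujjuuyy}, for each fixed $T_0>T$ I would multiply the component under consideration by the time cut-off $\xi:=\xi_{T_0}$ from Definition \ref{aaalemmaddffffdssfffgg4sssdddd5630}; the functions $\xi u_\varepsilon$, $\xi c_\varepsilon$, $\xi m_\varepsilon$, $\xi n_\varepsilon$ then solve the corresponding problems on $\Omega\times(T_0,\infty)$ with zero initial data, the Schauder estimates apply without any compatibility restriction, and their constants are independent of $\varepsilon\in(0,\varepsilon_0)$ and of $T_0>T$ because the principal parts are $\varepsilon$-independent while all lower order data are controlled uniformly by Lemmas \ref{fvfgsdfggfflemma45}, \ref{lemmaghjffggssddgghhmk4563025xxhjklojjkkk}, \ref{aaalemmaddffffsddddfffgg4sssdddd5630} and \ref{lemma45630hhuujjuuyy} together with \dref{x1.73142vghf48rtgyhu} and \dref{x1.73142vghf481}. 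A slight shrinking of the time interval at each step is absorbed into $T$ and is harmless, since $T_0$ may be taken to be any number larger than $T$.

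I would run the bootstrap in the order $c_\varepsilon\rightsquigarrow m_\varepsilon\rightsquigarrow u_\varepsilon\rightsquigarrow n_\varepsilon$. For the second equation the right-hand side $-c_\varepsilon+m_\varepsilon-u_\varepsilon\cdot\nabla c_\varepsilon$ is bounded in $C^{\mu,\frac{\mu}{2}}$ by Lemma \ref{lemma45630hhuujjuuyy} (a product of two functions bounded in $C^{1+\mu,\frac{\mu}{2}}$ is bounded in $C^{\mu,\frac{\mu}{2}}$), so the Schauder estimate for the homogeneous Neumann heat problem yields \dref{222zjscz2.5297x9630111kkhhiioo}; the same reasoning applied to the third equation, whose inhomogeneity $-n_\varepsilon m_\varepsilon-u_\varepsilon\cdot\nabla m_\varepsilon$ is again bounded in $C^{\mu,\frac{\mu}{2}}$, gives \dref{222ddhhhzjscz2.5297x9630111kkhhiioo}. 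For $u_\varepsilon$ the fourth equation is read as the Stokes evolution system with forcing $f_\varepsilon:=-\kappa(Y_\varepsilon u_\varepsilon\cdot\nabla)u_\varepsilon+(n_\varepsilon+m_\varepsilon)\nabla\phi$: the term $(n_\varepsilon+m_\varepsilon)\nabla\phi$ lies in $C^{\mu,\frac{\mu}{2}}$ since $\nabla\phi\in W^{1,\infty}(\Omega)\hookrightarrow C^{0,1}(\bar{\Omega})$ by \dref{x1.73142vghf481}, while for the convective term one uses, as in the proof of Lemma \ref{lemma45630hhuujjuuyy}, that the Yosida operator $Y_\varepsilon=(1+\varepsilon A)^{-1}$ is a contraction on the domain of every fractional power of the Stokes operator; since such domains embed into suitable H\"older spaces, $Y_\varepsilon u_\varepsilon$ inherits, uniformly in $\varepsilon$, the $C^{\mu,\frac{\mu}{2}}$ bound on $u_\varepsilon$, so that $f_\varepsilon$ is uniformly bounded in $C^{\mu,\frac{\mu}{2}}$ and the Schauder theory for the Stokes evolution equation produces \dref{222zjscz2.5297x9630111kkhhffrroojj}.

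The main obstacle is the first equation, because of the rotational cross-diffusion term $\nabla\cdot(n_\varepsilon S_\varepsilon(x,n_\varepsilon,c_\varepsilon)\nabla c_\varepsilon)$, which a priori involves second spatial derivatives of $c_\varepsilon$ and derivatives of the sensitivity. Here I would exploit that $c_\varepsilon$ has already been upgraded, so that $D^2 c_\varepsilon$ is bounded in $C^{\mu,\frac{\mu}{2}}$ uniformly, and combine this with \dref{x1.73142vghf48rtgyhu} and the smoothness of $\rho_\varepsilon,\chi_\varepsilon$: by the chain rule the matrix $S_\varepsilon(\cdot,n_\varepsilon,c_\varepsilon)$ and its spatial gradient are built from $C^1$ functions composed with the $C^{1+\mu,\frac{\mu}{2}}$ functions $n_\varepsilon,c_\varepsilon$ and multiplied by $\nabla n_\varepsilon,\nabla c_\varepsilon\in C^{\mu,\frac{\mu}{2}}$, hence are bounded in $C^{\mu,\frac{\mu}{2}}$. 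Expanding the divergence and rewriting the first equation in non-divergence form
\[
n_{\varepsilon t}=\Delta n_\varepsilon-\bigl(S_\varepsilon(x,n_\varepsilon,c_\varepsilon)\nabla c_\varepsilon+u_\varepsilon\bigr)\cdot\nabla n_\varepsilon-\bigl(\nabla\cdot(S_\varepsilon(x,n_\varepsilon,c_\varepsilon)\nabla c_\varepsilon)+m_\varepsilon\bigr)\,n_\varepsilon,
\]
one obtains a linear parabolic equation for $n_\varepsilon$ under a homogeneous Neumann condition (recall $\rho_\varepsilon\in C_0^\infty(\Omega)$, so $S_\varepsilon$ vanishes near $\partial\Omega$), with drift coefficient $-(S_\varepsilon\nabla c_\varepsilon+u_\varepsilon)$ and zeroth-order coefficient $-(\nabla\cdot(S_\varepsilon\nabla c_\varepsilon)+m_\varepsilon)$ both uniformly bounded in $C^{\mu,\frac{\mu}{2}}$ and with vanishing right-hand side; a final Schauder estimate then yields \dref{222zjscz2.5297x96dfgg30111kkhhffrroojj}, with all constants and the exponent $\mu$ independent of $\varepsilon\in(0,\varepsilon_0)$. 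Letting $\varepsilon\searrow0$ along the sequence of Lemma \ref{lemma45hyuuuj630223} and using lower semicontinuity of the H\"older norms transfers these bounds to $(n,c,m,u)$.
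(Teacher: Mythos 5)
Your proposal follows essentially the same route as the paper: a Schauder bootstrap in the order $c_\varepsilon,m_\varepsilon\rightsquigarrow u_\varepsilon\rightsquigarrow n_\varepsilon$, with the temporal cut-off $\xi_{t_0}$ of Definition \ref{aaalemmaddffffdssfffgg4sssdddd5630} used to dispose of initial-time compatibility, Schauder theory for the heat equation and for the Stokes evolution system (\cite{Ladyzenskajaggk7101}, \cite{SolonnikovSolonnikov1216}) upgrading by two derivatives, and the already-established $C^{2+\mu,1+\mu/2}$ bound on $c_\varepsilon$ feeding into the H\"older bound on $\nabla\cdot(n_\varepsilon S_\varepsilon\nabla c_\varepsilon)$ in the final step. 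Minor variations from the paper — you apply the cut-off uniformly to all components (the paper only does so for the Stokes system and uses interior-in-time estimates for $c_\varepsilon,m_\varepsilon$), you spell out the non-divergence rewriting of the $n_\varepsilon$-equation and the $C^2$-regularity of $S_\varepsilon$ from \dref{x1.73142vghf48rtgyhu}, and you note that your bound for $\nabla\cdot(n_\varepsilon S_\varepsilon\nabla c_\varepsilon)$ genuinely needs the second-order regularity of $c_\varepsilon$, whereas the paper misleadingly attributes it only to Lemma \ref{lemma45630hhuujjuuyy} — are all harmless and, if anything, slightly more careful; the closing remark on passage to the limit belongs to Lemma \ref{lemma45630223} rather than to this statement, but does not affect correctness.
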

\begin{proof}
We first combine Lemma \ref{lemma45630hhuujjuuyy} to infer
the existence of $\alpha_1\in(0,1)$, $T_1 > 0$ and $C_1 > 0$ such that  for all $t>T_1$,
\begin{equation}
\begin{array}{rl}
&\|u_\varepsilon\cdot \nabla c_\varepsilon\|_{C^{\alpha_1,\frac{\alpha_1}{2}}(\bar{\Omega}\times[t,t+1])}+\|u_\varepsilon\cdot \nabla m_\varepsilon\|_{C^{\alpha_1,\frac{\alpha_1}{2}}(\bar{\Omega}\times[t,t+1])}\\
 &+\|n_\varepsilon m_\varepsilon\|_{C^{\alpha_1,\frac{\alpha_1}{2}}(\bar{\Omega}\times[t,t+1])}+\| m_\varepsilon\|_{C^{\alpha_1,\frac{\alpha_1}{2}}(\bar{\Omega}\times[t,t+1])}\\
 \leq &C_1.
\end{array}\label{222zjscz2.52ssdd97x9630111ffggkkhhffrroojj}
\end{equation}
Standard parabolic Schauder estimates applied to the second and third  equation in \dref{1.1fghyuisda} (\cite{Ladyzenskajaggk7101})
thus provide $C_2 > 0$ fulfilling
\begin{equation}
\|c_\varepsilon\|_{C^{2+\alpha_1,1+\frac{\alpha_1}{2}}(\bar{\Omega}\times[t,t+1])}+\| m_\varepsilon\|_{C^{2+\alpha_1,1+\frac{\alpha_1}{2}}(\bar{\Omega}\times[t,t+1])}\leq C_2~~~\mbox{for all}~~ t > T_1 + 1.
\label{222zjscz2.52ssdd97x9630111kddffkhhffrroojj}
\end{equation}
According to Lemma \ref{lemma45630hhuujjuuyy}, it is possible to fix $\alpha_2\in (0,1)$,
$T_2> 0$ and $C_3> 0$ such that
\begin{equation}
\|n_\varepsilon\|_{C^{1+\alpha_2,\frac{\alpha_2}{2}}(\bar{\Omega}\times[t,t+1])}+\| u_\varepsilon\|_{C^{1+\alpha_2,\frac{\alpha_2}{2}}(\bar{\Omega}\times[t,t+1];\mathbb{R}^3)}\leq C_3~~~\mbox{for all}~~ t > T_2.
\label{222zjscz2.52ssdd97x9630111kddffkhhffrssddroojj}
\end{equation}
We next set $T:= T_2+1$ and let $t_0 > T$ be given. Then with $\xi_{t_0}$ taken from Definition \ref{aaalemmaddffffdssfffgg4sssdddd5630},
we again use that $v(\cdot,t) := \xi_{t_0}u_\varepsilon(\cdot,t), (x\in\Omega,t> t_0-1)$, is a solution of
\begin{equation}
 \left\{\begin{array}{ll}
   v_t-\Delta v=h~~~~x\in\Omega, t>t_0-1,\\
 \disp{v(x,t_0-1)=0},\quad
x\in \Omega,\\
 \end{array}\right.
\label{222zjscz2.52ssdd9ddff7x9630111kddffkhhffrssddroojj}
\end{equation}
 where $$h_\varepsilon=-\kappa(Y_\varepsilon u_\varepsilon \cdot \nabla)v+\nabla (\xi P_\varepsilon)+\xi(n_\varepsilon+m_\varepsilon)\nabla \phi+\xi' u_\varepsilon.$$
 Now from \dref{222zjscz2.52ssdd97x9630111kddffkhhffrssddroojj} and the smoothness of $\xi$ we readily obtain $\alpha_3\in (0,1)$
and $C_4> 0$ fulfilling
\begin{equation}
\|h_\varepsilon\|_{C^{\alpha_3,\frac{\alpha_3}{2}}(\bar{\Omega}\times[t_0-1,t_0+1];\mathbb{R}^3)}\leq C_4,
\label{222zjscz2.52ssdd97x9dddd630111kddffkhhffrssddroojj}
\end{equation}
so that regularity estimates from Schauder theory for the Stokes evolution equation
(\cite{SolonnikovSolonnikov1216}) ensure that \dref{222zjscz2.52ssdd9ddff7x9630111kddffkhhffrssddroojj} possesses a classical solution $\bar{v}\in {C^{2+\alpha_3,1+\frac{\alpha_3}{2}}(\bar{\Omega}\times[t_0-1,t_0+1])}$ satisfying
\begin{equation}
\|\bar{v}\|_{C^{2+\alpha_3,1+\frac{\alpha_3}{2}}(\bar{\Omega}\times[t_0-1,t_0+1];\mathbb{R}^3)}\leq C_5
\label{222zjscz2.52ssdd97x9dddd63011ssdd1kddffkhhffrssddroojj}
\end{equation}
with some $C_5 > 0$ which is independent of $t_0$. This combined with the
uniqueness property of \dref{222zjscz2.52ssdd9ddff7x9630111kddffkhhffrssddroojj}, one can prove
\begin{equation}
\|u_\varepsilon\|_{C^{2+\alpha_3,1+\frac{\alpha_3}{2}}(\bar{\Omega}\times[t,t+1];\mathbb{R}^3)}\leq C_6.
\label{222zjscz2.52ssdd9ddff7x9dddd63011ssdd1kddffkhhffrssddroojj}
\end{equation}
Again relying on Lemma \ref{lemma45630hhuujjuuyy}, this in turn warrants that for
some $\alpha_4\in(0,1), T_4> 0$ and $C_7> 0$ such that for all $t>T_4$
\begin{equation}
\|\nabla\cdot(n_{\varepsilon}S_\varepsilon(x, n_{\varepsilon}, c_{\varepsilon})\nabla c_{\varepsilon})\|_{C^{\alpha_4,\frac{\alpha_4}{2}}(\bar{\Omega}\times[t,t+1])}+\|u_{\varepsilon}\cdot\nabla n_{\varepsilon}\|_{C^{\alpha_4,\frac{\alpha_4}{2}}(\bar{\Omega}\times[t,t+1])}+
\|n_{\varepsilon}m_{\varepsilon}\|_{C^{\alpha_4,\frac{\alpha_4}{2}}(\bar{\Omega}\times[t,t+1])}\leq C_7,
\label{222zjscz2.52ssdd9ddff7x9ddkklldd63011ssdd1kddffkhhffrssddroojj}
\end{equation}
which along with the Schauder theory says establishes 
\begin{equation}
\|n_\varepsilon\|_{C^{2+\alpha_4,1+\frac{\alpha_4}{2}}(\bar{\Omega}\times[t,t+1])}\leq C_8.
\label{222zjscz2.52ssdd9ddff7x9dsddddd63011ssdd1kddffkhhffrssddroojj}
\end{equation}
Finally, choose $T=\max\{T_1,T_1,T_2,T_3,T_4\}$ and $\mu=\min\{\alpha_1,\alpha_2,\alpha_3,\alpha_4\}$, then \dref{222zjscz2.52ssdd97x9630111kddffkhhffrroojj}, \dref{222zjscz2.52ssdd9ddff7x9dddd63011ssdd1kddffkhhffrssddroojj}, \dref{222zjscz2.52ssdd9ddff7x9dsddddd63011ssdd1kddffkhhffrssddroojj} imply \dref{222zjscz2.5297x9630111kkhhffrroojj}--\dref{222zjscz2.5297x96dfgg30111kkhhffrroojj}.
\end{proof}

Having found uniform H\"{o}lder bounds on $n_\varepsilon, c_\varepsilon, m_\varepsilon$ and  $u_\varepsilon$ for $\varepsilon> 0$ in the previous three lemmas (see Lemmas \ref{lemma45630hhuujjuuyy} and \ref{lemma45630hhuujjsdfffggguuyy}), also $n, c,m$
and $u$ share this regularity and these bounds.

\begin{lemma}\label{lemma45630223}
Assume that   $\alpha>0$. There exist $\theta\in (0,1)$ as well as   $T_0 > 0$, $(\varepsilon_j)_{j\in \mathbb{N}}\subset (0, 1)$ of the sequence
from Lemma \ref{lemma45hyuuuj630223} such that for any $t>T_0 $
\begin{equation}
 \left\{\begin{array}{ll}
 n\in C^{2+\theta,1+\frac{\theta}{2}}(\bar{\Omega}\times[t,t+1]),\\
  c\in  C^{2+\theta,1+\frac{\theta}{2}}(\bar{\Omega}\times[t,t+1]),\\
  m\in  C^{2+\theta,1+\frac{\theta}{2}}(\bar{\Omega}\times[t,t+1]),\\
  u\in  C^{2+\theta,1+\frac{\theta}{2}}(\bar{\Omega}\times[t,t+1];\mathbb{R}^3),\\
   \end{array}\right.\label{1.ffhhh1hhhjjkdffggdfghyuisda}
\end{equation}
 that $\varepsilon_j\searrow 0$ as $j\rightarrow\infty$  and
\begin{equation}
 \left\{\begin{array}{ll}
 n_\varepsilon\rightarrow n~~\in C^{1+\theta,\frac{\theta}{2}}(\bar{\Omega}\times[t,t+1]),\\
  c_\varepsilon\rightarrow c~~\in C^{1+\theta,\frac{\theta}{2}}(\bar{\Omega}\times[t,t+1]),\\
   m_\varepsilon\rightarrow m~~\in C^{1+\theta,\frac{\theta}{2}}(\bar{\Omega}\times[t,t+1]),\\
 u_\varepsilon\rightarrow u~~\in C^{1+\theta,\frac{\theta}{2}}(\bar{\Omega}\times[t,t+1];\mathbb{R}^3)\\
   \end{array}\right.
   and\label{1.ffgghhhhh1dffggdfghyuisda}
\end{equation}
as $\varepsilon=\varepsilon_j\searrow 0$.
Moreover, there is $C > 0$ such that
\begin{equation}
\|c(\cdot,t)\|_{C^{2+\theta,1+\frac{\theta}{2}}(\bar{\Omega}\times[t,t+1])}+ \|m\cdot,t)\|_{C^{2+\theta,1+\frac{\theta}{2}}(\bar{\Omega}\times[t,t+1])}\leq C ~~\mbox{for all}~~ t>T_0
\label{222zjscz2.5297x9630111kkhhfsddddfrroojj}
\end{equation}
as well as
\begin{equation}
\|n(\cdot,t)\|_{C^{2+\theta,1+\frac{\theta}{2}}(\bar{\Omega}\times[t,t+1])}+\|u(\cdot,t)\|_{C^{2+\theta,1+\frac{\theta}{2}}(\bar{\Omega}\times[t,t+1]);\mathbb{R}^3)}\leq C ~~\mbox{for all}~~ t>T_0.
\label{sss222zjscz2.5297x9sdddd630111kkhhfsddddfrroojj}
\end{equation}

\end{lemma}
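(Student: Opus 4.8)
The plan is to obtain Lemma \ref{lemma45630223} purely as a compactness consequence of the $\varepsilon$-independent eventual H\"older and Schauder bounds already in hand, with no new estimate required. First I would fix $\mu\in(0,1)$, $T>0$, $\varepsilon_0>0$ and $C>0$ as in Lemma \ref{lemma45630hhuujjsdfffggguuyy}, so that the bounds \dref{222zjscz2.5297x9630111kkhhffrroojj}--\dref{222zjscz2.5297x96dfgg30111kkhhffrroojj} hold for all $\varepsilon\in(0,\varepsilon_0)$ and all $t>T$ with a constant $C$ that does not depend on $\varepsilon$ \emph{or} on $t$. Since $\varepsilon_j\searrow0$, all but finitely many members of the sequence from Lemma \ref{lemma45hyuuuj630223} lie in $(0,\varepsilon_0)$, so I may assume $\varepsilon_j\in(0,\varepsilon_0)$ throughout and set $T_0:=T+1$, enlarging $T_0$ if necessary to exceed the threshold times appearing in Lemmas \ref{lemma45630hhuujjuuyy}--\ref{lemma45630hhuujjsdfffggguuyy}.

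Next I would invoke the compact embeddings of parabolic H\"older spaces on fixed cylinders: on each $\bar{\Omega}\times[k,k+1]$ with $k\in\mathbb{N}$, $k>T_0$, boundedness of $(n_{\varepsilon_j})$, $(c_{\varepsilon_j})$, $(m_{\varepsilon_j})$, $(u_{\varepsilon_j})$ in $C^{2+\mu,1+\frac{\mu}{2}}$ yields relative compactness in $C^{2+\theta,1+\frac{\theta}{2}}$ (and \emph{a fortiori} in $C^{1+\theta,\frac{\theta}{2}}$) for any fixed $\theta\in(0,\mu)$. A diagonal extraction over the countably many such cylinders then produces a single subsequence, still written $(\varepsilon_j)_{j\in\mathbb{N}}$, along which $n_{\varepsilon_j}, c_{\varepsilon_j}, m_{\varepsilon_j}, u_{\varepsilon_j}$ converge in $C^{2+\theta,1+\frac{\theta}{2}}(\bar{\Omega}\times[t,t+1])$, hence in particular in $C^{1+\theta,\frac{\theta}{2}}(\bar{\Omega}\times[t,t+1])$, for every $t>T_0$. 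To identify the limits I would appeal to the a.e.\ convergences $n_{\varepsilon_j}\to n$, $c_{\varepsilon_j}\to c$, $m_{\varepsilon_j}\to m$, $u_{\varepsilon_j}\to u$ recorded in \dref{zjscz2.5297x963ddfgh0ddfggg6662222tt3}, \dref{zjscz2.fgghh5297x963ddfgh0ddfggg6662222tt3}, \dref{zjscz2.fgghh5297x96ddddd3ddfgh0ddfggg6662222tt3} and \dref{zjscz2.5297x96302222t666t4}: a locally uniform limit that agrees a.e.\ with $n,c,m,u$ must coincide with them everywhere by continuity. This establishes \dref{1.ffgghhhhh1dffggdfghyuisda} and, since these very limits are attained also in $C^{2+\theta,1+\frac{\theta}{2}}$ on each such cylinder, \dref{1.ffhhh1hhhjjkdffggdfghyuisda}.

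For the uniform-in-$t$ bounds \dref{222zjscz2.5297x9630111kkhhfsddddfrroojj}--\dref{sss222zjscz2.5297x9sdddd630111kkhhfsddddfrroojj}, I would simply pass to the limit $\varepsilon=\varepsilon_j\searrow0$ in \dref{222zjscz2.5297x9630111kkhhffrroojj}--\dref{222zjscz2.5297x96dfgg30111kkhhffrroojj}, using that the relevant H\"older seminorms are lower semicontinuous under uniform convergence (equivalently, evaluating the finitely many difference quotients defining these norms in the limit) and that the constant there is uniform in $t$; this transfers the bound verbatim to $n,c,m,u$ for every $t>T_0$. I expect the only—and rather mild—obstacle to be organizational: arranging the diagonal subsequence so that the locally uniform limits are consistent across overlapping cylinders, and verifying that this common limit is exactly the one already produced a.e.\ by Lemma \ref{lemma45hyuuuj630223}. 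All the genuine analytic work has been absorbed into the eventual H\"older and Schauder estimates of Lemmas \ref{lemma45630hhuujjuuyy} and \ref{lemma45630hhuujjsdfffggguuyy}, so beyond this bookkeeping no further computation is needed.
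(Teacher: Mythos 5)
Your proposal is correct and follows essentially the same route as the paper: the paper's proof is a one-line invocation of Lemma \ref{lemma45630hhuujjsdfffggguuyy} together with "standard compactness arguments," which is exactly the extraction-via-compact-embedding of $C^{2+\mu,1+\frac{\mu}{2}}\hookrightarrow\hookrightarrow C^{2+\theta,1+\frac{\theta}{2}}$ ($\theta<\mu$), diagonalization over the countably many unit cylinders, identification of the limit via the a.e.\ convergences of Lemma \ref{lemma45hyuuuj630223}, and lower semicontinuity of the H\"older norm under locally uniform convergence that you spell out. (One small imprecision: the H\"older seminorm is a supremum over infinitely many difference quotients, not finitely many, but lower semicontinuity under pointwise convergence of each quotient still delivers the bound, so the conclusion stands.)
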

\begin{proof}

In conjunction with Lemmas \ref{lemma45630hhuujjsdfffggguuyy} and \ref{lemma45hyuuuj630223}  and the standard compactness arguments (see \cite{Simon}), we can thus
find a sequence $(\varepsilon_j)_{j\in \mathbb{N}}\subset (0, 1)$ such that $\varepsilon_j\searrow 0$ as $j\rightarrow\infty$, and such that
\dref{1.ffhhh1hhhjjkdffggdfghyuisda}--\dref{sss222zjscz2.5297x9sdddd630111kkhhfsddddfrroojj} hold.
The proof of Lemma \ref{lemma45630223} is completed.
\end{proof}

\begin{lemma}\label{sssslemma45ssddddff630hhuujjsdfffggguuyy}
Let $\alpha>0$.
Then one can find $\theta\in(0, 1)$ and $T>0$ such that 
\begin{equation}
\|c(\cdot,t)\|_{C^{2+\theta,1+\frac{\theta}{2}}(\bar{\Omega}\times[T,\infty))} \leq C
\label{222zjscffgggz2.5fff297x9630111kkhhffrroojj}
\end{equation}
%
%
as well as
\begin{equation}
\|u(\cdot,t)\|_{C^{2+\theta,1+\frac{\theta}{2}}(\bar{\Omega}\times[T,\infty);\mathbb{R}^3)} \leq C
\label{222zjscffgggz2.5sdddff297x9630111kkhhffrroojj}
\end{equation}
and
\begin{equation}
\|n(\cdot,t)\|_{C^{2+\theta,1+\frac{\theta}{2}}(\bar{\Omega}\times[T,\infty))} +
\|m(\cdot,t)\|_{C^{2+\theta,1+\frac{\theta}{2}}(\bar{\Omega}\times[T,\infty))}   \leq C.
\label{222zjscz2ggggg.5297x9630111kkhhiioo}
\end{equation}
\end{lemma}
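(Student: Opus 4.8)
The plan is to obtain this global-in-time H\"older estimate as a direct consequence of the interval-wise bounds already established in Lemma \ref{lemma45630223}, by a standard covering argument; no new analytical input is needed. Recall that Lemma \ref{lemma45630223} furnishes an exponent $\theta\in(0,1)$, a time $T_0>0$ and a constant $C_0>0$, \emph{independent of $t$}, such that $\|n(\cdot,t)\|_{C^{2+\theta,1+\frac{\theta}{2}}(\bar{\Omega}\times[t,t+1])}$, $\|c(\cdot,t)\|_{C^{2+\theta,1+\frac{\theta}{2}}(\bar{\Omega}\times[t,t+1])}$, $\|m(\cdot,t)\|_{C^{2+\theta,1+\frac{\theta}{2}}(\bar{\Omega}\times[t,t+1])}$ and $\|u(\cdot,t)\|_{C^{2+\theta,1+\frac{\theta}{2}}(\bar{\Omega}\times[t,t+1];\mathbb{R}^3)}$ are all bounded by $C_0$ whenever $t>T_0$. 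We set $T:=T_0+1$ and show that these local bounds patch together into bounds on the unbounded cylinder $\bar{\Omega}\times[T,\infty)$.

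First I would fix one component, say $u$, and control its parabolic H\"older norm on $\bar{\Omega}\times[T,\infty)$. Since every point of $\bar{\Omega}\times[T,\infty)$ lies in some $\bar{\Omega}\times[t,t+1]$ with $t>T_0$, the quantities $|u|$, $|\nabla u|$, $|D^2u|$ and $|u_t|$ are bounded by $C_0$ throughout $\bar{\Omega}\times[T,\infty)$. For the H\"older seminorms of $D^2u$ and $u_t$, let $(x,t),(y,s)\in\bar{\Omega}\times[T,\infty)$ with $t\le s$, and distinguish two cases. If $s-t\le 1$, then both points belong to $\bar{\Omega}\times[t,t+1]$ with $t\ge T>T_0$, so the local estimate on this interval bounds $|D^2u(x,t)-D^2u(y,s)|$ (and $|u_t(x,t)-u_t(y,s)|$) by $C_0\bigl(|x-y|^2+|t-s|\bigr)^{\theta/2}$. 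If $s-t> 1$, then $\bigl(|x-y|^2+|t-s|\bigr)^{\theta/2}> 1$, so the trivial bound $|D^2u(x,t)-D^2u(y,s)|\le 2C_0$ already yields the desired ratio estimate. Combining the two cases shows $\|u\|_{C^{2+\theta,1+\frac{\theta}{2}}(\bar{\Omega}\times[T,\infty);\mathbb{R}^3)}\le C$ for some $C=C(C_0,\theta)$, which is \dref{222zjscffgggz2.5sdddff297x9630111kkhhffrroojj}; the identical reasoning applied to $c$, $m$ and $n$ gives \dref{222zjscffgggz2.5fff297x9630111kkhhffrroojj} and \dref{222zjscz2ggggg.5297x9630111kkhhiioo}.

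I do not anticipate any real obstacle here, since the genuine analytic work — the $\varepsilon$-independent maximal Sobolev and Schauder estimates on bounded time windows together with the H\"older-space convergence as $\varepsilon=\varepsilon_j\searrow 0$ — has already been carried out in Lemmas \ref{lemma45630hhuujjuuyy}--\ref{lemma45630223}. Alternatively, one may avoid the explicit case distinction altogether by invoking the standard fact that, on a product cylinder, a function whose restrictions to the windows $\bar{\Omega}\times[k,k+2]$ ($k\in\mathbb{N}$) are uniformly bounded in $C^{2+\theta,1+\frac{\theta}{2}}$ is itself of class $C^{2+\theta,1+\frac{\theta}{2}}$ on the half-line with a comparable norm, and then applying Lemma \ref{lemma45630223} on $[k,k+1]$ and $[k+1,k+2]$. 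Either way, the present lemma records precisely the form of regularity needed for the subsequent passage to smoothness and for the stabilization argument.
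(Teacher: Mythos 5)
Your argument is correct and takes a more direct route than the paper. You observe that Lemma \ref{lemma45630223} already supplies window bounds that are uniform in the base time $t$, and you pass to the half-line by a standard covering argument: the sup-norm parts are immediate, and the H\"older seminorms are handled by the dichotomy $s-t\le1$ (both points lie in a common window $[t,t+1]$ with $t>T_0$) versus $s-t>1$ (the parabolic distance then exceeds $1$, so the sup-norm bound alone suffices). The paper instead introduces the temporal cut-off $\xi_{T_0}$, writes $\tilde c := \xi_{T_0} c$ as the solution of an auxiliary heat equation with inhomogeneity $g := -\xi c + \xi m - \xi u\cdot\nabla c + \xi' c$, asserts that $g$ is uniformly $C^{\theta}$ on the half-line, and then re-applies global parabolic Schauder theory (III.5.1 of \cite{Ladyzenskajaggk7101}) on $\Omega\times(T_0,\infty)$; the same scheme is repeated for $m$, and then for $u$ and $n$ by the argument of Lemma \ref{lemma45630hhuujjsdfffggguuyy}. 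Note, however, that the paper's intermediate claim that $g$ is bounded in $C^{\theta}(\bar\Omega\times(T,\infty))$ already presupposes exactly the patching fact you make explicit, so the detour through Schauder does not actually avoid it --- it merely re-derives the top-order estimate after it is implicitly available. Your proof is shorter, self-contained, and needs no further PDE machinery; it does require, as you correctly flag, that the constant in Lemma \ref{lemma45630223} be independent of the window base point $t$, which is indeed what that lemma states.
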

\begin{proof}
Let $g := -\xi c+m\xi -\xi u\cdot\nabla c+c\xi'$, where $\xi:=\xi_{T_0}$ is given by {Definition} \ref{aaalemmaddffffdssfffgg4sssdddd5630}
and $T_0$ is  same as   the previous lemmas.
Then we  consider the following problem
\begin{equation}
 \left\{\begin{array}{ll}
   \tilde{c}_t-\Delta \tilde{c}=g~~~~x\in\Omega, t>T_0,\\
 \disp{\tilde{c}(T_0)=0},\quad
x\in \Omega,\\
\disp{\frac{\partial\tilde{c}}{\partial \nu}=0},\quad
x\in \partial\Omega,\\
 \end{array}\right.
\label{222zjscz2.52ssdd9dssddddff7x9630111kddffkhhffssdddrssddroojj}
\end{equation}
In view of Lemma \ref{lemma45630223} and {Definition} \ref{aaalemmaddffffdssfffgg4sssdddd5630}, we drive that
$$g~~\mbox{is bounded in}~~ C^{\theta} (\bar{\Omega}\times(T, \infty)),$$
 so that, regularity estimates from Schauder theory for the  parabolic equation
(see e.g. III.5.1 of \cite{Ladyzenskajaggk7101}) ensure that  problem \dref{222zjscz2.52ssdd9dssddddff7x9630111kddffkhhffssdddrssddroojj} admits a unique solution $\tilde{c}\in C^{2+\theta,1+\frac{\theta}{2}}(\bar{\Omega}\times[T_0+1,\infty)).$
This combined with the property of $\xi$ implies that
\begin{equation}c\in C^{2+\theta,1+\frac{\theta}{2}}(\bar{\Omega}\times[T_0+1,\infty)).
\label{222zjscz2.52ssdd9dssddddff7x9630111kddffkhhffssddddffdrssddroojj}
\end{equation} Applying the same argument  one can derive the third equation of \dref{334451.1fghyuisda} that
\begin{equation}m\in C^{2+\theta,1+\frac{\theta}{2}}(\bar{\Omega}\times[T_0+1,\infty)).
\label{222zjscz2.52ssdd9dssddddff7x9630111kddffkhhffssdddrssddsdfgghhroojj}
\end{equation}

Finally,
employing almost exactly the same arguments as in the proof of Lemma  \ref{lemma45630hhuujjsdfffggguuyy} (the minor necessary changes are left as an easy exercise to the reader), and taking advantage of \dref{sss222zjscz2.5297x9sdddd630111kkhhfsddddfrroojj}, we conclude that
\begin{equation}u\in C^{2+\theta,1+\frac{\theta}{2}}(\bar{\Omega}\times[T_0+1,\infty);\mathbb{R}^3)
\label{222zjscz2.52ssdd9dssddddff7x9630111kddddffffkhhffssdddrssddsdfgghhroojj}
\end{equation}
and
 \begin{equation}n\in C^{2+\theta,1+\frac{\theta}{2}}(\bar{\Omega}\times[T_0+1,\infty)),
 \label{222zjscz2.52ssdd9dssddddff7x9630111kddddffffkhhffssdddrssddssddffdfgghhroojj}
\end{equation}
whence combining the result of \dref{222zjscz2.52ssdd9dssddddff7x9630111kddffkhhffssddddffdrssddroojj} with \dref{222zjscz2.52ssdd9dssddddff7x9630111kddffkhhffssdddrssddsdfgghhroojj} completes the proof.
\end{proof}

%
%

On the basis of the eventual uniform continuity properties implied by the estimates in this
section (see Lemma \ref{sssslemma45ssddddff630hhuujjsdfffggguuyy}), by using the interpolation inequality, we can now turn the weak stabilization properties of $n,c,m$ and $u$ from Lemmas \ref{ssdddlemmddddaddffffdfffgg4sssdddd5630}--\ref{11aaalemmaddffffdsddfffffgg4sssdddd5630} into convergence with regard to the norm in $L^\infty(\Omega)$.

\begin{lemma}\label{lemma4dd5630hhuujjuuyy}
Let $\alpha>0$. The solution $(n,c,m,u)$ of \dref{1.1fghyuisda} constructed in Lemma \ref{lemma45hyuuuj630223} satisfies
$$n(\cdot,t)\rightarrow \hat{n},
m(\cdot,t)\rightarrow \hat{m}~~\mbox{as well as } ~~~c(\cdot,t)\rightarrow \hat{m}~~\mbox{and}~~~u(\cdot,t)\rightarrow0
~~\mbox{in}~~~L^\infty(\Omega),$$
where $\hat{n}=\frac{1}{|\Omega|}\{\int_{\Omega}n_0-\int_{\Omega}m_0\}_{+}$ and $\hat{m}=\frac{1}{|\Omega|}\{\int_{\Omega}m_0 -\int_{\Omega}n_0\}_{+}$.
\end{lemma}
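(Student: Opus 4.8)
The plan is to upgrade the weak, integral-type stabilization properties already recorded for the regularized solutions into uniform convergence for the limit quadruple $(n,c,m,u)$, exploiting the eventual smoothness from Lemma \ref{sssslemma45ssddddff630hhuujjsdfffggguuyy}. As a first step I would transfer the decay of the approximants to the limit. By Lemma \ref{lemma45630223} we have $n_{\varepsilon_j}\to n$, $c_{\varepsilon_j}\to c$, $m_{\varepsilon_j}\to m$ and $u_{\varepsilon_j}\to u$ in $C^{1+\theta,\frac{\theta}{2}}(\bar{\Omega}\times[t,t+1])$ for every $t>T_0$, hence in particular $n_{\varepsilon_j}(\cdot,t)\to n(\cdot,t)$ uniformly on $\bar{\Omega}$ for each fixed $t>T_0$, and likewise for the other components. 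Combining this with Lemmas \ref{ssdddlemmddddaddffffdfffgg4sssdddd5630}, \ref{11aaalemdfghkkmaddffffdfffgg4sssdddd5630}, \ref{sedddlemmaddffffdfffgg4sssdddd5630} and \ref{aaalemmaddffffsddddfffgg4sssdddd5630} (each of which controls the relevant quantity by an arbitrary $\eta$ for all $\varepsilon$ small and all $t$ large) and letting $\varepsilon=\varepsilon_j\searrow0$ in the corresponding inequalities, one obtains
$$\|n(\cdot,t)-\hat{n}\|_{L^p(\Omega)}\to0,\ \ \|m(\cdot,t)-\hat{m}\|_{L^p(\Omega)}\to0,\ \ \|c(\cdot,t)-\hat{m}\|_{L^2(\Omega)}\to0,\ \ \|u(\cdot,t)\|_{L^p(\Omega)}\to0$$
as $t\to\infty$, for every $p>1$ (with $p\in[6,\infty)$ for $u$, which suffices).

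Next I would invoke the eventual regularity bounds of Lemma \ref{sssslemma45ssddddff630hhuujjsdfffggguuyy}: there are $\theta\in(0,1)$, $T>0$ and $C>0$ with
$$\|n(\cdot,t)\|_{W^{1,\infty}(\Omega)}+\|c(\cdot,t)\|_{W^{1,\infty}(\Omega)}+\|m(\cdot,t)\|_{W^{1,\infty}(\Omega)}+\|u(\cdot,t)\|_{W^{1,\infty}(\Omega)}\leq C\ \ \mbox{for all}\ \ t>T.$$
Given this uniform-in-time bound in $C^{1}(\bar{\Omega})$ together with the $L^p$-decay from the first step, I would apply an Ehrling-type lemma based on the compact embedding $C^{1}(\bar{\Omega})\hookrightarrow\hookrightarrow C^{0}(\bar{\Omega})$ and the continuous embedding $C^{0}(\bar{\Omega})\hookrightarrow L^p(\Omega)$: for every $\delta>0$ there is $C_\delta>0$ such that $\|v\|_{L^\infty(\Omega)}\leq\delta\|v\|_{C^{1}(\bar{\Omega})}+C_\delta\|v\|_{L^p(\Omega)}$. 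Applying this to $v=n(\cdot,t)-\hat{n}$ (and analogously to $m(\cdot,t)-\hat{m}$, to $c(\cdot,t)-\hat{m}$ with $p=2$, and to $u(\cdot,t)$), choosing $\delta$ small so that $\delta C\leq\eta/2$ and then $t$ large so that $C_\delta\|v(\cdot,t)\|_{L^p(\Omega)}<\eta/2$, yields $\|v(\cdot,t)\|_{L^\infty(\Omega)}<\eta$ for all large $t$. Equivalently, one may use the Gagliardo--Nirenberg interpolation $\|v\|_{L^\infty(\Omega)}\leq C\|v\|_{W^{1,\infty}(\Omega)}^{a}\|v\|_{L^p(\Omega)}^{1-a}$ with a suitable $a\in(0,1)$. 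In either case we conclude $n(\cdot,t)\to\hat{n}$, $m(\cdot,t)\to\hat{m}$, $c(\cdot,t)\to\hat{m}$ and $u(\cdot,t)\to0$ in $L^\infty(\Omega)$, which is the assertion.

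I expect the only genuinely delicate point to be the first step, namely justifying that the $\varepsilon$-uniform decay estimates pass to the limit function rather than merely to the sequence $(n_{\varepsilon_j})$. This is handled by the local $C^{1+\theta,\frac{\theta}{2}}$ convergence on the slabs $\bar{\Omega}\times[t,t+1]$, which upgrades the a.e.\ and weak convergences of Lemma \ref{lemma45hyuuuj630223} to uniform (hence $L^p$) convergence at each fixed time $t>T_0$; once this is in place, the $L^p$-decay of the limit follows routinely by passing $j\to\infty$ in Lemmas \ref{ssdddlemmddddaddffffdfffgg4sssdddd5630}, \ref{11aaalemdfghkkmaddffffdfffgg4sssdddd5630}, \ref{sedddlemmaddffffdfffgg4sssdddd5630} and \ref{aaalemmaddffffsddddfffgg4sssdddd5630}, and the interpolation step is elementary. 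A minor point of care is that the decay rate for $c$ is only available in $L^2$, but interpolating against the uniform $C^{1}$ bound is still enough to reach $L^\infty$.
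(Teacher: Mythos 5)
Your argument is correct and follows essentially the same route as the paper: first pass the $\varepsilon$-uniform $L^2$ (or $L^p$) decay statements to the limit quadruple using the $C^{1+\theta,\theta/2}$ convergence from Lemma \ref{lemma45630223}, then combine the resulting $L^2$/$L^p$ decay with the uniform eventual H\"older bounds from Lemma \ref{sssslemma45ssddddff630hhuujjsdfffggguuyy} and an Ehrling-type interpolation between a compactly embedded space and $L^2$/$L^p$ to obtain $L^\infty$ convergence. The only cosmetic difference is that you interpolate via $C^{1}(\bar{\Omega})\hookrightarrow\hookrightarrow C^{0}(\bar{\Omega})\hookrightarrow L^p(\Omega)$ while the paper uses $C^{2+\theta}(\bar{\Omega})\hookrightarrow\hookrightarrow L^\infty(\Omega)\hookrightarrow L^2(\Omega)$; both are available from the eventual regularity, and your version also spells out more explicitly the step of transferring the $\varepsilon$-uniform decay to the limit solution, which the paper treats rather tersely.
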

\begin{proof}
Firsly, due to Lemmas \ref{ssdddlemmddddaddffffdfffgg4sssdddd5630}--\ref{11aaalemmaddffffdsddfffffgg4sssdddd5630}, we derive from Lemma \ref{sssslemma45ssddddff630hhuujjsdfffggguuyy} that
\begin{equation}n(t)\rightarrow \hat{n},
c(t)\rightarrow \hat{m}, m(t)\rightarrow \hat{m}~~\mbox{and}~~u(t)\rightarrow0~~~\mbox{in}~~~L^2(\Omega),
\label{aahhxxcdfvhhhvssssssssdsssjjdfffddffssllllddcssdz2.ssdd5}
\end{equation}
where $\hat{m}$ and $\hat{n}$ are given by \dref{1111hhxxcdfvhhhvsddfffgssjjdfffsfffsddcsssz2.5} and \dref{1111hhxxcddffdfvhhhvsddfffgssdfffjjdfffssddcsssz2.5}, respectively.
Next, due to Lemma \ref{lemma45630223}, one can obtain there exist positive  constants $\kappa_1$ and $T$ such that for all $t>T$
\begin{equation}\|n (\cdot,t)\|_{C^{2+\theta}(\bar{\Omega})} +\|c (\cdot,t)\|_{C^{2+\theta}(\bar{\Omega})}+\|m (\cdot,t)\|_{C^{2+\theta}(\bar{\Omega})}+\|u (\cdot,t)\|_{C^{2+\theta}(\bar{\Omega})}  \leq \kappa_1.
\label{aahhxxcdfvhhhvssssssssdsssjjdfffddffssddcssdz2.ssdd5}
\end{equation}
Therefore,
for any  $\eta > 0$, we may use the compactness of the first of the embeddings
$C^{2+\theta}(\bar{\Omega}) \hookrightarrow\hookrightarrow L^\infty (\Omega)\hookrightarrow L^2 (\Omega)$ to fix,
through an associated Ehrling lemma, a constant $\kappa_{2} > 0$
such that
 \begin{equation}\|n(\cdot,t)-\hat{n}\|_{L^{\infty}(\Omega)}\leq\frac{\eta}{2\kappa_{1} }\|n(\cdot,t)-\hat{n}\|_{C^{2+\theta}(\bar{\Omega})}+\kappa_{2}\|n(\cdot,t)-\hat{n}\|_{L^{2}(\Omega)}
\label{233ddxcvbbggdddddddfghhdfgcz2vv.5ghju4ss8cfg9ddsddddffff24ssdddghddfgggyddfggusdffji}
\end{equation}
 \begin{equation}\|c(\cdot,t)-\hat{m}\|_{L^{\infty}(\Omega)}\leq\frac{\eta}{2\kappa_{1} }\|c(\cdot,t)-\hat{m}\|_{C^{2+\theta}(\bar{\Omega})}+\kappa_{2}\|c(\cdot,t)-\hat{m}\|_{L^{2}(\Omega)}
\label{ghhddxcvbbggdddddddfghhdfgcz2vv.5ghju4ss8cfg9ddsddddffff24ssdddghddfgggyddfggusdffji}
\end{equation}
as well as
 \begin{equation}\|m(\cdot,t)-\hat{m}\|_{L^{\infty}(\Omega)}\leq\frac{\eta}{2\kappa_{1} }\|m(\cdot,t)-\hat{m}\|_{C^{2+\theta}(\bar{\Omega})}+\kappa_{2}\|m(\cdot,t)-\hat{m}\|_{L^{2}(\Omega)}
\label{klllddxcvbbggdddddddfghhdfgcz2vv.5ghju4ss8cfg9ddsddddffff24ssdddghddfgggyddfggusdffji}
\end{equation}
and
 \begin{equation}\|u(\cdot,t)\|_{L^{\infty}(\Omega)}\leq\frac{\eta}{2\kappa_{1} }\|u(\cdot,t)\|_{C^{2+\theta}(\bar{\Omega})}+\kappa_{2}\|u(\cdot,t)\|_{L^{2}(\Omega)}.
\label{dllkoooopdxcvbbggdddddddfghhdfgcz2vv.5ghju4ss8cfg9ddsddddffff24ssdddghddfgggyddfggusdffji}
\end{equation}
Now due to \dref{aahhxxcdfvhhhvssssssssdsssjjdfffddffssllllddcssdz2.ssdd5}, we may choose $t_0 > \max\{1,T\}$ large enough
such that for all $t>t_0$,
 \begin{equation}\|n(\cdot,t)-\hat{n}\|_{L^2(\Omega)}+\|c(\cdot,t)-\hat{m}\|_{L^2(\Omega)}+\|m(\cdot,t)-\hat{m}\|_{L^2(\Omega)}+\|u(\cdot,t)\|_{L^2(\Omega)}<\frac{\eta}{2\kappa_{2}}.
\label{234ddxcvbbggdddddddfghhdfgcz2vv.5ghju4ss8cfg9ddsddddffff24ghddfgggyddfggusdffji}
\end{equation}
Combined with \dref{233ddxcvbbggdddddddfghhdfgcz2vv.5ghju4ss8cfg9ddsddddffff24ssdddghddfgggyddfggusdffji}--\dref{234ddxcvbbggdddddddfghhdfgcz2vv.5ghju4ss8cfg9ddsddddffff24ghddfgggyddfggusdffji}, this shows that in fact
$$
\begin{array}{rl}\|n(\cdot,t)-\hat{n}\|_{L^{\infty}(\Omega)}\leq&\disp{\frac{\eta}{2\kappa_{1} }\|n(\cdot,t)-\hat{n}\|_{C^{2+\theta}(\bar{\Omega})}+\kappa_{2}\|n(\cdot,t)-\hat{n}\|_{L^{2}(\Omega)}}\\
<&\disp{\frac{\eta}{2\kappa_{1} }\kappa_{1}+\kappa_{2}\frac{\eta}{2\kappa_{2} }}\\
=&\eta~~~\mbox{for all}~~ t > t _0,\\
\end{array}
$$
$$
\begin{array}{rl}\|c(\cdot,t)-\hat{m}\|_{L^{\infty}(\Omega)}\leq&\disp{\frac{\eta}{2\kappa_{1} }\|c(\cdot,t)-\hat{m}\|_{C^{2+\theta}(\bar{\Omega})}+\kappa_{2}\|c(\cdot,t)-\hat{m}\|_{L^{2}(\Omega)}}\\
<&\disp{\frac{\eta}{2\kappa_{1} }\kappa_{1}+\kappa_{2}\frac{\eta}{2\kappa_{2} }}\\
=&\eta~~~\mbox{for all}~~ t > t _0\\
\end{array}
$$
as well as
$$
\begin{array}{rl}\|m(\cdot,t)-\hat{m}\|_{L^{\infty}(\Omega)}\leq&\disp{\frac{\eta}{2\kappa_{1} }\|m(\cdot,t)-\hat{m}\|_{C^{2+\theta}(\bar{\Omega})}+\kappa_{2}\|m(\cdot,t)-\hat{m}\|_{L^{2}(\Omega)}}\\
<&\disp{\frac{\eta}{2\kappa_{1} }\kappa_{1}+\kappa_{2}\frac{\eta}{2\kappa_{2} }}\\
=&\eta~~~\mbox{for all}~~ t > t _0\\
\end{array}
$$
and
$$
\begin{array}{rl}\|u(\cdot,t)\|_{L^{\infty}(\Omega)}\leq&\disp{\frac{\eta}{2\kappa_{1} }\|u(\cdot,t)\|_{C^{2+\theta}(\bar{\Omega})}+\kappa_{2}\|u(\cdot,t)\|_{L^{2}(\Omega)}}\\
<&\disp{\frac{\eta}{2\kappa_{1} }\kappa_{1}+\kappa_{2}\frac{\eta}{2\kappa_{2} }}\\
=&\eta~~~\mbox{for all}~~ t > t _0,\\
\end{array}
$$
which together with the fact that $\eta> 0$ was arbitrary implies the 
claimed estimates.
\end{proof}
%
%
%
%
%
%
%
%
%
%
%
%
%
%
%
%
%
%
%
%

In order to prove Theorem \ref{thaaaeorem3}, we now only have to collect the results prepared during this section:

{\bf Proof of Theorem  \ref{thaaaeorem3}.} 


\begin{proof}
Combining Lemmas \ref{lemma45630223}--\ref{lemma4dd5630hhuujjuuyy} this convergence statement results immediately.
\end{proof}

{\bf Acknowledgement}:
This work is partially supported by  the National Natural
Science Foundation of China (No. 11601215), Shandong Provincial
Science Foundation for Outstanding Youth (No. ZR2018JL005) and Project funded by China
Postdoctoral Science Foundation (No. 2019M650927, 2019T120168).

\end{document}